\documentclass[11pt]{amsart}
\usepackage{amssymb}
\usepackage{amsthm}
\usepackage{amsmath}
\usepackage{mathtools}
\usepackage{latexsym}
\usepackage{comment}
\usepackage{hyperref}
\usepackage{cleveref}
\usepackage{verbatim}
\usepackage{xypic}
\usepackage{graphicx}
\usepackage{subcaption}
\usepackage{tikz-cd}
\usepackage[utf8]{inputenc}
\usepackage[margin=1in]{geometry}
\usepackage[shortlabels]{enumitem}
\usepackage{xcolor}

\usetikzlibrary{decorations.pathmorphing}

\newtheorem{thm}{Theorem}[section]
\newtheorem{prop}[thm]{Proposition}

\newtheorem{lemma}[thm]{Lemma}
\newtheorem{cor}[thm]{Corollary}
\newtheorem{dfn}[thm]{Definition}

\theoremstyle{definition}

\theoremstyle{definition} \newtheorem{rmk}[thm]{Remark}

\captionsetup[subfigure]{labelformat=empty}

\newcommand{\cc}{\mathbb{C}}
\newcommand{\rr}{\mathbb{R}}
\newcommand{\qq}{\mathbb{Q}}
\newcommand{\zz}{\mathbb{Z}}

\newcommand{\aff}{\mathbb{A}}
\newcommand{\proj}{\mathbb{P}}

\newcommand{\rank}{\mathrm{rank}}
\newcommand{\Gal}{\mathrm{Gal}}

\newcommand{\Spec}{\mathrm{Spec}}

\newcommand{\Pic}{\mathrm{Pic}}

\newcommand{\Xmini}{\mathcal{X}^{\mathrm{min}}}
\newcommand{\Cmini}{\mathcal{C}^{\mathrm{min}}}
\newcommand{\Dmini}{\mathfrak{D}^{\mathrm{min}}}
\newcommand{\Hmini}{H^{\mathrm{min}}}
\newcommand{\hatHmini}{\hat{H}^{\mathrm{min}}}
\newcommand{\Cst}{\mathcal{C}^{\mathrm{st}}}
\newcommand{\Irred}{\mathrm{Irred}}

\newcommand{\ord}{\mathrm{ord}}

\newcommand{\pfrac}{\frac{pv(p)}{p-1}}
\newcommand{\ptfrac}{\tfrac{pv(p)}{p-1}}
\newcommand{\SF}[1]{(#1)_s}
\newcommand{\truncate}[1]{\min\{{#1},\ptfrac\}}

\newcommand{\vfun}{\underline{v}}

\newcommand{\Berk}{\mathbb{P}_{\mathbb{C}_K}^{1, \mathrm{an}}}
\newcommand{\Hyp}{\mathbb{H}_{\mathbb{C}_K}^{1, \mathrm{an}}}

\bibliographystyle{plain}
\graphicspath{{pics/}}

\title{A cluster criterion for potential degeneracy of superelliptic curves}
\author{Jeffrey Yelton}

\begin{document}

\maketitle

\begin{abstract}

Let $K$ be a field with a discrete valuation; let $p$ be a prime; and let $C$ be the curve defined by an equation of the form $y^p = f(x)$.  We show that the curve $C$ has a model over an algebraic extension of $K$ whose special fiber consists of genus-$0$ components and has at worst nodal singularities if and only if the cluster data of the roots of $f$ satisfies a certain criterion, and when these hold, we show explicitly how to build the minimal regular model of $C$.  We develop an interpretation of cluster data in terms of a convex hull in the Berkovich projective line and express the above results directly in terms of this convex hull.

\end{abstract}

\section{Introduction} \label{sec intro}

Let $K$ be a field complete with respect to a discrete valuation $v : K^\times \to \qq$ and with algebraically closed residue field denoted by $k$.  The completeness of $K$ implies that the valuation $v$ extends uniqely to a valuation on any finite extension $K' / K$ as well as an algebraic closure $\bar{K}$ of $K$ and its completion $\cc_K$; we denote all of these valuations also by $v$.  For any finite extension $K' / K$, we write $\mathcal{O}_{K'} := \{z \in K' \ | \ v(z) \geq 0\}$ for its ring of integers.

\begin{dfn} \label{dfn reduction type}

A \emph{semistable model} of a curve $C / K$ is a model $\mathcal{C} / \mathcal{O}_{K'}$ (for some finite extension $K' / K$) whose special fiber, denoted $\mathcal{C}_s / k$, satisfies that each of its components has multiplicity $1$ and that each of its singular points (if there are any) is a node (\textit{i.e.} an ordinary double point).  Such a model is said to be \emph{(split) degenerate} if the normalization of each component of $\mathcal{C}_s$ is a copy of the projective line $\proj_k^1$, \textit{i.e.} if the geometric genus of each component of $\mathcal{C}_s$ is $0$.

A curve $C / K$ is said to \emph{have semistable (resp. split degenerate) reduction over an extension $K' / K$} if there exists a semistable (resp. split degenerate) model $\mathcal{C} / \mathcal{O}_{K'}$ of $C$.  The curve $C / K$ is called \emph{potentially degenerate} if it obtains split degenerate reduction over a finite extension $K' / K$.

\end{dfn}

\begin{rmk} \label{rmk semistability theorem}

As was shown by Deligne and Mumford in \cite{deligne1969irreducibility} and then through independent arguments by Artin and Winters in \cite{artin1971degenerate}, every smooth, geometrically connected curve $C / K$ obtains semistable reduction over some finite extension $K' / K$.  If some semistable model of $C$ over an extension $K' / K$ is (split) degenerate, then any semistable model of $C$ over any (possibly different) extension $K'' / K$ is also (split) degenerate: this follows from \Cref{rmk abelian and toric}(a) and \Cref{rmk alternate definition of split degenerate} below.

\end{rmk}

Let $p$ be a prime not equal to the characteristic of $K$ (and which may or may not be equal to the residue characteristic of $K$).  In this paper we are concerned with whether or not a given curve $C / K$ is potentially degenerate when $C$ is a $p$-cyclic cover of $\proj_K^1$; such a curve $C$ can always be described by an affine model of the form 
\begin{equation} \label{eq superelliptic}
y^p = f(x) = \prod_{i = 1}^d (x - z_i)^{m_i} \in K[x],
\end{equation}
 where $1 \leq m_i \leq p - 1$ for $1 \leq i \leq d$\footnote{For any $i$ such that $2 \leq m_i \leq p - 1$, the point $(x, y) = (z_i, 0)$ on this curve is a unibranch singularity; by $C$ we will actually mean the curve obtained after resolving each such singularity: see \Cref{prop superelliptic model}(b) below.}.  The set of branch points is then given by $\mathcal{B} := \{z_1, \dots, z_d\}$ (resp. $\mathcal{B} := \{z_1, \dots, z_d, \infty\}$) if $p \mid \deg(f)$ (resp. $p \nmid \deg(f)$); see \Cref{appendix} for more details.

Throughout this paper, we make the simplifying assumption that we have $\infty \in \mathcal{B}$, so that $d = \#\mathcal{B} - 1$.  An application of the Riemann-Hurwitz formula shows that the genus of $C$ is given by $g = \frac{1}{2}(p - 1)(\#\mathcal{B} - 2) = \frac{1}{2}(p - 1)(d - 1)$; let us further assume for the rest of the paper that $C$ has positive genus, which implies that $d \geq 2$ (and even $d \geq 3$ if $p = 2$).

Denoting the completion of an algebraic closure of $K$ by $\cc_K$, let $\Berk$ denote the Berkovich projective line over $\cc_K$ (see \S\ref{sec preliminaries berk} for a definition of this uniquely path connected infinite metric space).  Given any subspace $\Lambda \subset \Berk$ and any real number $r \geq 0$, let $B(\Lambda, r) \subset \Berk$ denote the tubular neighborhood of $\Lambda$ of radius $r$; more precisely, we define $B(\Lambda, r)$ to consist of all points $\eta \in \Berk$ such that there exists a point $\xi \in \Lambda$ whose distance from $\eta$ is $\leq r$.  With this notation, we are able to state our theorem which provides a criterion for potential degeneracy.

\begin{thm} \label{thm main}

A superelliptic curve $C$ defined by an equation of the form given in (\ref{eq superelliptic}) is potentially degenerate if and only if there is a labeling $\mathcal{B} = \{z_0, w_0 := \infty, z_1, w_1, \dots, z_h, w_h\}$ of the branch points which satisfies the below two properties.
\begin{enumerate}[(i)]

\item For each $i \in \{1, \dots, h\}$, the sum of the multiplicities of $z_i$ and $w_i$ (as roots of $f$) equals $p$; in other words, the defining equation in (\ref{eq superelliptic}) can be rewritten as 
\begin{equation} \label{eq superelliptic degenerate model}
y^p = f(x) = (x - z_0)^{m_0} \prod_{i = 1}^h (x - z_i)^{m_i} (x - w_i)^{p - m_i}
\end{equation}
for some integers $m_i$ each satisfying $1 \leq m_i \leq p - 1$.

\item For $0 \leq i \leq h$, let $\Lambda_i \subset \Berk$ denote the path in $\Berk$ which connects the points of Type I which correspond to $z_i$ and $w_i$.  For distinct indices $i, j \in \{0, \dots, h\}$, we have 
\begin{equation}
B(\Lambda_i, \ptfrac) \cap B(\Lambda_j, \ptfrac) = \varnothing.
\end{equation}

\end{enumerate}

\end{thm}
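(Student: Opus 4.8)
The plan is to translate the geometric condition on models into a statement about the skeleton (or a suitable semistable vertex set) of the analytification of $\proj_{\cc_K}^1$ and its behavior under the degree-$p$ cover $C \to \proj^1$. The key technical input is the theory relating semistable models of $p$-cyclic covers to the arrangement of branch points in the Berkovich line, together with the standard fact (see \Cref{prop superelliptic model} and the surrounding discussion) that, after normalization, a component of the special fiber lying above a component $\Gamma$ of the special fiber of a model of $\proj^1$ has genus $0$ exactly when the branch locus meets the corresponding open subset of $\Berk$ in a controlled way—roughly, when the $p$-th power character cutting out the cover is ramified along at most two ``boundary'' directions with the ramification indices of those two directions summing to a multiple of $p$. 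So the proof naturally splits into a local analysis at each vertex and an assembly step.

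First I would set up the Berkovich-theoretic dictionary: a split degenerate model of $C$ over some $\mathcal{O}_{K'}$ corresponds to a finite set of Type II points in $\Berk$ whose associated skeleton $\Sigma$ contains all branch points in its ``closure'' and such that the pullback of the Kummer cover to each component is unramified outside the nodes. Using the well-known computation of the different of a $\zz/p$-extension of a discretely valued field—which is precisely where the quantity $\ptfrac$ enters, as the radius within which the wild ramification ``spreads out''—I would show that requiring each component upstairs to have geometric genus $0$ forces two things simultaneously: (a) at each vertex of $\Sigma$, at most two of the tangent directions can carry branch points with nonzero total multiplicity mod $p$, and the two multiplicities must be negatives of each other mod $p$; and (b) the segments $\Lambda_i$ joining the two ``partner'' branch points $z_i, w_i$ must be separated by at least the critical distance $\ptfrac$, since otherwise the wild parts of the ramification coming from two different pairs would collide on a common component and force positive genus (via an Artin–Schreier–type genus contribution). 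Condition (a) is exactly property (i) of the theorem (after relabeling so that $\infty = w_0$), and condition (b) is property (ii).

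For the converse direction I would start from a labeling satisfying (i) and (ii), explicitly build the semistable vertex set: take the convex hull of the branch points in $\Berk$, then thicken each path $\Lambda_i$ to radius $\ptfrac$ and add the finitely many vertices needed so that each edge of the resulting skeleton has length a rational multiple of $1$ and each vertex is defined over a common finite extension $K'$. The disjointness in (ii) guarantees that on each resulting component of the model of $\proj^1$ the cover is ramified in a way that is either trivial or a single quadratic-type Kummer datum with the two ramification indices summing to $p$—hence the upstairs component has genus $0$ by Riemann–Hurwitz—and at each node the local picture is the standard one making the total space regular with nodal (indeed toric) singularities. Patching these local models gives the desired split degenerate model; this is also where the ``explicit minimal regular model'' promised in the abstract gets constructed, by contracting the $(-1)$-curves introduced by the thickening.

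The main obstacle I anticipate is the local genus computation at a vertex where the residue characteristic equals $p$: there the cover is wildly ramified, the naive Riemann–Hurwitz count must be replaced by a computation with the different, and one has to be careful that the ``radius $\ptfrac$'' really is the sharp threshold—i.e. that just inside this radius the two partner branch points induce a component of genus $0$ (the ramification is concentrated and cancels) while any overlap of two thickened paths produces a genuinely positive contribution to the genus. Handling this uniformly in $v(p)$ and for all residue characteristics (including the tame case $p \neq \mathrm{char}(k)$, where $\ptfrac$ degenerates to $0$ and the statement reduces to the classical cluster picture) is the delicate heart of the argument; everything else is bookkeeping with skeleta and semistable reduction that can be imported from the cited references and from \Cref{prop superelliptic model}.
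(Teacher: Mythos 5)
Your proposal takes a genuinely different route from the paper, but as written it has real gaps that go beyond details-to-be-filled-in; the ``delicate heart'' you concede is left open is precisely the bulk of the paper's work, and moreover one of your two local constraints is not correct in the wild case.

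The paper does not do a per-component genus computation at all. Instead of invoking Riemann--Hurwitz (or the different / Artin--Schreier conductor) at each vertex, it observes (\Cref{rmk alternate definition of split degenerate}) that split degeneracy is equivalent to the \emph{toric} rank of a semistable model equalling the genus, and then computes the toric rank globally via \Cref{prop toric rank formula}, which only counts nodes and components of $\SF{\Xmini}$ with $p$-fold inverse image. The role your ``radius $\ptfrac$'' plays is carried in the paper by the function $\mathfrak{t}_f$ of \Cref{dfn t_f}, whose behavior (\Cref{thm t_f formula one component}, \Cref{cor t_f formula}) is established through the algebra of part-$p$th-power decompositions in \S\ref{sec normalizations part-pth-power}--\S\ref{sec t_f}. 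That entire apparatus is what your sketch defers as ``the delicate heart,'' and without it the wild half of the argument has no content.

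There are two more specific problems. First, your condition (a) --- ``at most two tangent directions at each vertex carry branch points, with multiplicities summing to $0$ mod $p$, so the upstairs component has genus $0$ by Riemann--Hurwitz'' --- is only correct in the tame case. When $\operatorname{char}(k)=p$ the cover of a component reduces to an Artin--Schreier cover, whose genus depends on the conductor and can be positive even with a single branch direction; the paper's \Cref{prop normalization properties}(c) shows the relevant invariant in the wild case is the comparison $\deg(\bar\rho_0) \lessgtr p\deg(\bar q_0)$, not a simple branch-count. By switching to the toric rank the paper sidesteps this issue entirely, and that avoidance is load-bearing, not cosmetic. Second, your construction of the converse direction has the geometry backwards: the semistable model is built from the region where $\mathfrak{t}_f$ attains its maximum $\ptfrac$ --- i.e.\ the \emph{complement} of the open tubes $B(\Lambda_i, \ptfrac)$ inside $\Sigma_{\mathcal{B}}^0$, together with its convex hull $\hat\Upsilon$ (\Cref{thm construction of mini}) --- not from ``thickening each $\Lambda_i$ to radius $\ptfrac$.'' Finally, passing from the local vertex constraint to the global pairing in property (i) is not automatic; in the paper it comes out of \Cref{prop connected components Sigma} combined with the squeeze between the genus formula and the toric-rank bound in (\ref{eq squeezed between inequalities}), which forces each $\mathfrak{r}_i$ to have exactly two elements. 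Your sketch does not address how the local data assembles into a consistent labeling.
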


\begin{rmk} \label{rmk translating main thm}

The language of cluster data, introduced in \cite{dokchitser2022arithmetic} and defined in \S\ref{sec preliminaries clusters} of this paper, is an increasingly popular framework for studying the arithmetic of covers of the projective line over discrete valuation rings.  Relatedly, hypotheses expressed in terms of the cluster data associated to a superelliptic curve have the advantage of being intuitive and easy to confirm via computation.  At the same time, cluster language would become unwieldy in the course of some of our arguments.

Property (ii) given in the above theorem can be restated in terms of the \emph{cluster data} of the branch points rather than the paths between them in the Berkovich projective line, by requiring the following (where (ii)(a) is implied by the results of \S\ref{sec preliminaries partitions} below to be equivalent to the disjointness of the spaces $\Lambda_i$ and (ii)(b) is \cite[Proposition 3.10]{yelton2024branch2}).

\begin{enumerate}
\item[(ii)(a)] Let $\mathfrak{s}_i \subseteq \mathcal{B}$ be the smallest cluster containing $\alpha_i, \beta_i$ for $1 \leq i \leq h$.  The clusters $\underline{\mathfrak{s}}_0, \dots, \underline{\mathfrak{s}}_h$ are pairwise distinct and each coincide with $\{z_i, w_i\}_{i \in I}$ for some subset $I \subsetneq \{0, \dots, h\}$.
\item[(ii)(b)] Given any distinct clusters $\mathfrak{c}_1, \mathfrak{c}_2 \subset \mathcal{B}$ which are not themselves the disjoint union of $\geq 2$ even-cardinality clusters, such that $\mathfrak{c}_1$ has even cardinality and $\mathfrak{c}_2$ either has even cardinality or contains $\mathfrak{c}_1$, and letting $\mathfrak{s}$ be the smallest cluster containing both $\mathfrak{c}_1$ and $\mathfrak{c}_2$, we have 
\begin{equation}
d(\mathfrak{c}_1) + d(\mathfrak{c}_2) - 2d(\mathfrak{s}) > 2\ptfrac.
\end{equation}
\end{enumerate}

\end{rmk}

\begin{rmk} \label{rmk one direction already proved}

One direction of this -- which states that if $C$ is potentially degenerate, then there is a labeling of the branch points with the properties described in \Cref{thm main} -- has already been proved: property (i) was shown by van Steen as \cite[Proposition 3.1(a)]{van1982galois}, and property (ii) was shown by the author as \cite[Corollary 5.1]{yelton2024branch2}.  Both properties were proved using the theory of Schottky groups and Mumford uniformization of curves with split degenerate reduction.  In \cite[Remark 5.2]{yelton2024branch2}, the author suggests that not only should one be able to prove this direction through purely geometric arguments without the use of Schottky groups and Mumford uniformization but that its converse should hold and be similarly provable.  This paper is the realization of the idea that was expressed in that remark.

\end{rmk}

\subsection{Outline of the paper}

In setting out to prove our main results, we begin in \S\ref{sec preliminaries} by defining the cluster data associated to the roots of $f$, then establishing a suitable definition of the Berkovich projective line together with some special subspaces $\Sigma_{\mathcal{B}}^*, \Sigma_{\mathcal{B}}^0 \subseteq \Sigma_{\mathcal{B}} \subset \Berk$, defined with respect to the polynomial $f$ in the equation in (\ref{eq superelliptic}), and then relate all of these objects.  We then use \S\ref{sec preliminaries geometric} to set up certain geometric preliminaries regarding minimal regular models and toric ranks of semistable models, using the structure of $C$ as a Galois cover of $\proj_K^1$ to obtain a particular characterization of the latter.  Each point of Type II of the Berkovich projective line corresponds to a smooth model of the projective line over some finite extension $K' / K$; in \S\ref{sec normalizations}, we compute the (relative) normalizations in the function field extension $K(C) / K(x)$ of these smooth models and investigate the relevant properties of their special fibers.

We begin \S\ref{sec t_f} by defining an important function on the Berkovich projective line measuring ``$p$-power closeness" or how close (with respect to the Gauss valuation induced by $v$) the polynomial $f$ is to being a $p$th power.  This function depends on the defining polynomial $f$ and is denoted $\mathfrak{t}_f$, and in the wild residue characteristic case, it will be used directly to find smooth models of the projective line whose normalizations in $K(C)$ are dominated by the minimal regular model of $C$.  The rest of \S\ref{sec t_f} is then devoted to proving a crucial result (\Cref{thm t_f formula one component} below) which can be used to describe the behavior of the function $\mathfrak{t}_f$ and to obtain a simple formula (given as \Cref{cor t_f formula}) for $\mathfrak{t}_f$ under the hypothesis that the subspaces $\Lambda_i \subset \Berk$ of \Cref{thm main} are mutually disjoint.  We then tie all of this together to prove \Cref{thm main} in \S\ref{sec proof of main}.

Finally, we explicitly show how to build (split) degenerate models of potentially degenerate superelliptic curves in \S\ref{sec building models} (as \Cref{thm construction of mini}) and produce an explicit field extension over which they are defined.  We finish by classifying the structures of their special fibers for genus $1$ and $2$.

\subsection{Acknowledgments}

The author is grateful to Leonardo Fiore for his crucial contributions to the project they previously worked on together of ideas and results that have been adapted to this project.  He also wishes to thank Alexander Betts for a useful discussion which alerted him to a mistake in a draft of this manuscript, as well as the anonymous referee of a previous preprint for directly inspiring him to make \cite[Remark 5.2]{yelton2024branch2} into its own paper.

\section{Multisets, clusters, and the Berkovich projective line} \label{sec preliminaries}

Throughout this section, we will deal with the set $\mathcal{B}$ of branch points of the $p$-cyclic cover $C \to \proj_K^1$, which is the underlying set of a multiset $\underline{\mathcal{B}}$ that encodes the branching behavior of the cover.  More precisely, we define the \emph{branch multiset} $\underline{\mathcal{B}}$ to be the multiset consisting of each root of the polynomial $f$ given in (\ref{eq superelliptic}) occurring in the multiset with multiplicity equal to its multiplicity as a root of $f$, along with the element $\infty$ occurring with multiplicity equal to $p\lceil \frac{\deg(f)}{p} \rceil - \deg(f)$.  We have $p \mid \#\underline{\mathcal{B}}$, and we note from (\ref{eq superelliptic}) that every element of $\mathcal{B}$ appears with multiplicity $\leq p - 1$.

\subsection{Multisets and cluster data} \label{sec preliminaries clusters}

In order to discuss properties of multisets, and in particular, sub-multisets of $\underline{\mathcal{B}}$, we adopt the following terminology.  When we speak of the cardinality of any multiset $\underline{A}$, we mean the number of elements of $\underline{A}$ counted with multiplicity (\textit{i.e.} the sum of the multiplicities of the elements of the underlying set), and we denote this positive integer by $\#\underline{A}$.  (Note that by construction of $\underline{\mathcal{B}}$, we have $p \mid \#\underline{\mathcal{B}}$.)

For our purposes, every sub-multiset $\underline{B} \subseteq \underline{A}$ is a multiset consisting of certain elements $b$ which appear in $\underline{A}$, where the multiplicity of $b$ in $\underline{B}$ equals the multiplicity of $b$ in $\underline{A}$.  Moreover, given any subset $A \subseteq \mathcal{B}$, we write $\underline{A}$ for the multiset consisting of all of the elements of $A$ each occurring with the same multiplicity that they have in $\underline{\mathcal{B}}$.  Whenever a sub-multiset of $\mathcal{B}$ is denoted using an underline (as in $\underline{A}$), it will be understood that removing the underline indicates the underlying set.  We will freely switch between multisets and their underlying sets using these notational conventions.

We will encounter unions, intersections, and complements of two multisets only when each element appearing in both multisets has the same multiplicity in each of them, and they are then defined in the obvious way.  We define the intersection $\underline{A} \cap D$ of a sub-multiset $\underline{A} \subseteq \underline{\mathcal{B}}$ and a subset $D \subset \cc_K$ to consist of those elements of $D$ which are members of $\underline{A}$ of some (positive) multiplicity.

We define the notion of the \emph{cluster data} associated to the branch set $\mathcal{B}$ and all associated terminology following \cite{dokchitser2022arithmetic} while also adapting these definitions to the multiset $\underline{\mathcal{B}}$.

\begin{dfn} \label{dfn cluster}

A subset $\mathfrak{s} \subset \mathcal{B}$ is called a \emph{cluster} if there is some disc $D \subset \bar{K}$ which satisfies $\mathfrak{s} = \mathcal{B} \cap D$.  A sub-multiset of $\underline{\mathcal{B}}$ is called a \emph{cluster} if it coincides with $\underline{\mathfrak{s}}$ for some cluster $\mathfrak{s} \subset \mathcal{B}$.

Given a cluster $\mathfrak{s}$ of $\mathcal{B}$ which does not coincide with $\mathcal{B} \smallsetminus \{\infty\}$ (which itself is always a cluster), we write $\mathfrak{s}' \subset \mathcal{B}$ for the minimum cluster properly containing $\mathfrak{s}$, and we call this the \emph{parent cluster} of $\mathfrak{s}$.  Two clusters with the same parent cluster are called \emph{sibling clusters}.

The \emph{depth} of a cluster $\mathfrak{s} \subset \mathcal{B}$ is the rational number 
\begin{equation}
d(\mathfrak{s}) := \min_{z, z' \in \mathfrak{s}} v(z - z').
\end{equation}
We define the \emph{relative depth} $\delta(\mathfrak{s})$ of $\mathfrak{s}$ to be the difference $d(\mathfrak{s}) - d(\mathfrak{s}')$.  Given a cluster $\underline{\mathfrak{s}}$ of $\underline{\mathcal{B}}$, we define its depth and relative depth respectively to be those of its underlying set $\mathfrak{s}$.

The data of all clusters of $\mathcal{B}$ (resp. $\underline{\mathcal{B}}$) along with each of their depths (or relative depths) is called the \emph{cluster data} of $\mathcal{B}$ (resp. of $\underline{\mathcal{B}}$ or of the curve $C$).

\end{dfn}

\subsection{The Berkovich projective line and convex hulls} \label{sec preliminaries berk}

The \emph{Berkovich projective line} $\Berk$ over the complete algebraically closed field $\cc_K$ is a type of rigid analytification of the projective line $\proj_{\cc_K}^1$ and is typically defined in terms of multiplicative seminorms on $\cc_K[x]$ as in \cite[\S1]{baker2008introduction} and \cite[\S6.1]{benedetto2019dynamics}.  Points of $\Berk$ are identified with multiplicative seminorms which are each classified as Type I, II, III, or IV.  For the purposes of this paper, we may safely ignore points of Type IV and need only adopt a fairly rudimentary construction which does not directly involve seminorms.

\begin{dfn} \label{dfn berk}

Define the \textit{Berkovich projective line}, denoted $\Berk$, to be the topological space with points and topology given as follows.  The points of $\Berk$ are identified with  
\begin{enumerate}[(i)]
\item $\cc_K$-points of $\proj_{\cc_K}^1$, which we will call \emph{points of Type I}; and 
\item discs $D \subset \cc_K$; if $\min\{v(w - z)\}_{z, w \in D} \in \qq$ (resp. $\min\{v(w - z)\}_{z, w \in D} \notin \qq$), we call this a \emph{point of Type II} (resp. a \emph{point of Type III}).
\end{enumerate}
Let $\Hyp \subset \Berk$ be the subset consisting of the points of Type II or III.

A point of $\Berk$ which is identified with a point $z \in \cc_K \cup \{\infty\}$ (resp. a disc $D \subset \cc_K$) is denoted $\eta_z \in \Berk$ (resp. $\eta_D \in \Berk$).

Given two points $\eta, \eta' \in \Berk$, writing $D \subset \cc_K$ for the disc (resp. singleton subset consisting of the point) corresponding to $\eta$ if $\eta$ is of Type II or III (resp. of Type I), and defining $D' \subset \cc_K$ similarly, we write $\eta \vee \eta' \in \Berk$ for the point of Type II or III corresponding to the smallest disc containing both $D$ and $D'$.  We define an order relation (denoted by $>$) on $\Berk$ by decreeing that $\eta > \eta'$ if and only if the corresponding subsets $D, D' \subset \cc_K$ as defined above satisfy $D \supsetneq D'$.  Note that for any $\eta, \eta' \in \Berk$, the point $\eta \vee \eta'$ satisfies $\eta \vee \eta' > \eta, \eta'$ and is minimal (with respect to the ``greater than" relation) for this property.

We define a metric on $\Hyp$ given by the distance function 
\begin{equation*}
\delta : \Hyp \times \Hyp \to \rr
\end{equation*}
 as follows.  For points $\eta = \eta_D, \eta' = \eta_{D'} \in \Hyp$ satisfying $\eta > \eta'$, we let 
\begin{equation}
\delta(\eta, \eta') = d(D) - d(D'), 
\end{equation}
and for general $\eta, \eta' \in \Hyp$, we let 
\begin{equation}
\delta(\eta, \eta') = \delta(\eta, \eta \vee \eta') + \delta(\eta', \eta \vee \eta').
\end{equation}

We endow the subset $\Hyp \subset \Berk$ with the topology induced by the metric given by $\delta$, and we extend this to a topology on all of $\Berk$ in such a way that, for any $w \in \cc_K$, the sequence $\{\eta_{(w), i}\}_{i = 1, 2, 3, \dots}$ (resp. $\{\eta_{(w), i}\}_{i = -1, -2, -3, \dots}$) converges to $\eta_w$ (resp. $\eta_\infty$), where $\eta_{(w), i}$ corresponds to the disc $\{z \in \cc_K \ | \ v(z - w) \geq i\}$ for all $i \in \zz$.

\end{dfn}

It is well known that the space $\Berk$ is path connected and that there is a unique non-backtracking path between any pair of points in $\Berk$.  This allow us to set the following notation.  Below we denote the image in $\Berk$ of the non-backtracking path between two points $\eta, \eta' \in \Berk$ by $[\eta, \eta'] \subset \Berk$, and we will often refer to this image itself as ``the path'' from $\eta$ to $\eta'$.  We write $(\eta, \eta') := [\eta, \eta'] \smallsetminus \{\eta, \eta'\}$ for its interior, \textit{i.e.} the open path from $\eta$ to $\eta'$ and use the notation $(\eta, \eta']$ and $[\eta, \eta')$ for the obvious half-open paths.  Note that the order of the endpoints of the paths given in the above notation does not affect the paths.  The above observations imply that, given a point $\eta \in \Hyp$ and a connected subspace $\Lambda \in \Berk$ that intersects nontrivially with $\Hyp$, there is a (unique) closest point $\xi \in \Lambda$ to $\eta$, which allows us to define the distance between $\eta$ and $\Lambda$ as $\delta(\eta, \Lambda) = \delta(\eta, \xi)$.  Given a subspace $\Lambda \subset \Berk$ and a real number $r \geq 0$, we define the \emph{(closed) tubular neighborhood} of $\Lambda$ of radius $r$ to be 
\begin{equation} \label{eq tubular neighborhood}
B(\Lambda, r) = \{\eta \in \Berk \ | \ \delta(\eta, \Lambda) \leq r\}.
\end{equation}

We make note of the very important fact that given points $\eta, \eta', \xi \in \Berk$ with $\eta, \eta' > \xi$, we must have $\eta \geq \eta'$ or $\eta' \geq \eta$, coming from the fact that the relation $\eta > \xi$ can also be defined as saying that $\eta$ lies in the path $[\xi, \eta_\infty]$.  This fact may be phrased as saying that ``there is only one direction upwards from any point in $\Berk \smallsetminus\{\eta_\infty\}$."  It is easy to verify from this and from unique path connectedness that each connected subspace of $\Berk$ has at most one unique maximal point with respect to $>$, a fact that we will often use freely in arguments below.

\begin{rmk} \label{rmk delta}

It is no coincidence that we are using $\delta$ to denote both the relative depth of a cluster and the distance function on $\Hyp$.  Indeed, one computes directly from definitions that given any cluster $\mathfrak{s} \subsetneq \mathcal{B} \smallsetminus \{\infty\}$, we have 
\begin{equation}
\delta(\mathfrak{s}) = \delta(\eta_{\mathfrak{s}}, \eta_{\mathfrak{s}'}).
\end{equation}

\end{rmk}

Given a cluster $\mathfrak{s} \subset \mathcal{B}$, we write $D_{\mathfrak{s}} \subset \cc_K$ for the smallest disc which contains $\mathfrak{s}$, and we write $\eta_{\mathfrak{s}} \in \Hyp$ for the corresponding point of Type II.

\begin{dfn} \label{dfn Sigma}

Given any subset $A \subset \bar{K} \cup \{\infty\}$, we write $\Sigma_A \subset \Berk$ for the intersection of $\Hyp$ with the convex hull of the subspace $\{\eta_z\}_{z \in A} \subset \Berk$, \textit{i.e.} the smallest connected subspace of $\Hyp$ whose set of limit points contains $\{\eta_z\}_{z \in A}$.

We call a point of $v \in \Sigma_A$ a \emph{vertex} if the space $\Sigma_A \smallsetminus \{a\}$ has $\geq 3$ connected components.

\end{dfn}

Given a finite subset $A \subset \bar{K} \cup \{\infty\}$ of cardinality $\geq 2$, it is clear that the space $\Sigma_A$ coincides with the union of open paths between each pair of distinct points in $A$ (considered as points in $\Berk$ of Type I); that is, we have $\Sigma_A = \bigcup_{z, w \in A, \,z \neq w} (\eta_z, \eta_w)$ (note that this is not an empty union as we have $\#A \geq 2$).  Moreover, one sees directly from \Cref{dfn berk} that each open path $(\eta_z, \eta_w)$ consists of the points $\eta_D$ for all discs $D$ satisfying $\#(D \cap \{z, w\}) = 1$ as well as for the smallest disc containing both $z$ and $w$, which we denote by $D(z, w)$; the corresponding point $\eta_{D(z, w)}$ is the maximal point of $[\eta_z, \eta_w]$ with respect to the relation $>$.  It is also not hard to see from this characterization of $\Sigma_A$ that it contains all of its limit points not of Type I (or equivalently, $\Sigma_A \cup \{\eta_z\}_{z \in A}$ is closed in $\Berk$).  We freely use these facts in some arguments below.

The following proposition fully characterizes the points in the spaces $\Sigma_A$ when $A$ consists of branch points (but can easily be generalized to apply to any finite subset $A \subset \bar{K} \cup \{\infty\}$).

\begin{prop} \label{prop highest point of Lambda}

Let $\mathfrak{r} \subset \mathcal{B}$ be any subset.

\begin{enumerate}[(a)]
\item There is a (unique) maximal point of $\Sigma_{\mathfrak{r}}$ if and only if we have $\infty \notin \mathfrak{r}$, and in this case, it is the point $\eta_{\mathfrak{s}}$, where $\mathfrak{s}$ is the smallest cluster containing $\mathfrak{r}$.
\item A point $\eta = \eta_D \in \Hyp$ is a non-maximal point of $\Sigma_{\mathfrak{r}}$ if and only if there are points $z, w \in \mathfrak{r}$ with $z \in D$ and $w \notin D$.
\end{enumerate}

\end{prop}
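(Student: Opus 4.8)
The plan is to argue everything directly from the description $\Sigma_{\mathfrak r}=\bigcup_{z\neq w\in\mathfrak r}(\eta_z,\eta_w)$ recorded just above the proposition, together with the explicit picture of an open path: $(\eta_z,\eta_w)$ consists of the $\eta_D$ for discs $D$ containing exactly one of $z,w$, plus --- when $z,w$ are both finite --- the top point $\eta_{D(z,w)}$, where $D(z,w)$ is the smallest disc containing both; when $w=\infty$ there is no top point and $(\eta_z,\eta_\infty)$ is the increasing ``ray'' of $\eta_D$ over finite discs $D\ni z$. Two elementary facts about non-archimedean discs will be used throughout: (1) if $z\in D$ and $w\notin D$ then $D\subsetneq D(z,w)$, while if $z,w\in D$ then $D(z,w)\subseteq D$; and (2) the smallest disc $D_{\mathfrak r}$ containing a finite set $\mathfrak r$ has depth $\min_{z,w\in\mathfrak r}v(z-w)$, a minimum necessarily realized by some pair $z_0,w_0\in\mathfrak r$, so that $D(z_0,w_0)=D_{\mathfrak r}$ and $\eta_{\mathfrak s}=\eta_{D_{\mathfrak r}}$ for the smallest cluster $\mathfrak s=\mathcal B\cap D_{\mathfrak r}\supseteq\mathfrak r$. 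I will also use that points of $\Hyp$ are honest bounded discs, so $\infty$ lies in none of them. Throughout I assume $\#\mathfrak r\ge 2$ (otherwise $\Sigma_{\mathfrak r}=\varnothing$ and there is nothing to prove); this holds in every application.

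For part (b) I would prove both implications via the path picture. For ``$\Leftarrow$'': given $z,w\in\mathfrak r$ with $z\in D$, $w\notin D$, the point $\eta_D$ lies on the $z$-side of the path $(\eta_z,\eta_w)\subseteq\Sigma_{\mathfrak r}$, which always contains a point strictly above $\eta_D$ --- the top point $\eta_{D(z,w)}$ if $w$ is finite, or a slightly larger disc containing $z$ if $w=\infty$ --- so $\eta_D$ is non-maximal. For ``$\Rightarrow$'': if $\eta_D\in\Sigma_{\mathfrak r}$ is non-maximal, pick $z,w\in\mathfrak r$ with $\eta_D\in(\eta_z,\eta_w)$; if $\eta_D$ lies on the $z$- or $w$-side we already have a pair separated by $D$, so the only case to treat is $\eta_D=\eta_{D(z,w)}$, whence $z,w\in D$. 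Here I would note that if $\mathfrak r\subseteq D$ then every point of $\Sigma_{\mathfrak r}$ sits below $\eta_D$ (each defining pair $a,b\in\mathfrak r\subseteq D$ has $D(a,b)\subseteq D$ by fact (1)), contradicting non-maximality; hence some $u\in\mathfrak r\setminus D$ exists and $(z,u)$ is the desired separated pair.

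For part (a) I would split on whether $\infty\in\mathfrak r$. If $\infty\notin\mathfrak r$: using a depth-realizing pair $z_0,w_0$ as in fact (2), $\eta_{\mathfrak s}=\eta_{D_{\mathfrak r}}=\eta_{D(z_0,w_0)}$ is the top point of $(\eta_{z_0},\eta_{w_0})$ and hence lies in $\Sigma_{\mathfrak r}$; and any $\eta_D\in\Sigma_{\mathfrak r}$, lying on some $(\eta_z,\eta_w)$ with $z,w\in\mathfrak r\subseteq D_{\mathfrak r}$, satisfies $D\subseteq D(z,w)\subseteq D_{\mathfrak r}$, i.e. $\eta_D\le\eta_{\mathfrak s}$, so $\eta_{\mathfrak s}$ is a maximal point --- and the unique one, since $\Sigma_{\mathfrak r}$ is connected and a connected subspace has at most one maximal point by the cited fact. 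If $\infty\in\mathfrak r$: every $\eta_D\in\Sigma_{\mathfrak r}$ has $D$ containing some finite point $a\in\mathfrak r$ (on any path $(\eta_z,\eta_w)$ through $\eta_D$, either $D$ separates $z,w$ and so contains whichever is finite, or $D=D(z,w)$ with both finite), so $\eta_D$ lies on the ray $(\eta_a,\eta_\infty)$ and admits a strictly larger finite disc above it inside $\Sigma_{\mathfrak r}$; thus $\Sigma_{\mathfrak r}$ has no maximal point.

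The argument is essentially an unwinding of definitions, so the step I expect to be the main (if mild) obstacle is the bookkeeping around $\infty$: because points of $\Hyp$ are bounded discs, $\infty$ lies in none of them, which is simultaneously (i) what makes the ray $(\eta_a,\eta_\infty)$ have no top --- hence no maximal point when $\infty\in\mathfrak r$ --- and (ii) what one must track when exhibiting the relevant branch point outside $D$ in the ``$\Rightarrow$'' direction of (b) and in identifying the maximum in (a). The one quantitative input, that the smallest disc containing $\mathfrak r$ has its depth realized by a pair from $\mathfrak r$, is a one-line ultrametric computation.
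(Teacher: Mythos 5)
Your proof is correct and takes essentially the same approach as the paper: reduce everything to the open-path description of $\Sigma_{\mathfrak r}$ and the elementary nesting behaviour of non-archimedean discs. If anything, your treatment is slightly more complete than the paper's, since the paper's proof of (a) only works out the $\infty\notin\mathfrak r$ direction and of (b) only handles the $\infty\notin\mathfrak r$ subcase of the converse explicitly, leaving the ``no maximal point when $\infty\in\mathfrak r$'' claim (which both directions rely on) implicit; you supply that step explicitly by observing that any $\eta_D\in\Sigma_{\mathfrak r}$ lies on a ray $(\eta_a,\eta_\infty)$ with $a\in\mathfrak r$ finite, which contains strictly larger discs. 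The only cosmetic difference in (a) is how you exhibit a pair realizing $D_{\mathfrak s}$: you invoke a depth-minimizing pair, whereas the paper fixes one centre $z'$ and maximizes the disc $D(z',w)$ over $w\in\mathfrak r$ using the well-ordering of concentric discs; both arguments are correct and equally short.
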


\begin{proof}
For the proof of part (a), assume that we have $\infty \notin \mathfrak{r}$.  As above, for any distinct $z, w \in \mathfrak{r}$, let $D(z, w) \subset \cc_K$ denote the smallest disc containing both $z$ and $w$.  Now fixing $z = z'$ and letting $w$ vary among all points in $\mathfrak{r} \smallsetminus \{z'\}$, the set of discs $D(z', w)$ all have a common center (namely $z'$) and therefore form a well-ordered set with respect to inclusion.  Choose $w'$ such that $D(z', w')$ is maximal with respect to inclusion within this set of discs.  Then the $D(z', w')$ contains all other points in $\mathfrak{r}$ and thus coincides with $D_{\mathfrak{s}}$.  We therefore have $\eta_{\mathfrak{s}} = \eta_{D(z', w')} \in (\eta_{z'}, \eta_{w'}) \subseteq \Sigma_{\mathfrak{r}}$.  Moreover, for any $\eta \in \Sigma_{\mathfrak{r}}$, since we have $\eta \in [\eta_z, \eta_w]$ for some $z, w \in \mathfrak{r}$, we have $\eta_{\mathfrak{s}} = \eta_{D(z', w')} \geq \eta_{D(z, w)} \geq \eta$, thus demonstrating the maximality of $\eta_{\mathfrak{s}}$ and proving part (a).

Now let $\eta = \eta_D$ be a non-maximal point of $\Sigma_{\mathfrak{r}}$.  Then if $\infty \in \mathfrak{r}$ (resp. $\infty \notin \mathfrak{r}$), we have $w := \infty \notin D$ (resp. $D \subsetneq D_{\mathfrak{s}}$, so there is some $w \in \mathfrak{r}$ with $w \notin D$).  Since $\Sigma_{\mathfrak{r}}$ is the union of the open paths of the form $(\eta_z, \eta_{z'})$ with $z, z' \in \mathfrak{r}$, and since each point on such a path corresponds to a disc containing $z$ or $z'$, there is some point $z \in \mathfrak{r}$ with $z \in D$.

Conversely, suppose that $\eta = \eta_D \in \Hyp$ is a point such that there exist $z, w \in \mathfrak{r}$ with $z \in D$ and $w \notin D$.  Then we have $\eta \in [\eta_z, \eta_w] \subseteq \Sigma_{\mathfrak{r}}$.  At the same time, if $\infty \notin \mathfrak{r}$, we have $D \neq D_{\mathfrak{s}} \ni w$, or equivalently, $\eta \neq \eta_{\mathfrak{s}}$, meaning that $\eta$ is not maximal in $\Sigma_{\mathfrak{r}}$.
\end{proof}

\begin{prop} \label{prop removing a point from convex hull}

Given a point $\eta = \eta_D \in \Sigma_{\mathcal{B}}$, denote by $U_1, \dots, U_s, U_\infty$ the connected components of $\Sigma_{\mathcal{B}} \smallsetminus \{\eta\}$, named so that $\eta_\infty$ is a limit point of $U_\infty$, and for $1 \leq i \leq s$, let $\mathfrak{c}_i \subset \mathcal{B}$ be the subset consisting of the points $z$ such that $\eta_z$ is a limit point of $U_i$.

The subsets $\mathfrak{c}_1, \dots , \mathfrak{c}_s \subset \mathcal{R}$ are all clusters, and their disjoint union $\mathfrak{s} := \mathfrak{c}_1 \sqcup \dots \sqcup \mathfrak{c}_s$ is the cluster coinciding with $D \cap \mathcal{B}$.  We moreover have $s \geq 2$ (or equivalently, the point $\eta$ is a vertex of $\Sigma_{\mathcal{B}}$) if and only if $\eta = \eta_{\mathfrak{s}}$, and in this case, the clusters whose parents coincide with $\mathfrak{s}$ are precisely the clusters $\mathfrak{c}_1, \dots, \mathfrak{c}_s$.

\end{prop}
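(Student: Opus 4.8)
The plan is to trade statements about the connected components of $\Sigma_{\mathcal{B}} \smallsetminus \{\eta\}$ for statements about discs and clusters, using the tree structure of $\Sigma_{\mathcal{B}}$ together with \Cref{prop highest point of Lambda}. Write $\mathfrak{s}$ for the cluster $D \cap \mathcal{B}$, and recall that there is only one direction upward from any point of $\Berk \smallsetminus \{\eta_\infty\}$.

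First I would sort the components of $\Sigma_{\mathcal{B}} \smallsetminus \{\eta\}$ into one ``upward'' component and several ``downward'' ones. Since $\infty \in \mathcal{B}$, the half-open path $[\eta, \eta_\infty)$ lies in $\Sigma_{\mathcal{B}}$, and as there is only one upward direction, every point of $\Sigma_{\mathcal{B}}$ strictly above $\eta$ lies on it; hence exactly one component contains points $> \eta$, namely the one having $\eta_\infty$ as a limit point, which must therefore be $U_\infty$. Consequently every point of $U_i$ for $1 \le i \le s$ satisfies $\eta' < \eta$ (if such an $\eta'$ were incomparable to $\eta$, then $\eta' \vee \eta$ would lie in $U_i$ and also be $>\eta$, forcing it into $U_\infty$), so $U_i$ consists of points corresponding to discs properly contained in $D$. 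Since each $z \in \mathcal{B}$ has $\eta_z$ as a limit point of exactly one component and $\eta_z < \eta$ precisely when $z \in D$, the $\mathfrak{c}_i$ are pairwise disjoint with $\mathfrak{c}_1 \sqcup \cdots \sqcup \mathfrak{c}_s = D \cap \mathcal{B} = \mathfrak{s}$. I would also record the key observation, used repeatedly below, that for $z, w \in \mathcal{B} \cap D$ the path $[\eta_z, \eta_w]$ passes through $\eta$ if and only if $z$ and $w$ belong to different subsets $\mathfrak{c}_i$, and that in the former case $\eta$ is forced to be the maximal point $\eta_z \vee \eta_w = \eta_{D(z,w)}$ of that path.

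Next I would show each $\mathfrak{c}_i$ is a cluster. If $\#\mathfrak{c}_i = 1$ this is immediate, since a singleton subset of $\mathcal{B}$ not containing $\infty$ is a cluster, so assume $\#\mathfrak{c}_i \ge 2$. Because $\infty \notin \mathfrak{c}_i$, \Cref{prop highest point of Lambda}(a) gives a maximal point $\eta_{\mathfrak{t}_i}$ of $\Sigma_{\mathfrak{c}_i}$, where $\mathfrak{t}_i$ is the smallest cluster containing $\mathfrak{c}_i$ and $\eta_{\mathfrak{t}_i} = \eta_{D_{\mathfrak{t}_i}}$ with $D_{\mathfrak{t}_i}$ the smallest disc containing $\mathfrak{c}_i$; since $\Sigma_{\mathfrak{c}_i}$ is connected, avoids $\eta$ (by the observation above, all its points lying on paths between elements of the single set $\mathfrak{c}_i$), and has each $\eta_z$, $z \in \mathfrak{c}_i$, among its limit points, we get $\Sigma_{\mathfrak{c}_i} \subseteq U_i$ and hence $D_{\mathfrak{t}_i} \subsetneq D$. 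The crux is then to verify that no branch point $w \notin \mathfrak{c}_i$ lies in $D_{\mathfrak{t}_i}$: if one did, minimality of $D_{\mathfrak{t}_i}$ among discs containing $\mathfrak{c}_i$ would force some $z \in \mathfrak{c}_i$ with $\eta_z \vee \eta_w = \eta_{\mathfrak{t}_i}$, so that the path $(\eta_w, \eta_{\mathfrak{t}_i}]$ lies in $\Sigma_{\mathcal{B}}$, avoids $\eta$, and meets $U_i$ — making $\eta_w$ a limit point of $U_i$ and contradicting $w \notin \mathfrak{c}_i$. Thus $\mathfrak{c}_i = D_{\mathfrak{t}_i} \cap \mathcal{B} = \mathfrak{t}_i$ is a cluster. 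I expect this step — showing the purely topological set $\mathfrak{c}_i$ has no stray branch points and coincides with the cluster cut out by $D_{\mathfrak{t}_i}$ — to be the main obstacle; everything else is tree bookkeeping. Combining with the previous paragraph, $\mathfrak{s} = \mathfrak{c}_1 \sqcup \cdots \sqcup \mathfrak{c}_s = D \cap \mathcal{B}$ is itself a cluster.

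For the vertex criterion, note that $\Sigma_{\mathcal{B}} \smallsetminus \{\eta\}$ has $s+1$ components, so $\eta$ is a vertex exactly when $s \ge 2$. If $s = 1$ then $\mathfrak{c}_1 = \mathfrak{s}$, and by the preceding paragraph the smallest disc containing $\mathfrak{s}$ is $D_{\mathfrak{c}_1} \subsetneq D$, so $\eta_{\mathfrak{s}} \ne \eta$. If $s \ge 2$, choose $z \in \mathfrak{c}_1$ and $w \in \mathfrak{c}_2$; the path $[\eta_z, \eta_w]$ passes through $\eta$, so $\eta = \eta_{D(z,w)}$, and since $D(z,w) \subseteq D$ while $D_{\mathfrak{s}} \supseteq D(z,w)$ (as $z, w \in \mathfrak{s}$) and $D \supseteq D_{\mathfrak{s}}$ (as $\mathfrak{s} \subseteq D$), we conclude $D = D_{\mathfrak{s}}$ and $\eta = \eta_{\mathfrak{s}}$. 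Finally, assuming $s \ge 2$, I would prove the last assertion by two inclusions. Each $\mathfrak{c}_i$ is a proper subcluster of $\mathfrak{s}$ whose parent is $\mathfrak{s}$: a cluster $\mathfrak{p}$ with $\mathfrak{c}_i \subsetneq \mathfrak{p} \subsetneq \mathfrak{s}$ would satisfy $D_{\mathfrak{c}_i} \subsetneq D_{\mathfrak{p}} \subsetneq D$, and for $z \in \mathfrak{c}_i$, $w \in \mathfrak{p} \smallsetminus \mathfrak{c}_i$ we would get $D(z,w) \subseteq D_{\mathfrak{p}} \subsetneq D$, so $[\eta_z, \eta_w]$ avoids $\eta$ and $z, w$ lie in the same $\mathfrak{c}_i$, a contradiction. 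Conversely, a cluster $\mathfrak{t}$ with parent $\mathfrak{s}$ has $D_{\mathfrak{t}} \subsetneq D$ (any point of $\mathfrak{s} \smallsetminus \mathfrak{t}$ lies in $D$ but not $D_{\mathfrak{t}}$), so when $\#\mathfrak{t} \ge 2$ the point $\eta_{\mathfrak{t}} < \eta$ lies in some $U_i$ (the case $\#\mathfrak{t} = 1$ being immediate: the single point lies in some $\mathfrak{c}_i$, which must equal that singleton, else the parent of the singleton would be a proper subcluster of $\mathfrak{s}$); then $\Sigma_{\mathfrak{t}} \subseteq U_i$ forces $\mathfrak{t} \subseteq \mathfrak{c}_i$, and properness of this inclusion would again make the parent of $\mathfrak{t}$ a proper subcluster of $\mathfrak{s}$, so $\mathfrak{t} = \mathfrak{c}_i$.
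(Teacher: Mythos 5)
Your proof is correct and shares the same overall skeleton as the paper's: first establish $\mathfrak{c}_1 \sqcup \cdots \sqcup \mathfrak{c}_s = D \cap \mathcal{B}$ by isolating the unique ``upward'' component $U_\infty$, then show each $\mathfrak{c}_i$ is a cluster, then prove the vertex characterization and the parent relation. The one spot where your route genuinely differs is the cluster claim for each $\mathfrak{c}_i$. The paper constructs a witness disc directly: it takes the intersection $\bigcap_{z \in \mathfrak{c}_i}[\eta_z, \eta]$, which (since the pairwise intersections are non-degenerate paths $[\xi,\eta]$ with $\eta > \xi$) is itself a non-degenerate path $[\xi,\eta]$, and checks that any interior point $\eta_{D'}$ of that path satisfies $D' \cap \mathcal{B} = \mathfrak{c}_i$. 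You instead pass through \Cref{prop highest point of Lambda}(a) to identify the candidate disc $D_{\mathfrak{t}_i}$ (smallest disc containing $\mathfrak{c}_i$) and rule out stray branch points of $D_{\mathfrak{t}_i} \smallsetminus \mathfrak{c}_i$ by a connectedness argument. Both are sound; the paper's construction is a little more self-contained, while yours is a little slicker in leveraging the earlier proposition instead of rebuilding the disc from scratch.

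Two small points worth tightening. First, the claim that ``minimality of $D_{\mathfrak{t}_i}$ \dots would force some $z \in \mathfrak{c}_i$ with $\eta_z \vee \eta_w = \eta_{\mathfrak{t}_i}$'' deserves one more sentence: set $c := \min_{z \in \mathfrak{c}_i} v(z-w)$; if $c > d(D_{\mathfrak{t}_i})$ then the disc $\{x : v(x-w) \geq c\}$ contains $\mathfrak{c}_i$ and is strictly smaller than $D_{\mathfrak{t}_i}$, contradicting minimality, so $c = d(D_{\mathfrak{t}_i})$ and the desired $z$ exists. Second, in the vertex criterion you say ``by the preceding paragraph the smallest disc containing $\mathfrak{s}$ is $D_{\mathfrak{c}_1} \subsetneq D$,'' but that paragraph only handled $\#\mathfrak{c}_i \geq 2$; when $s = 1$ and $\mathfrak{s}$ is a singleton, $\eta_{\mathfrak{s}}$ is a Type I point (or not even defined as a Type II point), so $\eta \neq \eta_{\mathfrak{s}}$ holds for that trivial reason and should simply be noted separately.
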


\begin{proof}
It follows from the real tree structure of $\Sigma_{\mathcal{B}}$ that the connected components of $\Sigma_{\mathcal{B}}$ are in bijection with the equivalence classes of (non-singleton) paths having $\eta$ as one of their endpoints, where two (non-singleton) paths $[\eta, \xi]$ and $[\eta, \xi']$ are considered equivalent if their intersection is not a singleton (and thus coincides with another non-singleton path $[\eta, \xi'']$); this bijection assigns a connected component $U_i$ to the equivalence class represented by the path $[\eta, \xi]$ where $\xi$ is any point in $U_i$.  For any $\xi, \xi' > \eta$, one easily sees that the paths $[\eta, \xi]$ and $[\eta, \xi']$ are equivalent as one is contained in the other.  One also sees that conversely, for any point $\xi''$ with $\eta > \xi''$, we have $[\eta, \xi''] \cap [\eta, \xi] = \{\eta\}$, and so $[\eta, \xi'']$ does not lie in this equivalence class.  It follows that any $z \in \mathcal{B}$ satisfies that $\eta_z \in \bar{U}_\infty$ (where $\bar{U}_\infty$ is the topological closure of $U_\infty$) if and only if we have $\eta \vee \eta_z > \eta$ (as we have $[\eta, \eta \vee \eta_z] \subset [\eta, \eta_z]$).  Thus, the elements $z \in \mathcal{B} \smallsetminus \bar{U}_\infty = \mathfrak{c}_1 \sqcup \dots \sqcup \mathfrak{c}_s = \mathfrak{s}$ are exactly those branch points satisfying $\eta \vee \eta_z = \eta$, or equivalently, $\eta > \eta_z$, which in turn is equivalent to $z \in D$.  We therefore have $\mathfrak{s} = \mathcal{B} \cap D$.

Now fix any index $i \neq \infty$.  Since the intersection of the paths $[\eta_z, \eta]$ and $[\eta_w, \eta]$ for any $z, w \in \mathfrak{c}_i$ is a (non-singleton) path of the form $[\xi, \eta]$ with $\eta > \xi$, the intersection $\bigcap_{z \in \mathfrak{c}_i} [\eta_z, \eta]$ is a (non-singleton) path of that form, which we again denote by $[\xi, \eta]$.  Meanwhile, for any $z \in \mathfrak{c}_i$ and $w \in \mathfrak{s} \smallsetminus \mathfrak{c}_i$, we have $[z, \eta] \cap [w, \eta] = \{\eta\}$; it follows that we have $[\xi, \eta] \cap [w, \eta] = \{\eta\}$.  Therefore, for any $\eta' = \eta_{D'}$ lying in the interior of $[\xi, \eta]$ and any $z \in \mathfrak{s}$, we have $\eta' > \eta_z$ (or equivalently, $z \in D'$) if and only if $z \in \mathfrak{c}_i$.  We then get $\mathcal{B} \cap D' = \mathfrak{c}_i$, so the subset $\mathfrak{c}_i \subset \mathcal{B}$ is cluster.

The topological structure of $\Sigma_{\mathcal{B}}$ as a real tree implies that removing the point $\eta$ from it makes it no longer a connected space; since the points of $\Sigma_{\mathcal{B}}$ of Type I are endpoints, we again get that the space $\Sigma_{\mathcal{B}} \smallsetminus \{\eta\}$ is not connected, implying that $s \geq 1$. Now if $s = 1$, then, as in the above paragraph, there is a point $\eta' = \eta_{D'} < \eta$ such that we have $z \in D'$ for all $z \in \mathfrak{c}_1 = \mathfrak{s}$.  We therefore have $\mathcal{B} \cap D' = \mathfrak{c}_1 = \mathfrak{s}$.  Since we have $D' \subsetneq D$, this implies that $D \neq D_{\mathfrak{s}}$.  Conversely, if we have $s \geq 2$, then, as argued in the above paragraph, for any $z \in \mathfrak{c}_1$ and $w \in \mathfrak{c}_2$, we have $[\eta_z, \eta] \cap [\eta_w, \eta] = \{\eta\}$, which in fact is equivalent to saying that $\eta = \eta_z \vee \eta_w$.  Now we clearly have $\eta_{\mathfrak{s}} > \eta_z, \eta_w$ and thus $\eta_{\mathfrak{s}} \geq \eta_z \vee \eta_w = \eta$.  The fact that $D \supset \mathfrak{s}$ gives us the reverse inequality $\eta \geq \eta_{\mathfrak{s}}$, and so we have $\eta = \eta_{\mathfrak{s}}$.

Now, retaining our assumption that $s \geq 2$ and choosing any index $i \in \{1, \dots, s\}$, the cluster $\mathfrak{s}$, as it strictly contains $\mathfrak{c}_i$, clearly contains the parent cluster of $\mathfrak{c}_i$, \textit{i.e.} we have $\mathfrak{s} \supseteq \mathfrak{c}_i'$.  If this containment is strict, then, by basic properties of clusters, there must be an index $j \neq i$ such that we have $\mathfrak{c}_i' \supseteq \mathfrak{c}_j$, and for any $z \in \mathfrak{c}_i, w \in \mathfrak{c}_j$, the paths $[\eta_z, \eta_{\mathfrak{s}}]$ and $[\eta_w, \eta_{\mathfrak{s}}]$ intersect at the (non-singleton) path $[\eta_{\mathfrak{c}_i'}, \eta_{\mathfrak{s}}]$, which contradicts the fact that $\eta_z$ and $\eta_w$ are limit points of distinct connected components $U_i$ and $U_j$.  It follows that we have $\mathfrak{s} = \mathfrak{c}_i'$.  Now if $\mathfrak{c}$ is any cluster whose parent is $\mathfrak{s} = \mathfrak{c}_1 \sqcup \dots \sqcup \mathfrak{c}_s$, then we have $\mathfrak{c} \subseteq \mathfrak{c}_i$ for some $i \in \{1, \dots, s\}$, and by definition of parent cluster, equality holds.  This proves the last statement of the proposition.
\end{proof}

\subsection{Partitions into multisets of cardinality divisible by $p$} \label{sec preliminaries partitions}

In this subsection, we will deal with situations where we are given a partition $\underline{\mathcal{B}} = \underline{\mathfrak{r}}_0 \sqcup \dots \sqcup \underline{\mathfrak{r}}_h$ into non-empty sub-multisets of cardinality divisible by $p$.  We note, and will freely and implicitly use below, the fact that each of the underlying sets $\mathfrak{r}_i$ must have cardinality $\geq 2$, because no element of $\mathcal{B}$ can have multiplicity $\geq p$.  Whenever we are given such a partition, we assume that $\infty \in \mathfrak{r}_0$, and we define $\mathfrak{s}_i \subset \mathcal{B}$ to be the smallest cluster containing $\mathfrak{r}_i$ for $1 \leq i \leq h$.

The following properties that the branch multiset $\underline{\mathcal{B}}$ may have will be crucial going forward, in particular because being \emph{clustered in pairs} will turn out to be a necessary (and, in the tame situation, sufficient) condition for the associated curve $C$ to be potentially degenerate.

\begin{dfn}[cluster version] \label{dfn clustered in pairs clusters}

We say that $\underline{\mathcal{B}}$ is \emph{clustered in $p$-multisets} if there is a partition $\underline{\mathcal{B}} = \underline{\mathfrak{r}}_0 \sqcup \dots \sqcup \underline{\mathfrak{r}}_h$ into mutually disjoint sub-multisets of cardinality $p$ satisfying that the corresponding clusters $\underline{\mathfrak{s}}_0, \dots, \underline{\mathfrak{s}}_h$ are pairwise distinct and each have cardinality divisible by $p$.

If, in addition, we have that the underlying set $\mathfrak{r}_i$ of each cardinality-$p$ multiset $\underline{\mathfrak{r}}_i$ has cardinality $2$, we say that $\underline{\mathcal{B}}$ is \emph{clustered in pairs}.

In the context of using this terminology, the \emph{$p$-multisets} (resp. \emph{pairs}) that $\underline{\mathcal{B}}$ is clustered in are the multisets $\underline{\mathfrak{r}_i}$ (resp. the sets $\mathfrak{r}_i$).

\end{dfn}

We will later reframe this property in topological terms as \Cref{dfn clustered in pairs berk} below.  With an aim towards this, we begin by defining some important subspaces of $\Sigma_{\mathcal{B}} \subset \Hyp$.

\begin{dfn} \label{dfn Sigma^*}

We define $\Sigma_{\mathcal{B}}^* \subset \Sigma_{\mathcal{B}}$ to be the subspace consisting of the points $\eta \in \mathcal{B}$ such that, letting $U_1, \dots, U_s, U_\infty$ denote the connected components of $\mathcal{B} \smallsetminus \{\eta\}$ and $\mathfrak{c}_1, \dots, \mathfrak{c}_s$ be the corresponding clusters as in \Cref{prop removing a point from convex hull}, we have $p \nmid \#\underline{\mathfrak{c}}_i$ for some $i \in \{1, \dots, s\}$.

We define the subspace $\Sigma_{\mathcal{B}}^0 \subset \Sigma_{\mathcal{B}}$ to be the topological closure of $\Sigma_{\mathcal{B}} \smallsetminus \Sigma_{\mathcal{B}}^*$ in $\Sigma_{\mathcal{B}}$.

\end{dfn}

\begin{prop} \label{prop connected components Sigma}

Each connected component of $\Sigma_{\mathcal{B}}^*$ coincides with $\Sigma_{\mathfrak{r}}$ for some (non-empty) subset $\mathfrak{r} \subset \mathcal{B}$ with $p \mid \#\underline{\mathfrak{r}}$ such that the smallest cluster $\mathfrak{s}$ containing $\mathfrak{r}$ satisfies $p \mid \#\underline{\mathfrak{s}}$.

The set of the connected components of $\Sigma_{\mathcal{B}}^*$ therefore determines a partition $\underline{\mathcal{B}} = \underline{\mathfrak{r}}_0 \sqcup \dots \sqcup \underline{\mathfrak{r}}_h$ into sub-multisets (whose underlying sets are mutually disjoint) with $\infty \in \mathfrak{r}_0$, satisfying that, for $1 \leq i \leq h$, the smallest cluster $\mathfrak{s}_i$ containing $\mathfrak{r}_i$ satisfies $p \mid \#\underline{\mathfrak{s}}_i$.

Moreover, the clusters $\mathfrak{c}_i$ are pairwise distinct, and each is the disjoint union of a collection among the sets $\mathfrak{r}_1, \dots, \mathfrak{r}_h$.

\end{prop}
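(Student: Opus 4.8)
The plan is to reduce the whole statement to the combinatorics of $\Sigma_{\mathcal{B}}$ as a metric tree. First I record its structure, reading off \Cref{prop removing a point from convex hull}: the vertices of $\Sigma_{\mathcal{B}}$ are exactly the points $\eta_{\mathfrak{s}}$ for clusters $\mathfrak{s}$ with $\#\mathfrak{s} \geq 2$ (each such cluster having $\geq 2$ children), and the edges correspond to the clusters $\mathfrak{c} \subsetneq \mathcal{B} \smallsetminus \{\infty\}$ (singletons giving the pendant edges down to the Type I points, larger clusters giving internal edges from $\eta_{\mathfrak{c}}$ to $\eta_{\mathfrak{c}'}$), together with the infinite ray from $\eta_{\mathcal{B}\smallsetminus\{\infty\}}$ up to $\eta_\infty$; moreover along the relative interior of the edge for $\mathfrak{c}$ the unique ``downward cluster'' of \Cref{prop removing a point from convex hull} is constantly $\mathfrak{c}$. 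Calling a cluster \emph{light} if $p \nmid \#\underline{\mathfrak{c}}$, it follows that $\Sigma_{\mathcal{B}}^*$ is exactly the union of the interiors of the edges for light clusters together with the vertices $\eta_{\mathfrak{s}}$ having at least one light child; in particular every singleton is light, and $\mathcal{B} \smallsetminus \{\infty\}$ is light (since $\#\underline{\mathcal{B} \smallsetminus \{\infty\}} = \deg f$, and $\infty \in \mathcal{B}$ forces $p \nmid \deg f$). From $\#\underline{\mathfrak{c}} = \sum_j \#\underline{\mathfrak{c}_j}$ over the children, a light cluster with $\geq 2$ elements has a light child, so $\eta_{\mathfrak{c}} \in \Sigma_{\mathcal{B}}^*$; iterating, for every $\eta \in \Sigma_{\mathcal{B}}^*$ there is a point $z \in \mathcal{B}$ with $(\eta_z, \eta] \subseteq \Sigma_{\mathcal{B}}^*$ (one descends through light edges and vertices to a Type I point).

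Next, fix a connected component $Y$ of $\Sigma_{\mathcal{B}}^*$; being connected in the tree $\Sigma_{\mathcal{B}}$, it is path-convex. Set $\mathfrak{r} := \{z \in \mathcal{B} : \eta_z \in \overline{Y}\}$. The descent property gives $\mathfrak{r} \neq \varnothing$, and since each pendant edge is a connected subset of $\Sigma_{\mathcal{B}}^*$ one has $z \in \mathfrak{r}$ iff that pendant edge lies in $Y$; hence the sets $\mathfrak{r}$ of distinct components are disjoint and exhaust $\mathcal{B}$, and exactly one of them, to be called $\mathfrak{r}_0$, contains $\infty$. The crucial point is that no point of $\Sigma_{\mathcal{B}} \smallsetminus \Sigma_{\mathcal{B}}^*$ is a limit point of $Y$: a point interior to a non-light edge has a whole neighbourhood inside that edge, while a vertex $\eta_{\mathfrak{s}} \notin \Sigma_{\mathcal{B}}^*$ (all children non-light, hence $\mathfrak{s}$ non-light, hence --- using again that $\mathcal{B} \smallsetminus \{\infty\}$ is light --- the up-edge at $\eta_{\mathfrak{s}}$ non-light) has all adjacent edges non-light. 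Therefore $\overline{Y} \smallsetminus Y$ consists only of the Type I points $\{\eta_z\}_{z \in \mathfrak{r}}$, and combining this with the descent property and \Cref{prop highest point of Lambda} one gets $Y = \Sigma_{\mathfrak{r}}$ (equivalently, $\overline{Y}$ is the convex hull of $\{\eta_z\}_{z \in \mathfrak{r}}$); note $\#\mathfrak{r} \geq 2$ since $\Sigma_{\mathfrak{r}} \neq \varnothing$.

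It remains to pin down $\mathfrak{s}$ and check divisibility. Suppose first $\infty \notin \mathfrak{r}$; then $Y$ is bounded above (otherwise it runs up to $\eta_\infty$ and $\infty \in \mathfrak{r}$), so it has a maximal point $\eta_0 \geq \bigvee_{z \in \mathfrak{r}} \eta_z = \eta_{\mathfrak{s}}$ for $\mathfrak{s}$ the smallest cluster containing $\mathfrak{r}$. If $\eta_0 > \eta_{\mathfrak{s}}$, path-convexity of $Y$ together with the monochromatic-edge picture forces the up-edge at $\eta_{\mathfrak{s}}$ into $Y$, so $\mathfrak{s}$ is light; since $\mathfrak{r} \subseteq \mathfrak{s}$, every other child of $\mathfrak{s}'$ must be non-light, so $\mathfrak{s}'$ is light as well, and climbing through the finitely many clusters one eventually reaches $\mathcal{B} \smallsetminus \{\infty\}$ (light), whose ray up to $\eta_\infty$ then lies in $Y$, contradicting $\infty \notin \mathfrak{r}$. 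Thus $\eta_0 = \eta_{\mathfrak{s}}$, and if $\mathfrak{s}$ were light the up-edge at $\eta_{\mathfrak{s}} = \eta_0$ would lie in $Y$, exceeding its maximum; so $p \mid \#\underline{\mathfrak{s}}$. For $p \mid \#\underline{\mathfrak{r}}$, set $\mathfrak{r}(\mathfrak{c}) := \{z \in \mathfrak{c} : (\eta_z, \eta_{\mathfrak{c}}] \subseteq \Sigma_{\mathcal{B}}^*\}$ for a cluster $\mathfrak{c}$; induction on $\#\mathfrak{c}$, splitting according to whether $\eta_{\mathfrak{c}} \in \Sigma_{\mathcal{B}}^*$ (if not, $\mathfrak{r}(\mathfrak{c}) = \varnothing$ and $\mathfrak{c}$ is non-light; if so, $\mathfrak{r}(\mathfrak{c})$ is the disjoint union of the $\mathfrak{r}(\mathfrak{c}_j)$ over the light children, the non-light ones contributing $0 \bmod p$ to $\#\underline{\mathfrak{c}}$), gives $\#\underline{\mathfrak{r}(\mathfrak{c})} \equiv \#\underline{\mathfrak{c}} \pmod p$; and path-convexity yields $\mathfrak{r} = \mathfrak{r}(\mathfrak{s})$, whence $p \mid \#\underline{\mathfrak{r}}$. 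For $\mathfrak{r}_0$, the relation $p \mid \#\underline{\mathcal{B}} = \sum_{i=0}^h \#\underline{\mathfrak{r}_i}$ then forces $p \mid \#\underline{\mathfrak{r}_0}$. Finally, if some $\mathfrak{r}_k$ met a cluster $\mathfrak{s}_i$ ($i \geq 1$) both in a point of $\mathfrak{s}_i$ and in a point outside it, the path between those two points of $\mathfrak{r}_k$ would pass through $\eta_{\mathfrak{s}_i}$, so $\eta_{\mathfrak{s}_i} \in \Sigma_{\mathfrak{r}_k} = Y_k$; but $\eta_{\mathfrak{s}_i} = \max Y_i \in Y_i$ and distinct components are disjoint, forcing $k = i$, which is absurd since $\mathfrak{r}_i \subseteq \mathfrak{s}_i$. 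Hence each $\mathfrak{s}_i$ is the disjoint union of those $\mathfrak{r}_k$ it contains, and $\mathfrak{s}_i = \mathfrak{s}_j$ would likewise give $\eta_{\mathfrak{s}_i} \in Y_i \cap Y_j$, so $i = j$.

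The main obstacle is the bundle of topological bookkeeping behind the second and third paragraphs: that edge-interiors lie entirely inside or outside $\Sigma_{\mathcal{B}}^*$, that no point outside $\Sigma_{\mathcal{B}}^*$ is a limit point of a component, and the climbing argument locating $\max Y = \eta_{\mathfrak{s}}$ --- all of which rely on path-convexity of the component, finiteness of the cluster tree, and the fact that $\mathcal{B} \smallsetminus \{\infty\}$ is light (equivalently, the standing hypothesis $\infty \in \mathcal{B}$), with the special behaviour of $\eta_\infty$ being the delicate point to track. Once this combinatorial structure is in place, the divisibility $p \mid \#\underline{\mathfrak{r}}$ is a short induction.
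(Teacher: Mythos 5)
Your proof is correct, and the overall skeleton matches the paper's: both identify $\mathfrak{r}$ as the set of Type~I limit points of the component, prove the inclusion $\Sigma_{\mathfrak{r}} \subseteq Y$ by convexity, and pin down the reverse inclusion together with $p \mid \#\underline{\mathfrak{s}}$ by locating the maximal point $\eta_{\mathfrak{s}}$ of $Y$ and arguing about what lies just above it. The one genuinely different ingredient is your proof that $p \mid \#\underline{\mathfrak{r}}$: the paper deduces it \emph{last}, after establishing pairwise distinctness and the ``disjoint union of $\mathfrak{r}_j$'s'' structure, by a minimality/contradiction argument, whereas you introduce $\mathfrak{r}(\mathfrak{c})$ and prove $\#\underline{\mathfrak{r}(\mathfrak{c})} \equiv \#\underline{\mathfrak{c}} \pmod p$ by a self-contained induction on the cluster tree (non-light children contributing $0 \bmod p$, light children handled recursively), then specialize to $\mathfrak{c}=\mathfrak{s}$. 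This decouples the divisibility from the disjointness combinatorics and is arguably cleaner. You also make explicit the ``light cluster / monochromatic edge'' dictionary that the paper uses only implicitly; your ``climb to $\mathcal{B}\smallsetminus\{\infty\}$'' argument for ruling out $\eta_0 > \eta_{\mathfrak{s}}$ is in the same spirit as the paper's ``a point just above $\xi$ must lie outside $\Sigma_{\mathcal{B}}^*$'' argument, just phrased globally rather than locally.

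Two small presentational points, neither a genuine gap. First, you assert ``$Y = \Sigma_{\mathfrak{r}}$'' at the end of your second paragraph, but the inclusion $Y \subseteq \Sigma_{\mathfrak{r}}$ (when $\infty \notin \mathfrak{r}$) actually needs $\max Y = \eta_{\mathfrak{s}}$, which you only establish in the third paragraph; the logical order should be reversed. Second, the case $\infty \in \mathfrak{r}$ of $Y \subseteq \Sigma_{\mathfrak{r}}$ is never explicitly dispatched (it is immediate: take $w = \infty$ in \Cref{prop highest point of Lambda}(b)), since your third paragraph opens with ``suppose first $\infty \notin \mathfrak{r}$'' and never returns to the other branch. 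Both are easily repaired.
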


\begin{proof}
Let $\Sigma$ be a connected component of the space $\Sigma_{\mathcal{B}}^*$, and choose any point $\eta_{(0)} \in \Sigma$.  Let $\mathfrak{c}_1, \dots, \mathfrak{c}_s, \mathfrak{c}_\infty$ be the corresponding clusters as in \Cref{prop removing a point from convex hull}.  By definition of $\Sigma_{\mathcal{B}}^*$, there is an index $i$ such that $p \nmid \#\underline{\mathfrak{c}}_i$.  Now for each point $\eta = \eta_D \in \Hyp$ where $D$ is a disc satisfying $D \cap \mathcal{B} = \mathfrak{c}_i$, it is clear that we have $\eta_D \in \Sigma_{\mathcal{B}}^*$, so that given such a point $\eta_{(1)}$, the whole path $[\eta_{(0)}, \eta_{(1)}]$ is contained in $\Sigma_{\mathcal{B}}^*$; by connectedness of paths, this means that we have $[\eta_{(0)}, \eta_{(1)}] \subset \Sigma$.  If $\#\mathfrak{c}_i \geq 2$, let us fix $\eta_{(1)}$ to be the vertex $\eta_{\mathfrak{c}_i}$.  Now let $\mathfrak{c}^{(1)}_1, \dots, \mathfrak{c}^{(1)}_{s_1}, \mathfrak{c}^{(1)}_\infty$ be the clusters corresponding to $\eta_{(1)}$ as in \Cref{prop removing a point from convex hull}.  We have $\mathfrak{c}_i = \mathfrak{c}^{(1)}_1 \sqcup \dots \sqcup \mathfrak{c}^{(1)}_{s_1}$, so there is some index $i_1$ such that $p \nmid \#\underline{\mathfrak{c}}^{(1)}_{i_1}$.  If $\#\mathfrak{c}^{(1)}_{i_1} \geq 2$, then a repetition of our previous arguments shows that we have $[\eta_{(1)}, \eta_{(2)}] \subset \Sigma$, where $\eta_{(2)}$ is the vertex corresponding to the cluster $\mathfrak{c}^{(1)}_{i_1}$.  Repeating this process, for some $r \geq 0$, we have $\#\mathfrak{c}^{(r)}_{i_r} = 1$, so that the cluster $\#\mathfrak{c}^{(r)}_{i_r}$ is a singleton $\{\eta_z\}$ for some $z \in \mathfrak{c}_i$; then our arguments show that the half-open path $[\eta_{(r-1)}, \eta_z)$ is contained in $\Sigma$.  We have thus shown that for each point $\eta_{(0)} \in \Sigma$, there is a point $z \in \mathcal{B} \smallsetminus \{\infty\}$ such that we have $[\eta_{(0)}, \eta_z) \subset \Sigma$ and get that $\eta_z$ is a limit point of $\Sigma$.

Let $\mathfrak{r} \subset \mathcal{B}$ be the subset consisting of all points $z$ such that $\eta_z$ is a limit point of $\Sigma$.  By connectedness of $\Sigma$, we have $\Sigma \supseteq \Sigma_{\mathfrak{r}}$.  We now set out to show the reverse containment.  Choose any point $\eta = \eta_D \in \Sigma$, and let $z \in \mathcal{B} \smallsetminus \{\infty\}$ be a point such that the half-open path $[\eta, \eta_z)$ is contained in $\Sigma$.  By construction of $\Berk$, we certainly have $\eta \in (\eta_z, \eta_\infty)$; moreover, if $\infty \in \mathfrak{r}$, we also have $(\eta_z, \eta_\infty) \subset \Sigma_{\mathfrak{r}}$, so in this case we get $\eta \in \Sigma_{\mathfrak{r}}$, as desired.  We therefore now assume that $\infty \notin \mathfrak{r}$.  Then the space $\Sigma$, being connected, has a unique maximal point.  Denote this maximal point by $\xi$, and now let $\mathfrak{c}_1, \dots, \mathfrak{c}_s, \mathfrak{c}_\infty$ be the clusters corresponding to $\xi$ as in \Cref{prop removing a point from convex hull}.  For any point $\eta' = \eta_{D'} > \xi$, noting that $\infty \in \mathcal{B}$, we have $\eta' \in (\xi, \eta_\infty) \subset \Sigma_{\mathcal{B}}$.  If $\eta'$ is chosen to be close enough to $\xi$, then we also have $\eta' \notin \Sigma_{\mathcal{B}}^*$ as well as $D' \cap \mathcal{B} = \mathfrak{c}_1 \sqcup \dots \sqcup \mathfrak{c}_s$.  By definition of $\Sigma_{\mathcal{B}}^*$, we then get 
\begin{equation} \label{eq p divides union of clusters}
p \mid \#(D \cap \mathcal{B}) = \#(\underline{\mathfrak{c}}_1 \sqcup \dots \sqcup \underline{\mathfrak{c}}_s) = \sum_{i = 1}^s \#\underline{\mathfrak{c}}_i.
\end{equation}
By definition of $\Sigma_{\mathcal{B}}^*$, there is some index $i$ such that $p \nmid \#\underline{\mathfrak{c}}_i$; then by (\ref{eq p divides union of clusters}, there is another index $j \neq i$ such that $p \nmid \#\underline{\mathfrak{c}}_j$.  In particular, we have $s \geq 2$, and by \Cref{prop removing a point from convex hull}, the maximal point $\xi = \eta_{\mathfrak{c}_1 \sqcup \dots \sqcup \mathfrak{c}_s}$ is a vertex of $\Sigma_{\mathcal{B}}$.  Now, recalling that we have a chosen point $\eta = \eta_D \in \Sigma$ along with a point $z \in \mathfrak{r}$ satisfying $[\eta, \eta_z) \subset \Sigma$, for any point $\eta' = \eta_{D'} \in (\eta, \xi)\subset \Sigma$ sufficiently close to $\xi$, we have $D' \cap \mathcal{B} = \mathfrak{c}_i$ for some index $i$ (noting that by \Cref{prop removing a point from convex hull}, the common parent cluster of $\mathfrak{c}_1, \dots, \mathfrak{c}_s$ is their disjoint union).  As $\eta' > \eta_z$, it follows that we have $z \in \mathfrak{c}_i$.  As was argued earlier in this proof for a path denoted $[\eta_{(0)}, \eta_{(1)}]$, we have $[\xi, \eta_{\mathfrak{c}_j}] \subset \Sigma$.  As $\eta_{\mathfrak{c}_j}$ is a point of $\Sigma$, there is a point $w \in \mathfrak{r}$ such that the half-open path $[\eta_{\mathfrak{c}_j}, \eta_w)$ is contained in $\Sigma$.  Then we have $w \in \mathfrak{c}_j$ and so $w \notin \mathfrak{c}_i \supseteq D \cap \mathcal{B}$.  But from $\eta_D = \eta > \eta_z$, we get $z \in D \cap \mathcal{B}$.  It follows that $\eta = \eta_D$ lies in the open path $(\eta_z, \eta_w) \subset \Sigma_{\mathfrak{r}}$, and we are done showing the reverse inclusion.

We have now established that the spaces $\Sigma$ and $\Sigma_{\mathfrak{r}}$ are equal; in particular, by \Cref{prop highest point of Lambda}(a), their maximal points are the same and so we have $\mathfrak{s} = \mathfrak{c}_1 \sqcup \dots \sqcup \mathfrak{c}_s$.  Then by (\ref{eq p divides union of clusters}), we have $p \mid \#\underline{\mathfrak{s}}$.

It follows from all of this that the space $\Sigma_{\mathcal{B}}^*$, written as the disjoint union of its connected components, is $\Sigma_{\mathfrak{r}_0} \sqcup \dots \sqcup \Sigma_{\mathfrak{r}_h}$, where $\mathfrak{B} = \mathfrak{r}_0 \sqcup \dots \sqcup \mathfrak{r}_h$ is a partition, and where the smallest cluster $\mathfrak{s}_i$ containing $\mathfrak{r}_i$ satisfies $p \mid \#\underline{\mathfrak{s}}_i$ for $0 \leq i \leq h$.  If we have $\mathfrak{s}_i = \mathfrak{s}_j$ for some $i \neq j$, then by \Cref{prop highest point of Lambda}(a), the spaces $\Sigma_{\mathfrak{r}_i}$ and $\Sigma_{\mathfrak{r}_j}$ share the same maximal point, contradicting the fact that they are distinct connected components; therefore, the clusters $\mathfrak{s}_i$ are pairwise distinct.

We now show that each $\mathfrak{s}_i$ is a disjoint union of some collection of the $\mathfrak{r}_j$'s.  If this were not the case, then we would have $\mathfrak{r}_j \cap \mathfrak{s}_i \neq \varnothing$ but $\mathfrak{r}_j \subsetneq \mathfrak{s}_i$ for some $j \in \{0, \dots, h\}$.  Taking points $z \in \mathfrak{r}_j \cap \mathfrak{s}_i$ and $w \in \mathfrak{r}_j \smallsetminus \mathfrak{s}_i$, we have $\eta_{\mathfrak{s}_i} \in (\eta_z, \eta_w) \subseteq \Sigma_{\mathfrak{r}_j}$, but as $\eta_{\mathfrak{s}_i}$ is the maximal point of $\Sigma_{\mathfrak{r}_i}$ by \Cref{prop highest point of Lambda}(a), we get $\Sigma_{\mathfrak{r}_i} \cap \Sigma_{\mathfrak{r}_j} \neq \varnothing$, a contradiction.

Now suppose that there is an index $i$ such that we have $p \nmid \#\underline{\mathfrak{r}}_i$.  As we have $p \mid \#\underline{\mathcal{B}} = \sum_{i = 0}^h \#\underline{\mathfrak{r}}_i$, we may assume that $i \neq 0$; we may also assume that $i$ is minimal in the sense that for any other index $j \neq 0$ with $p \nmid \#\underline{\mathfrak{r}}_j$, the cluster $\mathfrak{s}_j$ is not properly contained in $\mathfrak{s}_i$.  We have shown that the cluster $\mathfrak{s}_i$ must be the union of $\mathfrak{r}_i$ and a collection of $\mathfrak{r}_j$'s; since $p$ divides $\#\underline{\mathfrak{s}}_i$ but not $\#\underline{\mathfrak{r}}_i$, there must be an index $j$ satisfying $\mathfrak{r}_j \subset \mathfrak{s}_i$ (which implies $\mathfrak{s}_j \subseteq \mathfrak{s}_i$) and $p \nmid \#\underline{\mathfrak{r}}_j$.  Our minimality assumption then implies $\mathfrak{s}_j = \mathfrak{s}_i$, contradicting pairwise distinctness.
\end{proof}

\begin{prop} \label{prop cardinality-p clusters}

Let $\mathfrak{s} \subset \mathcal{B}$ be a cluster with $p \mid \#\underline{\mathfrak{s}}$ and which is not the disjoint union of $\geq 2$ clusters that also, when considered as multisets, have cardinality divisible by $p$.  We have that $\mathfrak{s}$ is one of the clusters $\mathfrak{s}_1, \dots, \mathfrak{s}_h \subset \mathcal{B}$ which are defined by \Cref{prop connected components Sigma}.

\end{prop}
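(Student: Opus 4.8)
The plan is to show that the Type~II point $\eta_{\mathfrak{s}} = \eta_{D_{\mathfrak{s}}}$ attached to $\mathfrak{s}$ is the \emph{maximal} point of the connected component of $\Sigma_{\mathcal{B}}^*$ that contains it. Granting this, \Cref{prop connected components Sigma} says this component is $\Sigma_{\mathfrak{r}_j}$ for some $j$, and \Cref{prop highest point of Lambda}(a) then forces $\infty \notin \mathfrak{r}_j$ (so $j \in \{1, \dots, h\}$) and $\eta_{\mathfrak{s}} = \eta_{\mathfrak{s}_j}$; since a point of Type~II is determined by its associated disc, this yields $\mathfrak{s} = \mathcal{B} \cap D_{\mathfrak{s}} = \mathcal{B} \cap D_{\mathfrak{s}_j} = \mathfrak{s}_j$, as desired. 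First I would record the easy preliminaries: since $\#\underline{\mathfrak{s}}$ is a positive multiple of $p$ and no branch point has multiplicity $\geq p$, we have $\#\mathfrak{s} \geq 2$, hence $\eta_{\mathfrak{s}} \in \Sigma_{\mathcal{B}}$ and $D_{\mathfrak{s}} \cap \mathcal{B} = \mathfrak{s}$; applying \Cref{prop removing a point from convex hull} to $\eta = \eta_{\mathfrak{s}}$ shows $\eta_{\mathfrak{s}}$ is a vertex of $\Sigma_{\mathcal{B}}$ and that the clusters $\mathfrak{c}_1, \dots, \mathfrak{c}_s$ it determines there (with $s \geq 2$) are exactly the children of $\mathfrak{s}$, with $\mathfrak{s} = \mathfrak{c}_1 \sqcup \dots \sqcup \mathfrak{c}_s$.

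Next I would check $\eta_{\mathfrak{s}} \in \Sigma_{\mathcal{B}}^*$. If every $\#\underline{\mathfrak{c}}_i$ were divisible by $p$, then the decomposition $\mathfrak{s} = \mathfrak{c}_1 \sqcup \dots \sqcup \mathfrak{c}_s$ (with $s \geq 2$) would exhibit $\mathfrak{s}$ as a disjoint union of $\geq 2$ clusters of $p$-divisible cardinality, contradicting the hypothesis; hence $p \nmid \#\underline{\mathfrak{c}}_i$ for some $i$, which is precisely the condition in \Cref{dfn Sigma^*} for $\eta_{\mathfrak{s}}$ to lie in $\Sigma_{\mathcal{B}}^*$. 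Thus $\eta_{\mathfrak{s}}$ lies in some connected component $\Sigma_{\mathfrak{r}_j}$ of $\Sigma_{\mathcal{B}}^*$.

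For the maximality, the key local fact is that every point $\eta' = \eta_{D'}$ on the open segment of $\Sigma_{\mathcal{B}}$ immediately above $\eta_{\mathfrak{s}}$ --- namely on $(\eta_{\mathfrak{s}}, \eta_{\mathfrak{s}'})$, or on $(\eta_{\mathfrak{s}}, \eta_\infty)$ when $\mathfrak{s} = \mathcal{B} \smallsetminus \{\infty\}$ --- satisfies $D' \cap \mathcal{B} = \mathfrak{s}$ and is not a vertex, so its unique associated cluster in the sense of \Cref{prop removing a point from convex hull} is $\mathfrak{s}$ itself, and since $p \mid \#\underline{\mathfrak{s}}$, \Cref{dfn Sigma^*} gives $\eta' \notin \Sigma_{\mathcal{B}}^*$. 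Now if $\eta_{\mathfrak{s}}$ were not maximal in $\Sigma_{\mathfrak{r}_j}$, there would be $\xi \in \Sigma_{\mathfrak{r}_j}$ with $\xi \not\leq \eta_{\mathfrak{s}}$, hence with $\eta_{\mathfrak{s}} \vee \xi > \eta_{\mathfrak{s}}$; being a connected subset of the real tree $\Sigma_{\mathcal{B}}$, the space $\Sigma_{\mathfrak{r}_j}$ is convex, so $[\eta_{\mathfrak{s}}, \eta_{\mathfrak{s}} \vee \xi] \subseteq \Sigma_{\mathfrak{r}_j} \subseteq \Sigma_{\mathcal{B}}^*$. By the ``one direction upwards'' property of $\Berk$, this path starts with a subsegment of $(\eta_{\mathfrak{s}}, \eta_{\mathfrak{s}'})$ (resp.\ of $(\eta_{\mathfrak{s}}, \eta_\infty)$), whose points were just shown not to lie in $\Sigma_{\mathcal{B}}^*$ --- a contradiction. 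Hence every point of $\Sigma_{\mathfrak{r}_j}$ is $\leq \eta_{\mathfrak{s}}$, so $\eta_{\mathfrak{s}}$ is the maximal point, and the argument concludes as in the first paragraph.

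The only step I expect to require genuine care is this last one, where the ``one direction upwards'' structure of $\Berk$, the convexity of connected subsets of the tree $\Sigma_{\mathcal{B}}$, and the local identity $D' \cap \mathcal{B} = \mathfrak{s}$ on the segment just above $\eta_{\mathfrak{s}}$ all have to be combined correctly; everything else is a routine unwinding of \Cref{dfn Sigma^*}, \Cref{prop removing a point from convex hull}, \Cref{prop connected components Sigma}, and \Cref{prop highest point of Lambda}(a).
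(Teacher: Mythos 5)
Your proof is correct and follows essentially the same route as the paper's: locate $\eta_{\mathfrak{s}}$ in $\Sigma_{\mathcal{B}}^*$ via \Cref{prop removing a point from convex hull}, show points immediately above $\eta_{\mathfrak{s}}$ fall outside $\Sigma_{\mathcal{B}}^*$ because $p \mid \#\underline{\mathfrak{s}}$, conclude $\eta_{\mathfrak{s}}$ is the maximal point of its component, and finish with \Cref{prop highest point of Lambda}(a). The paper's version is terser --- it asserts ``$p \nmid \#\underline{\mathfrak{c}}_i$ for some $i$'' and ``$\eta_{\mathfrak{s}}$ is therefore maximal'' without spelling out the justifications you give (the contradiction-with-hypothesis argument and the convexity/one-direction-upwards argument, respectively) --- so your writeup actually fills in details the paper leaves implicit, but the underlying strategy is identical.
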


\begin{proof}
Let $\mathfrak{s}$ be a cluster satisfying the hypotheses of the proposition, and consider the corresponding point $\eta_{\mathfrak{s}} \in \Hyp$.  Let $\mathfrak{c}_1, \dots, \mathfrak{c}_s$ be the clusters defined by \Cref{prop removing a point from convex hull}, so that we have $\mathfrak{s} = \mathfrak{c}_1 \sqcup \dots \sqcup \mathfrak{c}_s$ for some $s \geq 2$.  Then for some $i \in \{1, \dots, s\}$, we have $p \nmid \#\mathfrak{c}_i$, which implies $\eta_{\mathfrak{s}} \in \Sigma_{\mathcal{B}}^*$.  However, for any point $\eta = \eta_D > \eta_{\mathfrak{s}}$ close enough that $\eta$ is not a vertex and we have $D \cap \mathcal{B} = D_{\mathfrak{s}} \cap \mathcal{B} = \mathfrak{s}$, from applying \Cref{prop removing a point from convex hull} we get $\eta \notin \Sigma_{\mathcal{B}}^*$.  The point $\eta_{\mathfrak{s}}$ is therefore maximal in some connected component of $\Sigma_{\mathcal{B}}^*$ and thus, by \Cref{prop highest point of Lambda}(a), we get $\mathfrak{s} = \mathfrak{s}_i$ for some $i \in \{1, \dots, h\}$.
\end{proof}

\begin{cor} \label{cor cardinality-p clusters}

Every cluster $\mathfrak{s} \subset \mathcal{B}$ with $p \mid \#\underline{\mathfrak{s}}$ is the disjoint union of some collection among the sets $\mathfrak{r}_1, \dots, \mathfrak{r}_h$ defined by \Cref{prop connected components Sigma}.

\end{cor}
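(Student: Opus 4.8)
The plan is to prove the corollary by strong induction on the multiset cardinality $\#\underline{\mathfrak{s}}$, using \Cref{prop cardinality-p clusters} as the base mechanism and \Cref{prop connected components Sigma} to identify the pieces. So let $\mathfrak{s} \subset \mathcal{B}$ be a cluster with $p \mid \#\underline{\mathfrak{s}}$, and assume the statement has been proved for all clusters of strictly smaller multiset cardinality whose cardinality is divisible by $p$.

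First I would split into two cases according to whether or not $\mathfrak{s}$ can be written as a disjoint union of $\geq 2$ clusters each having multiset cardinality divisible by $p$. If it cannot, then \Cref{prop cardinality-p clusters} applies verbatim and gives $\mathfrak{s} = \mathfrak{s}_i$ for some $i \in \{1, \dots, h\}$; then \Cref{prop connected components Sigma} says that this $\mathfrak{s}_i$ is the disjoint union of some sub-collection of $\mathfrak{r}_1, \dots, \mathfrak{r}_h$ (the part $\mathfrak{r}_0$ cannot occur, since it contains $\infty$, which lies in no cluster), which is exactly what we want.

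In the opposite case, write $\mathfrak{s} = \mathfrak{t}_1 \sqcup \dots \sqcup \mathfrak{t}_m$ with $m \geq 2$ and each $\mathfrak{t}_j \subset \mathcal{B}$ a cluster with $p \mid \#\underline{\mathfrak{t}}_j$. Since $m \geq 2$ and every $\mathfrak{t}_j$ is non-empty (indeed $\#\underline{\mathfrak{t}}_j \geq p$), each $\mathfrak{t}_j$ is a proper subset of $\mathfrak{s}$, so $\#\underline{\mathfrak{t}}_j < \#\underline{\mathfrak{s}}$ and the induction hypothesis applies to each $\mathfrak{t}_j$. Writing each $\mathfrak{t}_j$ as a disjoint union of some of the $\mathfrak{r}_l$ and taking the union over $j$ — these sub-collections are automatically pairwise disjoint because $\mathfrak{r}_1, \dots, \mathfrak{r}_h$ are mutually disjoint — exhibits $\mathfrak{s}$ as the required disjoint union. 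The base case $\#\underline{\mathfrak{s}} = p$ is subsumed by the first case, since a cluster of multiset cardinality $p$ admits no decomposition into $\geq 2$ clusters of $p$-divisible cardinality.

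This is a short argument, and I do not expect a genuine obstacle: all of the real content (the structure of $\Sigma_{\mathcal{B}}^*$ and the characterization of its maximal points) has already been established in \Cref{prop connected components Sigma} and \Cref{prop cardinality-p clusters}. The only point demanding attention is the routine verification that in the inductive step the pieces $\mathfrak{t}_j$ are bona fide clusters of $\mathcal{B}$, of strictly smaller multiset cardinality, and still have $p$-divisible multiset cardinality, so that the induction hypothesis may legitimately be invoked.
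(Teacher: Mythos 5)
Your proof is correct and follows essentially the same route as the paper's: both reduce to \Cref{prop cardinality-p clusters} to identify the ``irreducible'' $p$-divisible clusters as $\mathfrak{s}_i$'s, and then invoke the final assertion of \Cref{prop connected components Sigma} to split each $\mathfrak{s}_i$ into $\mathfrak{r}_j$'s. The only difference is presentational: the paper compresses the recursive decomposition into the phrase ``it follows immediately,'' whereas you have spelled out the underlying strong induction on $\#\underline{\mathfrak{s}}$, which is a reasonable thing to make explicit.
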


\begin{proof}
It follows immediately from \Cref{prop cardinality-p clusters} that each such cluster $\mathfrak{s}$ is a disjoint union of some of the $\mathfrak{s}_i$'s defined by \Cref{prop connected components Sigma}.  Then the desired statement follows by applying the last assertion of \Cref{prop connected components Sigma} to each of the $\mathfrak{s}_i$'s.
\end{proof}

We are now ready to introduce our alternate definition of the property of being clustered in pairs and show that the two definitions are equivalent; in doing so, we define a stronger property as well.

\begin{dfn}[Berkovich version] \label{dfn clustered in pairs berk}

We say that $\underline{\mathcal{B}}$ is \emph{clustered in $p$-multisets} if the space $\Sigma_{\mathcal{B}}^*$ has $\#\underline{\mathcal{B}} / p$ connected components $\Lambda_0, \dots, \Lambda_h$ (where $h = \#\underline{\mathcal{B}}/p - 1$).

When this property holds, we say that $\underline{\mathcal{B}}$ is \emph{clustered in $r$-separated $p$-multisets} for some real number $r \geq 0$ if, for indices $i \neq j$, we have 
\begin{equation}
B(\Lambda_i, r) \cap B(\Lambda_j, r) = \varnothing.
\end{equation}

If, in addition, each subspace $\Lambda_i \subset \Berk$ consists of a single open path, we say that $\underline{\mathcal{B}}$ is \emph{clustered in $r$-separated pairs}.

\end{dfn}

\begin{prop} \label{prop clustered in pairs}

The multiset $\underline{\mathcal{B}}$ is clustered in $p$-multisets (resp. pairs) in the sense of \Cref{dfn clustered in pairs berk} if and only if it is clustered in $p$-multisets (resp. pairs) in the sense of \Cref{dfn clustered in pairs clusters}.

\end{prop}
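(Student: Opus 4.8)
The plan is to read everything off the canonical partition furnished by \Cref{prop connected components Sigma}, which writes the connected components of $\Sigma_{\mathcal{B}}^*$ as the subspaces $\Sigma_{\mathfrak{r}_0}, \dots, \Sigma_{\mathfrak{r}_h}$ of a partition $\underline{\mathcal{B}} = \underline{\mathfrak{r}}_0 \sqcup \dots \sqcup \underline{\mathfrak{r}}_h$ (with $\infty \in \mathfrak{r}_0$) in which each $\#\underline{\mathfrak{r}}_i$ is divisible by $p$, the smallest cluster $\mathfrak{s}_i$ containing $\mathfrak{r}_i$ satisfies $p \mid \#\underline{\mathfrak{s}}_i$ for $1 \leq i \leq h$, and the clusters $\mathfrak{s}_1, \dots, \mathfrak{s}_h$ are pairwise distinct.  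Since $\#\underline{\mathfrak{r}}_i \geq 1$ and $p \mid \#\underline{\mathfrak{r}}_i$, we have $\#\underline{\mathfrak{r}}_i \geq p$, so $\sum_{i=0}^h \#\underline{\mathfrak{r}}_i = \#\underline{\mathcal{B}}$ forces $h + 1 \leq \#\underline{\mathcal{B}}/p$, with equality precisely when every $\#\underline{\mathfrak{r}}_i = p$.  Hence $\underline{\mathcal{B}}$ is clustered in $p$-multisets in the sense of \Cref{dfn clustered in pairs berk} if and only if each $\#\underline{\mathfrak{r}}_i = p$; and since each component $\Sigma_{\mathfrak{r}_i}$ is a single open path exactly when $\#\mathfrak{r}_i = 2$ (it contains a vertex once $\#\mathfrak{r}_i \geq 3$, while $\#\mathfrak{r}_i \geq 2$ always because no element of $\mathcal{B}$ has multiplicity $\geq p$), $\underline{\mathcal{B}}$ is clustered in pairs in that sense if and only if moreover each $\#\mathfrak{r}_i = 2$.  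It therefore remains to show that each of these two numerical conditions on the sets $\mathfrak{r}_i$ is equivalent to the existence of a partition of $\underline{\mathcal{B}}$ into cardinality-$p$ multisets --- with underlying sets of size $2$ in the ``pairs'' case --- whose smallest containing clusters are pairwise distinct and of cardinality divisible by $p$.

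One implication is immediate: if every $\#\underline{\mathfrak{r}}_i = p$ then $\underline{\mathcal{B}} = \underline{\mathfrak{r}}_0 \sqcup \dots \sqcup \underline{\mathfrak{r}}_h$ is itself such a partition by \Cref{prop connected components Sigma}, and if moreover each $\#\mathfrak{r}_i = 2$ it is a partition into pairs.  For the converse, let $\underline{\mathcal{B}} = \underline{\mathfrak{t}}_0 \sqcup \dots \sqcup \underline{\mathfrak{t}}_{h'}$ be a partition into cardinality-$p$ multisets with $\infty \in \mathfrak{t}_0$ (so $h' + 1 = \#\underline{\mathcal{B}}/p$) whose smallest containing clusters $\hat{\mathfrak{s}}_1, \dots, \hat{\mathfrak{s}}_{h'}$ (with $\hat{\mathfrak{s}}_j$ the smallest cluster containing $\mathfrak{t}_j$) are pairwise distinct and of cardinality divisible by $p$.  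Since $h + 1 \leq \#\underline{\mathcal{B}}/p = h' + 1$ always holds, it suffices to construct an injection $\{1, \dots, h'\} \hookrightarrow \{1, \dots, h\}$, for this forces $h = h'$ and hence $\#\underline{\mathfrak{r}}_i = p$ for every $i$.  I would obtain such an injection from the claim that each $\hat{\mathfrak{s}}_j$ coincides with one of $\mathfrak{s}_1, \dots, \mathfrak{s}_h$: then $j \mapsto i(j)$, defined by $\hat{\mathfrak{s}}_j = \mathfrak{s}_{i(j)}$, is injective since distinct $\hat{\mathfrak{s}}_j$ have distinct images.  By \Cref{prop cardinality-p clusters}, this claim follows once we show that no $\hat{\mathfrak{s}}_j$ is a disjoint union of two or more clusters each of cardinality divisible by $p$.

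This last point is the crux, and it is exactly where pairwise-distinctness of the $\hat{\mathfrak{s}}_j$ is needed --- without it the statement genuinely fails (a ``bad'' partition may split a cluster that is a disjoint union of two sibling clusters of cardinality divisible by $p$).  I would argue by contradiction: suppose some $\hat{\mathfrak{s}}_j = \mathfrak{d}_1 \sqcup \dots \sqcup \mathfrak{d}_m$ with $m \geq 2$ and $p \mid \#\underline{\mathfrak{d}}_l$ for each $l$.  Because $\hat{\mathfrak{s}}_j$ is the \emph{smallest} cluster containing $\mathfrak{t}_j$, the part $\mathfrak{t}_j$ cannot lie inside any single $\mathfrak{d}_l$, so it meets at least two of them and therefore ``splits'' each $\mathfrak{d}_l$ it meets: $0 < \#(\underline{\mathfrak{t}}_j \cap \underline{\mathfrak{d}}_l) < p$.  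From $p \mid \#\underline{\mathfrak{d}}_l = \sum_{j'} \#(\underline{\mathfrak{t}}_{j'} \cap \underline{\mathfrak{d}}_l)$ we get that some other part $\mathfrak{t}_{j'}$ also splits $\mathfrak{d}_l$, and one then checks that both $\hat{\mathfrak{s}}_j$ and $\hat{\mathfrak{s}}_{j'}$ properly contain $\mathfrak{d}_l$, hence both contain the parent cluster of $\mathfrak{d}_l$.  Iterating this --- decomposing $\mathfrak{d}_l$ into its own sibling sub-clusters and climbing the cluster tree, each time using the divisibility of $\#\underline{\mathfrak{d}}_l$ to force a second part to split the same cluster --- should eventually pin two distinct parts to a common smallest containing cluster, contradicting the hypothesis.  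I expect the main obstacle to be arranging this bookkeeping so that the process is genuinely forced and terminates, and treating the part $\mathfrak{t}_0$ containing $\infty$ (which has no smallest containing cluster) separately; an alternative worth trying is an induction on $\#\underline{\mathcal{B}}$ that collapses a minimal proper cluster of cardinality divisible by $p$, the same splitting mechanism being used first to show no part $\mathfrak{t}_j$ can split such a cluster.

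Finally, the ``pairs'' refinement follows from the ``$p$-multisets'' case by a counting argument.  Once $h = h'$ we have $\#\underline{\mathfrak{r}}_i = p$ for all $i$, so $\mathcal{B} = \mathfrak{r}_0 \sqcup \dots \sqcup \mathfrak{r}_h$ is a disjoint union of $h + 1$ sets each of size $\geq 2$; if $\underline{\mathcal{B}}$ is clustered in pairs in the cluster-version sense, then $\#\mathcal{B} = 2(h' + 1) = 2(h + 1)$, which forces each $\#\mathfrak{r}_i = 2$, i.e.\ each $\Sigma_{\mathfrak{r}_i}$ to be a single open path.  Together with the reverse implication from the second paragraph, this completes the ``pairs'' equivalence.
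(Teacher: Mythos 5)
Your route is genuinely different from the paper's, and the framing is attractive: by Proposition~\ref{prop connected components Sigma} you have a canonical partition $\underline{\mathcal{B}} = \underline{\mathfrak{r}}_0 \sqcup \dots \sqcup \underline{\mathfrak{r}}_h$, and your first two paragraphs correctly reduce both equivalences to the single numerical condition ``every $\#\underline{\mathfrak{r}}_i = p$'' (together with $\#\mathfrak{r}_i = 2$ in the pairs case). The forward implication --- the canonical partition itself witnesses Definition~\ref{dfn clustered in pairs clusters} when the cardinalities are right --- is also fine. The paper instead never compares a given cluster-version partition to the canonical one: it proves $\Sigma_{\mathcal{B}}^* = \Sigma_{\mathfrak{r}_0} \sqcup \dots \sqcup \Sigma_{\mathfrak{r}_h}$ directly for the \emph{given} partition, first establishing pairwise disjointness of the $\Sigma_{\mathfrak{r}_i}$'s via \Cref{prop highest point of Lambda} and then checking both inclusions between $\Sigma_{\mathcal{B}}^*$ and the disjoint union pointwise. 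That argument sidesteps entirely the combinatorial lemma you need.

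The gap in your proposal is exactly the one you flag yourself: the claim that no $\hat{\mathfrak{s}}_j$ can be a disjoint union of $\geq 2$ clusters of $p$-divisible cardinality is asserted but not proved, and the tree-climbing sketch does not close. Concretely, once you have located a second part $\mathfrak{t}_{j'}$ splitting some $\mathfrak{d}_l$, you know that $\hat{\mathfrak{s}}_j$ and $\hat{\mathfrak{s}}_{j'}$ both properly contain $\mathfrak{d}_l$ and are therefore nested, but nesting is not equality; the distinctness hypothesis only kills the case $\hat{\mathfrak{s}}_j = \hat{\mathfrak{s}}_{j'}$, and nothing in the sketch forces that case to arise rather than a strictly ascending chain. (One can in fact push the contradiction through --- roughly, take $\hat{\mathfrak{s}}_j$ minimal among those that decompose, write $\hat{\mathfrak{s}}_j = \mathfrak{s}_{i_1} \sqcup \dots \sqcup \mathfrak{s}_{i_m}$ with the $\mathfrak{s}_{i_k}$ coming from Proposition~\ref{prop cardinality-p clusters}, and argue that some part $\mathfrak{t}_{j'}$ contained in $\hat{\mathfrak{s}}_j$ must meet two of the $\mathfrak{s}_{i_k}$, which then forces $\hat{\mathfrak{s}}_{j'} = \hat{\mathfrak{s}}_j$ --- but this still takes a real argument to force the existence of such a $\mathfrak{t}_{j'}$ inside $\hat{\mathfrak{s}}_j$, plus the separate treatment of $\mathfrak{t}_0$ you already anticipate.) As written, the proposal identifies the right crux and the right tool (Proposition~\ref{prop cardinality-p clusters}) but leaves a genuine missing step where the paper has none.
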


\begin{proof}
First assume that the multiset $\underline{\mathcal{B}}$ satisfies \Cref{dfn clustered in pairs berk} (with all of the notation involved).  Letting $h = \#\underline{\mathcal{B}} / p - 1$, by \Cref{prop connected components Sigma}, this gives us a partition $\mathcal{B} = \mathfrak{r}_0 \sqcup \dots \sqcup \mathfrak{r}_h$ with pairwise distinct corresponding clusters $\mathfrak{s}_i$ and with $p \mid \#\underline{\mathfrak{r}}_i, \#\underline{\mathfrak{s}}_i$ for $0 \leq i \leq h$.  The formula for $h$ in terms of the cardinality of $\underline{\mathcal{B}}$ forces $\#\underline{\mathfrak{r}}_i = p$ for $0 \leq i \leq h$, and thus, \Cref{dfn clustered in pairs clusters} is satisfied for $\underline{\mathcal{B}}$.

Now assume conversely that $\underline{\mathcal{B}}$ is clustered in $p$-multisets in the sense of \Cref{dfn clustered in pairs clusters} (with all of the notation involved), and consider the subspaces $\Sigma_{\mathfrak{r}_0}, \dots, \Sigma_{\mathfrak{r}_h} \subset \Sigma_{\mathcal{B}}$ corresponding to the partition $\mathcal{B} = \mathfrak{r}_0 \sqcup \dots \sqcup \mathfrak{r}_h$.  Suppose that there are indices $i \neq j$ such that we have $\Sigma_{\mathfrak{r}_i} \cap \Sigma_{\mathfrak{r}_j} \neq \varnothing$.  Choose a point $\eta \in \Sigma_{\mathfrak{r}_i} \cap \Sigma_{\mathfrak{r}_j}$.  According to \Cref{prop highest point of Lambda}(a), we have $\eta_{\mathfrak{s}_i}, \eta_{\mathfrak{s}_j} \geq \eta$.  We then have $\eta_{\mathfrak{s}_j} \geq \eta_{\mathfrak{s}_i}$ (implying $\eta_{\mathfrak{s}_i} \in [\eta, \eta_{\mathfrak{s}_j}] \subset \Sigma_{\mathfrak{r}_j}$) or $\eta_{\mathfrak{s}_i} \geq \eta_{\mathfrak{s}_j}$ (implying $\eta_{\mathfrak{s}_j} \in [\eta, \eta_{\mathfrak{s}_i}] \subset \Sigma_{\mathfrak{r}_i}$); without loss of generality, let us assume the former.  If we have $\eta_{\mathfrak{s}_i} = \eta_{\mathfrak{s}_j}$, this is the same as saying that $\mathfrak{s}_i = \mathfrak{s}_j$, which violates \Cref{dfn clustered in pairs clusters}.  If instead we have $\eta_{\mathfrak{s}_i} \in (\eta, \eta_{\mathfrak{s}_j})$, then the point $\eta_{\mathfrak{s}_i}$ is a non-maximal point of $\Sigma_{\mathfrak{r}_j}$.  By \Cref{prop highest point of Lambda}(b), we then have points $z, w \in \mathfrak{r}_j$ such that $z \in \mathfrak{s}_i$ and $w \notin \mathfrak{s}_i$.  This contradicts the final statement of \Cref{prop connected components Sigma}.  We have thus shown that the spaces $\Sigma_{\mathfrak{r}_i}$ are mutually disjoint; since each is clearly a closed subspace of $\Hyp$, we get that the spaces $\Sigma_{\mathfrak{r}_i}$ are the connected components of their disjoint union.

We now set out to prove the equality $\Sigma_{\mathcal{B}}^* = \Sigma_{\mathfrak{r}_0} \sqcup \dots \sqcup \Sigma_{\mathfrak{r}_h}$, which will imply that \Cref{dfn clustered in pairs berk} is satisfied for $\underline{\mathcal{B}}$.  Choose any point $\eta = \eta_D \in \Sigma_{\mathcal{B}}$, and let $\mathfrak{c}_1, \dots, \mathfrak{c}_s, \mathfrak{c}_\infty$ be the corresponding clusters as in \Cref{prop removing a point from convex hull}, with $\mathfrak{s} := D \cap \mathcal{B} = \mathfrak{c}_1 \sqcup \dots \sqcup \mathfrak{c}_s$.  Suppose first that we have $\eta \in \Sigma_{\mathcal{B}}^*$.  We have $p \nmid \#\underline{\mathfrak{c}}_i$ for some $i$.  We see from the cardinalities of the multisets $\underline{\mathfrak{r}}_j$ that there is some index $j \in \{0, \dots, h\}$ and points $z, w \in \mathfrak{r}_j$ such that we have $z \in \mathfrak{c}_i$ and $w \notin \mathfrak{c}_i$.  Meanwhile, as the disjoint union $\mathfrak{c}_1 \sqcup \dots \sqcup \mathfrak{c}_s$ is the parent of the cluster $\mathfrak{c}_i$, each point $\eta'$ of the open path $(\eta_{\mathfrak{c}_i}, \eta)$ corresponds to a disc $D'$ satisfying $D' \cap \mathcal{B} = \mathfrak{c}_i$; we then have $\eta' \in (\eta_z, \eta_w) \subseteq \Sigma_{\mathfrak{r}_j}$.  We therefore have $(\eta', \eta) \subset \Sigma_{\mathfrak{r}_i}$, and since $\Sigma_{\mathfrak{r}_j}$ is closed, this implies $\eta \in \Sigma_{\mathfrak{r}_j}$.  Now suppose conversely that we have $\eta \in \Sigma_{\mathfrak{r}_j}$ for some $j$.  By \Cref{prop highest point of Lambda}(a), the maximal point of $\Sigma_{\mathfrak{r}_j}$ is $\eta_{\mathfrak{s}_j}$; since this is a limit point of $\Sigma_{\mathfrak{r}_j}$ and $\Sigma_{\mathcal{B}}^* \subset \Sigma_{\mathcal{B}}$ is closed, in order to prove that $\eta \in \Sigma_{\mathcal{B}}^*$, it suffices to assume that $\eta$ is not the maximal point of $\Sigma_{\mathfrak{r}_j}$.  By \Cref{prop highest point of Lambda}(b), this implies $1 \leq \#(\underline{\mathfrak{s}} \cap \underline{\mathfrak{r}}_j) \leq p - 1$.  Meanwhile, the disjointness of the spaces $\Sigma_{\mathfrak{r}_i}$ implies that we have $\eta \notin \Sigma_{\mathfrak{r}_i}$ for $i \neq j$; by \Cref{prop highest point of Lambda}, this means that either $\mathfrak{s} \supset \mathfrak{r}_i$ or $\mathfrak{s} \cap \mathfrak{r}_i = \varnothing$ for $i \neq j$.  Putting these together and keeping in mind that the cardinality of each multiset $\underline{\mathfrak{r}}_i$ equals $p$, we have $p \nmid \#\underline{\mathfrak{s}} = \sum_i \#\underline{\mathfrak{c}}_i$, implying that $\eta \in \Sigma_{\mathcal{B}}^*$, as desired.

We have just shown that the two definitions of being clustered in $p$-multisets are equivalent.  To show the equivalence of the two definitions of being clustered in pairs, we only need to note that for each index $i$, the subspace $\Lambda_i = \bigcup_{z, w \in \mathfrak{r}_i, \, z \neq w} (\eta_z, \eta_w) \subset \Berk$ consists of a single path if and only if there is only one path in the union given above, which happens if and only if there is only one pair of distinct elements of $\mathfrak{r}_i$.
\end{proof}

We finish this section by presenting some properties of the space $\Sigma_{\mathcal{B}}^0$ which will be used in \S\ref{sec proof of main}.

\begin{prop} \label{prop boundary points}

For any point $\eta = \eta_D \in \Sigma_{\mathcal{B}}$, write $\mathfrak{s} = D \cap \mathcal{B}$ and $\mathfrak{s} = \mathfrak{c}_1 \sqcup \dots \sqcup \mathfrak{c}_s$ for the partition given by \Cref{prop removing a point from convex hull}.  Let $U \subseteq \Sigma_{\mathcal{B}}^0$ be the connected component in which $\eta$ lies.  The points $\eta$ of the subspace $\Sigma_{\mathcal{B}}^0$ are characterized as those for which we have $p \mid \#\underline{\mathfrak{s}}$ or $p \mid \#\underline{\mathfrak{c}}_i$ for some $i \in \{1, \dots, r\}$.  Moreover, the boundary points of $\Sigma_{\mathcal{B}}^0$ may be characterized as follows.

\begin{enumerate}[(a)]

\item The point $\eta$ is a boundary (\textit{i.e.} non-interior) point of the subspace $\Sigma_{\mathcal{B}}^0 \subset \Sigma_{\mathcal{B}}$ if and only if we have $(\eta, \eta_z) \cap U = \varnothing$ for some $z \in \mathcal{B}$.

\item We have $(\eta, \eta_z) \cap U = \varnothing$ for some $z \in \mathcal{B} \smallsetminus D$ if and only if we have $p \nmid \#\underline{\mathfrak{s}}$.  In this case, the point $\eta$ is the (unique) maximal point of $U$.

\item We have $(\eta, \eta_z) \cap U = \varnothing$ for some $z \in D$ if and only if some cluster $\mathfrak{c}$ whose parent is $\mathfrak{s}$ (namely the one containing $z$) satisfies $p \nmid \#\underline{\mathfrak{c}}$.

\item We have $(\eta, \eta_z) \cap U = \varnothing$ for some $z \in D$ but $(\eta, \eta_w) \cap U \neq \varnothing$ for all $w \in \mathcal{B} \smallsetminus D$ if and only if $\eta$ is the (unique) maximal point of a connected component $\Sigma_{\mathfrak{r}_i} \subset \Sigma_{\mathcal{B}}^*$ for some $i \in \{1, \dots, h\}$, in the notation of \Cref{prop connected components Sigma}.

\end{enumerate}

\end{prop}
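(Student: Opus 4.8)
The plan is to reduce the whole proposition to a local study of the spaces $\Sigma_{\mathcal{B}}$, $\Sigma_{\mathcal{B}}^*$, and $\Sigma_{\mathcal{B}}^0$ near a fixed point $\eta = \eta_D$. The first step is to describe the edges of the tree $\Sigma_{\mathcal{B}}$ emanating from $\eta$: by \Cref{prop removing a point from convex hull} together with the construction of $\Berk$, these are one ``upward'' germ, contained in $(\eta, \eta_\infty)$, along which every point $\eta_{D'}$ close enough to $\eta$ satisfies $D' \cap \mathcal{B} = \mathfrak{s}$, and, for each $i$, one ``downward'' germ toward $\mathfrak{c}_i$ along which every nearby $\eta_{D'}$ satisfies $D' \cap \mathcal{B} = \mathfrak{c}_i$ (when $\eta$ is not a vertex we have $s = 1$ and $\mathfrak{c}_1 = \mathfrak{s}$). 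Along any one of these germs the intersection $D' \cap \mathcal{B}$ is constant, so a non-vertex point $\eta' = \eta_{D'}$ with $D' \cap \mathcal{B} = \mathfrak{t}$ lies in $\Sigma_{\mathcal{B}}^*$ exactly when $p \nmid \#\underline{\mathfrak{t}}$, and then is in fact an interior point of $\Sigma_{\mathcal{B}}^*$. Since $\Sigma_{\mathcal{B}}^0$ is a closure (hence closed) and its complement in $\Sigma_{\mathcal{B}}$ is the interior of $\Sigma_{\mathcal{B}}^*$, it follows that no non-vertex point is a boundary point of $\Sigma_{\mathcal{B}}^0$.

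Next I would introduce, for each edge $e$ at $\eta$, its ``local cluster'' $\mathfrak{t}_e$: namely $\mathfrak{s}$ for the upward edge and $\mathfrak{c}_i$ for the downward edge toward $\mathfrak{c}_i$. By the first step, the germ of $e$ at $\eta$ lies inside $\Sigma_{\mathcal{B}}^0$ when $p \mid \#\underline{\mathfrak{t}}_e$ and outside $\Sigma_{\mathcal{B}}^0$ when $p \nmid \#\underline{\mathfrak{t}}_e$. Since $\#\underline{\mathfrak{s}} = \sum_i \#\underline{\mathfrak{c}}_i$, every point of $\Sigma_{\mathcal{B}} \smallsetminus \Sigma_{\mathcal{B}}^*$ has all its edge germs inside $\Sigma_{\mathcal{B}}^0$; as $\Sigma_{\mathcal{B}}^0$ is closed, this gives at once that $\eta \in \Sigma_{\mathcal{B}}^0$ precisely when $p \mid \#\underline{\mathfrak{s}}$ or $p \mid \#\underline{\mathfrak{c}}_i$ for some $i$, and that $\eta$ is a boundary point of $\Sigma_{\mathcal{B}}^0$ precisely when $\eta \in \Sigma_{\mathcal{B}}^0$ and some edge $e$ at $\eta$ has $p \nmid \#\underline{\mathfrak{t}}_e$. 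In particular all boundary points of $\Sigma_{\mathcal{B}}^0$ are vertices, so in proving (b)--(d) we may take $\eta = \eta_{\mathfrak{s}}$, in which case the $\mathfrak{c}_i$ are exactly the clusters whose parent is $\mathfrak{s}$.

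It remains to translate these germ conditions into the path conditions of the statement. Since $U$ is a connected subspace of the real tree $\Sigma_{\mathcal{B}}$ containing $\eta$, a point of the open path $(\eta, \eta_z)$ lies in $U$ if and only if the germ of $(\eta, \eta_z)$ at $\eta$ — which coincides with the germ of the edge $e(z)$ pointing toward $z$ — lies in $\Sigma_{\mathcal{B}}^0$; here $e(z)$ is the upward edge (so $\mathfrak{t}_{e(z)} = \mathfrak{s}$) when $z \notin D$, and the downward edge into the cluster $\mathfrak{c}_i$ containing $z$ (so $\mathfrak{t}_{e(z)} = \mathfrak{c}_i$) when $z \in D$. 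Part (a) is then immediate, since each edge at $\eta$ equals $e(z)$ for some $z \in \mathcal{B}$ (take $z = \infty$ for the upward edge, using $\infty \in \mathcal{B}$); part (b) follows because $\mathcal{B} \smallsetminus D$ always contains $\infty$, and when $p \nmid \#\underline{\mathfrak{s}}$ the upward germ avoids $U$, so $U$ has no point above $\eta$ and $\eta$ is its unique maximal point; and part (c) is exactly the germ criterion for the downward edges. For (d), combining (b) and (c), the displayed condition holds iff some $\mathfrak{c}_i$ has $p \nmid \#\underline{\mathfrak{c}}_i$ — equivalently $\eta \in \Sigma_{\mathcal{B}}^*$ by \Cref{dfn Sigma^*} — and $p \mid \#\underline{\mathfrak{s}}$; then $\eta$ lies in some connected component $\Sigma_{\mathfrak{r}_j}$ of $\Sigma_{\mathcal{B}}^*$, and the fact that the upward germ leaves $\Sigma_{\mathcal{B}}^*$ forces $\eta$ to be the maximal point of $\Sigma_{\mathfrak{r}_j}$; since $\infty \notin \mathfrak{s}$, this component is not $\Sigma_{\mathfrak{r}_0}$ (which has no maximal point by \Cref{prop highest point of Lambda}), so $j \in \{1, \dots, h\}$ and $\eta = \eta_{\mathfrak{s}_j}$, and conversely each such $\eta_{\mathfrak{s}_j}$ lies in $\Sigma_{\mathcal{B}}^*$ and satisfies $p \mid \#\underline{\mathfrak{s}}_j$ by \Cref{prop connected components Sigma}.

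The main obstacle is the first, local step: pinning down the ``germ at $\eta$'' dictionary — that $D' \cap \mathcal{B}$ is eventually constant along each edge, that non-vertex points of $\Sigma_{\mathcal{B}}^*$ are interior, and that a nonempty intersection $(\eta, \eta_z) \cap U$ forces the whole germ of $(\eta, \eta_z)$ at $\eta$ into $U$ (because $U$ contains the path $[\eta, \xi]$ for each $\xi \in U$). Once this dictionary is established, the combinatorics of parts (a)--(d) is routine bookkeeping.
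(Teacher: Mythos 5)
Your proposal is correct and follows essentially the same route as the paper: both are grounded in \Cref{prop removing a point from convex hull} together with the observation that along each direction leaving $\eta$, the intersection $D'\cap\mathcal{B}$ is eventually constant (equal to $\mathfrak{s}$ upward and to $\mathfrak{c}_i$ downward), so membership in $\Sigma_{\mathcal{B}}^*$ is eventually constant on each branch. The difference is organizational rather than mathematical. You package this local reasoning once and for all as a ``germ $\leftrightarrow$ local cluster'' dictionary (and the further observation that $(\eta,\eta_z)\cap U\neq\varnothing$ is equivalent to the germ of $e(z)$ lying in $\Sigma_{\mathcal{B}}^0$, which uses convexity of the component $U$), then read off the $\Sigma_{\mathcal{B}}^0$-membership criterion and (a)--(d) as combinatorial corollaries; the paper instead rehearses the same local disc-picking argument separately for the closure characterization, for (a), and for (b), and declares (c) ``analogous.'' Your streamlined treatment also makes explicit a fact the paper leaves implicit, namely that boundary points of $\Sigma_{\mathcal{B}}^0$ are vertices of $\Sigma_{\mathcal{B}}$, which is what justifies both proofs' reading of the phrase ``cluster whose parent is $\mathfrak{s}$'' in (c). The only place your derivation genuinely swaps one helper result for another is (d): the paper invokes \Cref{prop cardinality-p clusters} to recognize $\mathfrak{s}$ as one of the $\mathfrak{s}_j$, while you argue directly that $\eta$ lies in a component $\Sigma_{\mathfrak{r}_j}$ of $\Sigma_{\mathcal{B}}^*$ with no point of that component above it (because the upward germ leaves $\Sigma_{\mathcal{B}}^*$), then apply \Cref{prop highest point of Lambda}(a) and \Cref{prop connected components Sigma}. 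Both are correct and of comparable length. One small point worth spelling out in a writeup: in the first step, after showing that a non-vertex $\eta'\in\Sigma_{\mathcal{B}}^*$ is interior to $\Sigma_{\mathcal{B}}^*$, you should state the symmetric observation (a non-vertex $\eta'\notin\Sigma_{\mathcal{B}}^*$ is interior to $\Sigma_{\mathcal{B}}\smallsetminus\Sigma_{\mathcal{B}}^*$, hence to $\Sigma_{\mathcal{B}}^0$) to conclude that no non-vertex point is a boundary point; your argument establishes it but only half of it is written down.
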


\begin{proof}
Choose any point $\eta = \eta_D \in \Sigma_{\mathcal{B}}$, and assume that we have $p \mid \#\underline{\mathfrak{s}}$ (resp. $p \mid \#\underline{\mathfrak{c}}_i$ for some $i \in \{1, \dots, h\}$).  Let $z$ be any element of $\mathfrak{s}$ (resp. $\mathfrak{c}_i$).  Now it is clear that for any disc $D' \ni z$ such that $\eta_{D'}$ is close enough to $\eta = \eta_D$, we have $D' \cap \mathcal{B} = \mathfrak{s}$ (resp. $D' \cap \mathcal{B} = \mathfrak{c}_i$, which we see by noting that $\mathfrak{s}$ is the parent of $\mathfrak{c}_i$) while $D'$ is not the smallest disc containing $\mathfrak{s}$ (resp. $\mathfrak{c}_i$), so that $\eta_{D'} \neq \eta_{\mathfrak{s}}$ (resp. $\eta_{D'} \neq \eta_{\mathfrak{c}_i}$).  Then it follows from \Cref{prop removing a point from convex hull} that $(\eta = \eta_D, \eta_{D'}) \subset \Sigma_{\mathcal{B}} \smallsetminus \Sigma_{\mathcal{B}}^*$, which means that $\eta$ lies in the closure of this complement and thus in $\Sigma_{\mathcal{B}}^0$.  Now assume conversely that we have $\eta \in \Sigma_{\mathcal{B}}^0$, and let $(\eta, \eta')$ be the open path in $\Sigma_{\mathcal{B}} \smallsetminus \Sigma_{\mathcal{B}}^*$ whose existence is implied by the definition of $\Sigma_{\mathcal{B}}^0$.  If $\eta'$ is chosen close enough to $\eta$, then the cluster $\mathfrak{c} := D'' \cap \mathcal{B}$ for a disc $D''$ such that $\eta_{D''} \in (\eta, \eta') \subset \Sigma_{\mathcal{B}} \smallsetminus \Sigma_{\mathcal{B}}^*$ (so that we have $p \mid \#\underline{\mathfrak{c}}$) does not depend on the choice of $\eta_{D''}$ in this open path.  Moreover, the cluster $\mathfrak{s} = D \cap \mathcal{B}$ either coincides with $\mathfrak{c}$ or is the parent of $\mathfrak{c}$, in which case we have $\mathfrak{c} = \mathfrak{c}_i$ for some $i \in \{1, \dots, h\}$.  Therefore, the cardinality of one of the multisets $\underline{\mathfrak{s}}, \underline{\mathfrak{c}}_1, \dots, \underline{\mathfrak{c}}_s$ is divisible by $p$.  This proves the first statement of the proposition.

Now suppose that $\eta$ is a boundary point of $\Sigma_{\mathcal{B}}^0 \subset \Sigma_{\mathcal{B}}$.  Then there is an open path of the form $(\eta, \eta')$ contained in $\Sigma_{\mathcal{B}} \smallsetminus U$; if $\eta' = \eta_{D'}$ is chosen close enough to $\eta$, then we have $\eta' > \eta$ or $\eta > \eta'$.  In the former case, we have $(\eta, \eta') \subset (\eta, \eta_\infty)$ and, by unique path connectedness and the fact that $U$ is connected, we have $(\eta, \eta_\infty) \cap U = \varnothing$.  In the latter case, by \Cref{prop removing a point from convex hull}, there exists $z \in D' \cap \mathcal{B}$ and so we have $(\eta, \eta') \subset (\eta, \eta_z)$, and similarly we get $(\eta, \eta_z) \cap U = \varnothing$.  The converse is immediate, and part (a) is proved.

Suppose that for some $z \in \mathcal{B} \smallsetminus D$ (which implies $\eta \vee \eta_z > \eta$), we have $(\eta, \eta_z) \cap U = \varnothing$.  Then for any $\eta' \in (\eta, \eta \vee \eta_z)$, we have $\eta' > \eta$ and, if $\eta'$ is chosen close enough to $\eta$, we have $(\eta, \eta') \cap \Sigma_{\mathcal{B}}^0 = \varnothing$ and thus $(\eta, \eta') \subset \Sigma_{\mathcal{B}}^*$; we may further guarantee that for each $\eta_{D''} \in (\eta, \eta')$, we have $D'' \cap \mathcal{B} = \mathfrak{s}$ and $\eta_{D''} \neq \eta_{\mathfrak{s}}$ (see the arguments of the first paragraph).  Then by \Cref{prop removing a point from convex hull} and the definition of $\Sigma_{\mathcal{B}}^*$, we have $p \nmid \#\underline{\mathfrak{s}}$.  Conversely, if $p \nmid \#\underline{\mathfrak{s}}$, then all points $\eta_{D''} > \eta$ close enough to $\eta$ that we have $D'' \cap \mathcal{B} = \mathfrak{s}$ lie in $\Sigma_{\mathcal{B}}^*$ and thus not in $U$.  We thus have $(\eta, \eta') \cap U = \varnothing$ for some $\eta' > \eta$, and this again implies $(\eta, \eta_\infty) \cap U = \varnothing$.  This proves the ``if and only if" statement of part (b) as well as the fact that $\eta$ is maximal in $U$.

Part (c) is proved analogously to the ``if and only if" statement of part (b) and again uses arguments from the first paragraph.

To prove part (d), we observe that the first hypothesis of part (d) is equivalent to the existence of a cluster $\mathfrak{c} \ni z$ whose parent is $\mathfrak{s}$ and satisfies $p \nmid \#\underline{\mathfrak{c}}$ thanks to part (c), while the second hypothesis of part (d) is equivalent to $p \mid \#\underline{\mathfrak{s}}$ thanks to part (b).  The above divisibility properties imply that $s \geq 2$ and so $\eta = \eta_{\mathfrak{s}}$ by \Cref{prop removing a point from convex hull}.  Now since there exists a cluster (namely $\mathfrak{c}$) whose parent is $\mathfrak{s}$ and whose cardinality as a multiset is not divisible by $p$, the cluster $\mathfrak{s}$ is not the disjoint union of $\geq 2$ sub-clusters whose cardinalities as multisets are divisible by $p$, so by \Cref{prop cardinality-p clusters} it is the smallest cluster containing a subset $\mathfrak{r} \subset \mathcal{B} \smallsetminus \{\infty\}$ for which $\Sigma_{\mathfrak{r}}$ is a connected component of $\Sigma_{\mathcal{B}}^*$.  Then by \Cref{prop highest point of Lambda}(a), the point $\eta$ is the maximal point of $\Sigma_{\mathfrak{r}}$; this proves the first direction of part (d).  The converses of each of the last several implications hold as well: by \Cref{prop highest point of Lambda}(a), the maximal point of any component $\Sigma_{\mathfrak{r}} \not\ni \eta_\infty$ of $\Sigma_{\mathcal{B}}^*$ is $\eta_{\mathfrak{s}}$, where $\mathfrak{s}$ is the smallest cluster containing $\mathfrak{r}$; and \Cref{prop cardinality-p clusters} implies that we have $p \mid \#\underline{\mathfrak{s}}$ but that some cluster $\mathfrak{c}$ whose parent is $\mathfrak{s}$ satisfies $p \nmid \#\underline{\mathfrak{c}}$.  Therefore, the other direction of part (d) also holds.
\end{proof}

\section{Preliminaries on semistable models of Galois covers} \label{sec preliminaries geometric}

In \S\ref{sec preliminaries geometric models}, we recall some well known facts about models of curves (in particular, regular and semistable models), while in \S\ref{sec preliminaries geometric components} we use them to characterize the components of the special fiber of the minimal regular model by considering certain invariants of each component.  Then in \S\ref{sec preliminaries geometric Galois}, we narrow our focus to the case where $C$ is a $p$-cyclic Galois cover of the projective line and study the toric rank of $C$ through this framework.

\subsection{Minimal models of curves} \label{sec preliminaries geometric models}

A \emph{model} of $C / K$ (over a ring of integers $\mathcal{O}_{K'}$ for some finite extension $K' / K$) is a normal, flat, projective $\mathcal{O}_{K'}$-scheme $\mathcal{C}$ whose generic fiber is identified with $C$.  (We say that such a model is \emph{defined over $K'$}.)  The models of $C$ form a preordered set with the order relation being given by \emph{dominance}: given two models $\mathcal{C}$ and $\mathcal{C}'$ of $C$, we will write $\mathcal{C}' \geq \mathcal{C}$ (or $\mathcal{C} \leq \mathcal{C}'$) to mean that $\mathcal{C}'$ dominates $\mathcal{C}$, \textit{i.e.} that the identity map $C\to C$ extends to a birational morphism $\mathcal{C}'\to \mathcal{C}$.  For a given model $\mathcal{C}$ of $C$, the \emph{minimal desingularization} of $\mathcal{C}$ is the regular model dominating $\mathcal{C}$ which is minimal (with respect to dominance) for these properties.  The \emph{minimal regular model} of $C$ is the regular model which is minimal among models of $C$.

The special fiber $\mathcal{C}_s$ of a model $\mathcal{C}$ of $C$ is connected and consists of a number of irreducible components $V_1, \ldots, V_n$; these components are projective, possibly singular curves over the residue field $k$, each one appearing in $\mathcal{C}_s$ with a certain multiplicity (which is defined as the length of the local ring of $\mathcal{C}_s$ at the generic point of the component). We denote by $\Irred(\mathcal{C}_s)=\{ V_1,\ldots, V_n\}$ the set of such components. It follows from the properness and flatness of the morphism $\mathcal{C} \to \Spec(\mathcal{O}_{K'})$ that the genus of the smooth $K$-curve $C$ coincides with the arithmetic genus of the $k$-curve $\mathcal{C}_s$.

A model $\mathcal{C}$ of $C$ is said to be \emph{semistable} if its special fiber is reduced and its singularities (if there are any) are all nodes (\textit{i.e.} ordinary double points).  More generally, we say that a model $\mathcal{C}$ of $C$ is \emph{semistable at a point} $P\in \mathcal{C}_s$ if $\mathcal{C}_s$ is reduced at $P$ and if $P$ is either a smooth point or a node of $\mathcal{C}_s$; in the latter case, the completed local ring at $P$ has the form $\mathcal{O}_{K'}[[t_1,t_2]]/(t_1 t_2 - a)$ for some $a \in \pi \mathcal{O}_{K'}$, where $\pi$ is a uniformizer of $\mathcal{O}_{K'}$.  The integer $v(a)/v(\pi) \geq 1$ is known as the \emph{thickness} of the node. A semistable model is regular precisely when all of its nodes have thickness equal to $1$.  We remark that while semistability of a model $\mathcal{C}$ is preserved over an extension of $\mathcal{O}_{K'}$, the property of regularity is generally not preserved, as changing the base to an extension $\mathcal{O}_{K''} / \mathcal{O}_{K'}$ results in the thickness of each node of $\mathcal{C}$ being multiplied by the ramification index $e_{K''/K'}$ in $\mathcal{C} \otimes \Spec(\mathcal{O}_{K''})$.

To describe the combinatorics of a semistable model $\mathcal{C}$ of a curve $C$, one can form the \emph{dual graph} $\Gamma(\mathcal{C}_s)$ of its special fiber, whose set of vertices is $\Irred(\mathcal{C}_s)$ and whose edges correspond to the nodes connecting them.

Given a semistable model $\mathcal{C}$ of $C$, the (generalized) Jacobian $\Pic^0(\mathcal{C}_s)$ of the (possibly singular) $k$-curve $\mathcal{C}_s$ is an extension of an abelian variety $A$ by a torus $T$ (see \cite[Example 9.2.8 and Corollary 9.7.2.]{bosch2012neron}). The ranks of $A$ and $T$ are known respectively as the \emph{abelian rank} and the \emph{toric rank}, and we denote them respectively by $a(\mathcal{C}_s)$ and $t(\mathcal{C}_s)$.

\begin{rmk} \label{rmk abelian and toric}

The following facts about the ranks defined above are well known.

\begin{enumerate}[(a)]
\item The abelian and toric ranks do not depend on the choice of semistable model of a curve $C$.
\item For all models $\mathcal{C}$, the abelian rank $a(\mathcal{C}_s)$ equals the sum $\sum_{V\in \Irred(\mathcal{C}_s)} a(V)$, where $a(V)$ is the genus of the normalization of $V$.
\item If $\mathcal{C}$ is a semistable model, the toric rank $t(\mathcal{C}_s)$ can be computed as $\rank_\zz H^1(\Gamma(\mathcal{C}_s), \zz)$ (see \cite[Example 9.2.8]{bosch2012neron} for a proof); this is the number of edges minus the number of vertices plus $1$.  In other words, we have $t(\mathcal{C}_s)=N_{\mathrm{nodes}}(\mathcal{C}_s) - N_{\mathrm{irred}}(\mathcal{C}_s) + 1$, where $N_{\mathrm{nodes}}(\mathcal{C}_s)$ denotes the number of nodes and $N_{\mathrm{irred}}(\mathcal{C}_s) := \#\Irred(\mathcal{C}_s)$ is the number of irreducible components.
\end{enumerate}

\end{rmk}

\begin{rmk} \label{rmk alternate definition of split degenerate}

The ranks $a(\mathcal{C}_s)$ and $t(\mathcal{C}_s)$ are non-negative integers which add up to the genus $g$ of $C$ (see, for example, the results of \cite[Section 7.5]{liu2002algebraic}).  Meanwhile, by \Cref{rmk abelian and toric}(a), the condition on genus of components for a model $\mathcal{C}$ to be (split) degenerate is equivalent to $a(\mathcal{C}_s) = 0$.  It follows that an alternate definition for split degeneracy is the one given in \Cref{dfn reduction type} with the genus condition replaced by requiring $t(\mathcal{C}_s) = g$, or in other words, that the special fiber have the ``greatest possible" toric rank.

\end{rmk}

We now define the notion of (-1)-lines in the context of semistable models.

\begin{dfn} \label{dfn (-1)-lines}

If  $\mathcal{C}$ is a model, $V \in \Irred(\mathcal{C}_s)$, and $\mathcal{C}$ is semistable at the points of $V$, then $V$ is said to be a \emph{(-1)-line} if it is a line (\textit{i.e.} $V \cong \proj^1_k$) and there is exactly $1$ node of $\mathcal{C}_s$ lying on it.

\end{dfn}

\begin{rmk}

It is possible to show that, if $\mathcal{C}$ is regular model that is semistable at the points of a component $V \in \Irred(\mathcal{C}_s)$, then the definition above is consistent with the definition involving self-intersection numbers commonly given in the more general context of regular models: this follows, for example, from the formula for self-intersection numbers given in \cite[Proposition 9.1.21(b)]{liu2002algebraic}.

\end{rmk}

\begin{rmk} \label{rmk minimal means no (-1)-lines}

The minimal desingularization of $\mathcal{C}$ (resp. regular minimal model of $C$) can be characterized as the unique regular model dominating $\mathcal{C}$ (resp. regular model of $C$) whose special fiber does not contain (-1)-lines, and the minimal regular model over $\mathcal{O}_K$ of a curve $C$ that has semistable reduction over $K$ is itself a semistable model: see for instance the results of \cite[\S9.3]{liu2002algebraic}.

\end{rmk}

\subsection{Finding components of the minimal regular model} \label{sec preliminaries geometric components}

Given a model $\mathcal{C}$ of $C$ and a component $V \in \Irred(\mathcal{C}_s)$, we consider several invariants attached to $V$, listed as follows:
\begin{itemize}
\item $m(V)$ denotes the multiplicity of $V$ in $\mathcal{C}_s$;
\item $a(V)$ denotes the \emph{abelian rank} of $V$, \textit{i.e.} the genus of the normalization of $V$;
\item $w(V)$ is defined only when $m(V)=1$, and it denotes the number of singular points of $\mathcal{C}_s$ that belong to $V$, each one counted as many times as the number of branches of $V$ at that point; in other words, writing $\widetilde{V}$ for the normalization of $V$, then $w(V)$ is the number of points of $\widetilde{V}$ that lie over the intersection of $V$ with the set of singular points of $\mathcal{C}_s$.
\end{itemize}

We now show that, under appropriate assumptions, the integers $m$, $a$, and $w$ are left invariant when the model is changed.

\begin{lemma} \label{lemma inv m a w}

Let $\mathcal{C}'$ be another model of $C$ which dominates $\mathcal{C}$, and let $V'$ denote the strict transform of $V$ in $(\mathcal{C}')_s$.  Then we have $m(V') = m(V)$ and $a(V') = a(V)$. Moreover, if $\mathcal{C}'$ is dominated by the minimal desingularization of $\mathcal{C}$, we also have $w(V') = w(V)$.

\end{lemma}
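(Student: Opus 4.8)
The plan is to prove the three equalities in turn, reducing to the case where $\mathcal{C}'$ is obtained from $\mathcal{C}$ by a single blow-up at a closed point $P \in \mathcal{C}_s$ and then using that an arbitrary dominance $\mathcal{C}' \geq \mathcal{C}$ factors as a composition of such blow-ups (one may enlarge $K'$ first so that both models are defined over a common base; the thickness issues noted in \S\ref{sec preliminaries geometric models} do not affect $m$, $a$, or $w$, which are defined purely on the special fiber). So I would fix $\pi : \mathcal{C}' \to \mathcal{C}$ a blow-up at $P$, let $E$ be the exceptional divisor, and let $V'$ be the strict transform of $V$. The equality $m(V') = m(V)$ is immediate: the local ring of $\mathcal{C}'_s$ at the generic point of $V'$ is identified with the local ring of $\mathcal{C}_s$ at the generic point of $V$ via $\pi$ (blowing up a closed point does not change the generic point of a component), so the lengths agree. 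The equality $a(V') = a(V)$ follows because $\pi$ restricts to a birational morphism $V' \to V$ of integral curves over $k$, hence induces an isomorphism on normalizations, so the geometric genera coincide.

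The content is in the statement $w(V') = w(V)$, which needs the hypothesis that $\mathcal{C}'$ is dominated by the minimal desingularization of $\mathcal{C}$; in particular $m(V) = 1$, and by the previous sentence $m(V') = 1$ as well, so both $w$'s are defined. First I would dispose of the case $P \notin V$: then $\pi$ is an isomorphism near $V$, the strict transform $V'$ maps isomorphically to $V$, and no singular point of $\mathcal{C}'_s$ on $V'$ differs from one on $V$, so $w(V') = w(V)$ trivially. So assume $P \in V$. Because $\mathcal{C}'$ is dominated by the minimal desingularization of $\mathcal{C}$, the center $P$ must be a singular point of $\mathcal{C}_s$ (blowing up a regular point of a regular scheme, or more generally a point where no desingularization is needed, would create a $(-1)$-line contradicting \Cref{rmk minimal means no (-1)-lines}); I would make this precise by noting that any blow-up center occurring in a factorization of the minimal desingularization lies over the non-regular locus of $\mathcal{C}$, which is contained in the singular locus of $\mathcal{C}_s$. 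The key computation is then local at $P$: I would compare the points of $\widetilde{V'}$ over the singular locus of $\mathcal{C}'_s$ with the points of $\widetilde{V}$ over the singular locus of $\mathcal{C}_s$. Over the open locus away from $P$ and $\pi^{-1}(P)$ the two models are isomorphic, so the contribution is the same; the only thing to check is that the branch-counted number of singular points of $\mathcal{C}'_s$ lying on $V'$ over $P$ equals the branch-counted number of singular points of $\mathcal{C}_s$ lying on $V'$ that map to $P$, i.e. the local multiplicity of $V$ at $P$. Since $\widetilde{V'} = \widetilde{V}$, the branches of $V$ at $P$ are in bijection with the points of $\widetilde{V}$ over $P$; I claim each such branch meets $E$ in exactly one point of $V'$, and that point is a singular point of $\mathcal{C}'_s$ (it lies on both $V'$ and $E$, which are distinct components, hence on a node since $\mathcal{C}'$ is semistable at the relevant points — here I use that $\mathcal{C}'$ is dominated by the minimal desingularization, which is semistable at $V'$). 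Conversely each branch of $V$ at $P$ corresponds to exactly one branch of $V'$ at its preimage. This gives a bijection between the branches of $V$ at $P$ and the points of $\widetilde{V'}$ lying over $V' \cap (\text{Sing}(\mathcal{C}'_s))$ that map to $P$, and the count balances.

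The main obstacle I anticipate is making the local branch-counting argument around $P$ fully rigorous — in particular verifying that after blow-up each branch of $V$ at $P$ meets the exceptional locus in a single point which is genuinely a node of $\mathcal{C}'_s$ (rather than, say, a smooth point or a more degenerate singularity), and that no branches are merged or created. This is where the hypothesis that $\mathcal{C}'$ lies below the minimal desingularization is essential: it forces $\mathcal{C}'$ to be semistable at the points of $V'$ (so all the relevant singularities are nodes with exactly two branches, one on $V'$ and one on $E$), and it rules out the degenerate possibility that the strict transform $V'$ fails to meet $E$ or meets it non-transversally in a way that would change $w$. A clean way to organize this is to invoke the explicit description of the minimal desingularization of a node (a chain of $\proj^1$'s) together with \Cref{rmk minimal means no (-1)-lines}, and to observe that passing from $\mathcal{C}$ to any intermediate model in this chain neither adds a singular point to $V'$ that was not forced by an existing branch nor removes one, because each blow-up replaces a node lying on $V'$ by a node lying on $V'$ together with new nodes not on $V'$. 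I would then conclude by induction on the number of blow-ups in a factorization of $\mathcal{C}' \to \mathcal{C}$.
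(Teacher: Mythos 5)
Your treatment of $m$ and $a$ is correct and matches the paper's reasoning (the morphism $\mathcal{C}'_s \to \mathcal{C}_s$ is an isomorphism off a finite set, and $V' \to V$ is birational so the normalizations agree). The $w$ argument, however, has two genuine gaps.

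First, the reduction to a single blow-up at a closed point $P$ and an induction over a blow-up factorization of $\mathcal{C}' \to \mathcal{C}$ is not available in this generality. The model $\mathcal{C}$ need not be regular, and for a normal arithmetic surface a proper birational morphism to it need not factor as a chain of blow-ups at closed points (resolving singularities of such a surface in general requires interleaving normalizations with blow-ups, and $\mathcal{C}'$ is an arbitrary intermediate model below the minimal desingularization, not necessarily one reached from $\mathcal{C}$ by a blow-up sequence). The paper sidesteps this by arguing directly with the morphism $\mathcal{C}' \to \mathcal{C}$: for $Q \in V'$ over $P \in V$, if $\mathcal{C}' \to \mathcal{C}$ is not an isomorphism over $P$, then $\mathcal{C}$ is non-regular at $P$ (because $\mathcal{C}'$ sits under the minimal desingularization), so $\mathcal{C}_s$ is singular at $P$; and the fiber of $\mathcal{C}' \to \mathcal{C}$ over $P$ is pure of dimension $1$, so $Q$ lies on $V'$ and on an exceptional component $E_i$, hence $\mathcal{C}'_s$ is singular at $Q$. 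That gives ``singular at $P$ iff singular at $Q$'' with no factorization at all, and then the branch count follows from the birationality of $V' \to V$ (both sides are indexed by the points of the common normalization $\widetilde{V} = \widetilde{V'}$).

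Second, the parenthetical appeal to semistability is both unjustified and unnecessary. The minimal desingularization of an arbitrary model $\mathcal{C}$ of $C$ is not in general semistable (\Cref{rmk minimal means no (-1)-lines} concerns the minimal regular model of a curve with semistable reduction, a different object), so you cannot conclude the intersection of $V'$ with $E$ is a node. Fortunately, you do not need ``node'': lying on two distinct components of $\mathcal{C}'_s$ already forces the point to be singular, which is all $w$ cares about. Similarly, the claim that each branch of $V$ at $P$ meets $E$ in its own point of $V'$ is not true in general (e.g.\ a tacnode's two branches can remain attached after one blow-up) and is also not needed; the bijection of branches is purely through the normalization and does not require the points of $V'$ over $P$ to be distinct. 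Stripping out the factorization and the semistability detour, and replacing the per-branch meeting claim with the dimension argument for the fiber over $P$, turns your sketch into essentially the paper's proof.
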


\begin{proof}
For $m$ and $a$, the lemma immediately follows from the consideration that $\mathcal{C}'_s \to \mathcal{C}_s$ is an isomorphism away from a finite set of points of $\mathcal{C}_s$. We will now prove the result for $w$.
        
Let $Q$ be a point of $V'$ which lies over some $P\in V$. We claim that $\mathcal{C}'_s$ is smooth (resp. singular) at $Q$ if and only if $\mathcal{C}_s$ is smooth (resp. singular) at $P$. This is obvious whenever $\mathcal{C}' \to \mathcal{C}$ is an isomorphism above $P$. If $\mathcal{C}' \to \mathcal{C}$ is not an isomorphism above $P$, the claim follows from the two following observations.  Firstly, since we are assuming that $\mathcal{C}'$ is dominated by the minimal desingularization of $\mathcal{C}$, it must be the case that $\mathcal{C}$ is not regular at $P$ and consequently that $\mathcal{C}_s$ is singular at $P$.  Secondly, the fiber of $\mathcal{C}'\to \mathcal{C}$ above $P$ is pure of dimension 1, and it consists of those components $E_i$ of $\mathcal{C}'_s$ that contract to $P$; the point $Q$ will thus belong not only to $V'$, but also to one of the $E_i$'s, so that $\mathcal{C}'_s$ will certainly be singular at $Q$. This completes the proof of the claim.
        
Now, since $\mathcal{C}' \to \mathcal{C}$ restricts to a birational morphism $V' \to V$ of $k$-curves, the set of branches of $V$ at a point $P \in \mathcal{C}_s$ equals the set of branches of $V'$ at the points of $\mathcal{C}'_s$ lying above $P$. If we combine this consideration with the claim we have just proved, we see that the morphism $V' \to V$ induces a bijection between the set of the branches of $V$ at the singular points of $\mathcal{C}_s$ and the set of the branches of $V'$ at the singular points of $\mathcal{C}'_s$. The equality $w(V') = w(V)$ follows.
\end{proof}

We now describe how the invariants we have defined allow us to detect (-1)-lines.

\begin{lemma} \label{lemma detecting (-1)-lines}

Let $\mathcal{C}$ be any model of $C$, and let $V$ be an irreducible component of $\mathcal{C}_s$.

\begin{enumerate}[(a)]
\item If $\mathcal{C}$ is regular and $V$ is a (-1)-line of multiplicity 1, then $a(V)=0$ and $w(V)=1$.
\item If $\mathcal{C}$ is semistable at the points of $V$, then $V$ is a (-1)-line if and only if $a(V)=0$ and $w(V)=1$.
\end{enumerate}

\end{lemma}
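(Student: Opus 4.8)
The plan is to reduce both parts to a bookkeeping of the branches of $V$ at the nodes of $\mathcal{C}_s$, after first recording two elementary observations that I will use repeatedly. First, if $V$ is a line, i.e.\ $V \cong \proj^1_k$, then $V$ is smooth, so the normalization map $\widetilde{V} \to V$ is an isomorphism; hence $a(V) = 0$, and $V$ is unibranch at each of its points, so that $w(V)$ is simply the number of points of $V$ at which $\mathcal{C}_s$ is singular. Second, when $\mathcal{C}$ is semistable at the points of $V$, every singular point of $\mathcal{C}_s$ lying on $V$ is a node $P$, which contributes to $w(V)$ the number of branches of $V$ at $P$: this is $2$ if $P$ is a self-node of $V$ and $1$ if the two branches at $P$ belong to $V$ and to a distinct component; moreover $V$ is singular at $P$ exactly in the former case, so $V$ is smooth if and only if it has no self-nodes.

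For part (a), the equality $a(V) = 0$ is immediate from the first observation, and it remains to see that the unique node on $V$ coming from the definition of a $(-1)$-line is the only point of $V$ at which $\mathcal{C}_s$ is singular. If $\mathcal{C}$ happens to be semistable at the points of $V$ this is automatic; in general I would invoke the characterization $V^2 = -1$ recalled in the remark following \Cref{dfn (-1)-lines} (via \cite[Proposition 9.1.21(b)]{liu2002algebraic}) together with the relation $\mathcal{C}_s \cdot V = 0$, which holds because $\mathcal{C}_s$ is a principal divisor on the regular surface $\mathcal{C}$. Writing $\mathcal{C}_s = \sum_i m_i V_i$ with $m_V = 1$, this relation becomes $\sum_{V_i \neq V} m_i (V_i \cdot V) = 1$, which forces a single component $W \neq V$ to meet $V$, with $m_W = 1$ and $W \cdot V = 1$; thus $V$ and $W$ cross transversally at one point, which (since elsewhere on $V$ only $V$ passes through, with multiplicity $1$) is the only point of $V$ where $\mathcal{C}_s$ fails to be smooth, and so $w(V) = 1$.

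For part (b), semistability of $\mathcal{C}$ at the points of $V$ forces $m(V) = 1$ and puts us squarely in the setting of the second observation. The forward implication is then just the first observation applied to the $(-1)$-line $V \cong \proj^1_k$: one gets $a(V) = 0$, and $w(V)$ counts the singular points of $\mathcal{C}_s$ on $V$, of which there is exactly one by definition. For the converse I would argue as follows. From $a(V) = 0$ the normalization $\widetilde{V}$ is $\proj^1_k$. A self-node of $V$ contributes $2$ to $w(V)$, and two or more singular points of $\mathcal{C}_s$ on $V$ contribute at least $2$; since $w(V) = 1$, there are no self-nodes of $V$ and there is exactly one singular point $P$ of $\mathcal{C}_s$ on $V$, necessarily a node at which $V$ has a single branch. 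Having no self-nodes, $V$ is smooth (second observation), so $V = \widetilde{V} \cong \proj^1_k$ is a line carrying exactly one node, i.e.\ a $(-1)$-line.

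I expect the main obstacle to be a matter of care rather than of depth: in the converse of part (b) one must squeeze the single equality $w(V) = 1$ in two directions at once — to exclude self-nodes of $V$ (equivalently, to force $V$ to be normal, hence a genuine line once $a(V) = 0$) and to exclude $V$ meeting two or more other components. The only other point needing attention is pinning down the intended meaning of ``$(-1)$-line of multiplicity $1$'' in part (a) when $\mathcal{C}$ is not assumed semistable at the points of $V$, which is why I propose to fall back on the self-intersection characterization there; with that reading the intersection-number computation above is self-contained.
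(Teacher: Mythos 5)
Your proof is correct and follows essentially the same route as the paper's: part (a) via the self-intersection/intersection-theory formula from Liu (the relation $\mathcal{C}_s\cdot V=0$ you make explicit is exactly what underlies the paper's ``minus the number of intersections with other components'' step), and part (b) via the decomposition of $w(V)$ into self-node contributions counted twice plus transverse intersections with other components, combined with the fact that $\proj^1_k$ is the unique smooth genus-$0$ curve. The only stylistic difference is that you spell out the intersection-number bookkeeping in (a) a little more fully; the logic is the same.
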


\begin{proof}
If $V$ is a component of multiplicity $1$ in the special fiber $\mathcal{C}_s$ of a regular model $\mathcal{C}$, then it follows from the intersection theory of regular models (see \cite[Chapter 9]{liu2002algebraic}) that its self-intersection number is equal to minus the number of points at which $V$ intersects the other components of $\mathcal{C}_s$, each counted with a certain (positive) multiplicity. Once this has been observed, part (a) follows immediately from the usual definition involving self-intersection numbers for (-1)-lines in the context of regular models.
        
Suppose now that $V$ is a component of the special fiber $\mathcal{C}_s$ of a model $\mathcal{C}$ that is semistable at the points of $V$ (which, in particular, implies that $m(V)=1$). From the definition of the invariant $w$ and the structure of semistable models, it is clear that $w(V)$ equals the sum $2w_{\text{self}}(V) + w_{\text{other}}(V)$, where $w_{\text{self}}(V)$ is the number of self-intersections of $V$, while $w_{\text{other}}(V)$ is the number of intersections of $V$ with other components of $\mathcal{C}_s$; moreover, we have $w_{\text{self}}(V)=0$ if and only if $V$ is smooth.  But the line $\mathbb{P}^1_k$ is the unique smooth $k$-curve with abelian rank 0, so the component $V$ is a line if and only if $a(V)=0$ and $w_{\text{self}}(V)=0$; according to \Cref{dfn (-1)-lines}, the component $V$ is thus a (-1)-line if and only if $a(V)=0$, $w_{\text{self}}(V)=0$, and $w_{\text{other}}(V) = 1$.  This immediately implies part (b).
\end{proof}

The above results and discussion have led us to a criterion for a model $\mathcal{C}$ to be part of the minimal regular model of $C$ in the case that $C$ has semistable reduction over $K' / K$.

\begin{prop} \label{prop part of mini}

Assume that the curve $C / K'$ has positive genus and semistable reduction.  Then a model $\mathcal{C}$ of $C$ (defined over $K'$) is dominated by the minimal regular model of $C$ if and only if for each component $V$ of $\mathcal{C}_s$, we have
\begin{enumerate}[(i)]
\item $m(V)=1$, and 
\item $a(V) \geq 1$ or $w(V) \geq 2$.
\end{enumerate}

\end{prop}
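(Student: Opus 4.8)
Write $\Cmini$ for the minimal regular model of $C$; since $C$ has semistable reduction over $K'$, \Cref{rmk minimal means no (-1)-lines} tells us that $\Cmini$ is itself semistable and contains no (-1)-lines. The plan is to prove both implications by comparing $\mathcal{C}$ with $\Cmini$ through the minimal desingularization $\widetilde{\mathcal{C}}$ of $\mathcal{C}$, using \Cref{lemma inv m a w} (which applies to any model squeezed between $\mathcal{C}$ and $\widetilde{\mathcal{C}}$), \Cref{lemma detecting (-1)-lines}, and the standard fact that a proper birational morphism of regular arithmetic surfaces factors as a composition of contractions of (-1)-curves (cf.\ \cite[Ch.~9]{liu2002algebraic}). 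I record at the outset that $\widetilde{\mathcal{C}}$ is a regular model of $C$ and hence dominates $\Cmini$, so whenever $\mathcal{C}$ is dominated by $\Cmini$ one has $\mathcal{C} \leq \Cmini \leq \widetilde{\mathcal{C}}$.

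For the ``only if'' direction I would assume $\mathcal{C} \leq \Cmini$ and apply \Cref{lemma inv m a w} with $\mathcal{C}' = \Cmini$ to the strict transform $V'$ in $\Cmini$ of an arbitrary component $V$ of $\mathcal{C}_s$: this gives $m(V') = m(V)$, $a(V') = a(V)$, and---since $\Cmini$ is semistable, so $m(V') = 1$---also $w(V') = w(V)$. In particular $m(V) = 1$, which is (i). For (ii), note that $V'$ is not a (-1)-line, so by \Cref{lemma detecting (-1)-lines}(b) we cannot have $a(V') = 0$ and $w(V') = 1$ simultaneously; and if $a(V') = 0$ and $w(V') = 0$, then $V'$ would be a smooth component of $(\Cmini)_s$ meeting no node and no other component, forcing the connected curve $(\Cmini)_s$ to equal $V'$ and hence to have arithmetic genus $a(V') = 0$, contradicting the fact that this arithmetic genus equals the (positive) genus $g$ of $C$. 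Therefore $a(V) \geq 1$ or $w(V) \geq 2$.

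For the ``if'' direction I would assume $\mathcal{C}$ satisfies (i) and (ii), take the morphism $g\colon \widetilde{\mathcal{C}} \to \Cmini$ provided by the minimality of $\Cmini$, and factor it as $\widetilde{\mathcal{C}} = \mathcal{C}^{(0)} \to \mathcal{C}^{(1)} \to \dots \to \mathcal{C}^{(N)} = \Cmini$, each arrow contracting a (-1)-curve. The crux is to show that every contracted (-1)-curve is the image of an exceptional component of $\widetilde{\mathcal{C}} \to \mathcal{C}$, never the image of a strict transform of a component of $\mathcal{C}_s$; granting this, $g$ contracts only components already contracted by $\widetilde{\mathcal{C}} \to \mathcal{C}$, so the latter factors through $g$, producing a morphism $\Cmini \to \mathcal{C}$ and hence $\mathcal{C} \leq \Cmini$. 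I would prove the crux by induction on $j$: if the first $j$ contractions were of exceptional components only, then $\widetilde{\mathcal{C}} \to \mathcal{C}^{(j)}$ contracts only components contracted by $\widetilde{\mathcal{C}} \to \mathcal{C}$, so the latter factors through it and $\mathcal{C} \leq \mathcal{C}^{(j)} \leq \widetilde{\mathcal{C}}$. Were the (-1)-curve $E$ contracted at step $j$ the strict transform of a component $V$ of $\mathcal{C}_s$, then $E \cong \proj^1_k$ would give $a(V) = a(E) = 0$ by \Cref{lemma inv m a w}, hence $w(V) \geq 2$ by (ii); and since $m(E) = m(V) = 1$ and $\mathcal{C} \leq \mathcal{C}^{(j)} \leq \widetilde{\mathcal{C}}$, \Cref{lemma inv m a w} would give $w(E) = w(V) \geq 2$. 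On the other hand, an intersection-number computation in the regular model $\mathcal{C}^{(j)}$---using $E^2 = -1$, $m(E) = 1$, and $E \cdot (\mathcal{C}^{(j)})_s = 0$, which together force $E$ to meet the rest of the special fiber transversally at a single point lying on a reduced component---shows that the only singular point of $(\mathcal{C}^{(j)})_s$ on $E$ is that node, at which $E$ has one branch, so $w(E) = 1$: a contradiction.

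I expect the main obstacle to be the bookkeeping around the invariant $w$: \Cref{lemma inv m a w} certifies invariance of $w$ only for models lying between $\mathcal{C}$ and its minimal desingularization, so the induction in the ``if'' direction must be arranged so that each intermediate model $\mathcal{C}^{(j)}$ is known in advance to dominate $\mathcal{C}$---which is exactly why the inductive hypothesis must record that only exceptional components have been contracted so far. A secondary technical point is that the equality $w(E) = 1$ for a multiplicity-$1$ (-1)-curve has to be extracted directly from intersection theory, since the intermediate models $\mathcal{C}^{(j)}$ need not be semistable along $E$ and so \Cref{lemma detecting (-1)-lines}(b) is unavailable there.
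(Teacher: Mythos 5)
Your ``only if'' direction matches the paper's line for line: apply \Cref{lemma inv m a w} with $\mathcal{C}' = \Cmini$ to get $m(V)=1$, then rule out $a(V)=w(V)=0$ via the genus of the special fiber and rule out $a(V)=0,\ w(V)=1$ via \Cref{lemma detecting (-1)-lines}(b) together with \Cref{rmk minimal means no (-1)-lines}.

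For the ``if'' direction your route is genuinely different. The paper argues on $\widetilde{\mathcal{C}}$ alone: any (-1)-line of $\widetilde{\mathcal{C}}_s$ must, by minimality of the desingularization, be a strict transform $V''$ of some $V \in \Irred(\mathcal{C}_s)$, and then \Cref{lemma inv m a w} plus \Cref{lemma detecting (-1)-lines}(a) give $a(V)=0,\ w(V)=1$, contradicting (ii); so $\widetilde{\mathcal{C}}_s$ has no (-1)-lines, and \Cref{rmk minimal means no (-1)-lines} immediately gives $\widetilde{\mathcal{C}} = \Cmini$. You instead factor $\widetilde{\mathcal{C}} \to \Cmini$ into blow-downs of (-1)-curves and run an induction showing each contracted curve is exceptional for $\widetilde{\mathcal{C}} \to \mathcal{C}$; the contradiction at each step is morally the same computation as the paper's, but you have to re-establish the invariance window $\mathcal{C} \leq \mathcal{C}^{(j)} \leq \widetilde{\mathcal{C}}$ at every step and then invoke a factorization principle for arithmetic surfaces (that $\widetilde{\mathcal{C}} \to \mathcal{C}$ factors through $\widetilde{\mathcal{C}} \to \mathcal{C}^{(j)}$ once the exceptional locus of the latter sits inside that of the former). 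That factorization step is true for fibered surfaces but is not free; you should point to a reference (Liu, Ch.\ 9) or observe, as the paper effectively does, that once you know every contracted curve is exceptional, minimality of the desingularization forces there to be no contractions at all, so you land back on $\widetilde{\mathcal{C}} = \Cmini$ and the factorization is trivial. One small economy you missed: your intersection-theoretic computation of $w(E)=1$ is exactly the content of \Cref{lemma detecting (-1)-lines}(a), whose hypothesis is regularity (not semistability), so you could have cited it directly rather than re-deriving it.
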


\begin{proof}
Let $\mathcal{C}'$ denote the minimal regular model of $C$ over $K'$.  First assume that we have $\mathcal{C} \leq \mathcal{C}'$.  Then it follows from \Cref{lemma inv m a w} that the invariants $a$, $m$, and $w$ of a component $V$ of the special fiber $\mathcal{C}_s$ must be equal to those of its strict transform $V'$ in $\SF{\mathcal{C}'}$.  Since $\mathcal{C}'$ is semistable, its special fiber is reduced; thus, we get $m(V) = m(V') = 1$.  Let us now assume that $a(V) = a(V') = 0$.  If it were the case that $w(V) = 0$, then $\mathcal{C}_s = V$ would be a line; since the arithmetic genus of $\mathcal{C}_s$ equals the genus of $C$, this contradicts the condition that the genus is positive.  If we had $w(V) = w(V') = 1$, then, thanks to \Cref{lemma detecting (-1)-lines}(b), the component $V'$ would be a (-1)-line, which is impossible by \Cref{rmk minimal means no (-1)-lines}.  Thus, the quantity $w(V)$ is necessarily $\geq 2$.

Now assume that for each component $V$ of $\mathcal{C}_s$, the conditions (i) and (ii) given in the statement hold.  Let $\tilde{\mathcal{C}}$ be the minimal desingularization of $\mathcal{C}$.  Assume by way of contradiction that $\tilde{\mathcal{C}}_s$ contains a (-1)-line.  Since the desingularization $\tilde{\mathcal{C}}$ is minimal, such a (-1)-line must necessarily be the strict transform $V''$ of some component $V \in \Irred(\mathcal{C}_s)$.  By \Cref{lemma inv m a w}, the quantities $a(V'')$, $m(V'')$ and $w(V'')$ are equal to $a(V)$, $m(V)$ and $w(V)$ respectively.  Thus, from the condition $m(V)=1$, we deduce $m(V'')=1$.  Moreover, since $V''$ is a (-1)-line of multiplicity $1$, \Cref{lemma detecting (-1)-lines}(a) ensures that $a(V'') = 0$ and $w(V'') = 1$, and thus we have $a(V)=0$ and $w(V)=1$.  But this contradicts our hypothesis, so we conclude that $\tilde{\mathcal{C}}_s$ cannot contain a (-1)-line.  It follows from \Cref{rmk minimal means no (-1)-lines} that $\tilde{\mathcal{C}} = \mathcal{C}'$, and we get $\mathcal{C} \leq \mathcal{C}'$ as desired.
\end{proof}

\subsection{Galois covers and toric rank} \label{sec preliminaries geometric Galois}

If $C \to X$ is a Galois cover of curves over $K$ with Galois group $G$ such that the Galois automorphisms of $C$ are defined over $K$, then any model $\mathcal{X} / \mathcal{O}_K$ of $X$ gives rise to a model $\mathcal{C} / \mathcal{O}_K$ of $C$ on which $G$ acts by $\mathcal{O}_K$-scheme automorphisms, via letting $\mathcal{C}$ be the normalization of $\mathcal{X}$ in the function field $K(C)$ (see \cite[Definition 4.1.24]{liu2002algebraic}).  Conversely, given a model $\mathcal{C} / \mathcal{O}_K$ of $C$ on which $G$ acts, the quotient $\mathcal{C} / G$ with respect to that action is a model of $X$ whose normalization in $K(C)$ can be identified with $\mathcal{C}$.  Moreover, it is clear that $G$ acts on the set $\Irred(\mathcal{C}_s)$ of irreducible components of $\mathcal{C}_s$ and that the resulting quotient $\Irred(\mathcal{C}_s) / G$ can be identified with the set $\Irred(\mathcal{X}_s)$ of irreducible components of $X$.  Relatedly a dominant morphism $\mathcal{X}_1 \to \mathcal{X}_2$ between two models of $\mathcal{X}$ which contracts the components in a certain subset $\mathfrak{C} \subsetneq \Irred((\mathcal{X}_1)_s)$ lifts to a dominant morphism $\mathcal{C}_1 \to \mathcal{C}_2$ between their normalizations which contracts the components in the inverse image of $\mathfrak{C}$ in $\Irred((\mathcal{C}_1)_s)$.

It follows from \cite[Proposition 10.1.16]{liu2002algebraic} that the minimal regular model of a Galois cover $C \to X$ is acted on by the Galois group $G$: more precisely, the action of each element of $G$ on the generic fiber $C$ extends uniquely to an automorphism of the minimal regular model.

\begin{prop} \label{prop persistent nodes}

Let $C / K$ be a $G$-Galois cover of $X$, and let $K' / K$ be a finite extension over which $C$ achieves semistable reduction.  There is an extension $K'' / K'$ with $[K'' : K'] \leq 2$ such that, letting $\Cmini$ be the minimal regular model of $C$ over $\mathcal{O}_{K''}$ and defining $\Xmini$ to be the quotient $\Cmini / G$, a $k$-point of $\SF{\Cmini}$ is a node if and only if its image under the quotient map $\Cmini \to \Xmini$ is a node.  (In particular, the model $\Xmini$ is semistable.)

\end{prop}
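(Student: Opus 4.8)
The plan is to analyze the behavior of singular points of $\SF{\Cmini}$ under the quotient map $q : \Cmini \to \Xmini$, using the $G$-action on $\Cmini$ (which exists by \cite[Proposition 10.1.16]{liu2002algebraic}, as recalled above) together with semistability of $\Cmini$. The first observation is that since $\Cmini$ is a semistable model and $K''$ contains $K'$, the special fiber $\SF{\Cmini}$ has only nodes as singularities; moreover, as $\Xmini$ is a quotient of $\Cmini$ by a finite group acting on it, $\Xmini$ is a model of $X$. The crux is to understand the local structure of $q$ at a point $P \in \SF{\Cmini}$ and its image $q(P)$. I would split the analysis according to whether $P$ is a smooth point or a node of $\SF{\Cmini}$, and in each case according to the behavior of the stabilizer $G_P \leq G$.

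First I would handle the generic situation: if $q$ is étale (or, more precisely, unramified) at $P$, then $P$ and $q(P)$ have isomorphic completed local rings, so one is a node if and only if the other is. The content is therefore concentrated at the finitely many points $P$ where $q$ ramifies — equivalently, the points lying over the branch locus of $C \to X$ or over components of $\SF{\Cmini}$ on which a nontrivial subgroup of $G$ acts trivially. Since $G$ acts on the finite set $\Irred(\SF{\Cmini})$, and the curve $C \to X$ is a $G$-cover of smooth projective curves, the inertia behavior is constrained: away from a finite set, $q$ is a genuine quotient by a free action on components.

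The main obstacle — and the reason the extension $K''/K'$ of degree $\leq 2$ is needed — will be controlling the \emph{thickness} of nodes and the possibility that a node of $\SF{\Cmini}$ maps to a smooth point (or vice versa) when a group element swaps the two branches at a node. Concretely, at a node $P$ with completed local ring $\mathcal{O}_{K''}[[t_1,t_2]]/(t_1t_2 - a)$, an element $g \in G_P$ that fixes each branch acts on the local parameters $t_i$, and the quotient of such a node is again a node (with possibly altered thickness); but an element that \emph{swaps} the two branches produces, after quotient, a point whose completed local ring is a ramified quadratic extension's worth of a smooth or cuspidal point, and one must pass to a degree-$2$ extension of the base to split this and guarantee the image is still a node (or to rule out that such swapping introduces a non-node). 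I would argue that after the base change to $\mathcal{O}_{K''}$ with $[K'':K'] \leq 2$, all branch-swapping automorphisms at nodes either disappear or become compatible with the node structure, so that $q$ sends nodes to nodes bijectively on the relevant points; here one invokes that the $G$-action on the minimal regular model over $\mathcal{O}_{K''}$ is again the minimal regular model (so no $(-1)$-lines get created or destroyed in a way that would change the combinatorics), using \Cref{rmk minimal means no (-1)-lines} and the fact that semistability is preserved under base change while the minimal regular model over the larger base is obtained by minimal desingularization.

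Finally, to conclude that $\Xmini = \Cmini/G$ is itself semistable, I would combine the two directions: every node of $\SF{\Cmini}$ maps to a node of $\SF{\Xmini}$, and conversely every singular point of $\SF{\Xmini}$ pulls back to a singular point of $\SF{\Cmini}$ (which is a node, since $\Cmini$ is semistable), and the analysis of the local ring of $q$ at such a point shows the image is also a node — so $\SF{\Xmini}$ is reduced with only nodal singularities. I expect the bulk of the work to be the local computation at ramified nodes, organizing the cases by the image of $G_P$ in the automorphism group of the two branches (trivial, fixing both branches, or swapping them), and verifying that the degree-$\leq 2$ base change suffices to normalize the last case; the rest is a matter of patching together the étale locus with these finitely many special points.
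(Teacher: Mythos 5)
You correctly diagnose that the quotient map is well behaved on the \'etale locus and that the genuine obstruction is a $g \in G$ that fixes a node but swaps its two branches, and you correctly guess that a degree-$2$ base change followed by re-desingularization should resolve it.  But the central step is asserted rather than proved: the sentence claiming that after the base change ``all branch-swapping automorphisms at nodes either disappear or become compatible with the node structure'' is exactly the content of the proposition, and you do not supply the argument.  Moreover, the framing suggests you view the degree-$2$ extension as ``splitting'' something quadratic in the local ring at a node; that is not how the extension is used.  (As an aside, the description of the quotient of a branch-swapped node as ``a ramified quadratic extension's worth of a smooth or cuspidal point'' is off --- the invariant ring of $\mathcal{O}[[t_1,t_2]]/(t_1t_2 - a)$ under $t_1 \leftrightarrow t_2$ is just $\mathcal{O}[[t_1+t_2]]$, a smooth point, though this does not by itself derail the plan.)

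The mechanism the paper actually uses is geometric and you should spell it out.  The minimal regular model $\mathcal{C}'$ over $\mathcal{O}_{K'}$ is semistable and regular, so all its nodes have thickness $1$.  After a ramified quadratic base change, every node of $\mathcal{C}' \otimes_{\mathcal{O}_{K'}} \mathcal{O}_{K''}$ has thickness $2$, so the minimal regular model $\Cmini$ over $\mathcal{O}_{K''}$ is obtained by blowing up each such node $Q$ once; this inserts an exceptional line $L$ meeting the two strict transforms of the old branches at two new nodes $Q_1, Q_2$ of thickness $1$.  Now if $g \in G$ fixes $Q_1$, then $g$ fixes the node $Q$ below it, hence preserves the exceptional fiber $L$ over $Q$; since $g$ permutes the two branches of the special fiber through $Q_1$ and one of those branches lies on $L$ (which $g$ preserves), $g$ must fix both branches.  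At that point one invokes the local computation in \cite[Proposition 10.3.48(b)]{liu2002algebraic} to conclude the image of $Q_1$ (and symmetrically $Q_2$) in $\Xmini$ is a node.  In short: the resolution does not ``tame'' branch-swapping, it geometrically separates the two branches by a $G$-equivariantly-determined line, so no stabilizer of a surviving node can swap them.  Without this argument your proposal has a gap at precisely the step that the degree-$2$ extension is designed to enable.
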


\begin{proof}
The fact that the image of a smooth point of a semistable model (over any extension of $\mathcal{O}_K$) is a smooth point in the quotient by $G$ is given by \cite[Proposition 10.3.48(a)]{liu2002algebraic}.

Let $\mathcal{C}'$ denote the minimal regular model of $C$ over $\mathcal{O}_{K'}$.  As discussed in \Cref{rmk minimal means no (-1)-lines}, the model $\mathcal{C}'$ is itself semistable as well as regular and therefore, every node of $(\mathcal{C}')_s$ has thickness $1$.  If each node of $(\mathcal{C}')_s$ lies over a node of $(\mathcal{X}')_s$, then, letting $K'' = K'$, we are done.  Otherwise, let $K'' / K'$ be a quadratic extension, which necessarily has ramification index $2$ because $K$ is assumed to have algebraically closed residue field.  Now the model $\mathcal{C}'' := \mathcal{C}' \otimes_{\mathcal{O}_{K''}} \mathcal{O}_{K'}$ of $C$ is still acted on by the Galois group $G$ of the cover $C \to X$ and has the property that the only singular points of $(\mathcal{C}'')_s$ (and thus the only singular points of the arithmetic surface $\mathcal{C}''$) are nodes of thickness $2$.

Now it is clear that $\Cmini$ can be obtained by desingularizing the arithmetic surface $\mathcal{C}''$ at each of its singular points.  Let $Q \in (\mathcal{C}'')_s$ be one of these singular points, a node of thickness $2$.  The inverse image of $Q$ under the blowing-up $\mathcal{C}''_Q \to \mathcal{C}''$ at $Q$ consists of a line $L \cong \proj_k^1$ which intersects the rest of the blown-up special fiber at $2$ nodes $Q_1, Q_2$ (each of thickness $1$) at the respective strict transforms $B_1, B_2$ of the two branches of $(\mathcal{C}'')_s$ which pass through $Q$.

Let $G_Q \leq G$ be the subgroup which fixes the node $Q$.  Then, as each automorphism in $G_Q$ extends to an automorphism of the desingularization $\mathcal{C}''_Q$, it is clear that $G_Q$ fixes the line $L$ as it is the inverse image of $Q$ under the blow-up.  Now let $G_{Q_1} \leq G_Q$ be the subgroup which fixes the node $Q_1$ in $(\mathcal{C}''_Q)_s$.  Since each element $g \in G_{Q_1}$ fixes both $L$ and $B_1$, it fixes each of the two branches of $(\mathcal{C}''_Q)_s$ passing through $Q_1$.  It then follows from an explicit local study of the quotient map $\mathcal{C}''_Q \to \mathcal{X}''_Q := \mathcal{C}''_Q / G$, which can be found in the proof of \cite[Proposition 10.3.48(b)]{liu2002algebraic}, that the image of $Q_1$ in $(\mathcal{X}''_Q)_s$ is a node.  By an identical argument, the image of $Q_2$ in ($\mathcal{X}''_Q)_s$ is a node.

Thus, desingularizing $\mathcal{C}''$ at each node $Q$ of its special fiber does not add any new nodes to the special fiber except for two nodes $Q_1, Q_2$, each of whose images modulo the action of $G$ is a node.  It follows that the minimal regular model $\Cmini$ of $C$ over $\mathcal{O}_{K''}$ satisfies the desired property that each node of $\SF{\Cmini}$ maps to a node of $\SF{\Xmini}$.
\end{proof}

\begin{cor} \label{cor inverse images are smooth}

With the set-up and definitions of \Cref{prop persistent nodes}, suppose that each component of $\SF{\Xmini}$ is a smooth $k$-curve.  Then the inverse image of any component of $\SF{\Xmini}$ in $\SF{\Cmini}$ is also a smooth (possible disconnected) $k$-curve.

\end{cor}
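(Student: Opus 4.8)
The plan is to combine the node-detection statement of \Cref{prop persistent nodes} with a local study of the quotient map $\phi \colon \Cmini \to \Xmini$ at the nodes of $\SF{\Cmini}$, essentially re-running the local computation used to prove that proposition. Fix a component $W$ of $\SF{\Xmini}$; by hypothesis it is smooth, hence has exactly one branch through each of its points. Let $Z \subseteq \SF{\Cmini}$ be the reduced inverse image of $W$, i.e. the union of the components of $\SF{\Cmini}$ mapping onto $W$. I want to show that $Z$ is smooth (as a possibly disconnected $k$-curve), or equivalently that $Z$ has no singular point.

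First I would reduce this to a statement about nodes. Since $\Cmini$ is the minimal regular model of a curve that has semistable reduction, $\SF{\Cmini}$ is a semistable curve by \Cref{rmk minimal means no (-1)-lines}, so its only singularities are nodes. At a smooth point of $\SF{\Cmini}$ the subcurve $Z$ is locally either empty or a single smooth branch, and at a node of $\SF{\Cmini}$ it is locally the union of whichever of the two branches lie on components mapping to $W$. Hence every singular point of $Z$ is a node $Q$ of $\SF{\Cmini}$ at which both branches lie in $Z$, and it suffices to rule out the existence of such a $Q$.

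So suppose $Q$ is a node of $\SF{\Cmini}$ with both branches $B_1, B_2$ contained in $Z$, and set $P := \phi(Q)$. By \Cref{prop persistent nodes}, $P$ is a node of $\SF{\Xmini}$. Letting $G_Q \leq G$ be the stabilizer of $Q$ and using that $G$ acts transitively on $\phi^{-1}(P)$, the completed local ring $\hat{\mathcal{O}}_{\SF{\Xmini},P}$ is identified with the ring of $G_Q$-invariants of $\hat{\mathcal{O}}_{\SF{\Cmini},Q}$, which — since $\Cmini$ is regular and $Q$ is a thickness-$1$ node of $\SF{\Cmini}$ — is $k[[t_1,t_2]]/(t_1 t_2)$, with $B_1$ and $B_2$ cut out by $t_1 = 0$ and $t_2 = 0$. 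A local analysis of the $G_Q$-action (the same one appearing in the proof of \Cref{prop persistent nodes}, via \cite[Proposition 10.3.48(b)]{liu2002algebraic}) then shows that $G_Q$ must fix each of the two branches at $Q$: if instead some element of $G_Q$ interchanged $B_1$ and $B_2$, the invariant ring would be regular and $P$ would be a smooth point, contradicting that $P$ is a node. With $G_Q$ fixing each branch, this same computation identifies the two branches of $\SF{\Xmini}$ at $P$ with the images $\phi(B_1)$ and $\phi(B_2)$.

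Finally, since $B_1$ and $B_2$ each lie on a component of $\SF{\Cmini}$ mapping onto $W$, the branches $\phi(B_1)$ and $\phi(B_2)$ both lie in $W$; as these are the two distinct branches of $\SF{\Xmini}$ at $P$, the component $W$ has two branches through $P$ and is therefore singular at $P$ — contradicting the hypothesis that $W$ is smooth. Hence no such node $Q$ exists and $Z$ is smooth. The hard part here is the local study of $\phi$ at the node $Q$ — pinning down the identification of $\hat{\mathcal{O}}_{\SF{\Xmini},P}$ with the $G_Q$-invariants of $\hat{\mathcal{O}}_{\SF{\Cmini},Q}$ and tracking the branches through the quotient — but since this essentially duplicates the local computation already carried out for \Cref{prop persistent nodes}, it should present no real difficulty.
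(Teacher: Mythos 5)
Your proof is correct, and it follows essentially the same path as the paper's: reduce to showing no node $Q$ of $\SF{\Cmini}$ has both branches over the chosen component $W$, invoke \Cref{prop persistent nodes} to see $P = \phi(Q)$ is a node, and then observe that the two branches of $Q$ must map to the two distinct branches of $P$, one of which lies outside $W$. The one place where you go into slightly more detail than the paper is the crucial claim that the two branches of $Q$ land on components of $\SF{\Cmini}$ lying over \emph{distinct} components of $\SF{\Xmini}$. The paper simply asserts that ``the node $Q$ must be the intersection of $W$ and $W'$'' for components $W, W'$ lying over $V, V'$, leaving the justification implicit; you supply it by the local-ring argument (if $G_Q$ interchanged the two branches, the invariant ring at $P$ would be regular, contradicting that $P$ is a node). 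This is precisely the computation in \cite[Proposition 10.3.48(b)]{liu2002algebraic} that \Cref{prop persistent nodes} already rests on, so your version makes explicit a step the paper takes for granted rather than introducing a genuinely different idea. Nothing to fix.
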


\begin{proof}
Choose a component $V \in \SF{\Xmini}$ and a point $Q \in \SF{\Cmini}(k)$ lying in the inverse image of $V$.  Let $P \in V(k)$ be the image of $Q$.  If the fiber $\SF{\Xmini}$ is smooth at $P$, then by \Cref{prop persistent nodes}, the fiber $\SF{\Cmini}$ is smooth at $Q$ and so $Q$ is a non-singular point of the inverse image of $V$.  If, on the other hand, the point $P$ is singular, then by \Cref{prop persistent nodes}, the point $Q$ must be as well; in fact, it must be a node as the model $\Cmini$ is semistable.  Since each component of $\SF{\Xmini}$ is smooth, the singular point $P$ must be the intersection of $V$ with some (unique) other component $V'$.  Then for some (distinct) components $W, W'$ of $\SF{\Cmini}$ respectively lying over $V, V'$, the node $Q$ must be the intersection of $W$ and $W'$.  As $Q$ is a node, only $2$ branches of $\SF{\Cmini}$ pass through it, so the irreducible curve $W$ must be smooth at $Q$, and no other component in the inverse image of $V$ passes through $Q$.  It follows that $Q$ is again a non-singular point of the inverse image of $V$.
\end{proof}

It follows from the above results that the minimal regular model $\Cmini$ of a $p$-cyclic Galois cover $C \to X := \proj_K^1$ is therefore determined by a model $\Xmini$ of $X = \proj_K^1$ (over some finite extension $\mathcal{O}_{K'} / \mathcal{O}_K$).  Fortunately, models of the projective line are easy to describe and classify -- see \S\ref{sec normalizations models of P^1} below for details.  The property of models $\mathcal{X}$ of the projective line that are pertinent to this subsection is that their special fibers consist of copies of $\proj_k^1$ meeting at ordinary singularities, each of which can itself be identified with the special fiber of a (smooth) model of $X$ which is dominated by $\mathcal{X}$.  If $\mathcal{X}$ is semistable, then these copies of $\proj_k^1$ are arranged in a tree-like fashion, \textit{i.e.} with the notation of \Cref{rmk abelian and toric}(c), their special fibers satisfy $H^1(\mathcal{X}_s, \zz) = 0$, or equivalently, we have 
\begin{equation} \label{eq toric rank of proj}
N_{\mathrm{nodes}}(\mathcal{X}_s) - N_{\mathrm{irred}}(\mathcal{X}_s) + 1 = 0.
\end{equation}

We will use the following proposition to compute the toric rank of a given $p$-cyclic cover $C \to X$.

\begin{prop} \label{prop toric rank formula}

With the set-up and definitions of \Cref{prop persistent nodes}, let $N_0$ denote the number of nodes of $\SF{\Xmini}$ which are not branch points of the cover $\Cmini \to \Xmini$, and let $N_1$ denote the number of components of $\SF{\Xmini}$ whose inverse images in $\SF{\Cmini}$ consist of multiple components.  Then we have the formula 
\begin{equation} \label{eq toric rank formula}
t(\SF{\Cmini}) = (p - 1)(N_0 - N_1).
\end{equation}

\end{prop}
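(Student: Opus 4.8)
The plan is to compute $t(\SF{\Cmini})$ via the dual graph of $\SF{\Cmini}$ and transport the count down along the quotient map $\pi \colon \Cmini \to \Xmini$, exploiting that the dual graph of $\SF{\Xmini}$ is a tree. Since $C$ achieves semistable reduction over $K''$ (semistability of a model being preserved under field extension) and $\Cmini$ is its minimal regular model, $\Cmini$ is itself semistable by \Cref{rmk minimal means no (-1)-lines}, so \Cref{rmk abelian and toric}(c) gives
\begin{equation*}
t(\SF{\Cmini}) = N_{\mathrm{nodes}}(\SF{\Cmini}) - N_{\mathrm{irred}}(\SF{\Cmini}) + 1.
\end{equation*}
Now $\pi$ is a degree-$p$ quotient by $G = \zz/p\zz$, so $G$ acts transitively on each of its fibers, and by \Cref{prop persistent nodes} the nodes of $\SF{\Cmini}$ are precisely the points lying over nodes of $\SF{\Xmini}$. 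Hence: each component of $\SF{\Xmini}$ corresponds, as recalled in \S\ref{sec preliminaries geometric Galois}, to a $G$-orbit in $\Irred(\SF{\Cmini})$, of size $1$ or $p$ (as $p$ is prime), with $N_1$ of them of size $p$; and each node $Q$ of $\SF{\Xmini}$ has $\pi$-fiber a $G$-orbit of nodes of $\SF{\Cmini}$, again of size $1$ or $p$. Letting $N_0'$ denote the number of nodes $Q$ with $p$-element $\pi$-fiber, this gives $N_{\mathrm{irred}}(\SF{\Cmini}) = N_{\mathrm{irred}}(\SF{\Xmini}) + (p-1)N_1$ and $N_{\mathrm{nodes}}(\SF{\Cmini}) = N_{\mathrm{nodes}}(\SF{\Xmini}) + (p-1)N_0'$.

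The crux is to identify $N_0'$ with $N_0$, that is, to prove that a node $Q$ of $\SF{\Xmini}$ has a $p$-element $\pi$-fiber if and only if $Q$ is not a branch point of $\pi$. The $\pi$-fiber over $Q$ has $p$ elements exactly when $G$ acts freely on it. If it does, then $G$ acts freely over all of $\operatorname{Spec}\hat{\mathcal{O}}_{\Xmini,Q}$: since $\pi$ is finite and $\hat{\mathcal{O}}_{\Xmini,Q}$ is local, a fixed point of a nontrivial $g \in G$ there would specialize to a fixed \emph{closed} point, which lies in the fiber over $Q$, contradicting freeness on that fiber; a free $G$-action then makes $\pi$ étale over $\operatorname{Spec}\hat{\mathcal{O}}_{\Xmini,Q}$, so $Q$ is not in the branch locus of $\pi$. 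Conversely, if $Q$ is not a branch point of $\pi$, then $\pi$ is unramified, hence étale, over a neighborhood of $Q$; since $\hat{\mathcal{O}}_{\Xmini,Q}$ is complete local with algebraically closed residue field, hence strictly henselian, the degree-$p$ étale cover of $\operatorname{Spec}\hat{\mathcal{O}}_{\Xmini,Q}$ splits into $p$ copies, so $Q$ has a $p$-element $\pi$-fiber. Thus $N_0' = N_0$.

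It then remains to assemble the pieces: substituting the two count formulas into the displayed expression for $t(\SF{\Cmini})$ and using $N_0' = N_0$ yields
\begin{equation*}
t(\SF{\Cmini}) = \bigl(N_{\mathrm{nodes}}(\SF{\Xmini}) - N_{\mathrm{irred}}(\SF{\Xmini}) + 1\bigr) + (p-1)(N_0 - N_1),
\end{equation*}
and the parenthesized term vanishes by \eqref{eq toric rank of proj}, which applies since $\Xmini$ is a semistable model of $\proj_K^1$ (semistable by \Cref{prop persistent nodes}). I expect the main obstacle to be the lemma of the middle paragraph — making rigorous the equivalences among ``$p$-element $\pi$-fiber'', ``$G$ acts freely near $Q$'', and ``$\pi$ is étale near $Q$'', and matching the last with the paper's notion of a branch point of the cover $\Cmini \to \Xmini$. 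The argument sketched is uniform in the residue characteristic; in particular it covers the wild case, where an entire vertical component of $\SF{\Xmini}$ may lie in the branch locus, since it relies only on transitivity and (non)freeness of the $G$-action on $\pi$-fibers together with strict henselianness of $\hat{\mathcal{O}}_{\Xmini,Q}$.
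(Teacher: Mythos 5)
Your proposal is correct and takes essentially the same approach as the paper: express $t(\SF{\Cmini})$ via \Cref{rmk abelian and toric}(c), subtract the vanishing quantity from \eqref{eq toric rank of proj}, and convert the resulting differences of node and component counts to $(p-1)N_0$ and $(p-1)N_1$ using the order-$p$ cyclic structure. The only difference is that you spell out the equivalence ``$Q$ has $p$-element fiber $\iff$ $Q$ not a branch point'' via freeness of the $G$-action and strict henselianness of $\hat{\mathcal{O}}_{\Xmini,Q}$, whereas the paper simply asserts this as a standard consequence of the cover being Galois of prime degree.
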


\begin{proof}
By \Cref{rmk abelian and toric}(c), the toric rank $t(\SF{\Cmini})$ equals $N_{\mathrm{nodes}}(\SF{\Cmini}) - N_{\mathrm{irred}}(\SF{\Cmini}) + 1$.  Now, using (\ref{eq toric rank of proj}) (with $\mathcal{X} = \Xmini$), we may write 
\begin{equation} \label{eq first toric rank formula}
t(\SF{\Cmini}) = [N_{\mathrm{nodes}}(\SF{\Cmini}) - N_{\mathrm{nodes}}(\SF{\Xmini})] - [N_{\mathrm{irred}}(\SF{\Cmini}) - N_{\mathrm{irred}}(\SF{\Xmini})].
\end{equation}

If a node of $\SF{\Xmini}$ is not a branch point of the cover $\Cmini \to \Xmini$, then since this cover is Galois with cyclic order-$p$ Galois group, the inverse image of such a node consists of exactly $p$ points of $\SF{\Cmini}$; otherwise, the inverse image of that node consists of exactly $1$ point of $\SF{\Cmini}$.  By \Cref{prop persistent nodes}, every point in the inverse image of a node of $\SF{\Xmini}$ is a node of $\SF{\Cmini}$, and all nodes of $\SF{\Cmini}$ arise this way.  It follows that we have $N_{\mathrm{nodes}}(\SF{\Cmini}) - N_{\mathrm{nodes}}(\SF{\Xmini}) = (p - 1)N_0$.  Meanwhile, since the cover $\Cmini \to \Xmini$ is Galois with cyclic order-$p$ Galois group, the inverse image of any component of $\SF{\Xmini}$ consists either of a single component of $\SF{\Cmini}$ or of exactly $p$ components of $\SF{\Cmini}$, and therefore, we have $N_{\mathrm{irred}}(\SF{\Cmini}) - N_{\mathrm{irred}}(\SF{\Xmini}) = (p - 1)N_1$.  Plugging these expressions into the formula (\ref{eq first toric rank formula}) gives us the desired formula (\ref{eq toric rank formula}).
\end{proof}

\begin{rmk} \label{rmk genus divisible by p - 1}

As an immediate corollary of \Cref{prop toric rank formula}, we get that the toric rank of a $p$-cyclic cover of the projective line is divisible by $p - 1$, and that if that superelliptic curve is potentially degenerate, its genus is therefore also divisible by $p - 1$ (see \Cref{rmk alternate definition of split degenerate}).  In general, one sees from the genus formula given in \S\ref{sec intro} that the genus of such a curve is divisible by $(p - 1)/2$; the slightly stronger divisibility condition that we get in the potentially degenerate case is apparent from the form of the equation (\ref{eq superelliptic degenerate model}) which was already shown by \cite[Proposition 3.1(a)]{van1982galois} to characterize superelliptic curves with split degenerate reduction.

\end{rmk}

\Cref{prop toric rank formula} indicates that we will care particularly about which which nodes (resp. components) of $\SF{\Xmini}$ have $p$ nodes (resp. components) of $\SF{\Cmini}$ lying over them.  The following results will be useful in detecting where this happens.

\begin{prop} \label{prop nodes do not happen far away from branching}

With the set-up and definitions of \Cref{prop persistent nodes}, let $\mathcal{X}$ be a smooth model of the projective line $X$ dominated by $\Xmini$, and let $P$ be a $k$-point of $\mathcal{X}_s$ to which no element of the set $\mathcal{B}$ reduces.  Then the dominant morphism $\Xmini \to \mathcal{X}$ is an isomorphism above $P$, or in other words, does not contract any component to $P$.

\end{prop}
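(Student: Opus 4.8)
The plan is to reduce the claim to a statement about normalizations that is settled by purity of the branch locus. Write $\mathcal{C}_{\mathcal{X}}$ for the normalization of $\mathcal{X}$ in the function field $K(C)$; by the discussion in \S\ref{sec preliminaries geometric Galois}, the dominant morphism $\Xmini \to \mathcal{X}$ lifts to a dominant morphism $\Cmini \to \mathcal{C}_{\mathcal{X}}$, and $\Cmini$ is the normalization of $\Xmini$ in $K(C)$. Since the quotient by $G$ commutes with restriction to an open subscheme of $\mathcal{X}$, it is enough to show that $\Cmini \to \mathcal{C}_{\mathcal{X}}$ is an isomorphism over an open neighborhood of the fiber of $\mathcal{X}$ over $P$; the desired statement then follows by taking the quotient by $G$. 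I would establish this in two steps: (1) $\mathcal{C}_{\mathcal{X}}$ is already smooth over $\mathcal{O}_{K''}$ over some open neighborhood $\mathcal{U}$ of $P$; and (2) the minimality of $\Cmini$ forces $\Cmini \to \mathcal{C}_{\mathcal{X}}$ to add nothing there.

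For step (1): as $\mathcal{X}$ is regular and $\mathcal{C}_{\mathcal{X}}$ is normal, the Zariski--Nagata purity theorem shows that the branch locus of the finite morphism $\mathcal{C}_{\mathcal{X}} \to \mathcal{X}$ is of pure codimension one. Its trace on the generic fiber is the branch locus of $C \to X$, namely $\mathcal{B}$, so the branch locus is contained in the union of the horizontal closure $\overline{\mathcal{B}}$ of $\mathcal{B}$ with the (irreducible) special fiber $\mathcal{X}_s$. The latter cannot occur: were $\mathcal{C}_{\mathcal{X}} \to \mathcal{X}$ ramified at the generic point of $\mathcal{X}_s$, then, the cover being $p$-cyclic, the preimage of $\mathcal{X}_s$ would be a single component of $(\mathcal{C}_{\mathcal{X}})_s$ of multiplicity $p$, and hence (by \Cref{lemma inv m a w}, strict transforms preserve the invariant $m$) its strict transform in $\SF{\Cmini}$ would have multiplicity $p$, contradicting the semistability of $\Cmini$. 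So the branch locus equals $\overline{\mathcal{B}}$, which by hypothesis is disjoint from $P$. Therefore $\mathcal{C}_{\mathcal{X}} \to \mathcal{X}$ is finite \'etale over a neighborhood $\mathcal{U} \ni P$, whence $\mathcal{C}_{\mathcal{X}}$ is smooth over $\mathcal{O}_{K''}$ above $\mathcal{U}$ (and in particular has no vertical component collapsing to a point of $\mathcal{U}$).

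For step (2): $\Cmini$ is a regular model of $C$ dominating $\mathcal{C}_{\mathcal{X}}$, so it dominates the minimal desingularization of $\mathcal{C}_{\mathcal{X}}$; if the resulting birational morphism of regular models were not an isomorphism, it would factor through the contraction of a $(-1)$-curve of $\Cmini$, which — $\Cmini$ being semistable — would be a $(-1)$-line, contradicting \Cref{rmk minimal means no (-1)-lines}. Hence this morphism is an isomorphism, so $\Cmini$ coincides with the minimal desingularization of $\mathcal{C}_{\mathcal{X}}$; above $\mathcal{U}$ the latter is just $\mathcal{C}_{\mathcal{X}}$ itself, since $\mathcal{C}_{\mathcal{X}}$ is already regular there. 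Thus $\Cmini \to \mathcal{C}_{\mathcal{X}}$ is an isomorphism above $\mathcal{U}$, and quotienting by $G$ shows that $\Xmini \to \mathcal{X}$ is an isomorphism above $\mathcal{U} \ni P$, so no component is contracted to $P$. I expect the crux to be step (1), and within it the exclusion of ramification along $\mathcal{X}_s$: this is where it is essential that $C$ already attains semistable reduction over $K''$ (so that $\Cmini$ is semistable); the rest of the argument is formal.
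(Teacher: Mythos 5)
Your proof is correct, but it takes a genuinely different route from the one in the paper. The paper's argument is combinatorial in flavor: it supposes that some component is contracted to $P$, chases along the tree structure of $\SF{\Xmini}$ to produce a leaf component $V_r$ to which no branch point reduces, applies Zariski--Nagata purity to the quotient map $\Cmini \to \Xmini$ to see that the $p$ lines above $V_r$ each meet the rest of $\SF{\Cmini}$ at a single node, and finally invokes the criterion of \Cref{prop part of mini} (via the invariant $w$) to get a contradiction. Your argument is more local-geometric and avoids the chain argument entirely: you apply purity instead to the normalization $\mathcal{C}_{\mathcal{X}} \to \mathcal{X}$, rule out ramification along the generic point of $\mathcal{X}_s$ via the multiplicity-$p$ contradiction with semistability (a nice use of \Cref{lemma inv m a w}), deduce \'etale-ness and hence regularity of $\mathcal{C}_{\mathcal{X}}$ over a neighborhood $\mathcal{U}$ of $P$, and then use the fact that $\Cmini$, being semistable with no $(-1)$-lines, coincides with the minimal desingularization of $\mathcal{C}_{\mathcal{X}}$ and is therefore an isomorphism over the regular locus. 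Both arguments turn on purity and on the fact that $\Cmini$ has no $(-1)$-lines; yours trades the paper's reduction to a leaf for a direct appeal to the minimality of $\Cmini$ over $\mathcal{C}_{\mathcal{X}}$. A couple of small points worth being explicit about in a final write-up: (1) the identification of $(-1)$-curves with $(-1)$-lines in a semistable regular model (which you invoke when arguing $\Cmini$ equals the minimal desingularization) deserves a word, since it relies on all components having multiplicity $1$; and (2) in passing from $\Cmini \to \mathcal{C}_{\mathcal{X}}$ being an isomorphism above $\mathcal{U}$ to $\Xmini \to \mathcal{X}$ being an isomorphism above $\mathcal{U}$, one should note that the relevant open in $\mathcal{C}_{\mathcal{X}}$ is $G$-invariant (being a preimage under a $G$-equivariant map), so quotienting behaves as expected. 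Neither of these is a gap, just detail worth spelling out.
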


\begin{proof}
The special fiber of any smooth model consists of a single component which must be isomorphic to $\proj_k^1$; let us call it $V_0$.  Suppose that the morphism $\Xmini \to \mathcal{X}$ contracts another component of $\SF{\Xmini}$ to $P$, which implies that $P$ lies in the intersection of $V_0$ and another component $V_1$ (which must similarly be a copy of $\proj_k^1$) of $\SF{\Xmini}$.  If there is another component of $\SF{\Xmini}$ intersecting $V_1$ at a point $P_1 \neq P_0 := P$, then no point in $\mathcal{B}$ reduces to a point in $V_1$.  Continuing this process, it is clear from the tree-like structure of $\SF{\Xmini}$ that we can find a chain of components $V_0, V_1, \dots, V_r$ such that no point in $\mathcal{B}$ reduces to a point in $V_r \cong \proj_k^1$ and so that $V_r$ intersects the rest of $\SF{\Xmini}$ at a single point $P_{r-1}$.

It follows from the Zariski-Nagata purity theorem, which says that the branch locus of the quotient map $\Cmini \to \Xmini$ is of pure dimension $1$, that this map is not branched over any point of the line $V_r$.  The only possible inverse image under an unbranched degree-$p$ cover of a projective line consists of $p$ copies of the projective line; we denote these lines forming the inverse image of $V_r$ by $\tilde{L}_1, \dots, \tilde{L}_p$ and note that they each intersect the rest of $\SF{\Cmini}$ at exactly $1$ point.  Now we have $V_r = (\mathcal{X}_r)_s$ for some smooth model $\mathcal{X}_r$ dominated by $\Xmini$; let $\mathcal{C}_r$ denote the normalization of $\mathcal{X}_r$ in $K''(C)$.  The dominant morphism $\Xmini \to \mathcal{X}_r$ lifts to a dominant morphism $\Cmini \to \mathcal{C}_r$.  Writing $L_1, \dots, L_p$ for the images in $\mathcal{C}_r$ of the lines $\tilde{L}_1, \dots, \tilde{L}_p$ under this composition of contractions, it is clear that these lines intersect each other at exactly $1$ point in $(\mathcal{C}_r)_s$.  We therefore have $w(L_1) = \dots = w(L_p) = 1$, and by \Cref{prop part of mini}, we do not have $\mathcal{C}_r \leq \Cmini$, a contradiction.
\end{proof}

\begin{cor} \label{cor crushed implies branch point}

With the set-up and definitions of \Cref{prop persistent nodes}, let $\mathcal{X}$ be a model of $X$ dominated by $\Xmini$ and satisfying that all elements of $\mathcal{B}$ reduce to the same point $P \in \mathcal{X}_s(k)$.  Then every node of $\SF{\Xmini}$ which lies in the strict transform $V$ of $\mathcal{X}_s$ in $\SF{\Xmini}$ is a branch point of the quotient map $\Cmini \to \Xmini$.

\end{cor}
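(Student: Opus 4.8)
The plan is to argue by contradiction, pushing the hypothesis ``$Q$ is not a branch point'' into a contradiction with the minimality of $\Cmini$ via \Cref{prop part of mini}. So suppose some node $Q$ of $\SF{\Xmini}$ supported on $V$ is not a branch point of the cover $\Cmini \to \Xmini$. We may assume $\mathcal{X}$ is a smooth model of $X$ (its special fiber, being irreducible, is one of the smooth models discussed in \S\ref{sec normalizations models of P^1}), so that $\Xmini \to \mathcal{X}$ restricts to an isomorphism from $V$ onto $\mathcal{X}_s$ and contracts every other component of $\SF{\Xmini}$ to a point of $\mathcal{X}_s$.

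The first step is to locate $Q$ precisely. Since every element of $\mathcal{B}$ reduces to $P$, \Cref{prop nodes do not happen far away from branching} shows that $\Xmini \to \mathcal{X}$ is an isomorphism above every $k$-point of $\mathcal{X}_s$ other than $P$; hence no component of $\SF{\Xmini}$ is contracted to such a point, and therefore $V$ meets the remaining components of $\SF{\Xmini}$ only at the unique point of $V$ lying over $P$. In particular $Q$ is that point, and it is the only node of $\SF{\Xmini}$ supported on $V$.

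Next I would analyse the cover over $V$. The component $V$ cannot lie in the branch locus of $\Cmini \to \Xmini$, since otherwise the cover would be ramified at every point of $V$, in particular at $Q$, contrary to assumption; combining this with the Zariski--Nagata purity of the branch locus and the fact (from the previous step together with $Q$ being a non-branch point) that no branch divisor meets $V$ at all, the restricted cover $\Cmini \times_{\Xmini} V \to V$ is finite \'etale of degree $p$. As $V \cong \proj^1_k$ is simply connected, it splits: its source is a disjoint union $\widetilde V_1 \sqcup \dots \sqcup \widetilde V_p$ of copies of $\proj^1_k$, each mapping isomorphically onto $V$ and, by semistability of $\Cmini$, each of multiplicity $1$ in $\SF{\Cmini}$. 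The inverse image of the non-branch node $Q$ consists of $p$ distinct points, one on each $\widetilde V_i$, each of which is a node of $\SF{\Cmini}$ by \Cref{prop persistent nodes}; by the same proposition $\widetilde V_i$ carries no other singular point of $\SF{\Cmini}$, as those would lie over nodes of $\SF{\Xmini}$ on $V$ and $Q$ is the only such node. Thus $a(\widetilde V_i) = 0$ and $w(\widetilde V_i) = 1$ for each $i$.

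Finally, $\Cmini$ is the minimal regular model of $C$, a curve of positive genus with semistable reduction over $K''$, so it is trivially dominated by its own minimal regular model; \Cref{prop part of mini} then forces every component $W$ of $\SF{\Cmini}$ to satisfy $m(W) = 1$ and ($a(W) \geq 1$ or $w(W) \geq 2$), which contradicts $a(\widetilde V_1) = 0$, $w(\widetilde V_1) = 1$. I expect the only delicate point to be the \'etaleness analysis over $V$ --- in particular ruling out that $V$ itself is a vertical component of the branch locus, which is a genuine concern because $p$ may equal the residue characteristic --- but this is dispatched at once by the observation that such vertical ramification would make $Q$ a branch point, and the splitting of a degree-$p$ \'etale cover of $\proj^1_k$ is valid in every characteristic since $\proj^1_k$ is simply connected.
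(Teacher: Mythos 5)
Your proof is correct, and it takes a genuinely different route from the paper's. After the shared first step (invoking \Cref{prop nodes do not happen far away from branching} to see that the only node of $\SF{\Xmini}$ on $V$ lies over $P$), you argue by contradiction: supposing that node $Q$ is not a branch point, you rerun the purity-of-branch-locus and simply-connectedness-of-$\proj^1_k$ argument to split the inverse image of $V$ into $p$ lines $\widetilde V_i$ with $a(\widetilde V_i)=0$, $w(\widetilde V_i)=1$, contradicting \Cref{prop part of mini}. The paper instead applies \Cref{prop nodes do not happen far away from branching} a \emph{second} time: it contracts $\Xmini$ down to the neighboring component $V'$ meeting $V$ at $Q$, deduces that some $z\in\mathcal B$ must reduce to the image of $Q$ there (else the contraction would be an isomorphism above it and $V$ could not have been contracted), and concludes directly that $z$ reduces to $Q$ in $\SF{\Xmini}$, making $Q$ a branch point. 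Your version buys self-containedness at the price of re-deriving the étale-splitting argument that is already packaged inside the proof of \Cref{prop nodes do not happen far away from branching}; the paper's version is shorter because it leverages that proposition as a black box a second time and exhibits an explicit branch point rather than reasoning by contradiction. Both rely on the same underlying purity plus $\pi_1(\proj^1_k)=1$ machinery, so neither is more elementary. One small wrinkle to note: the corollary as stated allows $\mathcal X_s$ to be reducible, while both your proof and the paper's implicitly assume $\mathcal X$ smooth (yours says so explicitly, citing irreducibility of $\mathcal X_s$, which the hypothesis does not actually guarantee); since the corollary is only ever applied to smooth models $\mathcal X_{\alpha,\beta}$ this is harmless, but it is a shared imprecision rather than a flaw particular to your argument.
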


\begin{proof}
By \Cref{prop nodes do not happen far away from branching}, the only possible node of $\SF{\Xmini}$ lying in $V$ is $P$.  Let us write $V' \neq V$ for the component of $\SF{\Xmini}$ that meets $V$ at the node $P$ and $\mathcal{X}' \leq \Xmini$ for the model of $X$ obtained by contracting all components of $\SF{\Xmini}$ other than $V'$.  It follows from \Cref{prop nodes do not happen far away from branching} that there is at least one point $z \in \mathcal{B}$ which reduces to (the image of) $P$ in the special fiber of the model $\mathcal{X}'$.  Then the branch point $z \in \mathcal{B}$ reduces to $P$ in the special fiber of $\Xmini$, which implies that the quotient map $\Cmini \to \Xmini$ is branched over $P$.
\end{proof}

\section{Relative normalizations of smooth models of the projective line} \label{sec normalizations}

The results of \S\ref{sec preliminaries geometric} show us that over some finite extension of $K$, there is a minimal regular model $\Cmini$ of our $p$-cyclic Galois cover $C \to X := \proj_K^1$ which is well-behaved with respect to the covering map in the sense that its corresponding quotient $\Xmini$ is a semistable model of $X$ satisfying that the inverse image of each smooth component of its special fiber $\SF{\Xmini}$ is smooth; the minimal regular model $\Cmini$ is the relative normalization of the model $\Xmini$ with respect to the covering map $C \to X$.  We note that as a result of this, given a smooth model $\mathcal{X}$ of $X$ which is dominated by $\Xmini$ and letting $\mathcal{C}$ denote its normalization in the function field of $C$, we have $\Cmini \geq \mathcal{C}$, and the strict transform of $\mathcal{C}_s$ in $\SF{\Cmini}$ is isomorphic to its desingularization (as it is smooth and has no (-1)-lines).  Keeping all of this in mind, we first provide an explicit description of smooth models of $X$ (in \S\ref{sec normalizations models of P^1}) and then, after establishing some important algebraic set-up (in \S\ref{sec normalizations part-pth-power}), proceed to explicitly construct their normalizations with respect to the covering map $C \to X$ (in \S\ref{sec normalizations p-cyclic covers}).

\subsection{Smooth models of the projective line} \label{sec normalizations models of P^1}

As before, let $X:=\mathbb{P}^1_K$ be the projective line, and let $x$ denote its standard coordinate.  Given $\alpha\in \bar{K}$ and  $\beta\in \bar{K}^\times$, one can define a smooth model $\mathcal{X}_{\alpha,\beta}$ of $X$ over the ring of integers $\mathcal{O}_{K'}$ of $K':=K(\alpha,\beta)\subseteq \bar{K}$) by declaring $\mathcal{X}_{\alpha,\beta}:=\mathbb{P}^1_{\mathcal{O}_{K'}}$, with coordinate $x_{\alpha,\beta} := \beta^{-1}(x - \alpha)$, as an $\mathcal{O}_{K'}$-scheme, and identifying the generic fiber $\mathcal{X}_\eta$ with $X$ via the affine transformation $x_{\alpha,\beta}=\beta^{-1}(x-\alpha)$. If $(\alpha_1,\beta_1)$ and $(\alpha_2,\beta_2)$ are such that $v(\alpha_1-\alpha_2)\ge v(\beta_1)=v(\beta_2)$, then $\mathcal{X}_{\alpha_1,\beta_1}$ and $\mathcal{X}_{\alpha_2,\beta_2}$ are isomorphic as models of $X$, the isomorphism being given by the change of variable $x_{\alpha_2,\beta_2}=u x_{\alpha_1,\beta_1}+ \epsilon$, where $u$ is the unit $\beta_1(\beta_2)^{-1}$ and $\epsilon$ is the integral element $\beta_2^{-1}(\alpha_1-\alpha_2)$.  The model $\mathcal{X}_{\alpha,\beta}$ therefore only depends (up to isomorphism) on the disc 
\begin{equation*}
D = D_{\alpha,b} := \{z \in \bar{K} \ | \ v(z - \alpha) \geq b\}
\end{equation*}
 of center $\alpha$ and depth $b := v(\beta)$; for this reason, we will often denote $\mathcal{X}_{\alpha,\beta}$ by $\mathcal{X}_D$.

If $\mathfrak{D} = \{D_1, \dots, D_n\}$ is a non-empty finite collection of discs of $\bar{K}$, one can form a corresponding model $\mathcal{X}_{\mathfrak{D}}$, which is defined as the minimal model dominating all the smooth models $\{\mathcal{X}_D: D\in \mathfrak{D}\}$.  Its special fiber is a reduced $k$-curve of arithmetic genus equal to $0$, \textit{i.e.} it consists of $n$ lines $L_1, \ldots, L_n$ corresponding to the discs $D_i$'s meeting each other at ordinary multiple points.  This scheme may be described algebraically as the affine space given by the coordinates $x_{\alpha, \beta}$ for $D_{\alpha, v(\beta)} \in \mathfrak{D}$ and cut out by the linear equations relating the $x_{\alpha, \beta}$'s.

\begin{prop} \label{prop models of projective line}

The construction $D \mapsto \mathcal{X}_D$ (resp. $\mathfrak{D} \mapsto \mathcal{X}_{\mathfrak{D}}$) described above defines a bijection between the discs in $\bar{K}$ (resp. collections of discs in $\bar{K}$) and the smooth models of $X$ (resp. the models of $X$ with reduced special fiber) defined over finite extensions of $\mathcal{O}_K$ considered up to isomorphism (where two models $\mathcal{X}_1/\mathcal{O}_{K'_1}$ and $\mathcal{X}_2/\mathcal{O}_{K'_2}$ are considered isomorphic if they become so over some common finite extension $\mathcal{O}_{K''} \supseteq \mathcal{O}_{K'_1}, \mathcal{O}_{K'_2}$).

\end{prop}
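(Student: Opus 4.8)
The plan is to reduce everything to the case of smooth models, which carries the essential content, and then to bootstrap the general statement using the structure of $\mathcal{X}_{\mathfrak{D}}$ recalled above (its special fiber is a tree of lines $L_1,\dots,L_n$ corresponding to the $D_i$). Both maps are already known to be well defined: the discussion preceding the proposition shows that $\mathcal{X}_{\alpha,\beta}$ depends (up to isomorphism, after passing to a common extension) only on the disc $D=D_{\alpha,v(\beta)}$, and hence that $\mathcal{X}_{\mathfrak{D}}$, being the minimal model dominating the smooth models $\{\mathcal{X}_D:D\in\mathfrak{D}\}$, depends only on the set $\mathfrak{D}$. So it remains to prove injectivity and surjectivity of each map.

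For the smooth case I would first treat surjectivity. Let $\mathcal{X}/\mathcal{O}_{K'}$ be a smooth model of $X$. Its special fiber is reduced of arithmetic genus $0$, hence is a smooth irreducible genus-$0$ curve over $k$, i.e. $\mathbb{P}^1_k$. The $K'$-point $x=\infty$ of $\mathcal{X}_\eta=X_{K'}$ extends, by properness, to a section $s\colon\Spec\mathcal{O}_{K'}\to\mathcal{X}$; using $s$ I would identify $\mathcal{X}$ with $\mathbb{P}^1_{\mathcal{O}_{K'}}$ as an $\mathcal{O}_{K'}$-scheme, and then, since $\PGL_2(\mathcal{O}_{K'})$ acts transitively on $\mathbb{P}^1(\mathcal{O}_{K'})$, postcompose with an automorphism of $\mathbb{P}^1_{\mathcal{O}_{K'}}$ so that $s$ becomes the section $[1:0]$. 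The resulting affine coordinate $\sigma$ on $\mathcal{X}\cong\mathbb{P}^1_{\mathcal{O}_{K'}}$ then has the point $x=\infty$ as its only pole on $\mathcal{X}_\eta$, as does $x$ itself, so $\sigma=ax+b$ for some $a\in K'^\times$, $b\in K'$; writing $\beta=a^{-1}$ and $\alpha=-a^{-1}b$ gives $\sigma=x_{\alpha,\beta}$, whence $\mathcal{X}\cong\mathcal{X}_{\alpha,\beta}=\mathcal{X}_D$ with $D=D_{\alpha,v(\beta)}$. For injectivity, suppose $\mathcal{X}_{D_1}$ and $\mathcal{X}_{D_2}$ become isomorphic over a common extension $\mathcal{O}_{K''}$; an isomorphism of models restricts to the identity on generic fibers, hence carries the section $x=\infty$ of one to that of the other, i.e. (since each $x_{D_i}$ is affine in $x$) carries the point $x_{D_1}=\infty$ to $x_{D_2}=\infty$. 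Thus the induced coordinate change $x_{D_2}=g(x_{D_1})$ fixes $\infty$, so it is the affine map $x_{D_1}\mapsto(\beta_1/\beta_2)\,x_{D_1}+\beta_2^{-1}(\alpha_1-\alpha_2)$ read off from $x_{D_i}=\beta_i^{-1}(x-\alpha_i)$; for $g$ to define an automorphism of $\mathbb{P}^1_{\mathcal{O}_{K''}}$ one needs exactly $\beta_1/\beta_2\in\mathcal{O}_{K''}^\times$ and $\beta_2^{-1}(\alpha_1-\alpha_2)\in\mathcal{O}_{K''}$, the first of which says $d(D_1)=d(D_2)$ and the second (given the first) says $\alpha_1\in D_2$, so that $D_1=D_2$.

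For the general case I would construct the inverse of $\mathfrak{D}\mapsto\mathcal{X}_{\mathfrak{D}}$. Given a model $\mathcal{X}/\mathcal{O}_{K'}$ of $X$ with reduced special fiber, write $\mathcal{X}_s=V_1\cup\dots\cup V_n$ (each $V_i\cong\mathbb{P}^1_k$ by the structure of such models), and for each $i$ let $\mathcal{Y}_i$ be the model obtained by contracting the components $V_j$ with $j\neq i$; this contraction exists (the intersection matrix of any proper subset of the components of a special fiber is negative definite, and the base is excellent), is again a normal flat projective model of $X$, and has special fiber the irreducible reduced genus-$0$ curve $V_i\cong\mathbb{P}^1_k$, hence is a smooth model. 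By the smooth case $\mathcal{Y}_i\cong\mathcal{X}_{D_i}$ for a unique disc $D_i$, and I set $\mathfrak{D}(\mathcal{X}):=\{D_1,\dots,D_n\}$. Since $\mathcal{X}$ dominates each $\mathcal{Y}_i=\mathcal{X}_{D_i}$, it dominates the minimal such model $\mathcal{X}_{\mathfrak{D}(\mathcal{X})}$; conversely, by the recalled description the special fiber of $\mathcal{X}_{\mathfrak{D}(\mathcal{X})}$ consists of exactly $n$ lines, corresponding to $D_1,\dots,D_n$, so the domination morphism $\mathcal{X}\to\mathcal{X}_{\mathfrak{D}(\mathcal{X})}$ induces a bijection of component sets, contracts nothing, and hence---being a proper birational morphism with finite fibres between integral normal schemes---is an isomorphism by Zariski's main theorem. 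This gives $\mathcal{X}\cong\mathcal{X}_{\mathfrak{D}(\mathcal{X})}$ (surjectivity of $\mathfrak{D}\mapsto\mathcal{X}_{\mathfrak{D}}$); and if $\mathcal{X}_{\mathfrak{D}_1}\cong\mathcal{X}_{\mathfrak{D}_2}$, then applying the component description and the smooth case to each component gives $\mathfrak{D}_1=\mathfrak{D}(\mathcal{X}_{\mathfrak{D}_1})=\mathfrak{D}(\mathcal{X}_{\mathfrak{D}_2})=\mathfrak{D}_2$ (injectivity).

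The step I expect to be the main obstacle is the identification of an arbitrary smooth model of $X$ with $\mathbb{P}^1_{\mathcal{O}_{K'}}$ in the surjectivity argument; the rest is coordinate bookkeeping and standard surface theory. This identification follows either from the vanishing of $\mathrm{Br}(\mathcal{O}_{K'})$---valid since $\mathcal{O}_{K'}$ is henselian with algebraically closed residue field---applied to the Severi--Brauer scheme $\mathcal{X}$, or directly by checking that the line bundle associated to the section $s$ is relatively very ample with $\pi_*$ locally free of rank $2$ (hence free over the local ring $\mathcal{O}_{K'}$), which embeds $\mathcal{X}$ as a closed subscheme of $\mathbb{P}^1_{\mathcal{O}_{K'}}$ that is a fibrewise isomorphism of flat proper schemes, hence an isomorphism. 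A secondary technical point is to confirm that contracting all but one component of a reduced-special-fiber model yields a genuine model with smooth special fiber, which I would settle via Artin's contraction criterion over the excellent base $\mathcal{O}_{K'}$ together with the fibral criterion for regularity.
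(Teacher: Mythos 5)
Your proof follows essentially the same two-stage strategy as the paper's: first identify any smooth model with $\mathbb{P}^1_{\mathcal{O}_{K'}}$ and read off the disc $D$ from the affine coordinate change, then handle the reduced-special-fiber case by contracting all but one component and invoking the smooth case. The only difference is that you supply the details that the paper delegates to \cite[Exercise 8.3.5]{liu2002algebraic} (the Brauer-group or relatively very ample line bundle argument for $\mathcal{X}\cong\mathbb{P}^1_{\mathcal{O}_{K'}}$) and spell out the domination $\mathcal{X}\to\mathcal{X}_{\mathfrak{D}(\mathcal{X})}$ being an isomorphism via Zariski's main theorem, which the paper simply asserts.
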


\begin{proof}
Given a smooth model of the line $\mathcal{X}$ over the ring of integers $\mathcal{O}_{K'}$ of some finite extension $K'/K$, one may prove that there exists an isomorphism of $\mathcal{O}_{K'}$-schemes $\mathcal{X}\cong \mathbb{P}^1_{\mathcal{O}_{K'}}$ (see \cite[Exercise 8.3.5]{liu2002algebraic}), which immediately implies that $\mathcal{X}$ is isomorphic, as a model, to $\mathcal{X}_D$ for some uniquely determined disc $D=D_{\alpha,b}$ with $\alpha\in K'$ and $b\in v((K')^\times)$.

Now suppose that $\mathcal{X}$ is a model of the line $X$ with reduced special fiber, and let $\lbrace L_1, \ldots, L_n \rbrace$ be the components of its special fiber $\mathcal{X}_s$.  For each $i$, let $\mathcal{X}_i$ be the model of obtained from $\mathcal{X}$ by contracting all lines in $\Irred(\mathcal{X}_s)$ except for $L_i$.  It is easy to prove that $\mathcal{X}_i$ is smooth (again, see \cite[Exercise 8.3.5]{liu2002algebraic}), so that $\mathcal{X}_i = \mathcal{X}_{D_i}$ for some uniquely determined disc $D_i \subset \bar{K}$.  Now the model $\mathcal{X}$ can be described as the minimal model dominating all the $\mathcal{X}_i$'s, \textit{i.e.} we have $\mathcal{X} \cong \mathcal{X}_{\mathfrak{D}}$ with $\mathfrak{D} = \{D_1, \ldots, D_n\}$.
\end{proof}

As the special fiber of the model $\mathcal{X}_D$ for any disc $D \subset \bar{K}$ is isomorphic to $\proj_k^1$, so is its strict transform in the special fiber of the model $\mathcal{X}_{\mathfrak{D}}$ for any collection $\mathfrak{D}$ of discs with $D \in \mathfrak{D}$; in this situation, we therefore identify each component of $(\mathcal{X}_{\mathfrak{D}})_s$ with the line $(\mathcal{X}_D)_s$ for the corresponding disc $D \in \mathfrak{D}$.  For any non-empty subset $\mathfrak{D}' \subsetneq \mathfrak{D}$, the dominant map $\mathcal{X}_{\mathfrak{D}} \to \mathcal{X}_{\mathfrak{D}'}$ is given by contracting the components $\SF{\mathcal{X}_D}$ of the special fiber $(\mathcal{X}_{\mathfrak{D}})_s$ corresponding to the discs $D \in \mathfrak{D} \smallsetminus \mathfrak{D}'$; this map can be defined algebraically in terms of ``forgetting" the coordinates $x_{\alpha, \beta}$ for $D_{\alpha, v(\beta)} \in \mathfrak{D} \smallsetminus \mathfrak{D}'$.  The following proposition describes the structure of the special fiber of $\mathcal{X}_{\mathfrak{D}}$ for a given collection $\mathfrak{D}$.

\begin{prop} \label{prop models of projective line special fiber}

Let $\mathfrak{D}$ be a collection of $\geq 2$ discs in $\bar{K}$; let $\mathcal{X}_{\mathfrak{D}}$ be defined as above; and let $K' / K$ be a finite extension over which $\mathcal{X}_{\mathfrak{D}}$ is defined.

Two components $\mathcal{X}_D$ and $\mathcal{X}_{D'}$ (with $D, D' \in \mathfrak{D}$) intersect at a singular point of $(\mathcal{X}_{\mathfrak{D}})_s$ if and only if the path $[\eta_D, \eta_{D'}] \subset \Berk$ does not contain the point $\eta_{D''}$ for any disc $D'' \subset \mathfrak{D}$.  If this singular point is a node, its thickness is equal to $\delta(\eta_D, \eta_{D'}) / v(\pi)$, where $\pi$ is a uniformizer of $K'$.

\end{prop}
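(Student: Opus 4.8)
The plan is to reduce everything to an explicit computation with two discs, using the reduction maps of the smooth models $\mathcal{X}_D$ to connect $\mathcal{X}_{\mathfrak{D}}$ with the Berkovich tree spanned by $\{\eta_D : D \in \mathfrak{D}\}$. I begin with a dictionary: for a disc $D = D_{\alpha, b}$, reducing the coordinate $x_D = \beta^{-1}(x - \alpha)$ defines a map $\proj^1_{\cc_K}(\cc_K) \to (\mathcal{X}_D)_s(\bar{k}) = \proj^1_k(\bar{k})$ under which two points have the same image exactly when the corresponding points of Type I lie in the same connected component of $\Berk \smallsetminus \{\eta_D\}$ (immediate from \Cref{dfn berk}). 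Thus $(\mathcal{X}_D)_s(\bar{k})$ is canonically the set of connected components of $\Berk \smallsetminus \{\eta_D\}$; for $\eta \neq \eta_D$ I write $\mathrm{red}_D(\eta)$ for the corresponding point of $(\mathcal{X}_D)_s$. When $D \in \mathfrak{D}$, the line $L_D \subset (\mathcal{X}_{\mathfrak{D}})_s$ is the strict transform of $(\mathcal{X}_D)_s$ under the contraction $\mathcal{X}_{\mathfrak{D}} \to \mathcal{X}_D$, hence canonically identified with $(\mathcal{X}_D)_s$; I use this identification throughout.

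The key structural fact — which I regard as the main obstacle and would prove by induction on $\#\mathfrak{D}$ from the explicit affine-chart description recalled before the statement, by a direct analysis of the blow-ups relating $\mathcal{X}_{\mathfrak{D}}$ to $\mathcal{X}_{\mathfrak{D}_0}$ for $\mathfrak{D}_0 = \mathfrak{D} \smallsetminus \{D^*\}$ — is the following gluing description: $(\mathcal{X}_{\mathfrak{D}})_s$ is obtained from $\bigsqcup_{D \in \mathfrak{D}} L_D$ by the equivalence relation generated by the identifications $\mathrm{red}_D(\eta_{D'}) \sim \mathrm{red}_{D'}(\eta_D)$ over all pairs $D, D' \in \mathfrak{D}$ such that $(\eta_D, \eta_{D'})$ contains no $\eta_{D''}$ with $D'' \in \mathfrak{D}$. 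Concretely this says that adjoining a disc $D^*$ to $\mathfrak{D}_0$ blows up $\mathcal{X}_{\mathfrak{D}_0}$ precisely over the point of $(\mathcal{X}_{\mathfrak{D}_0})_s$ to which the retraction of $\eta_{D^*}$ onto the convex hull of $\{\eta_E : E \in \mathfrak{D}_0\}$ maps, so that the contraction $\mathcal{X}_{\mathfrak{D}} \to \mathcal{X}_{\mathfrak{D}_0}$ is an isomorphism away from those retraction points; essentially all of the bookkeeping lives in this step.

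Granting this, the two assertions follow quickly once the case $\#\mathfrak{D} = 2$ is settled. For $\mathfrak{D} = \{D, D'\}$ one checks directly that $(\mathcal{X}_{\{D, D'\}})_s = L_D \cup L_{D'}$ meets at the single point $\mathrm{red}_D(\eta_{D'}) = \mathrm{red}_{D'}(\eta_D)$, a node of thickness $\delta(\eta_D, \eta_{D'})/v(\pi)$ (with $\pi$ a uniformizer of $K'$). If $D \subsetneq D'$, an affine change of coordinate puts $D = D_{0, b}$, $D' = D_{0, b'}$ with $b > b'$, so $x_{D'} = \pi^m x_D$ with $m := (b - b')/v(\pi) \in \zz_{>0}$, and one identifies $\mathcal{X}_{\{D, D'\}}$ with the blow-up of $\mathcal{X}_{D'} = \proj^1_{\mathcal{O}_{K'}}$ along $(x_{D'}, \pi^m)$: the exceptional locus is $L_D$, it meets $L_{D'}$ at $x_{D'} = 0$, and the completed local ring there is $\mathcal{O}_{K'}[[s, t]]/(st - \pi^m)$ — a node of thickness $m = \delta(\eta_D, \eta_{D'})/v(\pi)$. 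If $D, D'$ are disjoint, I pass to $\mathcal{X}_{\{D, D', D''\}}$ with $D''$ the smallest disc containing both ($\eta_{D''} = \eta_D \vee \eta_{D'}$); since $D, D''$ and $D', D''$ are nested, the gluing description makes this model the chain $L_D - L_{D''} - L_{D'}$, with nodes of thicknesses $(d(D) - d(D''))/v(\pi)$ and $(d(D') - d(D''))/v(\pi)$ sitting at the distinct points $\mathrm{red}_{D''}(\eta_D)$ and $\mathrm{red}_{D''}(\eta_{D'})$ of $L_{D''}$ (distinct because $\eta_{D''}$ separates $\eta_D$ from $\eta_{D'}$); contracting the middle line $L_{D''}$ — which meets the rest only in those two nodes — yields $\mathcal{X}_{\{D, D'\}}$, with a node of thickness $(d(D) - d(D''))/v(\pi) + (d(D') - d(D''))/v(\pi) = \delta(\eta_D, \eta_{D'})/v(\pi)$. (Contracting a copy of $\proj^1_k$ meeting the rest of the special fiber at two nodes of thicknesses $t_1, t_2$ produces a node of thickness $t_1 + t_2$: if $s_1 s = \pi^{t_1}$, $s_2 s' = \pi^{t_2}$ with $ss'$ a unit on the contracted line, then $s_1 s_2$ has valuation $t_1 + t_2$.)

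Finally the two parts. Part 1 is immediate from the gluing description: if $(\eta_D, \eta_{D'})$ contains no $\eta_{D''}$ then $\mathrm{red}_D(\eta_{D'})$ and $\mathrm{red}_{D'}(\eta_D)$ are identified, so $L_D$ and $L_{D'}$ meet; conversely, if some $\eta_{D''} \in (\eta_D, \eta_{D'})$ then, tracing a minimal chain of elementary identifications linking a point of $L_D$ to a point of $L_{D'}$, the step through $L_{D''}$ would force its incoming and outgoing identification points on $L_{D''}$ to coincide, i.e.\ $\eta_D$ and $\eta_{D'}$ to lie in the same component of $\Berk \smallsetminus \{\eta_{D''}\}$, which is false since $\eta_{D''}$ lies strictly between them. (Alternatively for the converse one passes along $\mathcal{X}_{\mathfrak{D}} \to \mathcal{X}_{\{D, D', D''\}}$ — which restricts to isomorphisms on $L_D$ and $L_{D'}$ — and shows $L_D \cap L_{D'} = \varnothing$ in $\mathcal{X}_{\{D, D', D''\}}$, using that $L_{D''}$ there already carries two distinct singular points $\mathrm{red}_{D''}(\eta_D) \neq \mathrm{red}_{D''}(\eta_{D'})$ while $(\mathcal{X}_{\{D, D', D''\}})_s$ is a genus-$0$ union of three copies of $\proj^1_k$.) For Part 2, if $L_D$ and $L_{D'}$ meet at a point that is a node — so no third line passes through it — then the contraction $\mathcal{X}_{\mathfrak{D}} \to \mathcal{X}_{\{D, D'\}}$ is an isomorphism near that point (its preimage would otherwise contain a contracted line running into the node, forcing $\geq 3$ branches), whence its thickness equals that of the corresponding node of $\mathcal{X}_{\{D, D'\}}$, namely $\delta(\eta_D, \eta_{D'})/v(\pi)$.
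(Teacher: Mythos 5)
Your argument is correct and runs along essentially the same lines as the paper's: reduce to the two-disc case, compute the local defining equation $x_D x_{D'}^{\vee} = \pi^m$ at the node there, and build up the general case by contracting intermediate lines using the fact that collapsing a $\proj^1_k$ meeting two nodes of thicknesses $\theta_1, \theta_2$ produces a node of thickness $\theta_1 + \theta_2$. The one presentational difference is that you package the combinatorics as a global ``gluing description'' of $(\mathcal{X}_{\mathfrak{D}})_s$ from $\bigsqcup_{D} L_D$ and prove it by induction on $\#\mathfrak{D}$, whereas the paper analyzes the individual contraction maps $\mathcal{X}_{\mathfrak{D}} \to \mathcal{X}_D$ and invokes connectedness of the special fiber; your version is perhaps a bit more formal to set up but makes the ``iff'' of the first part fall out more transparently, and you also make the disjoint two-disc case explicit (the paper's local computation is only written out for nested discs). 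One small but worthwhile remark: as literally stated the proposition's criterion uses the closed path $[\eta_D, \eta_{D'}]$, which always contains $\eta_D, \eta_{D'}$ with $D, D' \in \mathfrak{D}$; you correctly read it as the open path $(\eta_D, \eta_{D'})$, which is clearly what is meant.
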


\begin{proof}
This result is standard, and here we only describe the main ideas of the argument.  Let $K' / K$ be a finite extension containing centers and scalars that may be used to define each disc in $\mathfrak{D}$.  One may verify that given any discs $D \neq D'$ and any collection $\mathfrak{D} \ni D, D'$, the dominant morphism $\mathcal{X}_{\mathfrak{D}} \to \mathcal{X}_D$ contracts every $k$-point of the component $(\mathcal{X}_{D'})_s$ with $x_{\alpha', \beta'} \neq \overline{(\beta')^{-1}(\alpha - \alpha')}$ (and thus all of $(\mathcal{X}_{D'})_s$) to the point $x_{\alpha, \beta} = \overline{\beta^{-1}(\alpha' - \alpha)}$ of $(\mathcal{X}_D)_s$, with the convention that $\beta^{-1}(\alpha' - \alpha)$ (resp. $(\beta')^{-1}(\alpha - \alpha')$) reduces to $\infty$ if $v(\alpha' - \alpha) < v(\beta)$ (resp. $v(\alpha - \alpha') < v(\beta')$).  Noting that, thanks to flatness, the special fiber of any model the projective line is (geometrically) connected, it is clear that either the components $\mathcal{X}_D$ and $\mathcal{X}_{D'}$ meet at the point where $x_{\alpha, \beta} = \overline{\beta^{-1}(\alpha' - \alpha)}$ and $x_{\alpha', \beta'} = \overline{(\beta')^{-1}(\alpha - \alpha')}$ or there is another component $(\mathcal{X}_{D''})_s$ (for some disc $D'' \in \mathfrak{D} \smallsetminus \{D, D'\}$) such that the dominant morphism $\mathcal{X}_{\mathfrak{D}} \to \mathcal{X}_{D''}$ contracts the components $(\mathcal{X}_D)_s$ and $(\mathcal{X}_{D'})_s$ to different $k$-points of $(\mathcal{X}_{D''})_s$.  One can then show, by applying the above formula to find the coordinate values of $k$-points on the component $(\mathcal{X}_{D''})_s$ to which other components contract that the latter happens if and only if we have $\eta_{D''} \in [\eta_D, \eta_{D'}]$.

To prove the statement about thickness, in the simple case that we have $\mathfrak{D} = \{D, D'\}$ and $D' = D_{\alpha, v(\beta')} \supsetneq D = D_{\alpha, v(\beta)}$, the idea is to check that a local defining equation at the node is given by $x_{\alpha, \beta} x_{\alpha, \beta'}^\vee = \beta' \beta^{-1}$, where $x_{\alpha, \beta'}^\vee = x_{\alpha, \beta'}^{-1}$ and noting that we have $v(\beta' \beta^{-1}) = \delta(\eta_D, \eta_{D'}) / v(\pi)$.  The general case follows from the fact (which can be shown using \cite[Lemma 10.3.21]{liu2002algebraic}, for instance) that contracting a component of the special fiber of any $\mathcal{O}_{K'}$-scheme which meets exactly $2$ other components at nodes of thickness $\theta_1$ and $\theta_2$ produces a node of thickness $\theta_1 + \theta_2$.
\end{proof}

\subsection{Approximations of a polynomial as a $p$th power} \label{sec normalizations part-pth-power}

We extend the valuation $v : \bar{K}^\times \to \qq$ to the \emph{Gauss valuation} $v: \bar{K}[z] \smallsetminus \{0\} \to \qq$; that is, for any non-zero polynomial $h(z) \in \bar{K}[z]$, we define $v(h)$ to be the minimum of the valuations of the coefficients of the terms appearing in $h$.

\begin{dfn} \label{dfn part-pth-power}

Given a nonzero polynomial $h(z) \in \bar{K}[z]$, a \emph{part-$p$th-power decomposition} (which we sometimes more simply call a \emph{decomposition}) of $h$ is a way of writing $h = q^p + \rho$ for some $q(z), \rho(z) \in \bar{K}[z]$, with $\deg(q) \leq \lfloor \deg(h)/p \rfloor$.  By convention, this definition allows $q = 0$.

\end{dfn}

Given a part-$p$th-power decomposition $h = q^2 + \rho$, we define the rational number 
\begin{equation*}
t_{q, \rho} := v(\rho) - v(h) \in \qq \cup \{+\infty\}.
\end{equation*}

\begin{dfn} \label{dfn good and total}

We define the following properties of a part-$p$th-power decomposition $h = q^p + \rho$.

\begin{enumerate}[(a)]
\item The decomposition is said to be \emph{good} either if we have $t_{q,\rho} \geq \pfrac$ or if we have $t_{q,\rho} < \ptfrac$ and there is no decomposition $h = q_1^p + \rho_1$ such that $t_{q_1, \rho_1} > t_{q, \rho}$.
\item The decomposition is said to be \emph{total} if $\rho$ only consists of terms of prime-to-$p$ degree.
\end{enumerate}

\end{dfn}

\begin{rmk} \label{rmk good decompositions}

We make the following observations about good part-$p$th-power decompositions.

\begin{enumerate}[(a)]

\item The trivial part-$p$th-power decomposition $h = 0^p + h$ has $t_{0, h} = 0$; this immediately implies that all good decompositions $h = q^p + \rho$ satisfy $t_{q, \rho} \geq 0$. When $p$ is not the residue characteristic of $K$, the converse also holds because then we have $\pfrac=0$.

\item If $h = q^p + \rho$ is a part-$p$th-power decomposition with $t_{q, \rho} > 0$, then we immediately get $v(h) = p v(q)$.

\item If $h = q_1^p + \rho_1 = q_2^p + \rho_2$ are two good part-$p$th-power decompositions for the same polynomial $h \neq 0$, then we have $\truncate{t_{q_1, \rho_1}} = \truncate{t_{q_2, \rho_2}}$ directly from \Cref{dfn good and total}.

\end{enumerate}

\end{rmk}

In the following proposition and elsewhere below, it will be necessary to refer to a \textit{normalized reduction} of a polynomial over $\bar{K}$, defined as follows.

\begin{dfn} \label{dfn normalized reduction}

A \emph{normalized reduction} of a nonzero polynomial $h(z) \in \bar{K}[z]$ is the reduction in $k[z]$ of $\gamma^{-1}h$, where $\gamma \in \bar{K}^{\times}$ is some scalar satisfying $v(\gamma) = v(h)$.

\end{dfn}

\begin{rmk} \label{rmk normalized reduction}

Clearly a normalized reduction of a polynomial $h(z)$ is a nonzero polynomial in $k[x]$ and is unique up to scaling; thus, the degrees of the terms appearing in the normalized reduction (which is what we will be chiefly interested in for our purposes) do not depend on the particular choice of $\gamma \in K^{\times}$ in \Cref{dfn normalized reduction}.

\end{rmk}

\begin{prop} \label{prop good decomposition}

Let $h = q^p + \rho$ be a part-$p$th-power decomposition satisfying $t_{q,\rho} < \pfrac$.  Then the decomposition $h = q^p + \rho$ is good if and only if some (any) normalized reduction of $\rho$ is not the $p$th power of a polynomial with coefficients in $k$.

\end{prop}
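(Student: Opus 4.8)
The plan is to prove both implications by direct estimates with the Gauss valuation, using that $p \mid \binom{p}{j}$ for $1 \le j \le p-1$, so that the threshold $\ptfrac$ enters exactly where the binomial cross-terms must be controlled. I would first dispose of the case $t_{q,\rho} < 0$: there the decomposition is not good by \Cref{rmk good decompositions}(a), while on the other hand $v(\rho) < v(h)$ forces $v(q^p) = v(\rho)$, so that for a scalar $\gamma \in \bar{K}^\times$ with $v(\gamma) = v(\rho)$ one has $\overline{\gamma^{-1}\rho} = -\,\overline{\gamma^{-1}q^p}$, which is a $p$th power in $k[z]$ because $k$ is algebraically closed (so both $\gamma$ and $-1$ admit $p$th roots). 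Thus both sides of the asserted equivalence hold in this case, and henceforth I may assume $t_{q,\rho} \ge 0$; appealing to \Cref{rmk good decompositions}(b), this yields $v(q) \ge \tfrac1p v(h)$ and $v(\rho) = v(h) + t_{q,\rho}$.

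For the implication ``a normalized reduction of $\rho$ is a $p$th power $\Rightarrow$ the decomposition is not good'': suppose a normalized reduction $\bar\rho$ of $\rho$ equals $\bar s^{\,p}$ for some $\bar s \in k[z]$. Choosing $\gamma \in \bar{K}^\times$ with $v(\gamma) = v(\rho)$, a lift $\tilde s \in \bar{K}[z]$ of $\bar s$ with $v(\tilde s) = 0$ and $\deg \tilde s = \deg \bar s$, and $\delta \in \bar{K}$ with $\delta^p = \gamma$, I would set $q_1 := q + \delta\tilde s$ and $\rho_1 := h - q_1^p$. The bound $\deg q_1 \le \lfloor \deg(h)/p\rfloor$ holds since $\deg \tilde s = \tfrac1p \deg \bar\rho \le \tfrac1p \deg(h)$. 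Expanding binomially, $\rho_1 = \bigl(\rho - (\delta\tilde s)^p\bigr) - \sum_{j=1}^{p-1}\binom{p}{j}q^{p-j}(\delta\tilde s)^j$; the first summand has valuation $> v(\rho)$ because $\overline{\gamma^{-1}(\delta\tilde s)^p} = \bar s^{\,p} = \bar\rho$, while each remaining summand has valuation at least $v(p) + \tfrac1p\bigl((p-j)v(h) + j\,v(\rho)\bigr) = v(p) + v(h) + \tfrac{j}{p}t_{q,\rho}$, which strictly exceeds $v(\rho) = v(h) + t_{q,\rho}$ precisely because $t_{q,\rho} < \ptfrac$ (the binding case being $j=1$). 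Hence $t_{q_1,\rho_1} > t_{q,\rho}$ and the decomposition is not good.

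For the converse, suppose $h = q_1^p + \rho_1$ with $t_{q_1,\rho_1} > t_{q,\rho}$; then $v(\rho_1) > v(\rho)$, and with $e := q_1 - q$ one computes $\rho - \rho_1 = q_1^p - q^p = e^p + \sum_{j=1}^{p-1}\binom{p}{j}q^{p-j}e^j$, a polynomial of valuation exactly $v(\rho)$. Feeding in $v(q) \ge \tfrac1p v(h)$, a short trichotomy on the sign of $v(e^p) - v(\rho)$ shows that, under $t_{q,\rho} < \ptfrac$, every cross-term $\binom{p}{j}q^{p-j}e^j$ has valuation strictly larger than $\min\{v(e^p), v(\rho)\}$; since $\rho - \rho_1$ has valuation $v(\rho)$, this is possible only if $v(e^p) = v(\rho)$ with all cross-terms of strictly larger valuation. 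Writing $\gamma = \mu^p$ with $v(\mu) = v(e)$, it follows that $\overline{\gamma^{-1}\rho} = \overline{\gamma^{-1}(\rho - \rho_1)} = \overline{\gamma^{-1}e^p} = \bigl(\overline{\mu^{-1}e}\bigr)^{\!p}$ is a $p$th power in $k[z]$, which is the contrapositive of what we want.

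The only delicate point is the valuation bookkeeping for the binomial cross-terms in the two main estimates; there I expect no genuine obstacle, but one must take care that the lower bound $v(q) \ge \tfrac1p v(h)$ (and, in the converse, the sub-case hypotheses on $v(e)$) together with $p \mid \binom{p}{j}$ make the inequality $t_{q,\rho} < \ptfrac$ exactly tight for the $j=1$ term, and that the boundary case $t_{q,\rho} = 0$ — where the hypothesis forces $v(p) > 0$ — is covered by the same inequalities.
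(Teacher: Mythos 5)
Your proof is correct, and in one direction it takes a genuinely different route from the paper. The direction ``normalized reduction is a $p$th power $\Rightarrow$ the decomposition is not good'' (constructing $q_1 = q + \delta\tilde s$ and bounding the cross-terms $\binom{p}{j}q^{p-j}(\delta\tilde s)^j$ by $v(p) + v(h) + \tfrac{j}{p}t_{q,\rho} > v(\rho)$, with the binding case $j = 1$) matches the paper essentially term for term. The other direction is where you diverge: the paper first establishes $v(q_1 - q) = \tfrac{1}{p}v(\rho)$ via the factorization $q_1^p - q^p = \prod_{m=0}^{p-1}(q_1 - \zeta_p^m q)$ together with the estimate $v(\zeta_p^m q - q) > \tfrac{1}{p}v(\rho)$, and then controls the error using the polynomial divisibility $p(q_1 - q) \mid (q_1^p - q^p) - (q_1 - q)^p$ plus a homogeneity argument; you instead expand $q_1^p - q^p = e^p + \sum_{j=1}^{p-1}\binom{p}{j}q^{p-j}e^j$ with $e := q_1 - q$ and run a trichotomy on $v(e^p)$ versus $v(\rho)$, showing every cross-term strictly exceeds $\min\{v(e^p),\,v(\rho)\}$, which simultaneously forces $v(e^p) = v(\rho)$ and kills the cross-terms. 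This is arguably cleaner: it avoids the roots-of-unity factorization and the divisibility identity entirely, and the two implications become visibly dual, both resting on the single bound $v(p) > \tfrac{p-j}{p}\,t_{q,\rho}$ for $1 \le j \le p-1$, which is precisely the content of $t_{q,\rho} < \ptfrac$. The one step you sketched --- that in the sub-case $v(e^p) \le v(\rho)$ the inequality $v(p) > (p-j)(v(e) - v(q))$ holds because either $v(e) \le v(q)$ (and the right side is nonpositive) or else $v(e) - v(q) \le \tfrac1p v(\rho) - \tfrac1p v(h) = \tfrac1p t_{q,\rho}$ --- is exactly right and just needs to be written out; once that is done the argument is complete.
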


\begin{proof}
If $t_{q,\rho} < 0$, the decomposition is not good by \Cref{rmk good decompositions}(a), and any normalized reduction of $\rho$ is a $p$th power, since it is a scalar multiple of a normalized reduction of $q^p$.  We therefore assume for the rest of the proof that $0 \leq t_{q,\rho} < \pfrac$.

Suppose that the part-$p$th-power decomposition $h = q^p + \rho$ is not good, so that there exists another decomposition $h = q_1^p + \rho_1$ with $t_{q_1, \rho_1} > t_{q, \rho} \geq 0$ (which is equivalent to the inequality $v(\rho_1) > v(\rho)$).  The fact that both $t_{q, \rho}$ and $t_{q_1, \rho_1}$ are non-negative implies that we have 
\begin{equation} \label{eq estimation of v(q)}
v(q) = \tfrac{1}{p}v(h - \rho) \geq \tfrac{1}{p}v(h); \ v(q_1) = \tfrac{1}{p}v(h - \rho_1) \geq \tfrac{1}{p}v(h).
\end{equation}

We claim that $\rho$ can be approximated (modulo polynomials of valuation $> v(\rho)$) as the $p$th power $(q_1 - q)^p$; this implies that a normalized reduction of $\rho$ is a normalized reduction of $(q_1 - q)^p$ and thus is a $p$th power.  To prove this claim, we first show that we have $v(q_1 - q) = \frac{1}{p}v(\rho)$.  Letting $\zeta_p$ be a primitive $p$th root of unity, note that for $1 \leq m \leq p - 1$, we have 
\begin{equation} \label{eq v(zeta_p q - q)}
v(\zeta_p^m q - q) = v(\zeta_p^m - 1) + v(q) \geq \tfrac{v(p)}{p-1} + \tfrac{1}{p}v(h) > \tfrac{1}{p}(t_{q,\rho} + v(h)) = \tfrac{1}{p}v(\rho).
\end{equation}
First suppose that we have $v(q_1 - q) < \frac{1}{p}v(\rho)$.  Using (\ref{eq v(zeta_p q - q)}), we then see that $v(q_1 - \zeta_p^m q) = v(q_1 - q)$ for $1 \leq m \leq p - 1$, and therefore we have 
\begin{equation}
v(q_1^p - q^p) = \sum_{m = 0}^{p-1} v(q_1 - \zeta_p^m q) = p v(q_1 - q) < v(\rho) = v(q_1^p - q^p),
\end{equation}
a contradiction.  Now suppose that we have $v(q_1 - q) > \frac{1}{p}v(\rho)$.  Again using (\ref{eq v(zeta_p q - q)}), we then see that $v(q_1 - \zeta_p^m q) \geq \min\{v(q_1 - q), v(\zeta_p^m q - q)\} > \frac{1}{p}v(\rho)$ for $1 \leq m \leq p - 1$, and therefore we have 
\begin{equation}
v(q_1^p - q^p) = \sum_{m = 0}^{p-1} v(q_1 - \zeta_p^m q) > v(\rho) = v(q_1^p - q^p),
\end{equation}
again a contradiction.  We conclude that 
\begin{equation} \label{eq v(tilde q - q)}
v(q_1 - q) = \tfrac{1}{p}v(\rho), \text{ or equivalently, } v((q_1 - q)^p) = v(\rho).
\end{equation}

Meanwhile, it is easy to verify that we have 
\begin{equation}
p (q_1 - q) \mid (q_1^p - q^p) - (q_1 - q)^p,
\end{equation}
and furthermore, the polynomial $((q_1^p - q^p) - (q_1 - q)^p) / (p (q_1 - q))$ can be expressed as a homogeneous degree-$(p-1)$ polynomial with respect to $q$ and $q_1$ treated as variables; by (\ref{eq estimation of v(q)}), this quotient thus has valuation $\geq \tfrac{p-1}{p}v(h)$.  Using this and (\ref{eq v(tilde q - q)}), we deduce that 
\begin{equation} \label{eq approximation of ugly difference}
\begin{aligned}
v((q_1^p - q^p) - (q_1 - q)^p) \geq v(p) &+ \tfrac{1}{p}v(\rho) + \tfrac{p - 1}{p}v(h) \\
&\geq \tfrac{p-1}{p}(\ptfrac + v(h)) + \tfrac{1}{p}v(\rho) > \tfrac{p-1}{p}v(\rho) + \tfrac{1}{p}v(\rho) = v(\rho).
\end{aligned}
\end{equation}

Now we may write the polynomial $\rho$ as 
\begin{equation}
\rho = [\rho_1] + [(q_1^p - q^p) - (q_1 - q)^p] + [(q_1 - q)^p],
\end{equation}
where the first two expressions in square brackets on the right-hand side have valuation $> v(\rho)$ thanks to (\ref{eq approximation of ugly difference}) but the last such expression has valuation equal to $v(\rho)$ thanks to (\ref{eq v(tilde q - q)}).  This completes the proof that a normalized reduction of $\rho$ is a $p$th power.
        		
Let us now prove the converse.  Suppose that a normalized reduction of $\rho$ is a $p$th power, which is to say that we have a decomposition $\rho = s^p + \rho_0$, where $s$ and $\rho_0$ are polynomials such that $v(\rho_0) > v(\rho)$, which implies that we have $v(s) = \frac{1}{p}v(\rho)$.  Now let us now consider the part-$p$th-power decomposition $h = q_1^p + \rho_1$, where $q_1 = s + q$ and 
\begin{equation} \label{eq tilde rho}
\rho_1 = h - q_1^p = \rho + q^p - (s + q)^p = \rho - s^p - p s^{p-1} q - \dots - p s q^{p-1} = \rho_0 - p s^{p-1} q - \dots - p s q^{p-1}.
\end{equation}
As in (\ref{eq estimation of v(q)}), we have $v(q) \geq \frac{1}{p}v(h)$, and we also have $v(s) = \frac{1}{p}v(\rho) \geq \frac{1}{p}v(h)$.  The terms appearing after $\rho_0$ in (\ref{eq tilde rho}), each being an integral multiple of $p$ times $s$ times a homogeneous degree-$(p-1)$ polynomial with respect to $q$ and $s$ treated as variables, therefore each have valuation 
\begin{equation*}
\geq v(p) + \tfrac{1}{p}v(\rho) + \tfrac{p-1}{p}v(h) = \tfrac{p-1}{p}(\ptfrac + v(h)) + \tfrac{1}{p}\rho > \tfrac{p-1}{p}v(\rho) + \tfrac{1}{p}v(\rho) = v(\rho).
\end{equation*}
As $\rho_1$ is thus the sum of polynomials of valuation $> v(\rho)$, we have $v(\rho_1) > v(\rho)$, implying that $t_{q_1, \rho_1} > t_{q, \rho}$, or in other words, that the part-$p$th-power decomposition $h = q^2 + \rho$ is not good.
\end{proof}

\begin{cor} \label{cor total is good}

Every total part-$p$th-power decomposition of a polynomial is good.

\end{cor}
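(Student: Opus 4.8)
The plan is to deduce this directly from \Cref{prop good decomposition}. Let $h = q^p + \rho$ be a total part-$p$th-power decomposition. I would first dispose of the case $t_{q,\rho} \geq \pfrac$: here the decomposition is good by the very first clause of \Cref{dfn good and total}(a), and there is nothing to prove (this clause also absorbs the degenerate possibility $\rho = 0$, where $t_{q,\rho} = +\infty$). So I may assume $t_{q,\rho} < \pfrac$; in particular $\rho \neq 0$, so it has a normalized reduction, and \Cref{prop good decomposition} applies and reduces the claim to showing that no normalized reduction $\bar\rho \in k[z]$ of $\rho$ is the $p$th power of a polynomial with coefficients in $k$.

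The main step is then the elementary observation that a normalized reduction of a \emph{total} remainder is never a $p$th power in $k[z]$. I would argue as follows. By \Cref{dfn normalized reduction}, $\bar\rho$ is the reduction modulo the maximal ideal of $\gamma^{-1}\rho$ for a suitable scalar $\gamma$ with $v(\gamma) = v(\rho)$; passing to the reduction can only kill coefficients, never create nonzero coefficients in degrees not already present in $\rho$. Hence the set of degrees of terms occurring in $\bar\rho$ is contained in the set of degrees of terms occurring in $\rho$, which, by totality of the decomposition, consists entirely of integers not divisible by $p$. Since $\bar\rho$ is nonzero (a normalized reduction always is), its degree — being the degree of its leading term — is a nonnegative integer not divisible by $p$. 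Now if $\bar\rho = g^p$ for some $g \in k[z]$, then $g \neq 0$ since $\bar\rho \neq 0$, so $\deg \bar\rho = \deg(g^p) = p \cdot \deg(g)$ would be divisible by $p$, regardless of the characteristic of $k$ — a contradiction. Therefore $\bar\rho$ is not a $p$th power, and \Cref{prop good decomposition} yields that $h = q^p + \rho$ is good.

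I do not expect any genuine obstacle here: all of the substantive work is already contained in \Cref{prop good decomposition}, and what remains is the degree count for $p$th powers plus a little bookkeeping about which degrees survive in a normalized reduction and about the boundary cases being handled by the $t_{q,\rho} \geq \pfrac$ branch. If anything, the only point requiring care is not to conflate $\deg \bar\rho$ with $\deg \rho$ (the leading term of $\rho$ may have valuation strictly exceeding $v(\rho)$ and thus reduce to $0$), but this is immaterial since we only use that \emph{every} degree occurring in $\bar\rho$ — in particular its top degree — is prime to $p$.
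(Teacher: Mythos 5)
Your proof is correct and follows essentially the same route as the paper's: split off the trivial $t_{q,\rho}\geq \pfrac$ case, then note that a total decomposition forces every degree occurring in $\rho$ (hence in any normalized reduction $\bar\rho$) to be prime to $p$, so $\bar\rho$ is not a $p$th power, and conclude via \Cref{prop good decomposition}. The only difference is that you spell out the degree-count justification (if $\bar\rho = g^p$ with $g\neq 0$ then $\deg\bar\rho = p\deg g$) that the paper leaves implicit, and you correctly flag the care needed in distinguishing $\deg\bar\rho$ from $\deg\rho$.
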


\begin{proof}
Suppose that the decomposition $h = q^2 + \rho$ is total.  If $t_{q, \rho} \geq \ptfrac$, then we are already done, so assume that $t_{q, \rho} < \ptfrac$.  Then since $\rho$ consists only of prime-to-$p$-degree terms, the same is true of any normalized reduction of $\rho$, which consequently cannot be the $p$th power of any polynomial in $k[x]$.  Then \Cref{prop good decomposition}(a) implies that the decomposition is good.
\end{proof}

\begin{prop} \label{prop total existence}

Given a nonzero polynomial $h(z) \in \bar{K}[z]$, there always exists a total part-$p$th-power decomposition of $h$.  Therefore, thanks to \Cref{cor total is good}, there always exists a good part-$p$th-power decomposition of $h$.

\end{prop}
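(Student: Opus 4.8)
The plan is to recast ``$h$ has a total part-$p$th-power decomposition'' as a surjectivity statement for a single homogeneous polynomial map between affine spaces of equal dimension, and then settle it by a standard finiteness argument; the ``good'' decomposition then comes for free from \Cref{cor total is good}. Set $N = \deg h$, $D = \lfloor N/p \rfloor$, and let $c_j$ be the coefficient of $z^j$ in $h$. The first step is the elementary observation that, for any $q \in \bar{K}[z]$ with $\deg q \le D$, the decomposition $h = q^p + (h - q^p)$ is total if and only if the coefficient of $z^{pk}$ in $q^p$ equals $c_{pk}$ for every $k$ with $0 \le k \le D$. Indeed, if $\ell$ is divisible by $p$ and $\ell > pD$, then $\ell \ge p(D+1) > N$ and $\ell > \deg(q^p)$, so the coefficient of $z^\ell$ in $h - q^p$ vanishes automatically; and for $\ell = pk \le pD$ the vanishing of the $z^\ell$-coefficient of $h - q^p$ is precisely the stated condition.

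Next I would introduce the map. Let $R = \bar{K}[a_0, \dots, a_D]$, and for $0 \le k \le D$ let $b_k \in R$ be the coefficient of $z^{pk}$ in $\big(\sum_{i=0}^D a_i z^i\big)^p$; thus $b_k = \sum a_{i_1} \cdots a_{i_p}$, the sum being over tuples $(i_1, \dots, i_p)$ with $0 \le i_l \le D$ and $i_1 + \cdots + i_p = pk$. Each $b_k$ is homogeneous of degree $p$, and it is a nonzero polynomial since it contains the monomial $a_k^p$. These define a morphism $\Phi \colon \mathbb{A}^{D+1}_{\bar{K}} \to \mathbb{A}^{D+1}_{\bar{K}}$ sending $\mathbf{a}$ to $(b_0(\mathbf{a}), \dots, b_D(\mathbf{a}))$, and by the previous paragraph it is enough to show that $(c_0, c_p, \dots, c_{pD})$ lies in the image of $\Phi$ on $\bar{K}$-points.

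The crucial input is that $\Phi^{-1}(0) = \{0\}$: if $\mathbf{a} \ne 0$ and $d = \max\{i : a_i \ne 0\}$, then $q := \sum a_i z^i$ has degree $d \le D$, so the coefficient of $z^{pd}$ in $q^p$ is the leading coefficient $a_d^p \ne 0$, i.e.\ $b_d(\mathbf{a}) \ne 0$. Hence $V(b_0, \dots, b_D) = \{0\}$ in $\mathbb{A}^{D+1}$, so $R/(b_0, \dots, b_D)$ is Artinian and finite-dimensional over $\bar{K}$; by graded Nakayama, $R$ is a finite module over the graded subring $S = \bar{K}[b_0, \dots, b_D]$. Comparing Krull dimensions forces $b_0, \dots, b_D$ to be algebraically independent, so $S$ is a polynomial ring and $\Phi$ factors as $\Spec R \to \Spec S \xrightarrow{\ \sim\ } \mathbb{A}^{D+1}$ with the first arrow finite (finite module) and dominant (injective ring map), hence closed with dense image, hence surjective. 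Thus $\Phi$ is surjective on $\bar{K}$-points, so $(c_0, c_p, \dots, c_{pD})$ is attained by some $\mathbf{a}$; the resulting $q = \sum a_i z^i$ has $\deg q \le D = \lfloor \deg h / p \rfloor$ and yields a total decomposition $h = q^p + (h - q^p)$. The final assertion of the proposition is then immediate from \Cref{cor total is good}.

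I expect none of the individual steps to be genuinely hard, but the points requiring care are the bookkeeping in the first paragraph (controlling $z^\ell$-coefficients with $\ell > pD$) and making the commutative-algebra argument of the last paragraph fully precise (homogeneity of the $b_k$, the graded Nakayama step, and the passage from ``zero-fiber is a point'' to ``finite and surjective''); it may be cleanest to isolate, as a small lemma, the statement that a homogeneous polynomial self-map of affine space whose fiber over the origin is a single point is finite and surjective. An alternative route is to induct on $\deg h$ — the case $p \nmid \deg h$ being an easy reduction obtained by absorbing the top term of $h$ into $\rho$ — but the case $p \mid \deg h$ ultimately rests on essentially the same zero-fiber computation, so I would favor the direct argument above.
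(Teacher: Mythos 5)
Your proof is correct and takes essentially the same route as the paper: reducing the claim to the surjectivity of the homogeneous polynomial self-map of $\aff^{m+1}$ sending $q$ (of degree $\leq m$) to the $z^{pk}$-coefficients of $q^p$, which the paper defers to \cite[Theorem 1]{yelton2025polynomials}. You have simply filled in a self-contained proof of that surjectivity via the zero-fiber observation $\Phi^{-1}(0) = \{0\}$, graded Nakayama, and finiteness of the induced affine morphism.
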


\begin{proof}
For $0 \leq i \leq m := \lfloor \deg(h)/p \rfloor$, let $a_i \in \bar{K}$ be the coefficient of the degree-$pi$ term of $h$.  The statement is equivalent to saying that there exists a polynomial $q(z) \in \bar{K}[z]$ of degree $m$ with the property that for $0 \leq i \leq m := \lfloor \deg(h)/p \rfloor$, the degree-$pi$ coefficient of $q^p$ equals $a_i$.  Such a polynomial is constructed in the case of $p = 2$ in \cite{yelton2025polynomials} and \cite[Proposition 4.20]{fiore2023clusters}.  For general $p$, this is \cite[Theorem 1]{yelton2025polynomials} and is proved (in a non-constructive way) by relating it to a morphism $\aff_K^{m+1} \to \aff_K^{m+1}$ which is shown to be surjective using elementary methods.
\end{proof}

We note that, if a nonzero polynomial $h \in \bar{K}[z]$ is written as a product of factors $h = \prod_{i=1}^s h_i$ with $h_i\in \bar{K}[z]$, then, given part-$p$th-power decompositions $h_i = q_i^p + \rho_i$, with $q_i, \rho_i \in \bar{K}[z]$, one can use them to form a part-$p$th-power decomposition $h = q^2 + \rho$, where $q = \prod_{i=1}^s q_i$ and $\rho = h - q^p$.  The following proposition dealing with this situation will be useful for our computations in \S\ref{sec t_f}.

\begin{prop} \label{prop product part-pth-power}

In the setting above, let $t = t_{q, \rho}$, and for $1 \leq i \leq s$, let $t_i = t_{q_i,\rho_i}$ and assume that the decompositions $h_i = q_i^p + \rho_i$ are each good.  Then we have $t \geq \min\{t_1,\dots, t_s\}$ with equality if the minimum is achieved for a unique $i \in \{1, \dots, s\}$.
\end{prop}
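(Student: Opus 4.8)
The plan is to expand the product and estimate the Gauss valuation term by term. Since $q = \prod_{i=1}^s q_i$, we have $q^p = \prod_{i=1}^s q_i^p$, and therefore
\[
\rho = h - q^p = \prod_{i=1}^s (q_i^p + \rho_i) - \prod_{i=1}^s q_i^p = \sum_{\varnothing \neq S \subseteq \{1, \dots, s\}} \Bigl(\prod_{i \in S} \rho_i\Bigr)\Bigl(\prod_{i \notin S} q_i^p\Bigr).
\]
By multiplicativity of the Gauss valuation we have $v(h) = \sum_{i=1}^s v(h_i)$. Moreover, since each decomposition $h_i = q_i^p + \rho_i$ is good, \Cref{rmk good decompositions}(a) gives $t_i \geq 0$, i.e. $v(\rho_i) \geq v(h_i)$. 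I would introduce the auxiliary quantity $u_i := v(q_i^p) - v(h_i)$, recording that $u_i \geq 0$ always (with the convention $v(0) = +\infty$ covering the case $q_i = 0$; otherwise $q_i^p = h_i - \rho_i$ forces $v(q_i^p) \geq v(h_i)$), and that $u_i = 0$ whenever $t_i > 0$ by \Cref{rmk good decompositions}(b). With this notation, each summand above, indexed by a nonempty $S$, has valuation exactly $v(h) + \sum_{i \in S} t_i + \sum_{i \notin S} u_i$.

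Since all $t_i$ and all $u_i$ are $\geq 0$, every such summand has valuation $\geq v(h) + \min_i t_i$, and hence $v(\rho) \geq v(h) + \min_i t_i$, which is exactly the inequality $t \geq \min\{t_1, \dots, t_s\}$. For the equality statement, suppose the minimum is attained at a unique index $j$. Then for every $i \neq j$ we have $t_i > t_j \geq 0$, so $t_i > 0$ and therefore $u_i = 0$; consequently the summand corresponding to $S = \{j\}$ has valuation exactly $v(h) + t_j$. For any other nonempty $S$ one checks $\sum_{i \in S} t_i + \sum_{i \notin S} u_i > t_j$: if $|S| \geq 2$ then $S$ contains some index $i_0 \neq j$ with $t_{i_0} > t_j \geq 0$, so $t_{i_0} > 0$ and the sum exceeds $t_j$ once the remaining nonnegative terms are added; and if $S = \{j'\}$ with $j' \neq j$ then already $t_{j'} > t_j$. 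Thus the summand for $S = \{j\}$ is the unique one of minimal valuation, so $v(\rho) = v(h) + t_j$ and $t = t_j = \min\{t_1, \dots, t_s\}$.

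I do not expect a serious obstacle. The only points needing care are the bookkeeping with the quantities $u_i$ and the degenerate cases where some $q_i = 0$ or $\rho = 0$ (where valuations are $+\infty$ but every inequality still holds), together with the observation that the hypothesis of goodness enters only through $t_i \geq 0$ and the implication $t_i > 0 \Rightarrow v(q_i^p) = v(h_i)$, both already recorded in \Cref{rmk good decompositions}. The multiplicativity of the Gauss valuation used throughout is standard.
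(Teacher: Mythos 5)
Your proof is correct, and it takes a somewhat different route from the paper's. The paper reduces to the case $s = 2$ (asserting this reduction ``clearly suffices'') and then expands $\rho = \rho_1 q_2^p + \rho_2 q_1^p + \rho_1\rho_2$, estimating each of the three terms. You instead treat general $s$ directly by expanding $\prod(q_i^p + \rho_i) - \prod q_i^p$ over nonempty subsets $S$ and introducing the auxiliary quantities $u_i = v(q_i^p) - v(h_i)$, from which the valuation of each of the $2^s - 1$ summands is exactly $v(h) + \sum_{i \in S} t_i + \sum_{i \notin S} u_i$. Both approaches rest on the same two ingredients (multiplicativity of the Gauss valuation, and the facts $t_i \geq 0$, $t_i > 0 \Rightarrow u_i = 0$ from \Cref{rmk good decompositions}), so the computational content is the same. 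What your version buys is that it sidesteps the reduction step entirely: the paper's claim that the reduction to $s = 2$ is ``clear'' actually requires a small amount of care, because when one groups $h = h_1 \cdot (h_2 \cdots h_s)$ the intermediate product decomposition of $h_2 \cdots h_s$ is not a priori known to be good (that is exactly the content of \Cref{cor mathfrakt minimum}, which is proved \emph{after} this proposition), and one must notice that the $s = 2$ argument only uses $t' \geq 0$ and $v((q')^p) = v(h')$ when $t' > 0$, not goodness per se. Your uniform treatment makes this a non-issue, at the modest cost of slightly heavier notation.
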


\begin{proof}
It clearly suffices to prove this for $N = 2$, so we assume that $N = 2$.  We have 
\begin{equation} \label{eq product part-pth-power}
\rho = h - q^p = h_1 h_2 - (q_1 q_2)^p = (q_1^p + \rho_1)(q_2^p + \rho_2) - (q_1 q_2)^p = \rho_1 q_2^p + \rho_2 q_1^p + \rho_1 \rho_2.
\end{equation}
\Cref{rmk good decompositions}(a) tells us that $t_1, t_2 \geq 0$, and therefore we have have $v(\rho_i)\ge v(h_i)$ (with strict inequality when $t_i > 0$) and thus $v(q_i^p) = v(h_i - \rho_i) \geq v(h_i)$ (with equality when $t_i > 0$) for $i = 1, 2$.  We meanwhile have $v(\rho_i) = v(h_i) + t_i$ by definition of $t_i$ for $i = 1, 2$, which enables us to estimate the valuations of the three terms on the right-hand side of (\ref{eq product part-pth-power}): we compute $v(\rho_1 q_2^p) \geq v(h_1) + t_1 + v(h_2) = v(h) + t_1$ (with equality when $t_2 > 0$); symmetrically, we get $v(\rho_2 q_1^p) \geq v(h) + t_2$ (with equality when $t_1 > 0$); and finally, we get $v(\rho_1\rho_2) = v(h_1) + t_1 + v(h_2) + t_2 = v(h) + t_1 + t_2$.  In particular, each summand on the right-hand side of (\ref{eq product part-pth-power}) has valuation $\geq v(h) + \min\{t_1, t_2\}$ and so we have $v(\rho) \geq v(h) + \min\{t_1, t_2\}$, and we get $t = v(\rho) - v(h) \geq \min\{t_1, t_2\}$, as desired.

Now if we have $t_2 > t_1 \geq 0$, we observe from the above computations that then we have $v(\rho_1 q_2^p) = v(h) + t_1$ while the other two terms on the right-hand side of (\ref{eq product part-pth-power}) have valuation strictly greater than $v(h) + t_1$, implying that $t = v(\rho) - v(h) = t_1$.  By symmetry, we get $t = t_2$ if $t_1 > t_2$.  The assertion of equality, in the case where the minimum is achieved for a unique index, follows.
\end{proof}

\subsection{Normalizations of models of the projective line in $p$-cyclic function fields} \label{sec normalizations p-cyclic covers}

In this subsection, we compute the model of the superelliptic curve $C: y^p = f(x)$ arising (by relative normalization in the sense of \S\ref{sec preliminaries geometric Galois}) from a given smooth model of the projective line $X$.

More precisely, following \S\ref{sec normalizations models of P^1}, choose any disc $D = D_{\alpha, b} \subset \bar{K}$ with center $\alpha \in \bar{K}$ and logarithmic radius $b \in \qq$, and choose $\beta \in \bar{K}^\times$ with $v(\beta) = b$ so that $x_{\alpha, \beta} := \beta^{-1}(x - \alpha)$ is a coordinate of the corresponding smooth model $\mathcal{X}_D$ of $X$ defined over some extension $\mathcal{O}_{K'} / \mathcal{O}_K$.  We construct its normalization in $K'(C)$, which we denote by $\mathcal{C}_D$, as follows.  After possibly replacing $\mathcal{O}_{K'}$ with a further finite extension, the results of \S\ref{sec normalizations part-pth-power} (in particular \Cref{prop total existence} and \Cref{cor total is good}) tell us that $f_{\alpha,\beta}$ admits a good part-$p$th-power decomposition $f_{\alpha,\beta} = q_{\alpha,\beta}^p + \rho_{\alpha,\beta}$ over $K'$.

Now, defining $t_{q_{\alpha, \beta}, \rho_{\alpha, \beta}} := v(\rho_{\alpha, \beta}) - v(f_{\alpha, \beta})$ as in \S\ref{sec normalizations part-pth-power} and writing $t$ as abbreviated notation for this quantity, let $\gamma \in (K')^\times$ be a scalar satisfying $v(\gamma) = \min\{t, \pfrac\} + v(f_{\alpha, \beta})$; choose a $p$th root $\gamma^{1/p} \in \bar{K}$; and replace $K'$ by $K'(\gamma^{1/p})$.  Let us make the change of variable $y = \gamma^{1/p} y_{\alpha, \beta} + q_{\alpha, \beta}(x_{\alpha, \beta})$.  Now the equation (\ref{eq superelliptic}) which defines $C$ can be written as 
\begin{equation}
(\gamma^{1/p} y_{\alpha, \beta} + q_{\alpha, \beta}(x_{\alpha, \beta}))^p = q_{\alpha, \beta}^p(x_{\alpha, \beta}) + \rho_{\alpha, \beta}(x_{\alpha, \beta}).
\end{equation}
After expanding, simplifying, and dividing both sides by $\gamma$, we get the equation 
\begin{equation} \label{eq non-reduced normalization}
y_{\alpha, \beta}^p + \tbinom{p}{1} \gamma^{-1/p} q_{\alpha, \beta}(x_{\alpha, \beta}) y_{\alpha, \beta}^{p - 1} + \dots + \tbinom{p}{p - 1} \gamma^{-(p-1)/p} q_{\alpha, \beta}^{p - 1}(x_{\alpha, \beta}) y_{\alpha, \beta} = \gamma^{-1} \rho_{\alpha, \beta}(x_{\alpha, \beta}).
\end{equation}
After possibly replacing $K'$ with another finite extension, we assume that it contains a primitive $p$th root of unity $\zeta_p$, and we write $q_0 = (1 - \zeta_p) \gamma^{-1/p} q_{\alpha, \beta} \in K'[x_{\alpha, \beta}]$ and $\rho_0 = \gamma^{-1} \rho_{\alpha, \beta} \in K'[x_{\alpha, \beta}]$ (noting that $q_0$ and $\rho_0$ each depend on $\alpha$ and $\beta$ but that our notation suppresses $\alpha$ and $\beta$).

\begin{prop} \label{prop normalization}

With the above set-up, we have the following.
\begin{enumerate}[(a)]
\item The coefficients of the polynomials $\rho_0$ and $q_0$ are each integral (\textit{i.e.} have valuation $\geq 0$).
\item The $k$-curve given by the reduction of the equation in (\ref{eq non-reduced normalization}) is given by the equation 
\begin{equation} \label{eq reduced normalization}
y_{\alpha, \beta}^p + \bar{q}_0^{p-1}(x_{\alpha, \beta}) y_{\alpha, \beta} = \bar{\rho}_0(x_{\alpha, \beta}),
\end{equation}
 where $\bar{q}_0$ and $\bar{\rho}_0$ are the reductions of $q_0$ and $\rho_0$.  Moreover, if $k$ has characteristic $p$, then 
\begin{enumerate}[(i)]
\item the term $\bar{q}_0^{p-1}(x_{\alpha, \beta}) y_{\alpha, \beta}$ vanishes if and only if we have $t < \pfrac$; and 
\item the term $\bar{\rho}_0(x_{\alpha, \beta})$ vanishes if and only if we have $t > \pfrac$.
\end{enumerate}
\item The $k$-curve defined by the equation in (\ref{eq reduced normalization}) is reduced, and the map given by the coordinate $x_{\alpha, \beta}$ makes it a separable (resp. inseparable) cover of the affine $k$-line given by $x_{\alpha, \beta} \neq \infty$ in $(\mathcal{X}_D)_s \cong \proj_k^1$ if we have $t \geq \pfrac$ (resp. $t < \pfrac$).
\end{enumerate}

\end{prop}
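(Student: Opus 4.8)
The plan is to establish (a), (b), and (c) by direct valuation bookkeeping on the coefficients of (\ref{eq non-reduced normalization}), leaning on three standard facts about the $p$th cyclotomic situation: $v\big(\binom{p}{m}\big) \geq v(p)$ for $1 \leq m \leq p-1$; $v(1-\zeta_p) = \pfrac$, which vanishes exactly when $p$ is not the residue characteristic; and $\prod_{m=1}^{p-1}(1-\zeta_p^m) = p$, with each factor an associate of $1-\zeta_p$. The two inequalities that drive everything are: since $t \geq 0$ by \Cref{rmk good decompositions}(a), we have $v(q_{\alpha,\beta}^p) = v(f_{\alpha,\beta} - \rho_{\alpha,\beta}) \geq v(f_{\alpha,\beta})$, hence $v(q_{\alpha,\beta}) \geq \tfrac{1}{p}v(f_{\alpha,\beta})$, with equality once $t > 0$; and $v(\rho_{\alpha,\beta}) = t + v(f_{\alpha,\beta})$ by definition of $t$. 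Part (a) is then immediate: from $v(\gamma) = \min\{t,\ptfrac\} + v(f_{\alpha,\beta})$ one computes $v(\rho_0) = t - \min\{t,\ptfrac\} \geq 0$ and $v(q_0) = v(1-\zeta_p) - \tfrac1p v(\gamma) + v(q_{\alpha,\beta}) \geq \pfrac - \tfrac1p\min\{t,\ptfrac\} \geq 0$, the last step because $\tfrac1p\min\{t,\ptfrac\} \leq \pfrac$; integrality of these Gauss valuations is integrality of all the coefficients involved.

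For (b) I would reduce (\ref{eq non-reduced normalization}) term by term. The coefficient of $y_{\alpha,\beta}^{p-m}$ is $\binom{p}{m}\gamma^{-m/p}q_{\alpha,\beta}^m$, of Gauss valuation $\geq v(p) - \tfrac{m}{p}\min\{t,\ptfrac\} \geq \tfrac{p-1-m}{p-1}v(p)$, which is strictly positive whenever $1 \leq m \leq p-2$ and $v(p) > 0$; hence all the intermediate terms die in the special fiber. The coefficient of $y_{\alpha,\beta}$, namely $p\,\gamma^{-(p-1)/p}q_{\alpha,\beta}^{p-1}$, equals $u\,q_0^{p-1}$ once we write $p = u(1-\zeta_p)^{p-1}$ with $u$ a unit, and its Gauss valuation works out to $\max\{0, v(p) - \tfrac{p-1}{p}t\}$ (using $v(q_{\alpha,\beta}) = \tfrac1p v(f_{\alpha,\beta})$ when $t > 0$, with a direct check at $t = 0$); this vanishes exactly when $t \geq \ptfrac$, which — computing $v(q_0)$ the same way — is exactly when $\bar q_0 \neq 0$. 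Finally $v(\rho_0) = \max\{0, t - \ptfrac\}$, so $\bar\rho_0 = 0$ exactly when $t > \ptfrac$. Assembling these, and recalling $v(p) > 0 \iff \mathrm{char}(k) = p$, yields the displayed reduction (\ref{eq reduced normalization}) together with (b)(i) and (b)(ii); the one cosmetic point is that the reduction $\bar u = \overline{(p-1)!} = -1$ of the Wilson unit must be absorbed into the coordinate $y_{\alpha,\beta}$, which is harmless as $k$ is algebraically closed. When $p$ is not the residue characteristic the binomial coefficients give nothing, and one instead uses the trivial (hence, by \Cref{rmk good decompositions}(a), good) decomposition $q_{\alpha,\beta} = 0$, so that (\ref{eq non-reduced normalization}) already reads $y_{\alpha,\beta}^p = \gamma^{-1}f_{\alpha,\beta}$ with $v(\gamma^{-1}f_{\alpha,\beta}) = 0$.

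For (c): if $t \geq \ptfrac$ then $\bar q_0 \neq 0$, and the $Y$-derivative of $Y^p + \bar q_0^{p-1}Y - \bar\rho_0 \in k(x_{\alpha,\beta})[Y]$ is the nonzero element $\bar q_0^{p-1}$ of the base field, hence a unit in $k(x_{\alpha,\beta})[Y]$; so $Y^p + \bar q_0^{p-1}Y - \bar\rho_0$ is squarefree over $k(x_{\alpha,\beta})$ (hence, by Gauss's lemma, over $k[x_{\alpha,\beta}]$), so the curve it cuts out is reduced and is a separable degree-$p$ cover of the affine line; the same argument, via $Y^p - \overline{\gamma^{-1}f_{\alpha,\beta}}$, handles the case where $p$ is not the residue characteristic. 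If instead $t < \pfrac$, forcing $\mathrm{char}(k) = p$, the reduced equation is $y_{\alpha,\beta}^p = \bar\rho_0$; here $v(\gamma) = v(\rho_{\alpha,\beta})$, so $\bar\rho_0$ is a normalized reduction of $\rho_{\alpha,\beta}$, which by \Cref{prop good decomposition} (applicable since $t < \pfrac$ and the decomposition is good) is not the $p$th power of a polynomial over $k$, i.e.\ $\bar\rho_0 \notin (k[x_{\alpha,\beta}])^p = (k(x_{\alpha,\beta}))^p \cap k[x_{\alpha,\beta}]$; therefore $\bar\rho_0 \notin (k(x_{\alpha,\beta}))^p$ and $Y^p - \bar\rho_0$ is irreducible over $k(x_{\alpha,\beta})$, so the curve is integral — in particular reduced — and, its $Y$-derivative vanishing identically, it is an inseparable cover of the affine line.

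The main obstacle is the term-by-term reduction in (b): one must pin down the \emph{exact} Gauss valuation of the two extreme coefficients (not merely bound them) to see precisely when each survives, and identify the surviving $y_{\alpha,\beta}$-coefficient correctly via $\prod_{m=1}^{p-1}(1-\zeta_p^m) = p$. Everything else is essentially formal, the only non-computational ingredient being the appeal in part (c) to \Cref{prop good decomposition} — together with the elementary identity $(k(x_{\alpha,\beta}))^p \cap k[x_{\alpha,\beta}] = (k[x_{\alpha,\beta}])^p$ — to pass from ``$\bar\rho_0$ is not a $p$th power in $k[x_{\alpha,\beta}]$'' to the irreducibility of $Y^p - \bar\rho_0$ over the function field.
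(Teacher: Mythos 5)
Your argument is correct and follows essentially the same route as the paper's own proof: bound the Gauss valuations of the middle binomial-coefficient terms of (\ref{eq non-reduced normalization}) by $v(p)\,\tfrac{p-1-m}{p-1} > 0$, compute the valuations of the extreme coefficients exactly to locate when each survives, and then split on $t \geq \pfrac$ versus $t < \pfrac$, invoking \Cref{prop good decomposition} for reducedness in the second case. Two places where you supply care that the paper's write-up elides are worth flagging. First, you correctly observe that the surviving coefficient of $y_{\alpha,\beta}$ in the reduction is $\bar u\,\bar q_0^{p-1}$ with $u = p/(1-\zeta_p)^{p-1}$, whose image is $\overline{(p-1)!} = -1$ when $\mathrm{char}(k) = p$, rather than $\bar q_0^{p-1}$ on the nose: as written, equation (\ref{eq reduced normalization}) and the paper's verification ``that all terms except for $y_{\alpha,\beta}^p$, $q_0^{p-1}(x_{\alpha,\beta})y_{\alpha,\beta}$, and $\rho_0(x_{\alpha,\beta})$ vanish'' silently drop this Wilson unit; your remedy (rescale $y_{\alpha,\beta}$ by a $(p-1)$th root of $\bar u$, available because $k$ is algebraically closed, and note that the resulting rescaling of $\bar\rho_0$ changes nothing downstream) is the right one. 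Second, you make explicit the choice $q_{\alpha,\beta}=0$ when $\mathrm{char}(k) \neq p$; without it the binomial coefficients $\binom{p}{m}$ are units, the middle terms do \emph{not} vanish, and the displayed reduction (\ref{eq reduced normalization}) is literally false. The paper only records this hypothesis explicitly in \Cref{prop normalization properties}, so folding it into the argument here, as you do, is a genuine (if small) correction.

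One presentational wrinkle in your part (c): the chain ``$t \geq \ptfrac$ hence $\bar q_0 \neq 0$, hence the $Y$-derivative is the constant $\bar q_0^{p-1}$'' uses $\mathrm{char}(k)=p$ twice (for (b)(i), and to kill the $pY^{p-1}$ term), while the tame case always has $t \geq \ptfrac = 0$ yet $\bar q_0 = 0$ under your convention. You do dispose of the tame case separately via $Y^p - \overline{\gamma^{-1}f_{\alpha,\beta}}$, so the argument is complete, but splitting (c) by residue characteristic at the outset, rather than by the size of $t$, would avoid the momentary clash. The paper's proof of (c) carries the same implicit restriction to $\mathrm{char}(k)=p$ and handles the tame case no more explicitly than you do, so you are not behind the paper here — just note that the exposition can be tightened.
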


\begin{proof}
We have $t \geq 0$ by \Cref{rmk good decompositions}(a), from which the inequalities $v(\rho_{\alpha, \beta}) \geq v(f_{\alpha, \beta})$ and $v(q_{\alpha, \beta}) = \frac{1}{p}v(f_{\alpha, \beta} - \rho_{\alpha, \beta}) \geq \frac{1}{p}v(f_{\alpha, \beta})$ (with equality if $t > 0$) follow.  Now we have the inequalities 
\begin{equation} \label{eq v(rho_0) and v(q_0)}
\begin{aligned}
v(\rho_0) = -v(\gamma) + t + v(f_{\alpha, \beta}&) \geq 0; \qquad \qquad \qquad \qquad v(q_0) = v(1 - \zeta_p) - \tfrac{1}{p}v(\gamma) + v(q_{\alpha, \beta}) \\
&\geq \tfrac{v(p)}{p-1} - \tfrac{1}{p}(\ptfrac + v(f_{\alpha, \beta})) + v(q_{\alpha, \beta}) = v(q_{\alpha, \beta}) - \tfrac{1}{p}v(f_{\alpha, \beta}) \geq 0.
\end{aligned}
\end{equation}
This proves part (a).  Moreover, one observes that the first (resp. second) inequality in (\ref{eq v(rho_0) and v(q_0)}) is strict if and only if we have $t > \pfrac$ (resp. $t < \pfrac$), which implies assertions (i) and (ii) of part (b).  The initial statement of (b) is immediate on verifying that all terms except for $y_{\alpha, \beta}^p$, $q_0^{p-1}(x_{\alpha, \beta}) y_{\alpha, \beta}$, and $\rho_0(x_{\alpha, \beta})$ vanish in the reduction of the equation in (\ref{eq non-reduced normalization}), or in other words, that the polynomial $\binom{p}{i} \gamma^{-i/p} q_{\alpha, \beta}^i(x_{\alpha, \beta}) y_{\alpha, \beta}^{p-i}$ has positive valuation for $1 \leq i \leq p - 2$: indeed, as we have $p \mid \binom{p}{i}$ for $1 \leq i \leq p - 2$, we get $v(\binom{p}{i} \gamma^{-i/p}) \geq v(p) - \frac{ipv(p)}{p-1} > 0$, and so part (b) is proved.

In order to prove part (c), we first note that the $k$-curve defined by the reduction of  (\ref{eq reduced normalization}) is not reduced if and only if the polynomial 
\begin{equation*}
F(x_{\alpha,\beta}, y) := y^p + \bar{q}_0^{p-1}(x_{\alpha, \beta}) y - \bar{\rho}_0(x_{\alpha, \beta}) \in k[x_{\alpha,\beta}, y]
\end{equation*}
 is a $p$th power.  Since $k$ has characteristic $p$, the polynomial $F$ is a $p$th power if and only if $\bar{\rho}_0$ is a $p$th power and the term $\bar{q}_0^{p-1}(x_{\alpha, \beta}) y$ vanishes (or in other words we have $\bar{q}_0 = 0$).  If $t \geq \pfrac$, then by (b)(i) we have $\bar{q}_0 \neq 0$.  Meanwhile, if $t < \pfrac$, the reduced polynomial $\bar{\rho}_0$ is a normalized reduction of $\rho_{\alpha,\beta}$, which is not a $p$th power by \Cref{prop good decomposition}.  We conclude that the $k$-curve given by $F(x_{\alpha,\beta}, y)=0$ is always reduced.

Now the coordinate $x_{\alpha, \beta}$ defines a degree-$p$ cover from the $k$-curve $F(x_{\alpha, \beta}, y) = 0$ to the affine $k$-line which is the part of $(\mathcal{X}_D)_s$ where $x_{\alpha, \beta} \neq \infty$, and it is immediate to realize that this cover is inseparable only when the term $\bar{q}_0^{p-1}(x_{\alpha, \beta})  y$ vanishes, which again happens if and only if $t < \pfrac$.  This completes the proof of part (c).
\end{proof}

We now set out to construct the desired model $\mathcal{C}_D$.  The equation in (\ref{eq non-reduced normalization}) defines a scheme $W$ over $\mathcal{O}_{K'}$ whose generic fiber is isomorphic to the affine chart $x_{\alpha, \beta} \neq \infty$ of $C$.  The coordinate $x_{\alpha, \beta}$ defines a map $W \to \mathcal{X}_D$ whose image is the affine chart $x_{\alpha,\beta} \neq \infty$ of $\mathcal{X}_D$.   Over the affine chart $x_{\alpha,\beta} \neq 0$ of $\mathcal{X}_D$, we can correspondingly form the $\mathcal{O}_{K'}$-scheme $W^\vee$ defined by the equation 
\begin{equation} \label{eq rear chart}
\begin{aligned}
\check{y}_{\alpha, \beta}^p + \tbinom{p}{1} p^{-1/(p-1)} q_0^\vee(\check{x}_{\alpha, \beta}) y_{\alpha, \beta}^{p - 1} + \dots + (q_0^\vee)^{p - 1}&(\check{x}_{\alpha, \beta}) y_{\alpha, \beta} = \rho_0^\vee(\check{x}_{\alpha, \beta}), \\
\text{where } \check{x}_{\alpha, \beta} = x_{\alpha, \beta}^{-1}; \ \check{y}_{\alpha, \beta} = x_{\alpha, \beta}^{-\lceil \deg(f)/p \rceil} y_{\alpha, \beta}; \ \ \ &q_0^\vee(\check{x}_{\alpha, \beta}) = x_{\alpha, \beta}^{-\lceil \deg(f)/d \rceil} q_0(x_{\alpha, \beta}); \\
\text{and } &\rho_0^\vee(\check{x}_{\alpha, \beta}) = x_{\alpha, \beta}^{-p \lceil \deg(f)/d \rceil} \rho_0(x_{\alpha, \beta}).
\end{aligned}
\end{equation}

\begin{rmk} \label{rmk normalization rear view}

As by \Cref{dfn part-pth-power} we have $\deg(q) \leq \lfloor \deg(f)/p \rfloor$, we have $\deg(\rho) \leq \deg(f)$.  As the polynomials $q_0, \rho_0$ are just respective scalings of $q, \rho$, their degrees are respectively equal; and it follows by construction that the rational functions $q_0^\vee(\check{x}_{\alpha, \beta}), \rho_0^\vee(\check{x}_{\alpha, \beta})$ are polynomials and that moreover, we have $v(q_0^\vee) = v(q_0)$ and $v(\rho_0^\vee) = v(\rho_0)$.  It directly follows that we get the analogs of parts (a) and (b) of \Cref{prop normalization} where the equation in (\ref{eq non-reduced normalization}), the coordinates $x_{\alpha, \beta}, y_{\alpha, \beta}$, the polynomials $q_0, \rho_0$, and their respective reductions $\bar{q}_0, \bar{\rho}_0$ are replaced by the equation in (\ref{eq rear chart}), the coordinates $\check{x}_{\alpha, \beta}, \check{y}_{\alpha, \beta}$, the polynomials $q_0^\vee, \rho_0^\vee$, and their respective reductions $\bar{q}_0^\vee, \bar{\rho}_0^\vee$.

\end{rmk}

We now define $\mathcal{C}_D$ to be the scheme obtained by gluing the affine charts $W$ and $W^\vee$ together in the obvious way: it is endowed with a degree-$p$ covering map $\mathcal{C}_D \to \mathcal{X}_D$, and its generic fiber is identified with the superelliptic curve $C$.

\begin{prop} \label{prop equations for normalization}

The scheme $\mathcal{C}_D / \mathcal{O}_{K'}$ constructed above coincides with the normalization of $\mathcal{X}_D / \mathcal{O}_{K'}$ in the function field $K'(C)$ of the superelliptic curve $C$, and it is a model of $C$ whose special fiber is reduced.

\end{prop}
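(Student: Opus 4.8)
The plan is to check that $\mathcal{C}_D$ is integral with function field $K'(C)$, finite over $\mathcal{X}_D$, flat and projective over $\mathcal{O}_{K'}$, normal, and with reduced special fiber; once this is in place, the identification with the relative normalization follows from the uniqueness of the normalization, and the fact that $\mathcal{C}_D$ is a model of $C$ is immediate from the definition recalled in \S\ref{sec preliminaries geometric models}. First I would treat the easy structural properties. Each of the affine charts $W$, $W^\vee$ is the spectrum of a ring of the form $\mathcal{O}_{K'}[u][v]/(g)$ with $g$ monic of degree $p$ in $v$ (the equations in (\ref{eq non-reduced normalization}) and (\ref{eq rear chart}) with the right-hand side moved over, whose coefficients are integral by \Cref{prop normalization}(a) and \Cref{rmk normalization rear view}); hence each chart is finite and free of rank $p$ over the corresponding affine chart of $\mathcal{X}_D$, is a hypersurface in the regular scheme $\mathbb{A}^2_{\mathcal{O}_{K'}}$ — so Cohen--Macaulay, satisfying Serre's condition $S_2$ and having no embedded points — and is flat over $\mathcal{O}_{K'}$. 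Gluing, $\mathcal{C}_D \to \mathcal{X}_D$ is finite of degree $p$, $\mathcal{C}_D$ is flat over $\mathcal{O}_{K'}$, satisfies $S_2$, and is projective over $\mathcal{O}_{K'}$ (being finite over $\mathcal{X}_D = \mathbb{P}^1_{\mathcal{O}_{K'}}$). Since the change of variables $x = \alpha + \beta x_{\alpha,\beta}$, $y = \gamma^{1/p} y_{\alpha,\beta} + q_{\alpha,\beta}(x_{\alpha,\beta})$ is $K'$-rational and birational, the generic fiber of $\mathcal{C}_D$ is identified with $C$; together with flatness over the discrete valuation ring $\mathcal{O}_{K'}$ and integrality of $C$, this yields that $\mathcal{C}_D$ is integral with function field $K'(C)$ compatibly with $\mathcal{X}_D$.

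Next I would record that $\SF{\mathcal{C}_D}$ is reduced. By \Cref{prop normalization}(b) the reduction of the defining equation of $W$ is the equation in (\ref{eq reduced normalization}), and \Cref{prop normalization}(c) says the $k$-curve it defines is reduced; by \Cref{rmk normalization rear view} the analogous statement holds for $W^\vee$. Gluing the two reduced $k$-curves gives that $\SF{\mathcal{C}_D}$ is reduced.

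The main work is normality, which I would obtain from Serre's criterion $R_1 + S_2$. We already have $S_2$, so it remains to check $R_1$ at each point $\xi$ of codimension $1$. If $\xi$ lies in the generic fiber, $\mathcal{O}_{\mathcal{C}_D,\xi}$ is a localization of $\mathcal{O}_C$, and $C/K'$ is smooth hence regular, so $\mathcal{O}_{\mathcal{C}_D,\xi}$ is regular. Otherwise $\xi$ lies in $\SF{\mathcal{C}_D}$; since $\mathcal{C}_D$ is flat over $\mathcal{O}_{K'}$ and integral, a uniformizer $\pi$ of $\mathcal{O}_{K'}$ is a nonzerodivisor in $A := \mathcal{O}_{\mathcal{C}_D,\xi}$, and $\dim A = 1$ forces $\xi$ to be a generic point of a component of $\SF{\mathcal{C}_D}$, whence $A/\pi A = \mathcal{O}_{\SF{\mathcal{C}_D},\xi}$ is the local ring of the reduced curve $\SF{\mathcal{C}_D}$ at a generic point, i.e.\ a field; thus $\mathfrak{m}_A = \pi A$, so $A$ is a discrete valuation ring, in particular regular. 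Hence $R_1$ holds and $\mathcal{C}_D$ is normal. Being normal, integral, finite over $\mathcal{X}_D$, and of function field $K'(C)$ (with the compatible identifications), $\mathcal{C}_D$ coincides with the relative normalization of $\mathcal{X}_D$ in $K'(C)$ by uniqueness of the normalization; and it is a model of $C$ over $\mathcal{O}_{K'}$ since it is normal, flat, and projective over $\mathcal{O}_{K'}$ with generic fiber $C$, with reduced special fiber by the previous paragraph. The only step carrying genuine content is the verification of $R_1$ along the special fiber, which — given the reducedness input of \Cref{prop normalization}(c) — is the standard fact that a flat relative hypersurface over a discrete valuation ring with reduced special fiber and regular (here even smooth) generic fiber is normal; I anticipate no real obstacle, only the bookkeeping of patching the two charts.
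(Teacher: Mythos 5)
Your proposal is correct and follows essentially the same route as the paper: both invoke the fact that the two affine charts are complete intersections (hence Cohen--Macaulay, giving $S_2$) to reduce normality to regularity at codimension-$1$ points, handle the generic fiber via smoothness of $C$, and handle the special fiber via the reducedness supplied by \Cref{prop normalization}(c); you merely spell out the DVR verification at the generic points of the special-fiber components, which the paper leaves implicit.
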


\begin{proof}
We have to show that the scheme $\mathcal{C}_D$ we have constructed is normal.  The $\mathcal{O}_{K'}$-schemes $W$ and $W^\vee$ are complete intersections, and hence they are Cohen-Macaulay.  As a consequence, to check that $\mathcal{C}_D$ is normal, it is enough to prove that it is regular at its codimension-$1$ points.  Since the generic fiber of $\mathcal{C}_D$ coincides with $C$, it is certainly regular; hence, all that is left is to check that $\mathcal{C}_D$ is regular at the generic point of each irreducible component $V_i$ of the special fiber $(\mathcal{C}_D)_s$.  Since the $k$-curve $W_s$ is reduced by \Cref{prop normalization}(c), the $k$-curve $(\mathcal{C}_D)_s$ is also clearly reduced, which implies that $\mathcal{C}_D$ is certainly regular at the generic points of its irreducible fibers.  Thus, the scheme $\mathcal{C}_D$ is normal, and it is clear that $\mathcal{C}_D$ is the model of $C$ obtained by normalizing $\mathcal{C}_D / \mathcal{O}_{K'}$ in the function field $K'(C)$.
\end{proof}

The results of \S\ref{sec preliminaries geometric} provide (in \Cref{prop part of mini}) criteria (in terms of the invariants $m$, $a$, and $w$ as defined at the beginning of \S\ref{sec preliminaries geometric components}) for which smooth models $\mathcal{X}_D$ are dominated by $\Xmini$ and a formula (in \Cref{prop toric rank formula}) for computing the toric rank of $\SF{\Cmini}$ using geometric information about the strict transforms of the normalizations in $K''(C)$ of the components of $\SF{\Xmini}$ (in particular, the number of connected components and the number of points in the inverse image of a node of $\SF{\Xmini}$).  We therefore want a result that allows us to compute these quantities using the geometry of $(\mathcal{C}_D)_s$ for any disc $D \subset \bar{K}$.

\begin{prop} \label{prop normalization properties}

For any center $\alpha \in \bar{K}$ and any scalar $\beta \in \bar{K}^\times$, write $D = D_{\alpha, v(\beta)}$; let $f_{\alpha, \beta} = q_{\alpha, \beta}^p + \rho_{\alpha, \beta}$ be a good part-$p$th-power decomposition; for an appropriate finite extension $K' / K$ define the polynomials $q_0(x_{\alpha, \beta}), \rho_0(x_{\alpha, \beta}) \in K'[x_{\alpha, \beta}]$ as above; and define the normalization of $\mathcal{C}_D$ of $\mathcal{X}_D$ as above.  We follow the convention that $\deg(0) = -\infty$, and we write $\ord : \bar{K}[x_{\alpha, \beta}] \to \zz_{\geq 0} \cup \{+\infty\}$ for the function assigning to a polynomial the maximal power of $x_{\alpha, \beta}$ dividing it, with the convention that $\ord(0) = +\infty$.

If the characteristic of $k$ is not $p$, then assume that we have $q_{\alpha, \beta} = 0$.  If the characteristic of $k$ is $p$, then assume that we have $p \nmid \ord(\rho_{\alpha, \beta})$ (which in particular is guaranteed if we take $f_{\alpha, \beta} = q_{\alpha, \beta}^p + \rho_{\alpha, \beta}$ to be a total decomposition, as is possible thanks to \Cref{prop total existence}).

\begin{enumerate}[(a)]

\item There is a unique $k$-point of $(\mathcal{C}_D)_s$ with $x_{\alpha, \beta} = \infty$, which is singular if and only if we have $\deg(\bar{\rho}_0) \leq p \lceil \deg(f)/p \rceil - 2$.

If the center $\alpha$ is chosen so that $v(\alpha - z) > v(\beta)$ where $z$ is a root of $f$, then there is a unique $k$-point of $(\mathcal{C}_D)_s$ with $x_{\alpha, \beta} = 0$, which is singular if and only if we have $\ord(\bar{\rho}_0) \geq 2$.

\item \textbf{(tame case)} Suppose that the characteristic of $k$ is not $p$.

\begin{enumerate}[(i)]
\item Assume that the unique $k$-point of $(\mathcal{C}_D)_s$ with $x_{\alpha, \beta} = \infty$ is singular.  There are exactly $p$ branches passing through it if we have $p \mid \deg(\bar{\rho}_0)$, and there is a unique branch passing through it otherwise.
\item Assume that $\alpha$ satisfies the hypothesis of part (a) and that the point $(x_{\alpha, \beta}, y_{\alpha, \beta}) = (0, 0)$ is singular.  There are exactly $p$ branches passing through it if we have $p \mid \ord(\bar{\rho}_0)$, and there is a unique branch passing through it otherwise.
\end{enumerate}

\item \textbf{(wild case)} Suppose that the characteristic of $k$ is $p$.

\begin{enumerate}[(i)]
\item Assume that the unique $k$-point of $(\mathcal{C}_D)_s$ with $x_{\alpha, \beta} = \infty$ is singular.  There are exactly $p$ branches passing through it if we have $\deg(\bar{\rho}_0) < p\deg(\bar{q}_0)$, and there is a unique branch passing through it otherwise.
\item Assume that $\alpha$ satisfies the hypothesis of part (a) and that the point $(x_{\alpha, \beta}, y_{\alpha, \beta}) = (0, 0)$ is singular.  There are exactly $p$ branches passing through it if we have $\ord(\bar{\rho}_0) > p\,\ord(\bar{q}_0)$, and there is a unique branch passing through it otherwise.
\end{enumerate}

\end{enumerate}

\end{prop}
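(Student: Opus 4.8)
The plan is to translate every assertion into an explicit local study of the reduced curves cut out by (\ref{eq reduced normalization}) and by its rear-chart analogue (see \Cref{rmk normalization rear view}), near the points lying over $x_{\alpha,\beta}=\infty$ and over $x_{\alpha,\beta}=0$ in $(\mathcal{X}_D)_s\cong\proj_k^1$. Since $\mathcal{X}_D$ is smooth, the normalization $\widetilde{(\mathcal{C}_D)_s}$ of the (reduced) special fiber is a degree-$p$ cover of $\proj_k^1$; once part (a) shows that there is a single point of $(\mathcal{C}_D)_s$ over each of $x_{\alpha,\beta}=\infty$ and $x_{\alpha,\beta}=0$, the number of branches of $(\mathcal{C}_D)_s$ there equals the number of points of $\widetilde{(\mathcal{C}_D)_s}$ in the corresponding fiber. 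Thus parts (b) and (c) reduce to determining the splitting type of a degree-$p$ cover of $\proj_k^1$ over a prescribed point, which one reads off after putting the cover into standard form.

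For part (a): in the rear chart the fiber of $(\mathcal{C}_D)_s$ over $x_{\alpha,\beta}=\infty$ is cut out in $\check{y}_{\alpha,\beta}$ by $\check{y}_{\alpha,\beta}^p+\bar{q}_0^\vee(0)^{p-1}\check{y}_{\alpha,\beta}-\bar{\rho}_0^\vee(0)$. The standing hypothesis $\infty\in\mathcal{B}$ forces $p\nmid\deg(f)$, whence $\deg(f)<p\lceil\deg(f)/p\rceil$ and $\deg\bar{q}_0\le\lfloor\deg(f)/p\rfloor<\lceil\deg(f)/p\rceil$; these give $\bar{q}_0^\vee(0)=\bar{\rho}_0^\vee(0)=0$, so the fiber is the single reduced point $\check{y}_{\alpha,\beta}=0$. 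Likewise, if $v(\alpha-z)>v(\beta)$ for a root $z$ of $f$, then that root of $f_{\alpha,\beta}$ reduces to $0$, and combining $f_{\alpha,\beta}=q_{\alpha,\beta}^p+\rho_{\alpha,\beta}$ with the hypothesis $p\nmid\ord(\rho_{\alpha,\beta})$ (which forces $\rho_{\alpha,\beta}(0)=0$) yields $\bar{q}_0(0)=\bar{\rho}_0(0)=0$, so the fiber over $x_{\alpha,\beta}=0$ is the single reduced point $y_{\alpha,\beta}=0$. In either case the unique point is, after the appropriate substitution, the origin on a plane curve $F=y^p+\bar{q}_0^{\,p-1}y-\bar{\rho}_0=0$; the Jacobian criterion shows it is singular exactly when $F$ has no linear part, and since $\bar{q}_0$ (resp. $\bar{q}_0^\vee$) already vanishes there this happens exactly when $\bar{\rho}_0$ (resp. $\bar{\rho}_0^\vee$) vanishes to order $\ge 2$. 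Using $\ord_{\check{x}_{\alpha,\beta}=0}\bar{\rho}_0^\vee=p\lceil\deg(f)/p\rceil-\deg\bar{\rho}_0$ for the point over $\infty$ and $\ord_{x_{\alpha,\beta}=0}\bar{\rho}_0=\ord(\bar{\rho}_0)$ for the point over $0$ translates this into the stated criteria.

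For (b) and (c) I would put the cover $\widetilde{(\mathcal{C}_D)_s}\to\proj_k^1$ into standard form and invoke the classical ramification theory of degree-$p$ cyclic covers, point by point. In the tame case (char $k\ne p$, so $q_{\alpha,\beta}=0$) the cover is the Kummer cover $y_{\alpha,\beta}^p=\bar{\rho}_0$; it has $\gcd(p,v_P(\bar{\rho}_0))$ preimages over a point $P$, so $p$ if $p\mid v_P(\bar{\rho}_0)$ and $1$ otherwise, and $v_P(\bar{\rho}_0)\equiv-\deg\bar{\rho}_0$ (resp. $+\ord\bar{\rho}_0$) $\pmod p$ at the point over $\infty$ (resp. over $0$). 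In the wild separable case (char $k=p$, $\bar{q}_0\ne0$, which by \Cref{prop normalization}(c) is the separable case) the substitution $\tilde{y}=y_{\alpha,\beta}/\bar{q}_0$ — legitimate at the level of function fields, hence not altering the normalization — puts the cover into Artin--Schreier form $\tilde{y}^p+\tilde{y}=\bar{\rho}_0/\bar{q}_0^{\,p}$ (and its rear-chart analogue with $\bar{\rho}_0^\vee,\bar{q}_0^\vee$); by Artin--Schreier theory the cover splits completely over $P$ when $\bar{\rho}_0/\bar{q}_0^{\,p}$ is regular at $P$ and is totally ramified over $P$ when it has a pole at $P$ of order prime to $p$. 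At the point over $\infty$ the relevant order is $p\deg\bar{q}_0-\deg\bar{\rho}_0$, nonnegative exactly when $\deg\bar{\rho}_0<p\deg\bar{q}_0$ (equality being impossible modulo $p$), and when negative of magnitude $\equiv\deg\bar{\rho}_0\not\equiv0$ by totality; the analysis over $0$ is identical with $\ord$ replacing $\deg$. Finally, in the wild inseparable case (char $k=p$, $\bar{q}_0=0$) the equation $y_{\alpha,\beta}^p=\bar{\rho}_0$ defines a purely inseparable cover, so $\widetilde{(\mathcal{C}_D)_s}\to\proj_k^1$ is a universal homeomorphism with exactly one point — a single branch — over every point, and the conventions $\deg(0)=-\infty$, $\ord(0)=+\infty$ make the stated inequalities evaluate correctly to this $1$-branch conclusion.

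The main obstacle will be the wild-case bookkeeping: one must verify that starting from a \emph{good} (and, for the statements at $x_{\alpha,\beta}=\infty$, \emph{total}) part-$p$th-power decomposition is exactly what guarantees that the Artin--Schreier presentation $\tilde{y}^p+\tilde{y}=\bar{\rho}_0/\bar{q}_0^{\,p}$ is already ``reduced'' at the relevant point — so that its pole order there is genuinely prime to $p$ and no further Artin--Schreier reduction, which would change the ramification conclusion, is available. This is precisely where \Cref{prop good decomposition}, the hypothesis $p\nmid\ord(\rho_{\alpha,\beta})$, and (for $x_{\alpha,\beta}=\infty$) the consequence of totality that $p\nmid\deg\bar{\rho}_0$ enter. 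A secondary but essential point is to carry the purely inseparable sub-case separately throughout, since there the covering map is not generically étale and the ramification-theoretic arguments do not apply.
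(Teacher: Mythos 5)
Your proof takes a genuinely different route from the paper's.  The paper proceeds by a direct, elementary desingularization: starting from the explicit affine equation $\check{y}^p + (\bar{q}_0^\vee)^{p-1}\check{y} = \bar{\rho}_0^\vee$ it performs the monomial substitution $\check{y} = \check{x}^m\check{y}'$ (with $m = \lfloor\ord(\bar{\rho}_0^\vee)/p\rfloor$ in the tame case, $m = \min\{\lfloor\ord(\bar{\rho}_0^\vee)/p\rfloor, \ord(\bar{q}_0^\vee)\}$ in the wild case), then counts solutions of the resulting equation at $\check{x}=0$ and checks its partial derivatives.  You instead pass immediately to the normalization $\widetilde{(\mathcal{C}_D)_s}$, observe (once the uniqueness in part (a) is known) that the branch count equals the cardinality of the fiber of $\widetilde{(\mathcal{C}_D)_s}\to\proj^1_k$ over the given point, and then invoke the standard ramification theory of degree-$p$ Kummer covers, Artin--Schreier covers, and purely inseparable covers.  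Both routes are valid; yours is more conceptual and recognizable, while the paper's is self-contained and avoids appealing to the classical structure theory.  You are also right that the purely inseparable sub-case must be split off (your analysis there correctly produces exactly one branch everywhere, as matches the conventions on $\deg(0)$ and $\ord(0)$).

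One place you should be more careful, however.  You observe yourself that your Artin--Schreier argument at $x_{\alpha,\beta}=\infty$ needs $p\nmid\deg\bar{\rho}_0$ to (a) exclude the borderline case $\deg\bar{\rho}_0 = p\deg\bar{q}_0$ (which, if it occurred, would give a regular Artin--Schreier function at $\infty$ and hence $p$ preimages, contradicting the ``unique branch'' alternative of the statement), and (b) ensure the pole order is prime to $p$ so that no further Artin--Schreier reduction alters the ramification.  You attribute $p\nmid\deg\bar{\rho}_0$ to totality, but the proposition's stated hypothesis in the wild case is only $p\nmid\ord(\rho_{\alpha,\beta})$; totality is mentioned as a sufficient but not assumed condition.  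Note also that even passing from $\rho_{\alpha,\beta}$ to $\bar{\rho}_0$ is not innocent: $\bar{\rho}_0$ is a normalized reduction, which keeps only terms of minimal valuation, so $\ord(\bar{\rho}_0)$ (resp. $\deg(\bar{\rho}_0)$) need not equal $\ord(\rho_{\alpha,\beta})$ (resp. $\deg(\rho_{\alpha,\beta})$) in general.  (The paper's own proof makes a similar jump, attributing the exclusion of the equality case to the $\ord$ hypothesis.)  If you want the argument to be airtight for the $\infty$ chart in the wild case, you should state and use the cleaner assumption that the decomposition is total (or, equivalently for this purpose, that $p\nmid\deg\bar{\rho}_0$ and $p\nmid\ord\bar{\rho}_0$ directly), and verify that passage to the normalized reduction preserves the ``prime to $p$'' property in the total case --- which it does, since a total $\rho_{\alpha,\beta}$ has only prime-to-$p$ degree terms and this property is inherited by any normalized reduction.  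With that adjustment, your proof outline is complete.
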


\begin{proof}
Define the coordinates $\check{x}_{\alpha, \beta}, \check{y}_{\alpha, \beta}$ and the polynomials $q_0^\vee, \rho_0^\vee$ as above, and note that (defining $\ord$ for $q_0^\vee(\check{x}_{\alpha, \beta}), \rho_0^\vee(\check{x}_{\alpha, \beta})$ as the maximal power of $\check{x}_{\alpha, \beta}$ dividing the polynomial) we have 
\begin{equation} \label{eq orders and degrees}
\begin{aligned}
\ord(\bar{q}_0^\vee) = \lceil \deg(f)/p \rceil - \deg(\bar{q}_0); \ \ &\deg(\bar{q}_0^\vee) = \lceil \deg(f)/p \rceil - \ord(\bar{q}_0); \\
\ord(\bar{\rho}_0^\vee) = p \lceil \deg(f)/p \rceil - \deg(\bar{\rho}_0); \ \ &\deg(\bar{\rho}_0^\vee) = p \lceil \deg(f)/p \rceil - \ord(\bar{\rho}_0).
\end{aligned}
\end{equation}
In particular, as we have $\deg(\bar{\rho}_0) \leq p \lceil \deg(f)/p \rceil - 1$ and $\deg(\bar{q}_0) \leq \lceil \deg(f)/p \rceil - 1$, we get $\ord(\bar{\rho}_0^\vee), \ord(\bar{q}_0^\vee) \geq 1$.  The condition that $x_{\alpha, \beta} = \infty$ is the same as saying that $\check{x}_{\alpha, \beta} = 0$, and so plugging that into the equation for the reduction of $W^\vee$ (see \Cref{rmk normalization rear view}) given by 
\begin{equation} \label{eq reduced rear view}
\check{y}_{\alpha, \beta}^p + (\bar{q}_0^\vee)^{p-1}(\check{x}_{\alpha, \beta}) \check{y}_{\alpha, \beta} = \bar{\rho}_0^\vee(\check{x}_{\alpha, \beta})
\end{equation}
yields the equation $\check{y}_{\alpha, \beta}^p + 0 \check{y}_{\alpha, \beta} = 0$, for which the only solution is $\check{y}_{\alpha, \beta} = 0$.  Now the singularity of the point $(\check{x}_{\alpha, \beta}, \check{y}_{\alpha, \beta}) = (0, 0)$ is equivalent to the simultaneous vanishing of the partial derivatives, which is easily checked to hold if and only if we have $\ord(\bar{\rho}_0^\vee) \geq 2$; by (\ref{eq orders and degrees}), this is equivalent to $\deg(\bar{\rho}_0) \leq p \lceil \deg(f)/p \rceil - 2$.  This proves the claims of part (a) about points lying above $x_{\alpha, \beta} = \infty$.

Given $\alpha_1, \alpha_2 \in D$ such that $v(\alpha_2 - \alpha_1) > v(\beta)$, the coordinates $x_{\alpha_1, \beta}$ and $x_{\alpha_2, \beta}$ have the same value at $k$-points in the reduction.  Therefore, we are free to assume that a center $\alpha$ satisfying the assumption of part (a) is itself a root of $f$.  This implies that the polynomial $f_{\alpha, \beta}$ has no constant term.  By assumption, we have $q_{\alpha, \beta} = 0$ and hence $\bar{q}_0 = 0$ or we have $p \nmid \ord(\rho_{\alpha, \beta})$ so that $x_{\alpha, \beta} \mid \ord(\rho_{\alpha, \beta})$; as we already have $x_{\alpha, \beta} \mid f_{\alpha, \beta}$, this forces $x_{\alpha, \beta} \mid q_{\alpha, \beta}$.  In either case, we get $\ord(\bar{\rho}_0^\vee), \ord(\bar{q}_0^\vee) \geq 1$, and the proof of the claims of part (a) about points lying above $x_{\alpha, \beta} = 0$ follow from analogous arguments to those used for the first statements of (a).

Now suppose that the characteristic of $k$ is not $p$, so that, by our assumption that $q_{\alpha, \beta} = 0$, the equation in (\ref{eq reduced rear view}) becomes $\check{y}_{\alpha, \beta}^p = \bar{\rho}_0^\vee(\check{x}_{\alpha, \beta})$.  Now if the point $(\check{x}_{\alpha, \beta}, \check{y}_{\alpha, \beta}) = (0, 0)$ is singular, we make the change of variables $\check{y}_{\alpha, \beta} = \check{x}_{\alpha, \beta}^{\lfloor \ord(\bar{\rho}_0^\vee)/p \rfloor} \check{y}_{\alpha, \beta}'$.  This yields the equation 
\begin{equation} \label{eq desingularized rear view tame}
(\check{y}_{\alpha, \beta}')^p = \check{x}_{\alpha, \beta}^{-p \lfloor \ord(\bar{\rho}_0^\vee)/p \rfloor} \bar{\rho}_0^\vee(\check{x}_{\alpha, \beta}).
\end{equation}
If $\check{x}_{\alpha, \beta}$ does not divide the right-hand side of the equation in (\ref{eq desingularized rear view tame}), then one verifies straightforwardly there are exactly $p$ solutions to the equation with $\check{x}_{\alpha, \beta} = 0$ and that the partial derivatives do not simultaneously vanish for any of the solutions; therefore, we have desingularized the curve given by (\ref{eq reduced rear view}) at the point $\check{x}_{\alpha, \beta} = 0$, over which there are $p$ points in the desingularization.  If, on the other hand, $\check{x}_{\alpha, \beta}$ exactly divides (resp. non-exactly divides) the right-hand side of (\ref{eq desingularized rear view tame}), then we have a unique point with $x_{\alpha, \beta} = 0$ which is non-singular (resp. a unibranch singularity: see the proof of \Cref{prop superelliptic model}(c)).

The condition that the right-hand side is not divisible by $\check{x}_{\alpha, \beta}$ is equivalent to $p \mid \ord(\bar{\rho}_0^\vee)$.  Meanwhile, it follows from (\ref{eq orders and degrees}) that we have $p \mid \ord(\bar{\rho}_0^\vee)$ if and only if $p \mid \deg(\bar{\rho}_0)$, and the claim of part (b)(i) follows.  Part (b)(ii) follows from an analogous argument.

Now suppose that the characteristic of $k$ is $p$ and that the point $(\check{x}_{\alpha, \beta}, \check{y}_{\alpha, \beta}) = (0, 0)$ is singular.  In this case we make the change of variables $\check{y}_{\alpha, \beta} = \check{x}_{\alpha, \beta}^m \check{y}_{\alpha, \beta}'$, where $m = \min\{\lfloor \ord(\bar{\rho}_0^\vee)/p \rfloor, \ord(\bar{q}_0^\vee)\}$.  This yields the equation 
\begin{equation} \label{eq desingularized rear view wild}
(\check{y}_{\alpha, \beta}')^p + [\check{x}_{\alpha, \beta}^{-m} \bar{q}_0^\vee(\check{x}_{\alpha, \beta})]^{p-1} \check{y}_{\alpha, \beta}' = \check{x}_{\alpha, \beta}^{-pm} \bar{\rho}_0^\vee(\check{x}_{\alpha, \beta}).
\end{equation}
If we have $\check{x}_{\alpha, \beta} \nmid \check{x}_{\alpha, \beta}^{-m} \bar{q}_0^\vee$, then putting $\check{x}_{\alpha, \beta} = 0$ into (\ref{eq desingularized rear view wild}) yields an Artin-Schreier equation with exactly $p$ solutions, and it is easy to check that the partial derivatives do not simultaneously vanish for any of the solutions; therefore, we have desingularized the curve given by (\ref{eq reduced rear view}) at the point $\check{x}_{\alpha, \beta} = 0$, over which there are $p$ points in the desingularization.  If, on the other hand, we have $\check{x}_{\alpha, \beta} \nmid \check{x}_{\alpha, \beta}^{-m} \bar{q}_0^\vee$, then we have a unique point with $x_{\alpha, \beta} = 0$ which is non-singular (resp. a unibranch singularity) if $\check{x}_{\alpha, \beta}$ exactly divides (resp. non-exactly divides) the right-hand side of (\ref{eq desingularized rear view wild}): the unibranch singularity claim can be seen by verifying that the point is the reduction in $\SF{\mathcal{C}_D}$ of the (unique point) of $C$ with $x_{\alpha, \beta} = 0$, which is treated by \Cref{prop superelliptic model}(c); see \Cref{rmk why we ignore this type of singularity} below.

The condition $\check{x}_{\alpha, \beta} \nmid \check{x}_{\alpha, \beta}^{-m} \bar{q}_0^\vee$ is equivalent to $\ord(\bar{q}_0^\vee) = m$ and in turn to $\ord(\bar{\rho}_0^\vee) \geq p\,\ord(\bar{q}_0^\vee)$.  By (\ref{eq orders and degrees}), this is the same as saying that $\deg(\bar{\rho}_0) < p\deg(\bar{q}_0)$ (where the inequality is strict because of our assumption that $p \nmid \ord(\rho_{\alpha, \beta})$).  The claim of part (c)(i) follows, and part (c)(ii) follows from an analogous argument.
\end{proof}

\section{The $p$th-power closeness function} \label{sec t_f}

The purpose of this section is to study a function denoted $\mathfrak{t}_h$ (for a given polynomial $h$ over a finite extension of $K$) that we call the \emph{$p$th-power closeness function} and which is defined as follows.

\begin{dfn} \label{dfn t_f}

Let $h(z) \in \bar{K}[z]$ be a polynomial; let $\eta = \eta_D \in \Hyp$ be a point such that there is a center $\alpha \in D$ and scalar $\beta \in \bar{K}^\times$ such that $D = D_{\alpha, v(\beta)} = \{z \in \bar{K} \ | \ v(z - \alpha) \geq v(\beta)\}$.  Let $h_{\alpha, \beta} = q_{\alpha, \beta}^p + \rho_{\alpha, \beta}$ be a good part-$p$th-power decomposition, and define $t_{q_{\alpha, \beta}, \rho_{\alpha, \beta}}$ as in \S\ref{sec normalizations part-pth-power}.  We define the function $\mathfrak{t}_h$ on the Type II points of $\Berk$ to be given by the assignment 
\begin{equation} \label{eq t_f}
\mathfrak{t}_h(\eta) = \truncate{t_{q_{\alpha, \beta}, \rho_{\alpha, \beta}}}.
\end{equation}
The fact that $\mathfrak{t}_h$ is continuous on the space of points of Type II by \Cref{cor t_f is continuous} below will allow us to extend $\mathfrak{t}_h$ as defined above to the function $\mathfrak{t}_h : \Hyp \to \rr$ on all of $\Hyp$ by continuity.

\end{dfn}

\begin{prop} \label{prop t_f well defined}

Let $h(z) \in \bar{K}[z]$ be a polynomial.

\begin{enumerate}[(a)]

\item If for some $\alpha, \alpha' \in \bar{K}$ and $\beta, \beta' \in \bar{K}^\times$, we have $D_{\alpha, v(\beta)} = D_{\alpha', v(\beta')}$, then we have $v(h_{\alpha, \beta}) = v(h_{\alpha', \beta'})$.

\item The function $\mathfrak{t}_h$ given by \Cref{dfn t_f} is well defined, \textit{i.e.} the formula in (\ref{eq t_f}) does not depend on the choice of center $\alpha$, scalar $\beta$, or good part-$p$th-power decomposition $h_{\alpha, \beta} = q_{\alpha, \beta}^p + \rho_{\alpha, \beta}$.

\end{enumerate}

\end{prop}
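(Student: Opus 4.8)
The plan is to reduce everything to the elementary fact that both the Gauss valuation and the degree of a polynomial are unchanged under an affine substitution $z \mapsto uz + \epsilon$ with $v(u) = 0$ and $v(\epsilon) \geq 0$. First I would record this preliminary observation: for $g \in \bar{K}[z]$ and such $u, \epsilon$, expanding $g(uz+\epsilon)$ shows that each coefficient of $g(uz+\epsilon)$ is a $\zz$-linear combination of products $a_i u^j \epsilon^{i-j}$ of coefficients $a_i$ of $g$ with factors of valuation $\geq 0$, so $v(g(uz+\epsilon)) \geq v(g)$; applying the same estimate to the inverse substitution $z \mapsto u^{-1}z - u^{-1}\epsilon$, which has the same shape (note $v(u^{-1}) = 0$ and $v(u^{-1}\epsilon) = v(\epsilon) \geq 0$), gives the reverse inequality, whence $v(g(uz+\epsilon)) = v(g)$; and $\deg(g(uz+\epsilon)) = \deg(g)$ is immediate.

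For part (a), the hypothesis $D_{\alpha, v(\beta)} = D_{\alpha', v(\beta')}$ forces $v(\beta) = v(\beta')$ and $v(\alpha-\alpha') \geq v(\beta)$; as recalled in \S\ref{sec normalizations models of P^1}, the two coordinates are then related by $x_{\alpha,\beta} = u^{-1}x_{\alpha',\beta'} - u^{-1}\epsilon$ with $u := \beta/\beta'$ a unit and $\epsilon := (\alpha-\alpha')/\beta'$ integral, so that $h_{\alpha',\beta'}(z) = h_{\alpha,\beta}(u^{-1}z - u^{-1}\epsilon)$. The preliminary observation then gives $v(h_{\alpha',\beta'}) = v(h_{\alpha,\beta})$ at once.

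For part (b), I would first note that for a fixed pair $(\alpha,\beta)$, independence of the value in (\ref{eq t_f}) on the choice of good part-$p$th-power decomposition is exactly \Cref{rmk good decompositions}(c) (and such a decomposition exists by \Cref{prop total existence} and \Cref{cor total is good}). So it remains to handle a change of pair from $(\alpha,\beta)$ to another pair $(\alpha',\beta')$ with the same associated disc. Writing $\sigma(z) = u^{-1}z - u^{-1}\epsilon$ as in part (a), the key point is that $q^p + \rho \mapsto (q\circ\sigma)^p + (\rho\circ\sigma)$ is a bijection between part-$p$th-power decompositions of $h_{\alpha,\beta}$ and of $h_{\alpha',\beta'}$: it preserves the degree bound since $\deg(q\circ\sigma) = \deg q$ and $\deg(h_{\alpha,\beta}) = \deg(h_{\alpha',\beta'})$, its inverse is induced by $\sigma^{-1}$, and by the preliminary observation it preserves the $t$-value, because $t_{q\circ\sigma,\,\rho\circ\sigma} = v(\rho\circ\sigma) - v(h_{\alpha',\beta'}) = v(\rho) - v(h_{\alpha,\beta}) = t_{q,\rho}$. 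Since goodness of a decomposition (\Cref{dfn good and total}(a)) is phrased purely in terms of its $t$-value together with the supremum of the $t$-values over all decompositions, this bijection carries good decompositions to good decompositions; hence a good decomposition of $h_{\alpha,\beta}$ produces a good decomposition of $h_{\alpha',\beta'}$ with the same $t$-value, so $\truncate{t_{q,\rho}}$ is unchanged. Combining this with \Cref{rmk good decompositions}(c) applied on the $(\alpha',\beta')$ side shows that $\mathfrak{t}_h(\eta)$ is independent of all choices.

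I do not anticipate a serious obstacle here: the whole content is the affine-invariance of the Gauss valuation, and the only point requiring a moment's care is verifying that the correspondence $q^p+\rho \leftrightarrow (q\circ\sigma)^p+(\rho\circ\sigma)$ is a degree-bound- and $t$-value-preserving bijection between the decompositions in the two coordinate systems, so that the extremal condition defining ``goodness'' is transported faithfully.
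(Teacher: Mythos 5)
Your proof is correct and follows essentially the same route as the paper: reduce everything to invariance of the Gauss valuation (and degree) under the affine substitution relating the two coordinates, then transport the $t$-value and invoke \Cref{rmk good decompositions}(c). The one place you are usefully more explicit than the paper is the final step — showing that the substitution carries good decompositions to good decompositions, so that \Cref{rmk good decompositions}(c) actually applies on the $(\alpha',\beta')$ side; the paper leaves this transport of ``goodness'' implicit, whereas your bijection argument (with the inverse substitution giving the reverse direction) spells it out.
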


\begin{proof}
It is immediately clear from definitions that replacing $\beta$ by a scalar of the same valuation does not affect the valuation of the polynomial $h_{\alpha, \beta}$.  Now if $\alpha' \in D$ is another center, we have $v(\alpha' - \alpha) \geq v(\beta)$, or equivalently $v(\beta^{-1}(\alpha' - \alpha)) \geq 0$.  Now given that 
\begin{equation}
h_{\alpha, \beta}(x_{\alpha, \beta}) = h_{\alpha', \beta}(x_{\alpha', \beta}) = h_{\alpha', \beta}(x_{\alpha, \beta} + \beta^{-1}(\alpha' - \alpha)),
\end{equation}
 it is elementary to verify that this polynomial in $x_{\alpha, \beta}$ has the same valuation as the polynomial $h_{\alpha', \beta}(x_{\alpha', \beta})$.  Thus, we have $v(h_{\alpha, \beta}) = v(h_{\alpha', \beta})$, proving part (a).

By applying this result to the polynomial $\rho$ as well, we get that the quantity $t_{q_{\alpha, \beta}, \rho_{\alpha, \beta}}$ is not affected by replacing $\alpha, \beta$ with $\alpha', \beta'$ such that $D_{\alpha, v(\beta)} = D_{\alpha', v(\beta')}$.  Finally, \Cref{rmk good decompositions}(c) says that the quantity $t_{q_{\alpha, \beta}, \rho_{\alpha, \beta}}$ does not depend on the choice of good part-$p$th-power decomposition, and part (b) is proved.
\end{proof}

Note that as we have $t_{q_{\alpha, \beta}, \rho_{\alpha, \beta}} \geq 0$ for any good part-$p$th-power decomposition $h_{\alpha, \beta} = q_{\alpha, \beta}^p + \rho_{\alpha, \beta}$ (as in \Cref{rmk good decompositions}(a)) and as we have $\pfrac = 0$ if the residue characteristic of $K$ is not $p$, the function $\mathfrak{t}_h$ is identically $0$ on $\Hyp$ in the tame setting.  Therefore, there is nothing interesting to prove in this section if one assumes the tame setting, and so instead \textit{for the rest of this section, we assume that we are in the wild setting, \textit{i.e.} that the residue characteristic of $K$ is $p$.}

The rest of this section is devoted to studying the function $\mathfrak{t}_f$, culminating in \Cref{thm t_f formula one component} below.

\subsection{Families of polynomials and their valuations} \label{sec t_f families}

We begin with an elementary lemma.

\begin{lemma} \label{lemma degrees of terms appearing}

Let $h(x) \in \bar{K}[x]$ be a monic polynomial, and choose elements $\alpha \in \bar{K}$ and $\beta \in \bar{K}^\times$.  Let $m$ (resp. $n$) denote the lowest (resp. highest) degree appearing among the terms of some (any) normalized reduction of the polynomial $h_{\alpha, \beta}$.  Let $\underline{\mathcal{R}} \subset \bar{K}$ denote the multiset of roots of $h$, and let $\underline{\mathfrak{c}}$ (resp. $\underline{\mathfrak{s}}$) be the multiset of roots $z$ of $h$ satisfying $v(z - \alpha) > v(\beta)$ (resp. $v(z - \alpha) \geq v(\beta)$), where each element of each multiset appears with its multiplicity as a root of $h$.  Then we have $m = \#\underline{\mathfrak{c}}$ and $n = \#\underline{\mathfrak{s}}$.

\end{lemma}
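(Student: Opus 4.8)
The plan is to factor $h$ over $\bar K$ into linear factors and substitute $x = \alpha + \beta x_{\alpha,\beta}$, then sort the factors according to whether the corresponding root lies in the disc $D_{\alpha, v(\beta)}$. Writing $h(x) = \prod_{z \in \underline{\mathcal{R}}}(x - z)$ (with each root occurring with its multiplicity), each factor becomes $\alpha + \beta x_{\alpha,\beta} - z = \beta x_{\alpha,\beta} - (z - \alpha)$. If $z \in \underline{\mathfrak{s}}$, i.e. $v(z-\alpha) \geq v(\beta)$, this equals $\beta(x_{\alpha,\beta} - \zeta_z)$ with $\zeta_z := \beta^{-1}(z - \alpha)$ integral, and $\zeta_z$ reduces to $0$ precisely when $v(z-\alpha) > v(\beta)$, i.e. when $z \in \underline{\mathfrak{c}}$; if $z \notin \underline{\mathfrak{s}}$, it equals $-(z - \alpha)(1 - u_z x_{\alpha,\beta})$ with $v(u_z) = v(\beta) - v(z-\alpha) > 0$. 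Collecting scalars, I obtain
\begin{equation*}
h_{\alpha,\beta}(x_{\alpha,\beta}) = c \cdot P(x_{\alpha,\beta}) \cdot Q(x_{\alpha,\beta}),
\end{equation*}
where $c \in \bar K^\times$, the polynomial $P(x_{\alpha,\beta}) = \prod_{z \in \underline{\mathfrak{s}}}(x_{\alpha,\beta} - \zeta_z)$ is monic with integral coefficients of degree $\#\underline{\mathfrak{s}}$, and $Q(x_{\alpha,\beta}) = \prod_{z \notin \underline{\mathfrak{s}}}(1 - u_z x_{\alpha,\beta})$ has integral coefficients and constant term $1$.

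Next I would compute Gauss valuations using the fact that the Gauss valuation is multiplicative (Gauss's lemma). Since $P$ is monic and integral, $v(P) = 0$; since $Q$ has integral coefficients and a unit constant term, $v(Q) = 0$ as well. Hence $v(h_{\alpha,\beta}) = v(c)$, so $c^{-1}h_{\alpha,\beta} = P\,Q$ has Gauss valuation $0$, and a normalized reduction of $h_{\alpha,\beta}$ is obtained by reducing $P(x_{\alpha,\beta})Q(x_{\alpha,\beta})$ modulo the maximal ideal. As reduction modulo the maximal ideal is multiplicative on polynomials with integral coefficients, this reduction equals $\bar P(x_{\alpha,\beta})\,\bar Q(x_{\alpha,\beta})$. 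Now $\bar Q(x_{\alpha,\beta}) = \prod_{z \notin \underline{\mathfrak{s}}}(1 - \bar u_z x_{\alpha,\beta}) = 1$ since each $v(u_z) > 0$, so a normalized reduction of $h_{\alpha,\beta}$ is simply $\bar P(x_{\alpha,\beta}) = \prod_{z \in \underline{\mathfrak{s}}}(x_{\alpha,\beta} - \bar\zeta_z)$, which by \Cref{rmk normalized reduction} suffices to determine the degrees $m$ and $n$.

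Finally I would read off $m$ and $n$ from $\bar P$. It is monic of degree $\#\underline{\mathfrak{s}}$, so its top-degree term has degree $n = \#\underline{\mathfrak{s}}$. Its order of vanishing at $x_{\alpha,\beta} = 0$ is the number of $z \in \underline{\mathfrak{s}}$ with $\bar\zeta_z = 0$, counted with multiplicity, which are exactly those $z$ with $v(z-\alpha) > v(\beta)$, i.e. the elements of $\underline{\mathfrak{c}}$; hence the lowest-degree term of $\bar P$ has degree $m = \#\underline{\mathfrak{c}}$, as claimed. I do not expect a genuine obstacle here: the only steps requiring care are invoking multiplicativity of the Gauss valuation (so that $v(h_{\alpha,\beta}) = v(c)$ and the reduction factors as $\bar P\,\bar Q$) and keeping track of root multiplicities throughout, both of which are routine.
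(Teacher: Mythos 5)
Your proof is correct and follows essentially the same route as the paper's: both factor $h_{\alpha,\beta}$ over $\bar K$ into linear terms sorted by whether the root lies in $D_{\alpha,v(\beta)}$, pull out the scalar, and read off the degrees from the reduction of the monic integral product over $\underline{\mathfrak{s}}$. Your version is phrased slightly more cleanly by invoking multiplicativity of the Gauss valuation explicitly for the scalar computation, but the decomposition and conclusion are the same.
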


\begin{proof}
We may write the polynomial $h_{\alpha, \beta}(x_{\alpha, \beta})$ as 
\begin{equation} \label{eq scaled and translated polynomial}
\prod_{z \in \underline{\mathcal{R}}} (\beta x_{\alpha, \beta} - (z - \alpha)) = \beta^{\#\underline{\mathfrak{s}}} \prod_{z \in \underline{\mathcal{R}} \smallsetminus \underline{\mathfrak{s}}} (z - \alpha) \prod_{z \in \underline{\mathcal{R}} \smallsetminus \underline{\mathfrak{s}}} (\beta (z - \alpha)^{-1} x_{\alpha, \beta} - 1) \prod_{z \in \underline{\mathfrak{s}}} (x_{\alpha, \beta} - \beta^{-1}(z - \alpha)),
\end{equation}
where products are taken with multiplicity.  One verifies easily that the right-most two products above multiply to a polynomial whose coefficients have non-negative valuation and whose degree-$\#\underline{\mathfrak{s}}$ term has coefficient equal to a power of $-1$.  Any normalized reduction of $h_{\alpha, \beta}$ is therefore equal to a non-zero scalar times the polynomial $\prod_{z \in \underline{\mathfrak{s}}} (x_{\alpha, \beta} - \overline{\beta^{-1}(z - \alpha)})$.  This is clearly a monic polynomial whose degree equals $\#\underline{\mathfrak{s}}$ and whose lowest-degree term has degree equal to the number (counted with multiplicity) of elements $z \in \underline{\mathfrak{s}}$ satisfying $\overline{\beta^{-1}(z - \alpha)} = 0$, which equals $\#\underline{\mathfrak{c}}$.
\end{proof}

Given a polynomial $h(x) \in \bar{K}[x]$ and a point $\eta \in \Hyp$ of Type II, we will consequently denote by $\vfun_h(\eta)$ the valuation of $h_{\alpha,\beta}$ for any $\alpha$ and $\beta$ such that $\eta = \eta_{D_{\alpha, v(\beta)}}$; this is a well defined function by \Cref{prop t_f well defined}(a).  When a center $\alpha \in \bar{K}$ is fixed, we may consider the function $b \mapsto \vfun_h(\eta_{D_{\alpha, b}}) \in \qq$ defined for all $b \in \qq = v(\bar{K}^\times)$.

\begin{lemma} \label{lemma t_f difference formula}

Choose polynomial $h(z) \in \bar{K}[z]$ and a center $\alpha \in \bar{K}$, and let $h = q^p + \rho$ be a part-$p$th-power decomposition such that the decomposition $h_{\alpha, 1} = q_{\alpha, 1}^p + \rho_{\alpha, 1}$ is total.  Then for $\eta \in \Hyp$ of Type II with $\eta > \eta_\alpha$, we have 
\begin{equation}
\mathfrak{t}_h(\eta) = \min\{\vfun_\rho(\eta) - \vfun_f(\eta), \ptfrac\}.
\end{equation}

\end{lemma}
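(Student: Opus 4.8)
The plan is to produce, at the given point $\eta$, one concrete good part-$p$th-power decomposition of the translated polynomial $h_{\alpha,\beta}$ whose $t$-invariant equals $\vfun_\rho(\eta) - \vfun_h(\eta)$, and then to appeal to the well-definedness of $\mathfrak{t}_h$ from \Cref{prop t_f well defined} (which allows $\mathfrak{t}_h(\eta)$ to be computed using \emph{any} good decomposition) to conclude.

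First I would note that, since $\eta$ is of Type II with $\eta > \eta_\alpha$, its associated disc must be $D_{\alpha,b}$ for some $b \in \qq$; so I pick $\beta \in \bar{K}^\times$ with $v(\beta) = b$, giving $\eta = \eta_{D_{\alpha,v(\beta)}}$, and work in the coordinate $x_{\alpha,\beta}$. Pulling the fixed decomposition $h = q^p + \rho$ back along $x = \beta x_{\alpha,\beta} + \alpha$ yields a decomposition $h_{\alpha,\beta} = q_{\alpha,\beta}^p + \rho_{\alpha,\beta}$ with $q_{\alpha,\beta}(u) = q(\beta u + \alpha)$ and $\rho_{\alpha,\beta}(u) = \rho(\beta u + \alpha)$; since this substitution preserves the degree of each of $q$, $\rho$, and $h$, and $\deg q \le \lfloor \deg h/p\rfloor$, this is again a part-$p$th-power decomposition. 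The key observation is then that $\rho_{\alpha,\beta}(u) = \rho_{\alpha,1}(\beta u)$ is obtained from $\rho_{\alpha,1}$ simply by rescaling the variable, and such a rescaling leaves unchanged the set of degrees of the monomials that occur; hence, because $h_{\alpha,1} = q_{\alpha,1}^p + \rho_{\alpha,1}$ is total by hypothesis (i.e.\ $\rho_{\alpha,1}$ has only prime-to-$p$ degree terms), the decomposition $h_{\alpha,\beta} = q_{\alpha,\beta}^p + \rho_{\alpha,\beta}$ is total as well.

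Once that is established, \Cref{cor total is good} makes $h_{\alpha,\beta} = q_{\alpha,\beta}^p + \rho_{\alpha,\beta}$ good, so by \Cref{dfn t_f} together with \Cref{prop t_f well defined} we obtain $\mathfrak{t}_h(\eta) = \min\{t_{q_{\alpha,\beta},\rho_{\alpha,\beta}}, \ptfrac\}$. I would then finish by unwinding definitions: $t_{q_{\alpha,\beta},\rho_{\alpha,\beta}} = v(\rho_{\alpha,\beta}) - v(h_{\alpha,\beta})$, while $v(\rho_{\alpha,\beta}) = \vfun_\rho(\eta)$ and $v(h_{\alpha,\beta}) = \vfun_h(\eta)$ by the definitions of these functions and \Cref{prop t_f well defined}(a). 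Combining these gives the claimed formula. The argument is essentially formal; the only thing to watch carefully is the bookkeeping around the coordinate-change conventions — in particular, confirming that the rescaled decomposition of $h_{\alpha,\beta}$ remains total (which rests solely on degree-preservation under $u \mapsto \beta u$) and that it is a legitimate good decomposition to plug into the definition of $\mathfrak{t}_h$ — so I do not anticipate a genuine obstacle here.
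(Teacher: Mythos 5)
Your proposal is correct and follows essentially the same route as the paper's proof: fix $\beta$ so that $\eta = \eta_{D_{\alpha,v(\beta)}}$, observe that the rescaling $\rho_{\alpha,\beta}(u) = \rho_{\alpha,1}(\beta u)$ preserves the set of monomial degrees and hence totality, invoke \Cref{cor total is good} to obtain a good decomposition at $\eta$, and then apply \Cref{prop t_f well defined} and unwind the definitions. The only difference is cosmetic — you spell out the "rescaling preserves degrees" point which the paper compresses into "it is immediate from definitions."
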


\begin{proof}
It is immediate from definitions that if $h_{\alpha, 1} = q_{\alpha, 1}^p + \rho_{\alpha, 1}$ is a total decomposition, then so is $h_{\alpha, \beta} = q_{\alpha, \beta}^p + \rho_{\alpha, \beta}$ for any $\beta \in \bar{K}^\times$; thus, by \Cref{cor total is good}, each decomposition $h_{\alpha, \beta} = q_{\alpha, \beta}^p + \rho_{\alpha, \beta}$ is good.  We thus have $\mathfrak{t}_h(\eta) = \min\{t_{q_{\alpha, \beta}, \rho_{\alpha, \beta}}, \pfrac\}$ if $\alpha \in \bar{K}$ and $\beta \in \bar{K}^\times$ are such that $\eta$ corresponds to the disc $D_{\alpha, v(\beta)}$ (a condition implying $\eta > \eta_\alpha$).  The desired formula follows.
\end{proof}

\begin{lemma} \label{lemma mathfrakv}

Choose a polynomial $h(z) \in \bar{K}[z]$, and fix a center $\alpha \in \bar{K}$.  With the above set-up, we have the following.

\begin{enumerate}[(a)]
\item The function on $\qq$ given by $b \mapsto \vfun_h(\eta_{D_{\alpha,b}}) \in \qq$ is a continuous, non-decreasing piecewise linear function with decreasing positive integer slopes each $\leq \deg(h)$.
\item For any $b \in \qq$ and $\beta \in \bar{K}^{\times}$ such that $v(\beta) = b$, the left (resp.\ right) derivative of the function $c \mapsto \vfun_h(\eta_{D_{\alpha,c}}) \in \qq$ at $c = b$ coincides with the highest (resp.\ lowest) degree of the variable $x_{\alpha,\beta}$ appearing in some (any) normalized reduction of $h_{\alpha,\beta}$.
\end{enumerate}
\end{lemma}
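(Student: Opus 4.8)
The plan is to reduce both parts to an explicit formula for the function $b \mapsto \vfun_h(\eta_{D_{\alpha,b}})$ in terms of the roots of $h$. First I would note that we may assume $h$ is monic: writing $h = c\,h_0$ with $c$ the leading coefficient and $h_0$ monic, we have $h_{\alpha,\beta} = c\,(h_0)_{\alpha,\beta}$, so $\vfun_h$ differs from $\vfun_{h_0}$ by the additive constant $v(c)$ — which affects neither the slopes in (a) nor the derivatives in (b) — and any normalized reduction of $h_{\alpha,\beta}$ is also one of $(h_0)_{\alpha,\beta}$. So write $h = \prod_{z\in\underline{\mathcal{R}}}(x-z)$, where $\underline{\mathcal{R}}\subset\bar{K}$ is the multiset of roots of $h$ and $\#\underline{\mathcal{R}}=\deg(h)$. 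Choosing $\beta\in\bar{K}^\times$ with $v(\beta)=b$, we have $h_{\alpha,\beta}(x_{\alpha,\beta})=\prod_{z\in\underline{\mathcal{R}}}\big(\beta x_{\alpha,\beta}-(z-\alpha)\big)$, a product of linear polynomials, each of Gauss valuation $\min\{v(\beta),v(z-\alpha)\}=\min\{b,v(z-\alpha)\}$. Since the Gauss valuation is multiplicative, this yields
\begin{equation*}
\vfun_h(\eta_{D_{\alpha,b}}) \;=\; \sum_{z\in\underline{\mathcal{R}}}\min\{b,\,v(z-\alpha)\},
\end{equation*}
the sum taken with multiplicity. (Equivalently, this may be read off directly from the factorization of $h_{\alpha,\beta}$ displayed in the proof of \Cref{lemma degrees of terms appearing}, using that the product of the two right-hand factors there has Gauss valuation $0$.)

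Part (a) then follows by inspecting this sum. Each summand $b\mapsto\min\{b,v(z-\alpha)\}$ is continuous, non-decreasing and piecewise linear, with slope $1$ on $(-\infty,v(z-\alpha))$ and slope $0$ on $(v(z-\alpha),\infty)$; a finite sum of such functions has the same three properties, and on the interior of each of its linear pieces its slope equals $\#\{z\in\underline{\mathcal{R}}:v(z-\alpha)>b\}$. This is a non-increasing, integer-valued step function of $b$, bounded above by $\#\underline{\mathcal{R}}=\deg(h)$, which drops by $\geq 1$ across each breakpoint $b=v(z-\alpha)$; hence the successive slopes form a strictly decreasing finite sequence of non-negative integers, each at most $\deg(h)$ (and positive throughout the range on which the function is non-constant — in particular everywhere when $\alpha$ is a root of $h$). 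This is exactly the assertion of (a).

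For part (b), the displayed formula shows that at any $b\in\qq$ the left derivative of $c\mapsto\vfun_h(\eta_{D_{\alpha,c}})$ is $\#\{z\in\underline{\mathcal{R}}:v(z-\alpha)\geq b\}$ and the right derivative is $\#\{z\in\underline{\mathcal{R}}:v(z-\alpha)>b\}$, both counted with multiplicity; for $\beta$ with $v(\beta)=b$ these are precisely the cardinalities $\#\underline{\mathfrak{s}}$ and $\#\underline{\mathfrak{c}}$ in the notation of \Cref{lemma degrees of terms appearing}. By that lemma, $\#\underline{\mathfrak{s}}$ and $\#\underline{\mathfrak{c}}$ are respectively the highest and the lowest degree of the variable $x_{\alpha,\beta}$ occurring in some (any) normalized reduction of $h_{\alpha,\beta}$, which gives (b).

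The argument is essentially routine; the two points I would watch are the bookkeeping of the left and right derivatives at a breakpoint $b=v(z-\alpha)$ where several roots share that valuation (so the slope jump and the cardinalities $\#\underline{\mathfrak{s}},\#\underline{\mathfrak{c}}$ must be tallied with multiplicity), and isolating the one non-formal input — that the Gauss valuation is multiplicative — which is what turns the product over roots into the sum in the displayed formula.
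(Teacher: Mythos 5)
Your proof is correct, but it takes a different route than the paper's. The paper works directly from the coefficients: writing $H_i$ for the $z^i$-coefficient of $h_{\alpha,1}$, it observes that $\beta^i H_i$ is the $z^i$-coefficient of $h_{\alpha,\beta}$, so
\[
\vfun_h(\eta_{D_{\alpha,b}})=\min_{0\le i\le\deg h}\{v(H_i)+ib\},
\]
a Newton-polygon-style minimum of affine functions of integer slope $i$, from which both (a) and (b) fall out immediately (the left/right derivative at a breakpoint being the largest/smallest index at which the minimum is attained, which is visibly the highest/lowest nonvanishing coefficient of a normalized reduction). You instead split $h$ into linear factors over $\bar{K}$ and invoke multiplicativity of the Gauss valuation to obtain the dual formula $\vfun_h(\eta_{D_{\alpha,b}})=\sum_{z}\min\{b,v(z-\alpha)\}$, then feed the resulting root counts into \Cref{lemma degrees of terms appearing} to land on (b). Both are ``Newton polygon'' arguments from dual viewpoints (roots vs.\ coefficients); yours requires the monic reduction and leans on the earlier lemma, but gives a concrete geometric reading of the slopes as root counts that is handy in later arguments, whereas the paper's is shorter and avoids factoring $h$ at all. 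You are also right to flag the imprecision in ``positive integer slopes'': when $\alpha$ is not a root of $h$ the slope becomes $0$ for $b$ large, a caveat the paper's terse proof glosses over; the lemma is only used in the paper in situations where the slope is genuinely positive, so this is a wrinkle in the statement rather than a gap in either proof.
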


\begin{proof}
Write $H_i$ for the $z^i$-coefficient of $h_{\alpha,1}$, and note that $\beta^i H_i$ is the $z^i$-coefficient of $h_{\alpha,\beta}$ for any scalar $\beta$.  Now given any $b \in \qq$ and $\beta \in \bar{K}^{\times}$ with $v(\beta) = b$, by definition we have 
\begin{equation} \label{eq mathfrakv}
\vfun_h(\eta_{D_{\alpha,b}}) = \min_{0 \leq i \leq \deg(h)} \{v(\beta^i H_i)\} = \min_{0 \leq i \leq \deg(h)} \{v(H_i) + ib\}.
\end{equation}
All the properties of the function $b \mapsto \vfun_h(\eta_{D_{\alpha,b}})$ stated in the lemma immediately follow from the explicit expression given above.
\end{proof}

\begin{cor} \label{cor t_f is continuous}

For any $h(z) \in \bar{K}[z]$, the function $\mathfrak{t}_h$ on the points of Type II is continuous.

\end{cor}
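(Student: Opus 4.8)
The plan is to prove that $\mathfrak{t}_h$ is in fact $(\deg h)$-Lipschitz on the set of Type II points with respect to $\delta$, which certainly implies continuity. The only inputs are \Cref{lemma t_f difference formula} and \Cref{lemma mathfrakv}(a), together with \Cref{prop t_f well defined}.

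For a center $\alpha \in \bar{K}$, set $\mathcal{U}_\alpha := \{\eta \in \Hyp : \eta > \eta_\alpha\}$; since a disc containing $\alpha$ is concentric around $\alpha$, the set $\mathcal{U}_\alpha$ is precisely the open ray $\{\eta_{D_{\alpha,b}}\}$ emanating from $\eta_\alpha$, and any two of its points are joined by a sub-segment of it. By \Cref{prop total existence}, applied to the polynomial $h_{\alpha,1}$ and then translating back, there is a part-$p$th-power decomposition $h = q^p + \rho$ for which $h_{\alpha,1} = q_{\alpha,1}^p + \rho_{\alpha,1}$ is total; hence \Cref{lemma t_f difference formula} gives $\mathfrak{t}_h(\eta) = \min\{\vfun_\rho(\eta) - \vfun_h(\eta),\, \ptfrac\}$ for all Type II $\eta \in \mathcal{U}_\alpha$. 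By \Cref{lemma mathfrakv}(a) each of $\vfun_h$ and $\vfun_\rho$ is, along $\mathcal{U}_\alpha$, continuous and piecewise linear in $b$ with all slopes in $\{0,1,\dots,\deg h\}$ (using $\deg \rho \le \deg h$), so $\vfun_\rho - \vfun_h$ is $(\deg h)$-Lipschitz along $\mathcal{U}_\alpha$ (whose $\delta$-arc length is measured by $b$), and truncating by the constant $\ptfrac$ preserves this. Thus $\mathfrak{t}_h$ is $(\deg h)$-Lipschitz on the Type II points of $\mathcal{U}_\alpha$; in fact the right-hand side above makes sense at every point of $\mathcal{U}_\alpha$ and is a $(\deg h)$-Lipschitz extension, which we denote $\mathfrak{t}_h^{(\alpha)}$.

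Now let $\eta, \eta_0$ be any two Type II points and put $m := \eta \vee \eta_0$. Choosing centers $\alpha \in D_\eta$ and $\alpha_0 \in D_{\eta_0}$, we have $\eta, m \in \mathcal{U}_\alpha$ and $\eta_0, m \in \mathcal{U}_{\alpha_0}$, and the path $[\eta,\eta_0]$ is the concatenation of the sub-segment $[\eta,m] \subset \mathcal{U}_\alpha$ and the sub-segment $[m,\eta_0] \subset \mathcal{U}_{\alpha_0}$. The functions $\mathfrak{t}_h^{(\alpha)}$ and $\mathfrak{t}_h^{(\alpha_0)}$ are continuous and, by \Cref{prop t_f well defined}, agree with $\mathfrak{t}_h$ on the (dense) Type II points of $\mathcal{U}_\alpha \cap \mathcal{U}_{\alpha_0}$, hence agree at $m$; combining the Lipschitz estimates on the two sub-segments therefore gives
\[
|\mathfrak{t}_h(\eta) - \mathfrak{t}_h(\eta_0)| \;\le\; (\deg h)\big(\delta(\eta,m) + \delta(m,\eta_0)\big) \;=\; (\deg h)\,\delta(\eta,\eta_0),
\]
which yields the desired continuity (and more).

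The only subtlety — and the step I expect to require the most care — is that the clean formula of \Cref{lemma t_f difference formula} is valid only on an upward cone $\mathcal{U}_\alpha$ rather than on a full neighborhood of a point, so the argument must chain together cone-wise estimates; the junction point $m = \eta \vee \eta_0$ may be of Type III, where $\mathfrak{t}_h$ is not a priori defined, which is precisely why one passes to the continuous extensions $\mathfrak{t}_h^{(\alpha)}$ and checks they coincide at $m$.
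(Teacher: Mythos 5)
Your argument is correct and follows essentially the same route as the paper's: both decompose $\Hyp$ into the upward rays emanating from points $\eta_\alpha$, use \Cref{lemma t_f difference formula} together with \Cref{lemma mathfrakv} to control $\mathfrak{t}_h$ along each ray, and patch over the overlaps $[\eta_\alpha \vee \eta_{\alpha'}, \eta_\infty)$. Your version sharpens the conclusion to a $(\deg h)$-Lipschitz estimate, which also makes the patching across a potentially Type III junction $\eta \vee \eta_0$ fully explicit — a point the paper's proof dispatches somewhat briskly.
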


\begin{proof}
Choosing an element $\alpha \in \bar{K}$ and applying \Cref{lemma mathfrakv} to both the polynomials $h$ and $\rho$, we get that the functions on $\qq$ given by $b \mapsto \vfun_\rho(\eta_{D_{\alpha, b}})$ and $b \mapsto \vfun_h(\eta_{D_{\alpha, b}})$ are each continuous.  This is the same as saying that the functions $\vfun_\rho$ and $\vfun_h$ are continuous on the (dense) subspace of the open path $(\eta_\alpha, \eta_\infty)$ consisting of points of Type II.  It now follows from \Cref{lemma t_f difference formula} that the function $\mathfrak{t}_h$ is continuous on this subspace.  One checks using an elementary analytic argument that the space $\Hyp$ is the union of the open paths $(\eta_\alpha, \eta_\infty)$ over all $\alpha \in \bar{K}$; moreover, any two distinct such paths $(\eta_\alpha, \eta_\infty)$ and $(\eta_{\alpha'}, \eta_\infty)$ intersect (their intersection being the half-open path $[\eta_\alpha \vee \eta_{\alpha'}, \eta_\infty)$).  The corollary follows.
\end{proof}

In light of the above corollary, as in \Cref{dfn t_f}, from now on we consider $\mathfrak{t}_h$ and $\vfun_h$ as functions on all of $\Hyp$ for any $h(x) \in \bar{K}[x]$.

For any subset $\mathfrak{r} \subseteq \mathcal{B} \smallsetminus \{\infty\}$ of roots of $f$, let us write $f_{\mathfrak{r}}$ for the monic polynomial of degree $\#\underline{\mathfrak{r}}$ whose multiset of roots coincides with $\underline{\mathfrak{r}} \subset \underline{\mathcal{B}}$.  In order to avoid cumbersome notation, given such a set $\mathfrak{r}$, we write $\mathfrak{t}_{\mathfrak{r}}$ for the function $\mathfrak{t}_{f_{\mathfrak{r}}} : \Hyp \to [0, \pfrac]$.

\begin{prop} \label{prop mathfrakt minimum}

Given a disjoint union $\mathfrak{r} = \mathfrak{r}_1 \sqcup \ldots \sqcup \mathfrak{r}_s \subset \mathcal{B} \smallsetminus \{\infty\}$, for all $\eta \in \Hyp$, we have 
\begin{equation}
\mathfrak{t}_{\mathfrak{r}}(\eta) = \min\{\mathfrak{t}_{\mathfrak{r}_1}(\eta), \ldots, \mathfrak{t}_{\mathfrak{r}_s}(\eta)\}.
\end{equation}

\end{prop}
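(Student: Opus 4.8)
The plan is to deduce the identity from the multiplicativity $f_{\mathfrak r} = \prod_i f_{\mathfrak r_i}$ (valid because the underlying sets $\mathfrak r_i$ are pairwise disjoint) together with \Cref{prop product part-pth-power}. By \Cref{cor t_f is continuous} both sides are continuous on $\Hyp$, so it suffices to check the formula at points of Type II, and by an obvious induction on $s$ it is enough to treat $s = 2$. So I would fix a Type II point $\eta = \eta_{D_{\alpha, v(\beta)}}$, write $g_i := (f_{\mathfrak r_i})_{\alpha,\beta}$ and $g := (f_{\mathfrak r})_{\alpha,\beta} = g_1 g_2$, choose (after enlarging $K'$, using \Cref{prop total existence} and \Cref{cor total is good}) total --- hence good --- decompositions $g_i = q_i^p + \rho_i$, and set $q = q_1 q_2$, $\rho = g - q^p$, so that $g = q^p + \rho$ is a part-$p$th-power decomposition. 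Write $t = t_{q,\rho}$ and $t_i = t_{q_i,\rho_i}$, so $\mathfrak t_{\mathfrak r_i}(\eta) = \truncate{t_i}$.

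The inequality ``$\geq$'' is then straightforward. By \Cref{prop product part-pth-power}, $t \geq \min\{t_1, t_2\}$. On the other hand, for \emph{any} decomposition $g = Q^p + P$ one has $\truncate{t_{Q,P}} \leq \mathfrak t_{\mathfrak r}(\eta)$: the good decomposition realizing $\mathfrak t_{\mathfrak r}(\eta)$ either has $t$-value $\geq \ptfrac$ (so $\mathfrak t_{\mathfrak r}(\eta) = \ptfrac \geq \truncate{t_{Q,P}}$) or has $t$-value $< \ptfrac$ and, being good, is no smaller than $t_{Q,P}$. Applying this to $g = q^p + \rho$ gives $\mathfrak t_{\mathfrak r}(\eta) \geq \truncate{t} \geq \truncate{\min\{t_1,t_2\}} = \min\{\truncate{t_1},\truncate{t_2}\} = \min\{\mathfrak t_{\mathfrak r_1}(\eta), \mathfrak t_{\mathfrak r_2}(\eta)\}$.

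For the reverse inequality it suffices to show $\mathfrak t_{\mathfrak r}(\eta) \leq \mathfrak t_{\mathfrak r_j}(\eta)$ for each $j$; say $\mathfrak t_{\mathfrak r_1}(\eta) \leq \mathfrak t_{\mathfrak r_2}(\eta)$, and aim for $\mathfrak t_{\mathfrak r}(\eta) \leq \mathfrak t_{\mathfrak r_1}(\eta)$. If $\mathfrak t_{\mathfrak r_1}(\eta) = \ptfrac$ there is nothing to prove, so assume $t_1 < \ptfrac$, whence $\mathfrak t_{\mathfrak r_1}(\eta) = t_1$; it then suffices to exhibit one \emph{good} decomposition of $g$ with $t$-value $t_1$. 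When the minimum is attained uniquely, i.e. $t_1 < t_2$, the product decomposition $g = q^p + \rho$ works: from $\rho = \rho_1 q_2^p + \rho_2 q_1^p + \rho_1\rho_2$ and the valuation bookkeeping of \Cref{prop product part-pth-power} one gets $v(\rho) = v(g) + t_1$ and that a normalized reduction of $\rho$ is a nonzero scalar multiple of $\bar\rho_1\,\bar q_2^{\,p}$; by \Cref{prop good decomposition} $\bar\rho_1$ is not a $p$th power, so writing $\bar\rho_1 = A + B$ with $A$ its part supported in degrees divisible by $p$ (a $p$th power since $k$ is perfect) and $B \neq 0$ supported in degrees prime to $p$, the product $\bar\rho_1\bar q_2^{\,p} = A\bar q_2^{\,p} + B\bar q_2^{\,p}$ is a $p$th power plus a nonzero polynomial all of whose exponents are prime to $p$, hence is not a $p$th power; \Cref{prop good decomposition} then shows $g = q^p + \rho$ is good, so $\mathfrak t_{\mathfrak r}(\eta) = t_1$.

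The remaining case $t_1 = t_2$ is the crux and the step I expect to be the main obstacle: here \Cref{prop product part-pth-power} only yields $t \geq t_1$, and the two dominant contributions $\rho_1 q_2^p$ and $\rho_2 q_1^p$ to $\rho$ can cancel modulo valuation $> v(g) + t_1$, so the product decomposition need not be good and a priori $\mathfrak t_{\mathfrak r}(\eta)$ could exceed $t_1$. The plan is to rule this out using the disjointness of $\mathfrak r_1$ and $\mathfrak r_2$ --- equivalently, the coprimality of $g_1$ and $g_2$ in $\bar K[z]$: supposing some decomposition $g = Q^p + P$ had $t_{Q,P} > t_1$, one expands $Q^p$ around $(q_1 q_2)^p$ by the binomial theorem, discards the cross terms using $t_1 < \ptfrac$ exactly as in the proof of \Cref{prop good decomposition}, and concludes that a normalized reduction of $\rho$ would be a $p$th power; one must then argue that this contradicts $\gcd(g_1, g_2) = 1$ together with the goodness of $g_i = q_i^p + \rho_i$. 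Combining this with the ``$\geq$'' direction gives the claimed identity.
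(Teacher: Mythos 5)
Your $\geq$ direction is correct, and in fact more explicit than the paper's own argument: the paper simply asserts the inequality after citing \Cref{prop product part-pth-power}, whereas you spell out the needed fact that any decomposition has truncated $t$-value at most that of a good decomposition. Your handling of the case $t_1 < t_2$ is also sound, and the reduction argument showing that a normalized reduction of $\rho$ equals $\bar\rho_1\bar q_2^{\,p}$ up to a nonzero scalar, hence is not a $p$th power, is precisely the verification that the paper leaves implicit when it claims ``equality when the minimum is achieved for a unique index.''

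The step you flag as the obstacle --- the case $t_1 = t_2$ --- is a genuine gap, and the coprimality contradiction you sketch cannot be made to work. When $t_1 = t_2$ the dominant contributions $\rho_1 q_2^p$ and $\rho_2 q_1^p$ can indeed cancel at leading valuation, so the product decomposition's $\rho$ need not have a normalized reduction that fails to be a $p$th power; there is simply nothing to contradict, and no amount of expanding $Q^p$ around $q^p$ will locally rule out $\mathfrak t_{\mathfrak r}(\eta) > t_1$ at such a point $\eta$.

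The paper sidesteps this by never engaging with the tied case directly. Having established $\mathfrak t_{\mathfrak r}(\eta)\geq\min_i\mathfrak t_{\mathfrak r_i}(\eta)$ everywhere, with equality whenever the minimum on the right-hand side is attained for a unique index (which is exactly where your $t_1 < t_2$ computation is needed), it then invokes the real tree structure of $\Hyp$ to see that every point has a punctured neighborhood on which the minimum is attained for a unique index, so the identity holds there, and concludes by the continuity of both sides (\Cref{cor t_f is continuous}) that the identity extends to the tied points as well. You already have all the ingredients; what your write-up is missing is the decision to prove the identity on the dense set of untied points and pass to the limit, rather than attempting a direct local analysis at a tie.
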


\begin{proof}
Fixing a point $\eta = \eta_{D_{\alpha, v(\beta)}}$ (of Type II) for some $\alpha \in \bar{K}$ and $\beta \in \bar{K}^\times$, for $1 \leq i \leq N$, let $f_{\mathfrak{r}_i} = q_i^2 + \rho_i$ be part-$p$th-power decompositions such that the decomposition $(f_{\mathfrak{r}_i})_{\alpha, v(\beta)} = (q_i)_{\alpha, v(\beta)}^2 + (\rho_i)_{\alpha, v(\beta)}$.  Then, by setting $q = \prod_i q_i$ and $\rho = f_{\mathfrak{r}} - q^p$, we obtain a part-$p$th-power decomposition for $f_{\mathfrak{r}}$ which according to \Cref{prop product part-pth-power} satisfies 
\begin{equation}
t_{q_{\alpha, \beta}, \rho_{\alpha, \beta}} \geq \min_{1 \leq i \leq s} \{t_{(q_i)_{\alpha, \beta}, (\rho_i)_{\alpha, \beta}}\}
\end{equation}
with equality when the minimum on the right-hand side is achieved for a unique index $i$.  From this (and by continuity thanks to \Cref{cor t_f is continuous}) it follows that we have $\mathfrak{t}_{\mathfrak{r}}(\eta) \geq \min\{\mathfrak{t}_{\mathfrak{r}_i}(\eta)\}_{1 \leq i \leq s}$ for all $\eta \in \Hyp$ with equality when the minimum is achieved for a unique $i$.  Now it follows from the real tree topological structure of $\Hyp$ that each point $\eta \in \Hyp$ lies in a neighborhood $U \subset \Hyp$ such that over $U \smallsetminus \{\eta\}$, the minimum of the values $\mathfrak{t}_{\mathfrak{r}_i}$ is achieved for a unique index $i$ and thus we get $\mathfrak{t}_{\mathfrak{r}} = \min\{\mathfrak{t}_{\mathfrak{r}_i}\}_{1 \leq i \leq s}$.  As the function $\mathfrak{t}_{\mathfrak{r}}$ is continuous, we get this equality at the point $\eta$ as well.
\end{proof}

\begin{cor} \label{cor mathfrakt minimum}

Given a disjoint union $\mathfrak{r} = \mathfrak{r}_1 \sqcup \ldots \sqcup \mathfrak{r}_s$ of subsets of $\mathcal{B} \smallsetminus \{\infty\}$ and good part-$p$th-power decompositions $f_{\mathfrak{r}_i} = q_i^p + \rho_i$ for $1 \leq i \leq s$, the decomposition $f = q^p + \rho$, where $q = \prod_{i = 1}^s q_i$ and $\rho = f - q^p$, is also good.

\end{cor}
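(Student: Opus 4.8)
The plan is to derive the corollary from the minimum formula for the $p$th-power closeness function (\Cref{prop mathfrakt minimum}) together with the product estimate (\Cref{prop product part-pth-power}). I read the assertion as stating that the product decomposition $f_{\mathfrak{r}} = q^p + \rho$ (a legitimate part-$p$th-power decomposition, as observed just before \Cref{prop product part-pth-power}) is good; in the intended application $\mathfrak{r} = \mathcal{B} \smallsetminus \{\infty\}$, in which case $f_{\mathfrak{r}}$ differs from $f$ only by a nonzero scalar, and since every element of $\bar{K}^\times$ is a $p$th power in $\bar{K}$ this scaling induces a bijection between part-$p$th-power decompositions preserving each quantity $t_{q,\rho}$ and hence the notion of being good. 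So write $t_i := t_{q_i, \rho_i}$ and $t := t_{q, \rho}$; then \Cref{prop product part-pth-power}, applied to the good decompositions $f_{\mathfrak{r}_i} = q_i^p + \rho_i$, gives $t \geq \min_i t_i$.

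Next I would evaluate the relevant closeness functions at the Gauss point $\eta_0 := \eta_{D_{0,0}}$, taking center $\alpha = 0$ and scalar $\beta = 1$ in \Cref{dfn t_f}, so that the polynomial $h_{\alpha,\beta}$ appearing there is simply $h$ itself. Since each given decomposition $f_{\mathfrak{r}_i} = q_i^p + \rho_i$ is good and a good decomposition $f_{\mathfrak{r}} = \bar{q}^p + \bar{\rho}$ exists by \Cref{prop total existence} and \Cref{cor total is good}, the definition of $\mathfrak{t}$ — well defined by \Cref{prop t_f well defined} — yields
\begin{equation*}
\mathfrak{t}_{\mathfrak{r}_i}(\eta_0) = \min\{t_i, \pfrac\} \quad (1 \leq i \leq s), \qquad \mathfrak{t}_{\mathfrak{r}}(\eta_0) = \min\{t_{\bar{q}, \bar{\rho}}, \pfrac\}.
\end{equation*}
Substituting these into the identity $\mathfrak{t}_{\mathfrak{r}}(\eta_0) = \min_i \mathfrak{t}_{\mathfrak{r}_i}(\eta_0)$ of \Cref{prop mathfrakt minimum} gives $\min\{t_{\bar{q}, \bar{\rho}}, \pfrac\} = \min\{\min_i t_i, \pfrac\}$.

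It then remains to check goodness of $f_{\mathfrak{r}} = q^p + \rho$ against \Cref{dfn good and total}(a) in two cases. If $\min_i t_i \geq \pfrac$, then $t \geq \min_i t_i \geq \pfrac$, so $f_{\mathfrak{r}} = q^p + \rho$ is good immediately. If $\min_i t_i < \pfrac$, the equality of truncations just obtained forces $t_{\bar{q}, \bar{\rho}} = \min_i t_i < \pfrac$; since $\bar{q}, \bar{\rho}$ is good with $t_{\bar{q},\bar{\rho}} < \pfrac$, \Cref{dfn good and total}(a) says no decomposition of $f_{\mathfrak{r}}$ has a strictly larger value of the associated invariant, so $t \leq t_{\bar{q}, \bar{\rho}} = \min_i t_i$; together with $t \geq \min_i t_i$ this gives $t = \min_i t_i < \pfrac$ with $t$ maximal among decompositions of $f_{\mathfrak{r}}$, so $f_{\mathfrak{r}} = q^p + \rho$ is good. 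I expect the subtlest point to be this last case — upgrading the equality of truncated values to the honest equality $t_{\bar{q},\bar{\rho}} = \min_i t_i$ (which is exactly where the hypothesis $\min_i t_i < \pfrac$ enters) and then transferring the extremality of $t_{\bar{q},\bar{\rho}}$ back to the product decomposition via \Cref{prop product part-pth-power} — but no step here is deep; the only real care needed is in keeping the truncation in the definition of $\mathfrak{t}$ straight against the two clauses of \Cref{dfn good and total}(a).
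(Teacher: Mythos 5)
Your proof is correct and follows essentially the same route as the paper: combine the one-sided inequality $t_{q,\rho} \geq \min_i t_{q_i,\rho_i}$ from \Cref{prop product part-pth-power} with the pointwise identity $\mathfrak{t}_{\mathfrak{r}} = \min_i \mathfrak{t}_{\mathfrak{r}_i}$ of \Cref{prop mathfrakt minimum}, and then use that a good decomposition is exactly one maximizing the truncated invariant. The paper's own proof compresses the final step into a single sentence; your explicit specialization to the Gauss point $\eta_0$ and the two-case analysis on whether $\min_i t_i$ meets the truncation threshold $\ptfrac$ is just a careful unpacking of what the paper leaves implicit, and your reading of the statement as being about $f_{\mathfrak{r}}$ (which equals $f$ exactly in the intended application, both being monic) is the right way to parse the slight abuse of notation in the corollary.
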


\begin{proof}
With the $t_{q, \rho}$ notation as in \S\ref{sec normalizations part-pth-power}, \Cref{prop product part-pth-power} tells us that we have $t_{q, \rho} \geq \min\{t_{q_i, \rho_i}\}_{1 \leq i \leq s}$.  As the decompositions $f_{\mathfrak{r}_i} = q_i^p + \rho_i$ are each good, the inequality $\min\{t_{q, \rho}, \pfrac\} \geq \min\{\mathfrak{t}_{\mathfrak{r}_i}\}_{1 \leq i \leq s}$ immediately follows.  \Cref{prop mathfrakt minimum} now implies that both that equality must hold and that the decomposition $f = q^p + \rho$ is good.
\end{proof}

\subsection{Our main result on the behavior of the $p$th-power closeness function}

We now present the main result of this section, followed by an important corollary.

\begin{thm} \label{thm t_f formula one component}

Let $\mathcal{B} = \mathfrak{r}_0 \sqcup \dots \sqcup \mathfrak{r}_h$ be the partition such that the decomposition of $\Sigma_{\mathcal{B}}^*$ as the disjoint union of its connected components can be written as $\Sigma_{\mathfrak{r}_0} \sqcup \dots \sqcup \Sigma_{\mathfrak{r}_h}$ as in \Cref{prop connected components Sigma}.  Fix $i \in \{0, \dots, h\}$; write $\mathfrak{r}_i' = \mathfrak{r}_i \smallsetminus \{\infty\}$; and let $\mathfrak{s}_i$ be the smallest cluster containing $\mathfrak{r}_i$ if $i \neq 0$.

\begin{enumerate}[(a)]

\item For $\eta \in \Hyp$, we have $\mathfrak{t}_{\mathfrak{r}_i'}(\eta) = 0$ if and only if $\eta \in \Sigma_{\mathfrak{r}_i}$.

\item If $i \neq 0$ (so that $\infty \notin \mathfrak{r}_i = \mathfrak{r}_i'$), for $\alpha \in \mathfrak{r}_i$, the function $b \mapsto \mathfrak{t}_{\mathfrak{r}_i'}(\eta_{D_{\alpha, b}})$, when not taking the value $\pfrac$, has decreasing negative integer slopes on $(-\infty, d(\mathfrak{s}_i))$.

\item For $\alpha \in \bar{K} \smallsetminus \mathfrak{r}_i$, the function $b \mapsto \mathfrak{t}_{\mathfrak{r}_i'}(\eta_{D_{\alpha, b}})$ has decreasing positive integer slopes on $(c, \infty)$ until it attains the value $\pfrac$, where $c \in \rr$ is such that we have $D_{\alpha, b} \cap \mathfrak{r}_i = \varnothing$ for $b > c$.

\item Assume that $\underline{\mathcal{B}}$ is clustered in pairs.  Then for $\eta \in \Hyp$, we have the formula 
\begin{equation} \label{eq t_r_i formula}
\mathfrak{t}_{\mathfrak{r}_i'}(\eta) = \min\{\delta(\eta, \Sigma_{\mathfrak{r}_i}), \ptfrac\}.
\end{equation}

\end{enumerate}

\end{thm}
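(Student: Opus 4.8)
The plan is to compare the two continuous functions $\mathfrak{t}_{\mathfrak{r}_i'}$ and $\eta \mapsto \min\{\delta(\eta,\Sigma_{\mathfrak{r}_i}),\ptfrac\}$ on $\Hyp$. Both are continuous --- the first by \Cref{cor t_f is continuous}, the second because $\eta\mapsto\delta(\eta,\Sigma_{\mathfrak{r}_i})$ is $1$-Lipschitz on the real tree $\Hyp$ --- and by part (a) both vanish precisely on $\Sigma_{\mathfrak{r}_i}$ (here we use that, $\underline{\mathcal{B}}$ being clustered in pairs, $\Sigma_{\mathfrak{r}_i}$ is a single open path, hence connected), while both are bounded above by $\ptfrac$. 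So I would first reduce to geodesics: the closure $\overline{\Sigma_{\mathfrak{r}_i}}$ is a closed geodesic segment (a half-line if $i=0$), the complement $\Hyp\smallsetminus\overline{\Sigma_{\mathfrak{r}_i}}$ decomposes into subtrees each meeting $\overline{\Sigma_{\mathfrak{r}_i}}$ in a single point, and every $\eta\notin\Sigma_{\mathfrak{r}_i}$ lies on a geodesic ray $\gamma\colon[0,\infty)\to\Hyp$ with $\gamma(0)=\xi\in\overline{\Sigma_{\mathfrak{r}_i}}$ the point nearest $\eta$ and $\delta(\eta,\Sigma_{\mathfrak{r}_i})=\delta(\gamma(0),\eta)$. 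It then suffices to prove $\mathfrak{t}_{\mathfrak{r}_i'}(\gamma(s))=\min\{s,\ptfrac\}$ for every such $\gamma$.

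By part (a) and continuity, $\mathfrak{t}_{\mathfrak{r}_i'}(\gamma(0))=0$. After choosing a suitable center $\alpha$, such a ray $\gamma$ is parametrized by an affine function $b(s)$ of slope $\pm1$ (the disc depth), and is of one of two types: it leaves $\overline{\Sigma_{\mathfrak{r}_i}}$ ``upward'' from its apex, where the center can be taken in $\mathfrak{r}_i$ and part (b) applies, or ``downward/sideways'', where the center lies outside $\mathfrak{r}_i$ and part (c) applies. In either case parts (b) and (c) tell us that $s\mapsto\mathfrak{t}_{\mathfrak{r}_i'}(\gamma(s))$ is continuous and piecewise-linear with non-increasing positive integer slopes on the locus where its value is $<\ptfrac$, and equals $\ptfrac$ elsewhere. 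Consequently the desired identity is equivalent to the statement that this slope is identically $1$ from $s=0$ until the value first reaches $\ptfrac$; that is, that $\mathfrak{t}_{\mathfrak{r}_i'}(\gamma(s))=s$ for $0\le s\le\ptfrac$.

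To establish this, I would produce explicitly, at each point $\gamma(s)=\eta_{D_{\alpha,b}}$, a good part-$p$th-power decomposition of $f_{\mathfrak{r}_i'}$ realizing $t=s$. Since $\underline{\mathcal{B}}$ is clustered in pairs, $f_{\mathfrak{r}_i'}=(x-z_i)^{m_i}(x-w_i)^{p-m_i}$ with $m_i+(p-m_i)=p$. When $D_{\alpha,b}$ contains neither $z_i$ nor $w_i$, a normalized reduction of $(f_{\mathfrak{r}_i'})_{\alpha,\beta}$ is a nonzero constant, hence --- $k$ being algebraically closed of characteristic $p$ --- a $p$th power in $k$; I peel off the corresponding constant $q$. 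When $D_{\alpha,b}$ contains both (so $\gamma(s)>\eta_{\mathfrak{s}_i}$), recentring at $z_i$ makes the normalized reduction of $(f_{\mathfrak{r}_i'})_{z_i,\beta}$ equal to $x_{z_i,\beta}^{\,p}$, and I peel off a degree-$1$ polynomial $q$. In each case the residual $\rho=(f_{\mathfrak{r}_i'})_{\alpha,\beta}-q^p$ involves only prime-to-$p$-degree terms, hence is total and therefore good by \Cref{cor total is good}, and a direct valuation computation --- using $v(\binom{p}{j})=v(p)$ for $1\le j\le p-1$, $v(\binom{m_i}{j})=0$ for the binomial coefficients that occur (as $0<m_i<p$), and the Vandermonde identity $\sum_j\binom{m_i}{j}\binom{p-m_i}{k-j}=\binom{p}{k}$ to exhibit the expected leading cancellations in $\rho$ --- shows $v(\rho)-v((f_{\mathfrak{r}_i'})_{\alpha,\beta})$ equals $\delta(\gamma(0),\gamma(s))=s$. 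By \Cref{rmk good decompositions}(c) the truncated quantity $\min\{s,\ptfrac\}$ is independent of the chosen good decomposition, so this computation simultaneously shows $\mathfrak{t}_{\mathfrak{r}_i'}(\gamma(s))=\min\{s,\ptfrac\}$ and that no decomposition improves on $t=s$. The case $i=0$ is handled by the same computation, carried out in the chart around $\infty$ so that the ``pair'' $\{z_0,\infty\}$ plays the role of $\{z_i,w_i\}$.

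The main obstacle will be the valuation bookkeeping in the third step: one must check that the ``obvious'' top-degree terms of $\rho$ really do cancel (this is exactly where the Vandermonde identity and the characteristic-$p$ hypothesis enter), identify the genuinely surviving term of $\rho$ of least valuation, and verify that it lies exactly $s$ above $v((f_{\mathfrak{r}_i'})_{\alpha,\beta})$ --- uniformly in the two cases and without the estimate deteriorating as $s$ grows past $v(p)$. It is precisely the largeness of the binomial-coefficient valuations $v(\binom{p}{j})=v(p)$ that prevents a degree-$1$ choice of $q$ from improving $t$ beyond $s$, which is why the slope along $\gamma$ never exceeds $1$.
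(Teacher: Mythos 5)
Your overall architecture is sound and close to the paper's: parts (a)--(c) are used to establish that along any geodesic ray $\gamma$ from $\Sigma_{\mathfrak{r}_i}$ the function $s\mapsto\mathfrak{t}_{\mathfrak{r}_i'}(\gamma(s))$ is piecewise linear with non-increasing positive integer slopes while $<\ptfrac$, so the formula reduces to showing the first slope is exactly $1$, which one establishes by exhibiting a good decomposition with $t_{q,\rho}=s$. Your geodesic framing is actually a genuine simplification of the paper's organization: by always starting the ray at the \emph{nearest} point of $\Sigma_{\mathfrak{r}_i}$ you force $c\geq d(\mathfrak{s}_i)$, so the auxiliary hypothesis $c>d(\mathfrak{s}_i)-v(p)$ that the paper's Lemma~\ref{lemma t_f formula one component} needs (and whose failure the paper has to treat by a separate monotonicity argument) is automatic. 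The paper instead parametrizes by arbitrary centers $\alpha$ via \Cref{lemma all of hyp covered}, which is why it encounters that extra case.

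There is, however, a concrete error in your argument for goodness. In the sub-case where $D_{\alpha,b}$ contains neither $z_i$ nor $w_i$ (your ``peel off a constant $q$'' case), the residual $\rho=h_{\alpha,\beta}-q^p$ has degree $p$, because $h_{\alpha,\beta}$ has degree $p$ and $q^p$ is constant; thus $\rho$ contains a $p$-multiple-degree term and is \emph{not} total, so \Cref{cor total is good} cannot be invoked. (Your other sub-case, recentering at $z_i$ and peeling off a degree-$1$ $q$, really is total since $\rho$ then has degrees in $\{m_i,\dots,p-1\}$.) The fix is either to appeal directly to \Cref{prop good decomposition} --- which requires showing that the \emph{linear} coefficient of $\rho_{\alpha,\beta}$ realizes the minimum valuation, so the normalized reduction has a prime-to-$p$-degree term and is not a $p$th power --- or to replace the constant $q$ by the degree-$1$ polynomial matching both the leading and constant coefficients of $h_{\alpha,\beta}$, which does give a total decomposition, at the cost of additional terms $\binom{p}{j}$ in each coefficient of $q^p$ whose extra $v(p)$ you must check is large enough not to spoil the minimality of the linear coefficient. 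Either route lands you in the Vandermonde-style bookkeeping you flag as the obstacle, and that bookkeeping is in fact the heart of the proof: the paper's \Cref{lemma t_f formula one component} carries it out by comparing $h_{\alpha,\beta}$ to $\tilde{h}:=(\beta x_{\alpha,\beta}-(z-\alpha))^p$ and splitting by whether $v(z-\alpha)>v(w-\alpha)$ or $v(z-\alpha)=v(w-\alpha)$. Until that computation is done and the linear coefficient is verified to dominate, the claim $t_{q,\rho}=s$ is unproved, so this remains the essential gap in the proposal.
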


\begin{cor} \label{cor t_f formula}

Assume that the multiset $\underline{\mathcal{B}}$ of branch points is clustered in pairs, and write $\Sigma_{\mathcal{B}}^* = \Lambda_0 \sqcup \dots \sqcup \Lambda_h$ as in \Cref{dfn clustered in pairs berk}.  Then the function $\mathfrak{t}_f : \Hyp \to [0, \pfrac]$ is given by 

\begin{equation} \label{eq t_f formula}
\mathfrak{t}_f(\eta) = \min\big\{\delta(\eta, \Lambda_0), \dots, \delta(\eta, \Lambda_h), \ptfrac\big\}.
\end{equation}

\end{cor}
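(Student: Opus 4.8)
The plan is to deduce the formula directly from the already-established \Cref{thm t_f formula one component}(d) by exploiting the multiplicativity of the closeness function under products recorded in \Cref{prop mathfrakt minimum}. First I would identify $\mathfrak{t}_f$ with $\mathfrak{t}_{\mathfrak{r}}$ for $\mathfrak{r} := \mathcal{B} \smallsetminus \{\infty\}$: indeed $f$ is the monic polynomial whose multiset of roots is $\underline{\mathcal{B}} \smallsetminus \{\infty\}$, so $f = f_{\mathcal{B} \smallsetminus \{\infty\}}$ in the notation of \S\ref{sec t_f families}, whence $\mathfrak{t}_f = \mathfrak{t}_{\mathcal{B} \smallsetminus \{\infty\}}$. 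Writing the partition of \Cref{prop connected components Sigma} as $\mathcal{B} = \mathfrak{r}_0 \sqcup \dots \sqcup \mathfrak{r}_h$ with $\infty \in \mathfrak{r}_0$, and putting $\mathfrak{r}_i' = \mathfrak{r}_i \smallsetminus \{\infty\}$ exactly as in \Cref{thm t_f formula one component}, we get a decomposition $\mathcal{B} \smallsetminus \{\infty\} = \mathfrak{r}_0' \sqcup \mathfrak{r}_1' \sqcup \dots \sqcup \mathfrak{r}_h'$ into nonempty subsets of $\mathcal{B} \smallsetminus \{\infty\}$ (each $\mathfrak{r}_i$ has underlying set of cardinality $\geq 2$, so deleting $\infty$ still leaves something nonempty). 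Applying \Cref{prop mathfrakt minimum} to this decomposition yields $\mathfrak{t}_f(\eta) = \min_{0 \leq i \leq h} \mathfrak{t}_{\mathfrak{r}_i'}(\eta)$ for all $\eta \in \Hyp$.

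Next, since $\underline{\mathcal{B}}$ is assumed clustered in pairs, \Cref{thm t_f formula one component}(d) applies to every index and gives $\mathfrak{t}_{\mathfrak{r}_i'}(\eta) = \min\{\delta(\eta, \Sigma_{\mathfrak{r}_i}), \ptfrac\}$. Moreover, by \Cref{prop connected components Sigma} the connected components of $\Sigma_{\mathcal{B}}^*$ are precisely the subspaces $\Sigma_{\mathfrak{r}_0}, \dots, \Sigma_{\mathfrak{r}_h}$, so the collection $\{\Lambda_0, \dots, \Lambda_h\}$ of \Cref{dfn clustered in pairs berk} coincides with $\{\Sigma_{\mathfrak{r}_0}, \dots, \Sigma_{\mathfrak{r}_h}\}$; labeling so that $\Lambda_i = \Sigma_{\mathfrak{r}_i}$ and using associativity of $\min$, we obtain
\[
\mathfrak{t}_f(\eta) = \min_{0 \leq i \leq h} \min\{\delta(\eta, \Lambda_i), \ptfrac\} = \min\{\delta(\eta, \Lambda_0), \dots, \delta(\eta, \Lambda_h), \ptfrac\},
\]
which is the desired identity \eqref{eq t_f formula} (and, since $\min$ is symmetric in its arguments, the conclusion is independent of the chosen labeling of the $\Lambda_i$).

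I expect no genuine obstacle here: all of the analytic content has been absorbed into \Cref{thm t_f formula one component} and \Cref{prop mathfrakt minimum}. The only points needing a moment of care are purely bookkeeping ones — confirming that $f$ really is $f_{\mathcal{B} \smallsetminus \{\infty\}}$ so that $\mathfrak{t}_f = \mathfrak{t}_{\mathcal{B}\smallsetminus\{\infty\}}$; checking that the pieces $\mathfrak{r}_i'$ of the partition are nonempty, so that \Cref{prop mathfrakt minimum} applies verbatim and no spurious factor $f_{\varnothing} = 1$ (which would contribute an identically-zero summand and corrupt the minimum) enters; and consistently treating the $i = 0$ piece, where $\mathfrak{r}_0'$ differs from $\mathfrak{r}_0$ by the removal of $\infty$ — but this is exactly the convention already adopted in \Cref{thm t_f formula one component}, so it causes no trouble.
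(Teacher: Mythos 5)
Your proposal is correct and takes essentially the same route as the paper: the paper's proof is a one-line invocation of \Cref{thm t_f formula one component}(d) together with \Cref{prop mathfrakt minimum}, which is precisely the combination you carry out, just with the bookkeeping (identifying $f$ with $f_{\mathcal{B}\smallsetminus\{\infty\}}$, checking nonemptiness of the $\mathfrak{r}_i'$, and matching $\Lambda_i$ with $\Sigma_{\mathfrak{r}_i}$) spelled out explicitly.
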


\begin{proof}
This is immediate from combining \Cref{thm t_f formula one component}(d) with \Cref{prop mathfrakt minimum}.
\end{proof}

We devote the rest of this section to proving \Cref{thm t_f formula one component}.  First we present the following lemma, which shows in particular that parts (a)--(c) of \Cref{thm t_f formula one component} describe the behavior of the function $\mathfrak{t}_{\mathfrak{r}_i'}$ (for any $i \in \{0, \dots, h\}$) on the whole domain $\Hyp$.

\begin{lemma} \label{lemma all of hyp covered}

With the set-up of \Cref{thm t_f formula one component}, each $\eta \in \Hyp$ satisfies the hypothesis of (at least) one of parts (a)--(c) of \Cref{thm t_f formula one component}, \textit{i.e.} satisfies $\eta \in \Sigma_{\mathfrak{r}_i}$ (as in part (a)), $\eta \geq \eta_{\alpha, d(\mathfrak{s}_i)} = \eta_{\mathfrak{s}_i}$ for some $\alpha \in \mathfrak{r}_i$ (as in part (b)), or $\eta = \eta_{D_{\alpha, b}}$ for some $\alpha \in \bar{K} \smallsetminus \mathfrak{r}_i$ and $b > c$ where $c \in \rr$ satisfies the condition in part (c).

\end{lemma}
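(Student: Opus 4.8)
The plan is to carry out a straightforward case analysis on the location of $\eta \in \Hyp$ relative to the point $\eta_{\mathfrak{s}_i}$ (or, in the case $i = 0$, relative to a suitable point high up on the path to $\eta_\infty$), using only the ``one direction upwards'' principle established just after \Cref{dfn berk} together with \Cref{prop highest point of Lambda}. The essential dichotomy is: either $\eta$ lies in the convex hull $\Sigma_{\mathfrak{r}_i}$, or it does not; and if it does not, then the unique path from $\eta$ to $\Sigma_{\mathfrak{r}_i}$ either exits $\Sigma_{\mathfrak{r}_i}$ through its maximal point $\eta_{\mathfrak{s}_i}$ (the case $i \neq 0$) or through the ``top,'' i.e. toward $\eta_\infty$ (relevant when $i = 0$).

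First I would dispose of the case $\eta \in \Sigma_{\mathfrak{r}_i}$, which is exactly the hypothesis of part (a) and requires nothing further. So assume $\eta \notin \Sigma_{\mathfrak{r}_i}$. By unique path-connectedness there is a well-defined closest point $\xi \in \Sigma_{\mathfrak{r}_i}$ to $\eta$, and the path $[\eta, \xi]$ meets $\Sigma_{\mathfrak{r}_i}$ only at $\xi$. I claim $\xi$ must be the maximal point $\eta_{\mathfrak{s}_i}$ of $\Sigma_{\mathfrak{r}_i}$ whenever $\eta > \xi$: indeed, if $\xi$ were a non-maximal point of $\Sigma_{\mathfrak{r}_i}$, then by \Cref{prop highest point of Lambda}(b) there are $z, w \in \mathfrak{r}_i$ with $z \in D_\xi$ and $w \notin D_\xi$, so $\xi \in (\eta_z, \eta_w)$ is an interior point of a path in $\Sigma_{\mathfrak{r}_i}$, and then any point just above $\xi$ on $[\xi,\eta]$ would still lie in $(\eta_z, \eta_w) \subseteq \Sigma_{\mathfrak{r}_i}$ by the description of open paths in \S\ref{sec preliminaries berk} — contradicting that $[\eta,\xi] \cap \Sigma_{\mathfrak{r}_i} = \{\xi\}$ unless $\eta < \xi$. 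Now split into two subcases. If $\eta > \xi$ (which, combined with the claim, forces $i \neq 0$ and $\xi = \eta_{\mathfrak{s}_i}$, since $\Sigma_{\mathfrak{r}_0}$ has no maximal point by \Cref{prop highest point of Lambda}(a)), then for any $\alpha \in \mathfrak{r}_i$ we have $\eta_{D_{\alpha, d(\mathfrak{s}_i)}} = \eta_{\mathfrak{s}_i} = \xi$ and $\eta \geq \xi$, which is the hypothesis of part (b). If instead $\xi > \eta$ or $\xi$ and $\eta$ are incomparable, then since $\xi \in \Sigma_{\mathfrak{r}_i}$ there is some $z \in \mathfrak{r}_i$ with $z \in D_\xi$; writing $\eta = \eta_{D_{\alpha,b}}$ with $\alpha = z$ would put $\eta$ below $\eta_z$ along the path, which is impossible, so in fact we may always choose a center $\alpha \notin \mathfrak{r}_i$ realizing $\eta = \eta_{D_{\alpha,b}}$, and by construction $D_{\alpha, b'} \cap \mathfrak{r}_i = \varnothing$ for all $b' > b$ — one takes $c = b$ — so that the hypothesis of part (c) holds.

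The main obstacle, and the step deserving genuine care rather than a wave of the hand, is verifying that in the last subcase one really can pick a center $\alpha \notin \mathfrak{r}_i$ and locate the threshold $c$ correctly — in other words, showing that a point $\eta$ not lying in $\Sigma_{\mathfrak{r}_i}$ and not lying weakly above $\eta_{\mathfrak{s}_i}$ corresponds to a disc $D$ with $D \cap \mathfrak{r}_i$ either empty or a proper nonempty subset whose complement in $\mathfrak{r}_i$ is also nonempty, and in the latter situation $\eta$ would actually lie in $\Sigma_{\mathfrak{r}_i}$ by \Cref{prop highest point of Lambda}(b) — a contradiction. So in fact $D \cap \mathfrak{r}_i = \varnothing$, any $\alpha \in D$ works as a center disjoint from $\mathfrak{r}_i$, and one can take $c$ to be the depth at which the disc centered at $\alpha$ first starts to contain a point of $\mathfrak{r}_i$ (or $-\infty$ if it never does, though that cannot happen here since $\Sigma_{\mathfrak{r}_i} \neq \varnothing$ and everything sits inside $\Berk$). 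Once this bookkeeping is in place the lemma follows immediately, since the three listed alternatives are visibly exhaustive. I expect the write-up to be short, with the only subtlety being the careful invocation of \Cref{prop highest point of Lambda}(b) to rule out the ``straddling'' configuration.
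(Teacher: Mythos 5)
Your proposal is morally on the right track and arrives at the paper's key reduction, but it routes through extra machinery and contains two slips. The paper simply splits into three cases according to $D_\eta \cap \mathfrak{r}_i$: if the intersection is empty one is in part~(c), taking $c = \max_{z \in \mathfrak{r}_i} v(z - \alpha)$; if $D_\eta \supseteq \mathfrak{r}_i$ (which cannot happen for $i = 0$) one is in part~(b) or, when $D_\eta = D_{\mathfrak{s}_i}$, part~(a); and if $D_\eta \cap \mathfrak{r}_i$ is a proper nonempty subset, \Cref{prop highest point of Lambda}(b) immediately puts $\eta$ in $\Sigma_{\mathfrak{r}_i}$. You instead introduce the nearest point $\xi$ of $\Sigma_{\mathfrak{r}_i}$ and compare $\eta$ with $\xi$, but the argument you ultimately make in your final paragraph -- that $D_\eta \cap \mathfrak{r}_i$ is empty, proper nonempty, or all of $\mathfrak{r}_i$, and the middle case forces $\eta \in \Sigma_{\mathfrak{r}_i}$ -- is exactly the paper's trichotomy; the $\xi$ scaffolding is not needed.

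Two points to fix. First, in your second paragraph you say ``one takes $c = b$,'' but the lemma requires $b > c$, so $c = b$ does not work; the correct choice, as the paper makes explicit and as you state in your final paragraph, is $c = \max_{z \in \mathfrak{r}_i} v(z - \alpha)$, which is strictly less than $b$ precisely because $D_{\alpha,b}$ misses $\mathfrak{r}_i$. Second, the claim that ``writing $\eta = \eta_{D_{\alpha,b}}$ with $\alpha = z$ would put $\eta$ below $\eta_z$ along the path, which is impossible'' does not parse as stated: centering at $z$ makes $\eta \geq \eta_z$, not below it, and $\eta > \eta_z$ is not in itself a contradiction. The genuine reason $D_\eta \cap \mathfrak{r}_i$ must be empty in that subcase is the one you give at the end -- a nonempty proper intersection would make $\eta$ a non-maximal point of $\Sigma_{\mathfrak{r}_i}$ by \Cref{prop highest point of Lambda}(b), while full containment would give $\eta \geq \eta_{\mathfrak{s}_i}$ (and for $i = 0$ full containment is impossible since $\infty \in \mathfrak{r}_0$). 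I would discard the nearest-point framing and the second paragraph and simply lead with the argument of your final paragraph, which is both correct and what the paper does.
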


\begin{proof}
Choose any point $\eta = \eta_D \in \Hyp$.  In the case that $D \cap \mathfrak{r}_i = \varnothing$, we have $D = D_{\alpha, b}$ for some $\alpha \in \bar{K}$ and $b \in \rr$ satisfying $b > c := \max\{z - \alpha\}_{\zeta \in \mathfrak{r}_i}$; clearly $c$ satisfies the condition in \Cref{thm t_f formula one component}(c).  In the case that $D \supseteq \mathfrak{r}_i$, we have $D \supseteq D_{\mathfrak{s}_i} = D_{\alpha, d(\mathfrak{s}_i)}$ for some (any) $\alpha \in \mathfrak{r}_i$.  If this containment is strict, then we get $D = D_{\alpha, b}$ with $b < d(\mathfrak{s}_i)$ as in \Cref{thm t_f formula one component}(b).  If instead we have the equality $D = D_{\mathfrak{s}_i}$, then we have $\eta \in \Sigma_{\mathfrak{r}_i}$ by \Cref{prop highest point of Lambda}(a).  Finally, if $D \cap \mathfrak{r}_i \neq \varnothing$ and $D \not\supset \mathfrak{r}_i$, \Cref{prop highest point of Lambda}(b) implies that we have $\eta \in \Sigma_{\mathfrak{r}_i}$ as in \Cref{thm t_f formula one component}(a).
\end{proof}

\subsection{Proof of \Cref{thm t_f formula one component}(a)(b)(c)}

By continuity (guaranteed by \Cref{cor t_f is continuous}), it suffices to assume that any given $\eta \in \Hyp$ that we are dealing with is of Type II.

Choose any point $\eta = \eta_D \in \Sigma_{\mathfrak{r}_i}$ which is not the maximal point of $\Sigma_{\mathfrak{r}_i}$, and let $\mathfrak{s} = D \cap \mathcal{B}$.  Applying \Cref{prop highest point of Lambda}(b) and keeping in mind that $\eta \notin \Sigma_{\mathfrak{r}_j}$ for $j \neq i$, we get $\mathfrak{s} \cap \mathfrak{r}_i \neq \varnothing$ and $\mathfrak{s} \not\supset \mathfrak{r}_i$ as well as either $\mathfrak{s} \supset \mathfrak{r}_j$ or $\mathfrak{s} \cap \mathfrak{r}_j = \varnothing$ for $j \neq i$.  Now \Cref{cor cardinality-p clusters} implies that we have $p \nmid \#\underline{\mathfrak{s}}$.  Meanwhile, the fact that $p \mid \#\underline{\mathfrak{r}}_j$ for $0 \leq j \leq h$ by \Cref{prop connected components Sigma} now also implies $p \mid \#(\underline{\mathfrak{s}} \smallsetminus \bigcup_{j \neq i} \underline{\mathfrak{r}}_j)$, so we have $p \nmid \#(\underline{\mathfrak{s}} \cap \underline{\mathfrak{r}}_i)$.

For simplicity of notation, we write $h = f_{\mathfrak{r}_i}$ for the rest of this proof.  Choose a root $\alpha \in \mathfrak{s} \cap \mathfrak{r}_i$ of the polynomial $h$, and choose a scalar $\beta \in \bar{K}^\times$ such that we have $D = D_{\alpha, v(\beta)}$.  Now the multiset $\underline{\mathfrak{s}} \cap \underline{\mathfrak{r}}_i$, whose cardinality we have shown is not divisible by $p$, is precisely the subset of roots $z$ of $h$ (counted with multiplicity) satisfying $v(z - \alpha) \geq v(\beta)$.  Then by \Cref{lemma degrees of terms appearing}, the highest degree among terms appearing in any normalized reduction of $h_{\alpha, \beta}$ is not divisible by $p$.  A normalized reduction of $h_{\alpha, \beta}$ thus cannot be a $p$th power, so by \Cref{prop good decomposition}, the decomposition $h_{\alpha, \beta} = 0^p + h_{\alpha, \beta}$ is good.  We thus get $\mathfrak{t}_h(\eta) = 0$.  As this holds for all non-maximal points $\eta \in \Sigma_{\mathfrak{r}_i}$ and the maximal point of $\Sigma_{\mathfrak{r}_i}$ is a limit point of this component, the function $\mathfrak{t}_{\mathfrak{r}_i'}$ (which is continuous thanks to \Cref{cor t_f is continuous}) is identically $0$ on $\Sigma_{\mathfrak{r}_i}$, and the backward direction of \Cref{thm t_f formula one component}(a) is proved.

Assuming that $i \neq 0$ (so that $\infty \notin \mathfrak{r}_i = \mathfrak{r}_i'$), we now investigate the behavior of $\mathfrak{t}_{\mathfrak{r}_i}$ on an open path beginning at the point $\eta_{\alpha, d(\mathfrak{s}_i)} = \eta_{\mathfrak{s}_i}$ for some (any) $\alpha \in \mathfrak{r}_i$ (which, by \Cref{prop highest point of Lambda}, is the maximal point of $\Sigma_{\mathfrak{r}_i}$) and ``going upward" in the direction of $\eta_\infty$.  Now for any $b < d(\mathfrak{s}_i)$, we have $v(z - \alpha) > b$ for each root $z \in \mathfrak{r}_i$.  Then given $\beta \in \bar{K}^\times$ with $v(\beta) = b$, by \Cref{lemma degrees of terms appearing}, the only degree appearing among the terms of a normalized reduction of $h_{\alpha, \beta}$ is $\#\underline{\mathfrak{r}}_i$ (which is divisible by $p$ by \Cref{prop connected components Sigma}).  \Cref{lemma mathfrakv}(b) then tells us that the function $b \mapsto \vfun_h(\eta_{D_{\alpha, b}})$, as $b$ increases along $(-\infty, d(\mathfrak{s}_i))$, is a linear function with slope equal to $\#\underline{\mathfrak{r}}_i$.  Letting $h = q^p + \rho$ be a part-$p$th-power decomposition such that the associated decomposition $h_{\alpha, 1} = q_{\alpha, 1}^p + \rho_{\alpha, 1}$ is total, we have $\deg(\rho_{\alpha, b}) \leq \deg(h) - 1 = \#\underline{\mathfrak{r}}_i - 1$ for $b \in (-\infty, d(\mathfrak{s}_i)]$, so \Cref{lemma mathfrakv}(b) tells us that the function $b \mapsto \vfun_\rho(\eta_{D_{\alpha, b}})$, as $b$ increases along $(-\infty, d(\mathfrak{s}_i))$, is piecewise linear with decreasing positive integer slopes each $\leq \#\underline{\mathfrak{r}}_i - 1$.  Now applying \Cref{lemma t_f difference formula}, we get that the function $b \mapsto \mathfrak{t}_{\mathfrak{r}_i}(\eta_{D_{\alpha, b}})$, as $b$ increases along $(-\infty, d(\mathfrak{s}_i))$, is piecewise linear with decreasing slopes which are negative wherever the function takes a value $< \pfrac$ (which happens eventually as it approaches the value $0$ at $b = d(\mathfrak{s}_i)$, which corresponds to the point $\eta_{\mathfrak{s}_i} \in \Sigma_{\mathfrak{r}_i}$).  This not only proves \Cref{thm t_f formula one component}(b) but shows that we have $\mathfrak{t}_{\mathfrak{r}_i}(\eta_{D_{\alpha, b}}) > 0$ for $b < d(\mathfrak{s}_i)$, proving the forward direction of part (a) of the theorem (its contrapositive) for $\eta \in (\eta_{\mathfrak{s}_i}, \eta_\infty)$.

The proof of \Cref{thm t_f formula one component}(c), together with the proof of the forward direction of part (a) of the theorem (its contrapositive) for $\eta \in (\eta_{D_{\alpha, c}}, \eta_\alpha)$ with $\alpha \in \bar{K}$ and $c \in \rr$ satisfying the hypotheses of part (c), is similar and is only sketched here.  Fix $\alpha \in \bar{K} \smallsetminus \mathfrak{r}_i$ and $c \in \rr$ so that $D_{\alpha, b} \cap \mathfrak{r}_i = \varnothing$ for $b > c$.  This clearly implies the inequality $b > v(z - \alpha)$ for all $b > c$ and all $z \in \mathfrak{r}_i$.  An application of \Cref{lemma degrees of terms appearing} shows that any normalized reduction of $h_{\alpha, \beta}$ with $b := v(\beta) > c$ is a constant, and then \Cref{lemma mathfrakv}(b) says that the function $b \mapsto \vfun_h(\eta_{D_{\alpha, b}})$ is constant on $(c, \infty)$.  Given a decomposition $h = q^p + \rho$ such that the decomposition $h_{\alpha, 1} = q_{\alpha, 1}^p + \rho_{\alpha, 1}$ is total, \Cref{lemma mathfrakv}(b) also says that $b \mapsto \vfun_\rho(\eta_{D_{\alpha, b}})$ has decreasing positive integer slopes, and the desired results (including this case of the forward direction of part (a)) again follow from applying \Cref{lemma t_f difference formula}; in particular, the function $b \mapsto \mathfrak{t}_{\mathfrak{r}_i'}(\eta_{D_{\alpha, b}})$ has decreasing positive integer slopes.

\subsection{Proof of \Cref{thm t_f formula one component}(d): the behavior of $\mathfrak{t}_{\mathfrak{r}_i'}$ in the case of clustered in pairs}

We have now proved parts (a)--(c) of \Cref{thm t_f formula one component}, and so our final task in this section is to prove part (d).  To this end, we assume that $\underline{\mathcal{B}}$ is clustered in pairs, so that we have $\#\underline{\mathfrak{r}}_i = p$ and $\#\mathfrak{r}_i = 2$, and we may write $\mathfrak{r}_i = \{z, w\}$, where the root $z$ has multiplicity $m \in \{1, \dots, p - 1\}$, which means that $w$ has multiplicity $p - m$.  

Let us assume for now that we have $i \neq 0$, so that $w \neq \infty$ and $\mathfrak{r}_i' = \mathfrak{r}_i$.  Then our formula for $h = \mathfrak{t}_{\mathfrak{r}_i'}$ is $h(x) = (x - z)^m (x - w)^{p-m}$.  Note also that the cluster $\mathfrak{s}_i$ corresponding to $\mathfrak{r}_i$ is the smallest to contain $z$ and $w$, and so satisfies $d(\mathfrak{s}_i) = v(w - z)$.

First we consider the situation in \Cref{thm t_f formula one component}(b), where $\alpha$ is chosen to be a root in $\mathfrak{r}_i$ and $\beta \in \cc_K^\times$ satisfies $v(\beta) < d(\mathfrak{s}_i) = v(w - z)$.  Without loss of generality, we set $\alpha = z$ and get 
\begin{equation}
h_{\alpha, \beta}(x_{\alpha, \beta}) = \beta^m x^m (\beta x - (w - z))^{p-m}.
\end{equation}
A total part-$p$th-power decomposition $h_{\alpha, 1} = q_{\alpha, 1}^p + \rho_{\alpha, 1}$ is given by setting $q_{\alpha, 1}(x_{\alpha, 1}) = x_{\alpha, 1}$ and $\rho_{\alpha, 1} = h_{\alpha, 1} - x_{\alpha, 1}^p$.  For $m \leq i \leq p - 1$, the degree-$i$ coefficient of $\rho_{\alpha, \beta}$ equals ${p - m \choose p - i} \beta^i (w - z)^{p - i}$.  Noting that $p \nmid {p - m \choose p - i}$ for $m \leq i \leq p - 1$, one immediately verifies using the inequality $v(\beta) < v(w - z)$ that the degree-$(p-1)$ coefficient has minimal valuation among the coefficients of $\rho_{\alpha, \beta}$, which means that the highest degree appearing in a normalized valuation of $\rho_{\alpha, \beta}$ is $p - 1$.

Applying \Cref{lemma mathfrakv}(b), we get that the function $b \mapsto \vfun_\rho(\eta_{D_{\alpha, b}})$ is linear with slope $p - 1$ as $b$ increases along $(-\infty, d(\mathfrak{s}_i))$.  We saw in the proof of \Cref{thm t_f formula one component}(b) that $b \mapsto \vfun_h(\eta_{D_{\alpha, b}})$ is linear with slope $\#\underline{\mathfrak{r}}_i = p$ on $(-\infty, d(\mathfrak{s}_i))$; applying \Cref{lemma t_f difference formula}, we get that the piecewise linear function $b \mapsto \mathfrak{t}_{\mathfrak{r}_i}(\eta_{D_{\alpha, b}})$ on $(-\infty, d(\mathfrak{s}_i))$ is identically $\pfrac$ and then has slope $-1$, and we know from \Cref{thm t_f formula one component}(a) that it then reaches $0$ at $b = d(\mathfrak{s}_i)$.  This implies that $\mathfrak{t}_{\mathfrak{r}_i}(\eta_{D_{\alpha, b}}) = \min\{d(\mathfrak{s}_i) - b, \pfrac\}$ for points $\eta_{D_{\alpha, b}} > \eta_{\mathfrak{s}_i} \in \Sigma_{\mathfrak{r}_i}$; as $\eta_{\mathfrak{s}_i}$ is the maximal point in $\Sigma_{\mathfrak{r}_i}$, this gives the formula in (\ref{eq t_r_i formula}) for $\mathfrak{t}_{\mathfrak{r}_i'}$.

Now we consider the situation for $\eta \in (\eta_{D_{\alpha, c}}, \eta_\alpha)$ for some choice of $\alpha \in \bar{K} \smallsetminus \mathfrak{r}_i$ and $c \in \rr$ satisfying the hypothesis of \Cref{thm t_f formula one component}(c), which is equivalent to saying that $c \geq v(z - \alpha), v(w - \alpha)$.  It clearly suffices to assume that we have $c = \max\{v(z - \alpha), v(w - \alpha)\}$.  Let us also assume without loss of generality that we have $v(z - \alpha) \geq v(w - \alpha)$, which implies $c = v(z - \alpha)$.

As we have set $c = \max\{v(z - \alpha), v(w - \alpha)\}$, which means that $D_{\alpha, c} \cap \mathfrak{r}_i \neq \varnothing$, applying \Cref{prop highest point of Lambda} tells us that we have $\eta_{D_{\alpha, c}} \in \Sigma_{\mathfrak{r}_i}$ or $\eta_{D_{\alpha, c}} > \eta_{\mathfrak{s}_i}$ (with $\delta(\eta_{D_{\alpha, c}}, \Sigma_{\mathfrak{r}_i}) = \delta(\eta_{D_{\alpha, c}}, \eta_{\mathfrak{s}_i}) = d(\mathfrak{s}_i) - c$) and thus already know that $\mathfrak{t}_{\mathfrak{r}_i'}(\eta_{D_{\alpha, c}})$ is given by the formula in (\ref{eq t_r_i formula}).  Note that we have 
\begin{equation} \label{eq sum of distances}
\delta(\eta_{D_{\alpha, b}}, \eta_{D_{\alpha, c}}) + \delta(\eta_{D_{\alpha, c}}, \Sigma_{\mathfrak{r}_i'}) = \delta(\eta_{D_{\alpha, b}}, \Sigma_{\mathfrak{r}_i'});
\end{equation}
indeed, if $\eta_{D_{\alpha, c}} \in \Sigma_{\mathfrak{r}_i}$, this is immediate, whereas otherwise, as the closest point to $\eta_{D_{\alpha, c}}$ in $\Sigma_{\mathfrak{r}_i}$ is its maximal point $\eta_{\mathfrak{s}_i}$ and as $D_{\alpha, b} \cap \mathfrak{r}_i = \varnothing$, we have $\eta_{D_{\alpha, c}} = \eta_{D_{\alpha, b}} \vee \eta_{\mathfrak{s}_i}$.

Now, fixing a scalar $\beta \in \bar{K}^\times$ with $b := v(\beta) > c$, we have 
\begin{equation} \label{eq h_(alpha', beta)}
h_{\alpha, \beta}(x_{\alpha, \beta}) = (\beta x_{\alpha, \beta} - (z - \alpha))^m (\beta x_{\alpha, \beta} - (w - \alpha))^{p-m}.
\end{equation}

\begin{lemma} \label{lemma t_f formula one component}

In the situation directly above, let $q$ be a constant polynomial whose $p$th power is the constant $(\alpha - z)^m (w - \alpha)^{p-m}$, and let $\rho = h - q^p$.

\begin{enumerate}[(a)]

\item Each coefficient of $h_{\alpha, \beta}$ has valuation $\geq m v(z - \alpha) + (p - m) v(w - \alpha)$, while the constant term has precisely this valuation.

\item If we have $c > d(\mathfrak{s}_i) - v(p)$, then the decomposition $h_{\alpha, \beta} = q_{\alpha, \beta}^p + \rho_{\alpha, \beta}$ is good, and the linear coefficient of $\rho_{\alpha, \beta}$ has minimal valuation among coefficents of $\rho_{\alpha, \beta}$.

\end{enumerate}

\end{lemma}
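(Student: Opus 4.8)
\emph{Plan.} The plan is to prove part (a) by a direct binomial expansion, and part (b) by computing the valuation of the linear coefficient of $\rho_{\alpha,\beta}$ exactly, showing it is the smallest among the coefficients of $\rho_{\alpha,\beta}$, and then invoking \Cref{prop good decomposition}.

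\emph{Part (a).} Expand $h_{\alpha,\beta}(x_{\alpha,\beta})=(\beta x_{\alpha,\beta}-(z-\alpha))^m(\beta x_{\alpha,\beta}-(w-\alpha))^{p-m}$ in each factor via the binomial theorem. Every monomial occurring in the coefficient of $x_{\alpha,\beta}^j$ is an integer multiple of $\beta^{j}(z-\alpha)^{m-j_1}(w-\alpha)^{p-m-j_2}$ with $j_1+j_2=j$, $0\le j_1\le m$, $0\le j_2\le p-m$; since $b:=v(\beta)>v(z-\alpha)\ge v(w-\alpha)$, one bounds each such monomial's valuation below by $m\,v(z-\alpha)+(p-m)v(w-\alpha)$ using $j_1 b\ge j_1 v(z-\alpha)$ and $j_2 b\ge j_2 v(w-\alpha)$. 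The constant term is the single monomial $(-(z-\alpha))^m(-(w-\alpha))^{p-m}$, which has exactly this valuation, so $v(h_{\alpha,\beta})=m\,v(z-\alpha)+(p-m)v(w-\alpha)$. This is (a), and it identifies $\rho_{\alpha,\beta}=h_{\alpha,\beta}-q_{\alpha,\beta}^p$ with $h_{\alpha,\beta}$ minus its constant term (and, once $q_{\alpha,\beta}$ is also chosen to match the leading coefficient of $h_{\alpha,\beta}$, minus its degree-$p$ term), so that $\rho_{\alpha,\beta}$ is supported in degrees $1,\dots,p-1$.

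\emph{Part (b).} I first compute the valuation of the linear coefficient of $h_{\alpha,\beta}$, which is $h_{\alpha,\beta}'(0)$. Differentiating the product and setting $x_{\alpha,\beta}=0$ yields, up to a sign and the factor $\beta(z-\alpha)^{m-1}(w-\alpha)^{p-m-1}$, the quantity $m(w-\alpha)+(p-m)(z-\alpha)=m(w-z)+p(z-\alpha)$. This is where the hypothesis enters: since $d(\mathfrak{s}_i)=v(w-z)$ and $c=v(z-\alpha)$, the bound $c>d(\mathfrak{s}_i)-v(p)$ rearranges to $v(m(w-z))=v(w-z)<v(p)+v(z-\alpha)=v(p(z-\alpha))$, so $m(w-z)+p(z-\alpha)$ has valuation exactly $d(\mathfrak{s}_i)$; whence the linear coefficient of $h_{\alpha,\beta}$ — and hence, the contribution of $q_{\alpha,\beta}^p$ in degree $1$ carrying a factor of valuation $v(p)$ and so being strictly larger, the linear coefficient of $\rho_{\alpha,\beta}$ — has valuation precisely $b+(m-1)v(z-\alpha)+(p-m-1)v(w-\alpha)+d(\mathfrak{s}_i)$. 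It then remains to show no other coefficient of $\rho_{\alpha,\beta}$ has smaller valuation. I would write the coefficient of $x_{\alpha,\beta}^j$ in $h_{\alpha,\beta}$ as $\pm\beta^j e_{p-j}$, where $e_{p-j}$ is the corresponding elementary symmetric function of the roots $z-\alpha$ (multiplicity $m$) and $w-\alpha$ (multiplicity $p-m$), and treat two cases. When $v(z-\alpha)>v(w-\alpha)$ there is no cancellation, as the lowest-valuation monomial of $e_{p-j}$ carries a binomial coefficient prime to $p$. When $v(z-\alpha)=v(w-\alpha)$ one substitutes $w-\alpha=(z-\alpha)+(w-z)$ and uses the identity $e_{p-j}=\sum_{s}\binom{p-m}{s}\binom{p-s}{p-j-s}(w-z)^s(z-\alpha)^{p-j-s}$; the only $p$-divisible binomial coefficient appears at $s=0$, so the hypothesis makes the $s=1$ term strictly dominant. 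In either case the valuation of the $x_{\alpha,\beta}^j$-coefficient is strictly increasing in $j$ (because $b>v(z-\alpha)\ge v(w-\alpha)$), so its minimum over $1\le j\le p-1$ is attained at $j=1$; one also verifies that passing from $h_{\alpha,\beta}$ to $\rho_{\alpha,\beta}=h_{\alpha,\beta}-q_{\alpha,\beta}^p$ lowers none of these valuations, since the correction terms $\binom{p}{j}\beta^j(\cdot)^{p-j}$ each carry an extra factor of valuation $v(p)$ and are checked not to be dominant. This proves (b)(ii); a normalized reduction of $\rho_{\alpha,\beta}$ then has a nonzero degree-$1$ term, hence possesses a term of prime-to-$p$ degree and cannot be a $p$th power in $k[x_{\alpha,\beta}]$, so \Cref{prop good decomposition} (together with the trivial possibility that $t_{q_{\alpha,\beta},\rho_{\alpha,\beta}}\ge\ptfrac$) shows the decomposition is good.

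\emph{Main obstacle.} I expect the delicate step to be the coefficient comparison when $v(z-\alpha)=v(w-\alpha)$: there the roots of $h_{\alpha,\beta}$ share a common valuation, so the naive Newton-polygon estimate is too coarse and genuine cancellation among the monomials of each $e_{p-j}$ must be ruled out — which is precisely what $c>d(\mathfrak{s}_i)-v(p)$ accomplishes, by forcing the one $p$-divisible contribution (from the pure power $(z-\alpha)^{p-j}$) to be strictly dominated by the $(w-z)(z-\alpha)^{p-j-1}$ term. The attendant bookkeeping — keeping track of which monomials survive after subtracting $q_{\alpha,\beta}^p$ and confirming that the degree-$p$ coefficient is cancelled rather than left to undercut the linear one — is the other point requiring care.
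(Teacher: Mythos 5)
Your approach closely parallels the paper's: isolate the linear coefficient of $h_{\alpha,\beta}$, rewrite it (up to the factor $\pm\beta(z-\alpha)^{m-1}(w-\alpha)^{p-m-1}$) as $m(w-z)+p(z-\alpha)$, invoke the hypothesis $c>d(\mathfrak{s}_i)-v(p)$ to pin its valuation, and then bound the higher-degree coefficients. However, you silently replace the \emph{constant} $q$ of the lemma's statement by a degree-$1$ polynomial chosen so that $q_{\alpha,\beta}^p$ also cancels the leading term $\beta^p x_{\alpha,\beta}^p$ of $h_{\alpha,\beta}$ (your own parenthetical ``once $q_{\alpha,\beta}$ is also chosen to match the leading coefficient''). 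That replacement is not cosmetic, and your later remarks about correction terms $\binom{p}{j}\beta^j(\cdot)^{p-j}$ only make sense after it. With $q$ genuinely constant, $\rho_{\alpha,\beta}=h_{\alpha,\beta}-q^p$ keeps the degree-$p$ term $\beta^p x_{\alpha,\beta}^p$, of valuation $p\,v(\beta)$. In the case $v(z-\alpha)=v(w-\alpha)=c$, the linear coefficient's valuation is $v(\beta)+(p-2)c+d(\mathfrak{s}_i)$, and whenever
\begin{equation*}
(p-1)\bigl(v(\beta)-c\bigr) < d(\mathfrak{s}_i)-c
\end{equation*}
(a nonempty range of $v(\beta)>c$ as soon as $d(\mathfrak{s}_i)>c$, and compatible with $c>d(\mathfrak{s}_i)-v(p)$) the degree-$p$ term is strictly smaller. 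A normalized reduction of $\rho_{\alpha,\beta}$ is then a scalar times $x^p$, a $p$th power, so by \Cref{prop good decomposition} the constant-$q$ decomposition is \emph{not} good and the linear coefficient is \emph{not} minimal — both claims of part (b) fail. The paper's own proof has the matching gap: it takes the degree-$j$ coefficient of $\tilde h$ to have valuation $v(p)+jv(\beta)+(p-j)c$ and concludes the bound ``for $2\le j\le p$,'' but $\binom{p}{p}=1$ is a $p$-adic unit, so the $j=p$ case of that estimate is false.

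So the lemma as written is flawed and your implicit correction (taking $q_{\alpha,\beta}$ of degree $1$, so that $\rho_{\alpha,\beta}$ has degree $\le p-1$ and vanishing constant term) is the right fix; it also agrees with what the subsequent invocation of \Cref{lemma t_f difference formula} actually requires, namely a \emph{total} decomposition of $h_{\alpha,1}$, which the constant-$q$ one is not. You should state the modified choice of $q$ explicitly rather than leave it as a parenthetical. One further small caution: the claim that the coefficient valuations of $\rho_{\alpha,\beta}$ are ``strictly increasing in $j$'' is a bit more than you need and is delicate to justify directly when $v(z-\alpha)=v(w-\alpha)$, where each coefficient is a minimum of two contributions; what suffices (and what the $\tilde h$ versus $h_{\alpha,\beta}-\tilde h$ splitting gives for $2\le j\le p-1$) is merely that each coefficient of degree $\ge 2$ has valuation strictly greater than that of the linear one.
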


\begin{proof}
For $0 \leq j \leq p$, an explicit formula for the degree-$j$ coefficient of $h_{\alpha, \beta}$ is given by 
\begin{equation} \label{eq degree-j term formula}
\beta^j \sum_{l = 0}^j \tbinom{m}{l} \tbinom{p - m}{j - l} (\alpha - z)^{m-l} (\alpha - w)^{(p-m) - (j-l)}.
\end{equation}
Note that as $m \leq p - 1$, each of the products $\binom{m}{l} \binom{m}{j - l}$ appearing within the above sum is not divisible by $p$.  The constant term equals $(\alpha - z)^m (\alpha - w)^{p-m}$ and thus has the valuation claimed by part (a).  We proceed to show that the valuation of the linear term is greater than this and that the valuations of the degree-$j$ terms with $2 \leq j \leq p$ are greater than that of the linear term, which proves part (a); the latter claim implies that the lowest-valuation term of $\rho_{\alpha, \beta}$ (which is $h_{\alpha, \beta}$ minus its constant term) is its linear term and thus, by \Cref{prop good decomposition}, the decomposition $h_{\alpha, \beta} = q_{\alpha, \beta}^p + \rho_{\alpha, \beta}$ is good, proving part (b).

\textbf{The case of $v(w - z) = v(w - \alpha) < v(z - \alpha)$:} Under this condition, each term inside the sum in (\ref{eq degree-j term formula}) has valuation $\geq (m - j) v(z - \alpha) + (p - m) v(w - \alpha)$ with equality only for the $l = j$ summand; therefore, the valuation of the degree-$j$ coefficient equals $j v(\beta) + (m - j) v(z - \alpha) + (p - m) v(w - \alpha)$.  As we have $v(\beta) = b > v(z - \alpha)$, the greater $j$ is, the greater the valuation of the degree-$j$ coefficient is, which implies the desired inequalities.

\textbf{The case of $v(w - z) \geq v(w - \alpha) = v(z - \alpha)$:} Noting that $c = v(w - \alpha) = v(z - \alpha)$, under this condition, each term inside the sum in (\ref{eq degree-j term formula}) has valuation equal to $(p - j)c$; therefore, the valuation of the degree-$j$ coefficient is $\geq j v(\beta) + (p - j)c$.  We have already computed the valuation of the constant term, which in this case simplifies to $p c$.  As we have $v(\beta) = b > c$, the non-constant coefficients each have higher valuation.

We now turn our attention to the linear coefficient, whose formula may be rewritten as 
\begin{equation}
\beta (\alpha - z)^{m-1} (\alpha - w)^{p-m-1} \big(p (\alpha - z) + m (z - w)\big).
\end{equation}
By hypothesis, we have $v(p(\alpha - z)) = c + v(p) > d(\mathfrak{s}_i) = v(m(z - w))$.  It follows that the valuation of the linear coefficient equals $v(\beta) + (p - 2) c + v(w - z)$.

In order to show that the valuations of the higher-degree terms exceed this, we consider the polynomial $\tilde{h}(x_{\alpha, \beta}) := (\beta x_{\alpha, \beta} - (z - \alpha))^p$.  For $0 \leq j \leq p$, the degree-$j$ coefficient of $\tilde{h}$ is given by $\binom{p}{j} \beta^j (\alpha - z)^{p-j}$, which has valuation equal to 
\begin{equation}
v(p) + j v(\beta) + (p - j) c > v(\beta) + (p - 2)c + (v(p) + c) > v(\beta) + (p - 2)c + v(w - z),
\end{equation}
where the first inequality above holds if and only if $j \geq 2$ because $v(\beta) = b > c$.  Meanwhile, we consider the difference $h_{\alpha, \beta} - \tilde{h}$, which can be written as 
\begin{equation}
\begin{aligned}
h_{\alpha, \beta}(x_{\alpha, \beta}&) - \tilde{h}(x_{\alpha, \beta}) = (\beta x_{\alpha, \beta} - (z - \alpha))^m \Big((\beta x_{\alpha, \beta} - (w - \alpha))^{p-m} - (\beta x_{\alpha, \beta} - (v - \alpha))^{p-m}\Big) \\
&= (\beta x_{\alpha, \beta} - (z - \alpha))^m (z - w) \sum_{l = 0}^{p-m-1} (\beta x_{\alpha, \beta} - (w - \alpha))^i (\beta x_{\alpha, \beta} - (z - \alpha))^{p-m-1-i}.
\end{aligned}
\end{equation}
By inspecting the above expression, one sees that for $0 \leq j \leq p$, the degree-$j$ coefficient of $h_{\alpha, \beta} - \tilde{h}$ expands to the sum of some terms that each have valuation equal to $j v(\beta) + (p - j - 1)c + v(z - w)$, which is $> v(\beta) + (p - 2)c + v(z - w)$ if $j \geq 2$.  It follows from all of this that for $2 \leq j \leq p$, the degree-$j$ coefficient of $h_{\alpha, \beta} = \tilde{h} + (h_{\alpha, \beta} - \tilde{h})$ is the sum of terms that each have valuation $> v(\beta) + (p - 2)c + v(z - w)$ and therefore itself has valuation $> v(\beta) + (p - 2)c + v(z - w)$, which we have shown to be the valuation of the linear coefficient of $h_{\alpha, \beta}$.  This completes the proof.
\end{proof}

Suppose for the moment that we have $c > d(\mathfrak{s}_i) - v(p)$.  Letting the polynomial $\rho$ be as in \Cref{lemma t_f formula one component}, we see from that lemma and by applying both parts of \Cref{lemma mathfrakv} to $h$ (resp. $\rho$, as well as considering that $\rho$ has no constant term) that the function $b \mapsto \vfun_\rho(\eta_{D_{\alpha, b}})$ (resp. $b \mapsto \vfun_\rho(\eta_{D_{\alpha, b}})$) is linear with slope $0$ (resp. $1$) over $(c, \infty)$.  Thus, applying \Cref{lemma t_f difference formula}, we see that the function $b \mapsto \mathfrak{t}_{\mathfrak{r}_i'}(\eta_{D_{\alpha, b}})$ on $(c, \infty)$ is the minimum of the constant function $\pfrac$ and a linear function with slope $1$; it is thus given by a formula of the form $\min\{(b - c) + C, \pfrac\}$ for some constant $C \in \rr_{\geq 0}$.  By continuity (thanks to \Cref{cor t_f is continuous}), we have $C = \mathfrak{t}_{\mathfrak{r}_i'}(\eta_{D_{\alpha, c}})$ if that quantity is $< \pfrac$ and may take $C$ to be any value $\geq \pfrac$ otherwise, thus giving us the formula 
\begin{equation}
\begin{aligned}
\mathfrak{t}_{\mathfrak{r}_i'}(\eta_{D_{\alpha, b}}) = &\min\{(b - c) + \mathfrak{t}_{\mathfrak{r}_i'}(\eta_{D_{\alpha, c}}), \ptfrac\} \\
&= \min\{\delta(\eta_{D_{\alpha, b}}, \eta_{D_{\alpha, c}}) + \delta(\eta_{D_{\alpha, c}}, \Sigma_{\mathfrak{r}_i'}), \ptfrac\} = \min\{\delta(\eta_{D_{\alpha, b}}, \Sigma_{\mathfrak{r}_i'}), \ptfrac\}.
\end{aligned}
\end{equation}
(The equation in (\ref{eq sum of distances}) is used for the last equality above.)

Assuming instead that we have $c \leq d(\mathfrak{s}_i) - v(p)$, we get 
\begin{equation}
\delta(\eta_{D_{\alpha, c}}, \Sigma_{\mathfrak{r}_i}) = \delta(\eta_{D_{\alpha, c}}, \eta_{\mathfrak{s}_i}) = d(\mathfrak{s}_i) - c \geq v(p) \geq \ptfrac,
\end{equation}
 and so (\ref{eq t_r_i formula}) yields $\mathfrak{t}_{\mathfrak{r}_i'}(\eta_{D_{\alpha, c}}) = \pfrac$.  By \Cref{lemma mathfrakv}(a), the function $b \mapsto \vfun_\rho(\eta_{D_{\alpha, b}})$ on $[c, \infty)$ is non-decreasing and, as $b \mapsto \vfun_h(\eta_{D_{\alpha, b}})$ is constant, by \Cref{lemma t_f difference formula}, the function $b \mapsto \mathfrak{t}_{\mathfrak{r}_i'}(\eta_{D_{\alpha, b}})$ is non-decreasing on $[c, \infty)$; as it takes the value $\pfrac$ at $b = c$, it must be identically $\pfrac$ on $[c, \infty)$ and therefore still satisfies the formula in (\ref{eq t_r_i formula}).

We have thus proved \Cref{thm t_f formula one component}(d) for $\eta = \eta_D$ with $D \cap \mathfrak{r}_i = \varnothing$ in the case that $\infty \notin \mathfrak{r}_i$.  If $i = 0$, so that $\mathfrak{r}_i = \{z, w := \infty\}$, the proof is similar but easier, as $h$ has the simpler form $h(x) = (x - z)^m$, and \Cref{lemma t_f formula one component} is replaced by the following analogous (and much easier) lemma.

\begin{lemma}

In the situation directly above, let $q$ be a constant polynomial whose $p$th power is the constant $(\alpha - z)^m$, and let $\rho = h - q^p$.

\begin{enumerate}[(a)]

\item Each coefficient of $h_{\alpha, \beta}$ has valuation $\geq m v(z - \alpha)$, while the constant term has precisely this valuation.

\item The decomposition $h_{\alpha, \beta} = q_{\alpha, \beta}^p + \rho_{\alpha, \beta}$ is good, and the linear coefficient of $\rho_{\alpha, \beta}$ has minimal valuation among coefficents of $\rho_{\alpha, \beta}$.

\end{enumerate}

\end{lemma}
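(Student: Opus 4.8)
The plan is to mimic the proof of \Cref{lemma t_f formula one component}, which simplifies dramatically here because $h = f_{\mathfrak{r}_0'}$ is a pure power. Since $i = 0$ and $\underline{\mathcal{B}}$ is clustered in pairs, we have $\mathfrak{r}_0 = \{z, w := \infty\}$ with $z$ a root of $f$ of multiplicity $m \in \{1, \dots, p-1\}$, so $\mathfrak{r}_0' = \{z\}$ and $h(x) = (x - z)^m$; the condition of \Cref{thm t_f formula one component}(c) for this $\mathfrak{r}_0$ forces $c = v(z - \alpha)$, and $\beta$ is chosen with $b := v(\beta) > c$. The first step is to substitute $x = \alpha + \beta x_{\alpha, \beta}$ to get $h_{\alpha, \beta}(x_{\alpha, \beta}) = (\beta x_{\alpha, \beta} - (z - \alpha))^m$ and to read off, via the binomial theorem, that the degree-$j$ coefficient of $h_{\alpha, \beta}$ equals $\binom{m}{j} \beta^j (-(z - \alpha))^{m-j}$ for $0 \le j \le m$.

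For part (a), I would estimate $v\big(\binom{m}{j} \beta^j (-(z-\alpha))^{m-j}\big) \ge jb + (m-j)c = mc + j(b - c)$; since $b > c$, this is $\ge mc$ with equality precisely at $j = 0$, and for $j = 0$ the coefficient is $(-(z-\alpha))^m$, of valuation exactly $mc = m v(z - \alpha)$. This gives part (a) and also shows $\vfun_h(\eta_{D_{\alpha, b}}) = v(h_{\alpha, \beta}) = m v(z - \alpha)$.

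For part (b), the key observation is that $q^p = (\alpha - z)^m$ is exactly the constant term of $h_{\alpha, \beta}$, so $\rho_{\alpha, \beta} = h_{\alpha, \beta} - q^p$ is precisely the positive-degree part of $h_{\alpha, \beta}$. Its linear coefficient $m \beta (-(z - \alpha))^{m-1}$ has valuation $v(m) + b + (m-1)c = b + (m-1)c$, where $v(m) = 0$ because $1 \le m \le p-1$ forces $p \nmid m$; and by the estimate of the previous step every coefficient of degree $j \ge 2$ has valuation $\ge jb + (m-j)c = \big(b + (m-1)c\big) + (j-1)(b - c) > b + (m-1)c$. Hence the linear coefficient uniquely attains the minimal valuation among the coefficients of $\rho_{\alpha, \beta}$, so $v(\rho_{\alpha, \beta}) = b + (m-1)c$ and $t_{q_{\alpha, \beta}, \rho_{\alpha, \beta}} = b - c > 0$. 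To conclude goodness I would split into two cases: if $t_{q_{\alpha, \beta}, \rho_{\alpha, \beta}} \ge \ptfrac$ this is immediate from \Cref{dfn good and total}(a); otherwise, since the linear term of $\rho_{\alpha, \beta}$ is the unique term of minimal valuation while the constant term vanishes, a normalized reduction of $\rho_{\alpha, \beta}$ is a nonzero element of $k[x_{\alpha, \beta}]$ having a degree-$1$ term and no constant term, hence not a $p$th power (a $p$th power in characteristic $p$ involves only monomials of degree divisible by $p$), and \Cref{prop good decomposition} applies.

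The argument is entirely routine — the point of isolating it is that the competition between $p(\alpha - z)$ and $m(z - w)$ that complicated the linear coefficient in \Cref{lemma t_f formula one component} disappears when $w = \infty$. The only two places that need a moment of care are the remark that $p \nmid m$ (so that $v(m) = 0$ and the linear coefficient has the claimed valuation) and the case split in the goodness verification, since \Cref{prop good decomposition} carries the standing hypothesis $t_{q, \rho} < \ptfrac$ and one must fall back on \Cref{dfn good and total}(a) when that fails.
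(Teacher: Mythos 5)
Your proof is correct and follows the same route the paper takes: expand $h_{\alpha,\beta}=(\beta x_{\alpha,\beta}-(z-\alpha))^m$ via the binomial theorem, note $p\nmid\binom{m}{j}$ so the degree-$j$ coefficient has valuation exactly $jb+(m-j)c$, read off that this is strictly increasing in $j$ because $b>c$, conclude the constant term is the minimal-valuation term of $h_{\alpha,\beta}$ and the linear term the minimal-valuation term of $\rho_{\alpha,\beta}$, and finish with \Cref{prop good decomposition}. The one place you are slightly more careful than the paper's own write-up is the final goodness verification: you explicitly split into the case $t_{q_{\alpha,\beta},\rho_{\alpha,\beta}}\ge\ptfrac$ (good by \Cref{dfn good and total}(a)) and the case $t_{q_{\alpha,\beta},\rho_{\alpha,\beta}}<\ptfrac$ (apply \Cref{prop good decomposition}, whose hypothesis this guarantees), whereas the paper silently elides this distinction; your version is the more precise one. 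Your closing observation that the constant $q^p=(\alpha-z)^m$ is exactly the constant term of $h_{\alpha,\beta}$, so $\rho_{\alpha,\beta}$ is precisely the positive-degree part, and that the competition between $p(\alpha-z)$ and $m(z-w)$ from the companion lemma evaporates when $w=\infty$, is accurate and captures why the paper calls this case ``much easier.''
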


\begin{proof}
For $0 \leq j \leq m \leq p - 1$, the degree-$j$ coefficient of $h_{\alpha, \beta}(x_{\alpha, \beta}) = (\beta x_{\alpha, \beta} - (z - \alpha))^m$ is given by ${m \choose j} \beta^j (\alpha - z)^{m-j}$.  Note that we have $p \nmid {m \choose j}$, so the valuation of the degree-$j$ coefficient equals $j v(\beta) + (m - j) v(z - \alpha)$.  As we have $b = v(\beta) > v(\alpha - z)$, the valuation of the degree-$j$ coefficient is $\geq m v(z - \alpha)$, with equality only when $j = 0$.  This proves part (a).

As the polynomial $\rho_{\alpha, \beta}$ equals $h_{\alpha, \beta}$ minus its constant term, to prove part (b), we need only observe that $j v(\beta) + (m - j) v(z - \alpha) \geq v(\beta) + (m - 1) v(z - \alpha)$ for $1 \leq j \leq m$ with equality only when $j = 1$.  The fact that the lowest-valuation term of $\rho_{\alpha, \beta}$ is the linear term implies thanks to \Cref{prop good decomposition} that the decomposition $h_{\alpha, \beta} = q_{\alpha, \beta}^p + \rho_{\alpha, \beta}$ is good, and part (b) is proved.
\end{proof}

In light of \Cref{lemma all of hyp covered}, we have now shown that the claimed formula in (\ref{eq t_r_i formula}) for $\mathfrak{t}_{\mathfrak{r}_i'}(\eta)$ holds for all $\eta \in \Hyp$, and \Cref{thm t_f formula one component}(d) is proved.

\section{Proof of the main theorem} \label{sec proof of main}

In this section we prove \Cref{thm main}.  Throughout this section, we assume that our ground field $K$ contains the roots of $f$ as well as the $p$th roots of unity, and we let $K' / K$ be a finite extension over which $C$ attains semistable reduction, and such that the regular model $\Cmini / \mathcal{O}_{K'}$ satisfies the property given by \Cref{prop persistent nodes}: under the quotient map $\Cmini \to \Xmini$, the image of every node of $\SF{\Cmini}$ is a node of $\SF{\Xmini}$.

Define $\Upsilon$ to be the subspace of $\Sigma_{\mathcal{B}}^0$ given by $\mathfrak{t}_f^{-1}(\pfrac) \cap \Sigma_{\mathcal{B}}^0$ (noting that if $p$ is not the residue characteristic of $K$, then we have $\Upsilon = \Sigma_{\mathcal{B}}^0$).  For any $\alpha \in \bar{K}$ and $\beta \in \bar{K}^\times$, recall the smooth model $\mathcal{X}_{\alpha, \beta}$ of $X := \proj_K^1$ in \S\ref{sec normalizations models of P^1} and the construction in \S\ref{sec normalizations p-cyclic covers} of the normalization $\mathcal{C}_{\alpha, \beta}$ of $\mathcal{X}_{\alpha, \beta}$ in $K'(C)$.  Let $\Upsilon_{K'} \subset \Upsilon$ denote the subset of points $\eta_{D_{\alpha, v(\beta)}}$ such that the model $\mathcal{C}_{\alpha, \beta}$ is defined over $K'$.

\begin{prop} \label{prop relating nodes to Upsilon}

Choose any point $\eta = \eta_{D_{\alpha, v(\beta)}} \in \Upsilon_{K'}$, and let $U$ be the connected component of $\Upsilon$ that $\eta$ lies in.  Choose a point $z \in \mathcal{B}$, and let $Q$ be the unique $k$-point of the special fiber $(\mathcal{C}_{\alpha, \beta})_s$ lying over the point $(\mathcal{X}_{\alpha, \beta})_s$ to which $z$ reduces.

We have $(\eta, \eta_z) \cap U \neq \varnothing$ if and only if $Q$ is a singular point with exactly $p$ branches passing through it.

\end{prop}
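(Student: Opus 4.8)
The plan is to reduce everything to the explicit local description of $(\mathcal{C}_{\alpha,\beta})_s$ near $Q$ given by \Cref{prop normalization properties}, and then to translate the numerical conditions occurring there into statements about how $\mathfrak{t}_f$ and $\Sigma_{\mathcal{B}}^0$ behave along the path from $\eta$ towards $\eta_z$. I would fix a total part-$p$th-power decomposition of $f$ at the relevant centre (so that the standing hypothesis $p \nmid \ord(\rho_{\alpha,\beta})$ of \Cref{prop normalization properties} holds in the wild case, and $q_{\alpha,\beta} = 0$ in the tame case), write $\mathfrak{s} = D \cap \mathcal{B}$, and split into two cases. \emph{Case A} is $z \notin D$ (which includes $z = \infty$): then $z$ reduces to $x_{\alpha,\beta} = \infty$, $Q$ is the point over $\infty$ of \Cref{prop normalization properties}(a), and near $\eta$ the path $(\eta,\eta_z)$ is the ``vertical'' segment $\{\eta_{D_{\alpha,b}} : b \text{ slightly below } v(\beta)\}$, which is the same for every such $z$. \emph{Case B} is $z \in D$: then $z$ is a root of $f$ lying in $D$, so after replacing $\alpha$ by $z$ we may assume $\alpha = z$, and $Q$ becomes the point over $x_{\alpha,\beta}=0$ of \Cref{prop normalization properties}(a); near $\eta$ the path $(\eta,\eta_z)$ is a downward segment, and I write $\mathfrak{c}$ for the multiset of roots $z'$ of $f$ with $v(z'-z) > v(\beta)$ (the child cluster of $\eta$ containing $z$, or $\mathfrak{s}$ itself if $\eta$ is not a vertex).

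The heart of the argument is a dictionary. By \Cref{lemma degrees of terms appearing}, a normalized reduction of $f_{\alpha,\beta}$ has top degree $\#\underline{\mathfrak{s}}$ and, in Case B, bottom degree $\#\underline{\mathfrak{c}}$. Since the wild case is the only one in which $\mathfrak{t}_f(\eta)=\ptfrac>0$, and $\eta\in\Upsilon$, we always have $t := t_{q_{\alpha,\beta},\rho_{\alpha,\beta}} \ge \ptfrac > 0$ there, hence $v(\rho_{\alpha,\beta}) > v(f_{\alpha,\beta})$ and a normalized reduction of $f_{\alpha,\beta}$ is the $p$th power of one of $q_{\alpha,\beta}$; therefore $p\deg(\bar q_0) = \#\underline{\mathfrak{s}}$ and $p\,\ord(\bar q_0)=\#\underline{\mathfrak{c}}$, so in particular $p\mid\#\underline{\mathfrak{s}}$ and $p\mid\#\underline{\mathfrak{c}}$ \emph{automatically} (in the tame case $q_{\alpha,\beta}=0$, so $\bar\rho_0$ is a normalized reduction of $f_{\alpha,\beta}$ with $\deg(\bar\rho_0)=\#\underline{\mathfrak{s}}$ and $\ord(\bar\rho_0)=\#\underline{\mathfrak{c}}$). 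Next, using \Cref{lemma mathfrakv}(b) — which identifies the one-sided slopes of $b\mapsto\vfun_\rho(\eta_{D_{\alpha,b}})$ and $b\mapsto\vfun_f(\eta_{D_{\alpha,b}})$ at $b=v(\beta)$ with the extreme degrees in the respective normalized reductions — together with the good-decomposition formula $\mathfrak{t}_f = \min\{\vfun_\rho-\vfun_f,\ \ptfrac\}$ of \Cref{lemma t_f difference formula} (applicable since a total decomposition is good, \Cref{cor total is good}) and $\vfun_\rho(\eta)-\vfun_f(\eta)=t$, I would establish: moving upward from $\eta$, $\mathfrak{t}_f$ stays $=\ptfrac$ iff $\deg(\bar\rho_0) < \#\underline{\mathfrak{s}}$; moving from $\eta$ towards $\eta_z$ in Case B, $\mathfrak{t}_f$ stays $=\ptfrac$ iff $\ord(\bar\rho_0) > \#\underline{\mathfrak{c}}$. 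The degenerate case $\bar\rho_0=0$ (i.e. $t>\ptfrac$) is covered uniformly, since then $\mathfrak{t}_f\equiv\ptfrac$ near $\eta$ by continuity while $\deg(\bar\rho_0)=-\infty$ and $\ord(\bar\rho_0)=+\infty$; and the fact that the inequalities are strict rather than $\le$ or $\ge$ comes precisely from the total decomposition forcing $p\nmid\deg(\bar\rho_0)$, $p\nmid\ord(\bar\rho_0)$ while $p\mid\#\underline{\mathfrak{s}},\#\underline{\mathfrak{c}}$.

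Finally the two sides get matched. Since $U$ is a connected subset of $\Sigma_{\mathcal{B}}^0\subseteq\Hyp$ it is convex, and $\Upsilon\subset\Sigma_{\mathcal{B}}^0$ is closed, so $(\eta,\eta_z)\cap U\ne\varnothing$ holds iff a half-open initial segment of $[\eta,\eta_z)$ lies in $\Upsilon=\mathfrak{t}_f^{-1}(\ptfrac)\cap\Sigma_{\mathcal{B}}^0$, i.e. iff such a segment lies in $\Sigma_{\mathcal{B}}^0$ \emph{and} $\mathfrak{t}_f$ stays $=\ptfrac$ on it. Membership in $\Sigma_{\mathcal{B}}^0$ is controlled by \Cref{prop boundary points}: an upward initial segment stays in $\Sigma_{\mathcal{B}}^0$ iff $p\mid\#\underline{\mathfrak{s}}$ (Case A), and an initial segment towards $\eta_z$ stays in $\Sigma_{\mathcal{B}}^0$ iff $p\mid\#\underline{\mathfrak{c}}$ (Case B). In the wild case both divisibilities are automatic by the dictionary, so $(\eta,\eta_z)\cap U\ne\varnothing$ becomes equivalent to ``$\mathfrak{t}_f$ stays $=\ptfrac$'', hence to $\deg(\bar\rho_0)<p\deg(\bar q_0)$ in Case A, resp. $\ord(\bar\rho_0)>p\,\ord(\bar q_0)$ in Case B; and by \Cref{prop normalization properties}(c), together with the fact that the matching singularity clause of \Cref{prop normalization properties}(a) is automatically implied (using $\#\underline{\mathfrak{s}}\le\#\underline{\mathcal{B}}-1$, whence $\deg(\bar\rho_0)\le\#\underline{\mathcal{B}}-2 = p\lceil\deg f/p\rceil-2$, and $\#\underline{\mathfrak{c}}\ge p\ge 2$), this says exactly that $Q$ is a singular point with $p$ branches passing through it. In the tame case $\Upsilon=\Sigma_{\mathcal{B}}^0$ and $\mathfrak{t}_f$ is identically $\ptfrac$, so $(\eta,\eta_z)\cap U\ne\varnothing$ reduces via \Cref{prop boundary points}(b),(c) to $p\mid\#\underline{\mathfrak{s}}$ (Case A) resp. $p\mid\#\underline{\mathfrak{c}}$ (Case B), which — using $p\mid\#\underline{\mathcal{B}}$ to see that the singularity of $Q$ is automatic in these situations — is precisely the condition of \Cref{prop normalization properties}(a),(b) that $Q$ be singular with $p$ branches.

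I expect the dictionary of the second paragraph to be the main obstacle: it requires carefully matching the one-sided derivatives of $\mathfrak{t}_f$ at the possibly non-smooth point $\eta$ against $\deg(\bar\rho_0)$ and $\ord(\bar\rho_0)$, handling the degenerate case $\bar\rho_0=0$ uniformly, and keeping the strict/non-strict bookkeeping straight. Verifying that the various singularity hypotheses of \Cref{prop normalization properties}(a) are automatic in the configurations that arise (which hinges on $p\mid\#\underline{\mathcal{B}}$ and $\infty\in\mathcal{B}$) is the second, smaller, point that needs care.
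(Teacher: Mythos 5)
Your proposal is correct and follows essentially the same route as the paper's proof: the same case split (tame/wild, then $z\notin D$ vs.\ $z\in D$), the same reduction of local branch counts to $\deg(\bar\rho_0)$, $\ord(\bar\rho_0)$, $p\deg(\bar q_0)$, $p\,\ord(\bar q_0)$ via \Cref{prop normalization properties}, and the same translation of path behavior through \Cref{lemma degrees of terms appearing}, \Cref{lemma t_f difference formula}, and \Cref{lemma mathfrakv}. Your explicit observation that $p\mid\#\underline{\mathfrak{s}}$ and $p\mid\#\underline{\mathfrak{c}}$ are automatic at $\eta\in\Upsilon$ in the wild case (so that only the $\mathfrak{t}_f$-level-set constraint is binding), and your explicit verification of singularity of $Q$ via $p\mid\#\underline{\mathcal{B}}$ and $\infty\in\mathcal{B}$, make explicit two points that the paper's write-up leaves implicit, but the underlying argument is the same.
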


\begin{proof}
First of all, the uniqueness of the point $Q$ is implied by \Cref{prop normalization properties}(a) and also results from the fact that the $\mathcal{O}_{K'}$-point of $\Xmini$ extending $z \in X(K)$ is clearly a branch point of the morphism $\Cmini \to \Xmini$ (as $z$ is a branch point of the morphism $C \to X$).

Let $\mathcal{B} = \mathfrak{c}_1 \sqcup \dots \sqcup \mathfrak{c}_s \sqcup \mathfrak{c}_\infty$ be the decomposition determined by $\eta$ given by \Cref{prop removing a point from convex hull} so that, according to that proposition, we have $\mathfrak{s} := \mathfrak{c}_1 \sqcup \dots \sqcup \mathfrak{c}_s = D_{\alpha, v(\beta)} \cap \mathcal{B}$.

Fix a part-$p$th-power decomposition $f = q^p + \rho$ with $q = 0$ if the residue characteristic of $K$ is not $p$ and such that the decomposition $f_{\alpha, 1} = q_{\alpha, 1}^p + \rho_{\alpha, 1}$ is total (and thus, the decomposition $f_{\alpha, \beta'} = q_{\alpha, \beta'}^p + \rho_{\alpha, \beta'}$ for each $\beta' \in \bar{K}$ is good by \Cref{cor total is good}) if the residue characteristic of $K$ is $p$.  Define the reduced polynomials $\bar{q}_0(x_{\alpha, \beta}), \bar{\rho}_0(x_{\alpha, \beta}) \in k[x]$ as in the discussion preceding \Cref{prop normalization}, so that an affine equation for the special fiber $(\mathcal{C}_{\alpha, \beta})_s$, as in \Cref{prop normalization}(b), is given by 
\begin{equation}
y_{\alpha, \beta}^p + \bar{q}_0^{p-1}(x_{\alpha, \beta}) y_{\alpha, \beta} = \bar{\rho}_0(x_{\alpha, \beta}).
\end{equation}
Note that if $\bar{\rho}_0 \neq 0$, then $\bar{\rho}_0$ is a normalized reduction of $\rho$, and the integer $\ord(\bar{\rho}_0)$ is the least degree appearing among terms in any normalized reduction of $\rho_{\alpha, \beta}$; note also that a similar statement holds for $\bar{q}_0$ and $q_{\alpha, \beta}$.

\textbf{Tame case:} Assume that $k$ has characteristic $\neq p$, so that we have $\Upsilon = \Sigma_{\mathcal{B}}^0$.  Suppose first that $Q \in \SF{\mathcal{C}_{\alpha, \beta}}(k)$ is the point with $x_{\alpha, \beta} = \infty$, which is equivalent to $v(z - \alpha) < v(\beta)$ and thus $z \notin D_{\alpha, v(\beta)}$.  By \Cref{prop boundary points}(b), we have $(\eta, \eta_z) \cap U = \varnothing$ if and only if we have $p \nmid \#\underline{\mathfrak{s}}$, which by \Cref{lemma degrees of terms appearing} is equivalent to $p \nmid \deg(\bar{\rho}_0)$.  \Cref{prop normalization properties}(b)(i) says that this in turn is equivalent to $Q$ not having $p$ branches passing through it.

Now suppose that $Q \in \SF{\mathcal{C}_D}(k)$ is the point with $x_{\alpha, \beta} \neq \infty$, which is equivalent to $v(z - \alpha) \geq v(\beta)$ and thus $z \in D_{\alpha, v(\beta)}$; it is clear that translating $\alpha$ by any element of valuation $\geq v(\beta)$ does not affect the disc $D_{\alpha, v(\beta)}$ or any object determined by it, so, after possibly making such a translation, we assume that $x_{\alpha, \beta} = 0$.  By \Cref{prop boundary points}(c), we have $(\eta, \eta_z) \cap U = \varnothing$ if and only if we have $p \nmid \#\underline{\mathfrak{c}}$ where $\mathfrak{c} \in \{\mathfrak{c}_1, \dots, \mathfrak{c}_s\}$ is the cluster containing $z$ whose parent cluster $\mathfrak{c}'$ coincides with $\mathfrak{s}$ (and thus its elements $w$ may be characterized as satisfying $v(w - \alpha) > v(\beta)$), which by \Cref{lemma degrees of terms appearing} is equivalent to $p \nmid \ord(\bar{\rho}_0)$.  \Cref{prop normalization properties}(b)(ii) says that this in turn is equivalent to $Q$ not having $p$ branches passing through it.

\textbf{Wild case:} Assume that $k$ has characteristic $p$.  Define the functions $\vfun_h, \vfun_q, \vfun_\rho : \Hyp \to \rr_{\geq 0}$ as in \S\ref{sec t_f families}.  Suppose first that $Q \in \SF{\mathcal{C}_{\alpha, \beta}}(k)$ is the point with $x_{\alpha, \beta} = \infty$, which again is equivalent to $z \notin D_{\alpha, v(\beta)}$.  This implies the existence of a subpath $(\eta, \eta') \subset (\eta, \eta_z)$ with $\eta' > \eta$ (as is obvious if $z = \infty$; otherwise, one may take $\eta' = \eta \vee \eta_z$ for instance).  Let us choose $\eta'$ close enough to $\eta$ that (by continuity thanks to \Cref{cor t_f is continuous}) the function $\mathfrak{t}_f$ is positive-valued over the path $(\eta', \eta)$; it then follows from \Cref{rmk good decompositions}(b) that we have $\vfun_f = p \vfun_q$ on $(\eta', \eta]$.

Now the condition $(\eta, \eta_z) \cap U = \varnothing$ is equivalent to the assertion that $\mathfrak{t}_f$ is monotonically increasing on the open path $(\eta', \eta)$ (as one goes in the direction of $\eta$).  It follows from \Cref{lemma t_f difference formula} that on $(\eta', \eta]$ that this is equivalent to 
\begin{equation} \label{eq t_f = v_rho - p v_q}
\mathfrak{t}_f = \vfun_\rho - \vfun_f = \vfun_\rho - p \vfun_q.
\end{equation}
In particular, this tells us that we have $v(\rho_{\alpha, \beta}) - p v(q_{\alpha, \beta}) = \mathfrak{t}_f(\eta) = \pfrac$, and one checks from the construction of $\rho_0$ in \S\ref{sec normalizations p-cyclic covers} that $\bar{\rho}_0$ is a normalized reduction of $\rho$ (\textit{i.e.} $\bar{\rho}_0$ does not vanish).  The equations in (\ref{eq t_f = v_rho - p v_q}) and the fact that $\mathfrak{t}_f$ is increasing on $(\eta', \eta]$ are moreover equivalent to saying that the left derivative of $\vfun_\rho$ at $\eta$ is greater than $p$ times the left derivative of $\vfun_q$ at $\eta$.  By \Cref{lemma mathfrakv}, this is equivalent to $\deg(\bar{\rho}_0) > p \deg(\bar{q}_0)$.  \Cref{prop normalization properties}(c)(i) says that this in turn is equivalent to $Q$ not having $p$ branches passing through it.

For the case that the $Q \in \SF{\mathcal{C}_{\alpha, \beta}}(k)$ is the point with $x_{\alpha, \beta} \neq \infty$ (in which we may assume, as in the tame case, that $x_{\alpha, \beta} = 0$ at $Q$), the argument is precisely analogous: we instead choose $\eta' \in (\eta, \eta_z)$ (so that $\eta > \eta'$), get ``monotonically decreasing" in place of ``monotonically increasing" and ``left derivatives" in place of ``right derivatives", get (\ref{eq t_f = v_rho - p v_q}) on $[\eta, \eta')$, and find that $(\eta, \eta_z) \cap U = \varnothing$ is equivalent to $\ord(\bar{\rho}_0) < p \,\ord(\bar{q}_0)$, which by \Cref{prop normalization properties}(c)(ii) is equivalent to $Q$ not having $p$ branches passing through it.
\end{proof}

\begin{cor} \label{cor relating nodes to Upsilon}

For any point $\eta = \eta_{D_{\alpha, v(\beta)}} \in \Upsilon_{K'}$ which is not isolated in $\Upsilon$, we have $\mathcal{X}_{\alpha, \beta} \leq \Xmini$.  Moreover, the special fiber $(\mathcal{C}_{\alpha, \beta})_s$ has a singular point with $p$ branches passing through it, and it has $p$ components if and only if $\eta$ is an interior point of the subspace $\Upsilon \subset \Sigma_{\mathcal{B}}$.

\end{cor}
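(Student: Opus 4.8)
The plan is to derive all three assertions from \Cref{prop relating nodes to Upsilon} together with \Cref{prop part of mini}, organizing the argument around the number of irreducible components of $\SF{\mathcal{C}_{\alpha,\beta}}$. First I record the relevant structure. Since $\eta$ is not isolated in $\Upsilon$, its connected component $U \subseteq \Upsilon$ is not a single point, so at least one direction of the real tree $\Sigma_{\mathcal{B}}$ emanating from $\eta$ begins inside $U$; as every such direction points toward $\eta_z$ for some $z \in \mathcal{B}$, there is a branch point $z_0$ with $(\eta, \eta_{z_0}) \cap U \neq \varnothing$, and then \Cref{prop relating nodes to Upsilon} makes the point $Q_{z_0}$ of $\SF{\mathcal{C}_{\alpha,\beta}}$ over the reduction of $z_0$ a singular point with exactly $p$ branches passing through it --- this is the ``moreover'' clause. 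Next, $\mathcal{C}_{\alpha,\beta}$ carries the $\zz/p$-action of the cover $C \to X$, and by the discussion in \S\ref{sec preliminaries geometric Galois} the set $\Irred(\SF{\mathcal{C}_{\alpha,\beta}})$ is a single orbit for this action, so it has either one element or $p$ elements (with $\zz/p$ acting transitively in the latter case). Since $\eta \in \Upsilon$ gives $\mathfrak{t}_f(\eta) = \pfrac$, hence $t \geq \pfrac$, \Cref{prop normalization}(c) shows the induced cover $\SF{\mathcal{C}_{\alpha,\beta}} \to \proj^1_k$ is separable, and \Cref{prop equations for normalization} shows $\SF{\mathcal{C}_{\alpha,\beta}}$ is reduced, so $m(V) = 1$ for every component $V$. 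By \Cref{prop part of mini} it then suffices, in order to conclude $\mathcal{X}_{\alpha,\beta} \leq \Xmini$, to verify $a(V) \geq 1$ or $w(V) \geq 2$ for each $V$ and then to pass to $\zz/p$-quotients (the resulting domination $\mathcal{C}_{\alpha,\beta} \leq \Cmini$ being automatically equivariant, hence descending to $\mathcal{X}_{\alpha,\beta} \leq \Xmini$).

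In the irreducible case, set $V := \SF{\mathcal{C}_{\alpha,\beta}}$. If $a(V) \geq 1$ we are done; otherwise $Q_{z_0}$ is a self-intersection point of $V$ carrying $p \geq 2$ branches, so $w(V) \geq p \geq 2$, and \Cref{prop part of mini} applies. For the equivalence in the final assertion I must show that $\eta$ is not an interior point of $\Upsilon$ here: if it were, then every direction from $\eta$ would start in $U$, so $(\eta, \eta_z) \cap U \neq \varnothing$ for all $z \in \mathcal{B}$, and \Cref{prop relating nodes to Upsilon} would make every $Q_z$ carry $p$ branches; combined with the fact that, by the Zariski--Nagata purity theorem and separability, $\SF{\mathcal{C}_{\alpha,\beta}} \to \proj^1_k$ is \'etale away from the finitely many reductions of $\mathcal{B}$, this would force the normalization of $V$ to be a connected degree-$p$ \'etale cover of $\proj^1_k$, which does not exist. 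Hence $\eta$ is not interior, consistent with there being only one component.

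In the $p$-component case each component $V_i$ has $a(V_i) = 0$, since it maps with degree $1$ onto $\proj^1_k$ and so has rational normalization. For each $z \in \mathcal{B}$ there is a unique point $Q_z$ of $\SF{\mathcal{C}_{\alpha,\beta}}$ lying over the reduction of $z$ (\Cref{prop normalization properties}(a)), so all $p$ components must pass through $Q_z$ and $Q_z$ has exactly $p$ branches; by \Cref{prop relating nodes to Upsilon} we then get $(\eta, \eta_z) \cap U \neq \varnothing$ for every $z \in \mathcal{B}$, i.e. $\eta$ is an interior point of $\Upsilon$, which together with the previous paragraph proves the equivalence. Finally, since $\eta = \eta_{D_{\alpha, v(\beta)}} \in \Sigma_{\mathcal{B}}$, \Cref{prop highest point of Lambda}(b) provides a branch point $z \in D_{\alpha,v(\beta)}$, whose reduction lies in the affine part of $\proj^1_k$, so $Q_z \neq Q_\infty$; both are singular points of each $V_i$ (being points with $p \geq 2$ branches), whence $w(V_i) \geq 2$, and \Cref{prop part of mini} again yields $\mathcal{C}_{\alpha,\beta} \leq \Cmini$ and thus $\mathcal{X}_{\alpha,\beta} \leq \Xmini$.

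The step I expect to require the most care is the coordination of the $\zz/p$-equivariant structure with \Cref{prop normalization properties} and the purity argument: one must use ``$Q_z$ has $p$ branches'' consistently (it records that the branches at $Q_z$ split completely, not that the covering map is \'etale at $Q_z$), and one must check that separability genuinely rules out a vertical component of the branch locus, so that \'etaleness propagates to all of $\proj^1_k$ away from the reductions of $\mathcal{B}$. A secondary point to watch --- the configuration in which every branch point reduces to a single $k$-point of $\SF{\mathcal{X}_{\alpha,\beta}}$, which would spoil $w(V_i) \geq 2$ --- does not in fact arise once $\eta \in \Sigma_{\mathcal{B}}$, because then some branch point reduces into $D_{\alpha,v(\beta)}$ while $\infty$ reduces to $\infty$, producing at least two distinct points among the $Q_z$.
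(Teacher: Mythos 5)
Your proof is correct and follows essentially the same route as the paper's: both arguments hinge on \Cref{prop relating nodes to Upsilon} (to translate ``direction from $\eta$ staying in $U$'' into ``$Q_z$ has $p$ branches''), Zariski--Nagata purity together with separability (to rule out branching of the normalized special-fiber cover away from reductions of $\mathcal{B}$, and hence to conclude that all directions staying in $U$ forces a connected degree-$p$ \'etale cover of $\proj_k^1$, which cannot exist), and \Cref{prop part of mini} together with $G$-equivariance (to pass from $\mathcal{C}_{\alpha,\beta}\le\Cmini$ to $\mathcal{X}_{\alpha,\beta}\le\Xmini$). The paper first establishes $A\ne\varnothing$ and the equivalence $A=\mathcal{B}\iff$ (unbranched everywhere) $\iff$ ($p$ components), and only then treats the domination in two cases; you instead organize everything around the dichotomy $\#\Irred(\SF{\mathcal{C}_{\alpha,\beta}})\in\{1,p\}$ (justified via the $\zz/p$-orbit structure from \S\ref{sec preliminaries geometric Galois}) and verify the criterion of \Cref{prop part of mini} and the ``interior'' characterization simultaneously in each case. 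You are also a bit more explicit than the paper about a few points the paper leaves implicit: that $m(V)=1$ via \Cref{prop equations for normalization}, and that simple connectedness of $\proj_k^1$ is what kills the \'etale cover in the single-component case. One small wording issue: in the $p$-component case, $Q_z$ and $Q_\infty$ are singular points of the whole special fiber $\SF{\mathcal{C}_{\alpha,\beta}}$ lying on each $V_i$ (each $V_i$ is unibranch there, since $V_i\to\proj_k^1$ is birational), rather than ``singular points of each $V_i$''; your computation $w(V_i)\ge 2$ is nonetheless exactly what the definition of $w$ delivers. Similarly, it is the \emph{normalized} cover that purity shows \'etale away from reductions of $\mathcal{B}$ (as you in fact use two clauses later). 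Neither of these affects the validity of the argument.
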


\begin{proof}
Given such a point $\eta = \eta_{D_{\alpha, v(\beta)}} \in \Upsilon_{K'}$, let $U \subseteq \Upsilon$ be the connected component containing $\eta$, and let $A \subseteq \mathcal{B}$ be the set of branch points $z$ satisfying $(\eta, \eta_z) \cap U \neq \varnothing$.  As $\eta$ is not isolated in $\Upsilon$, there is a point $\eta' \in \Upsilon$ with $[\eta, \eta'] \subset \Upsilon$.  If $\eta' > \eta$, then we have $(\eta, \eta') \subset (\eta, \eta_\infty)$; otherwise, as we have $\eta' \in \Sigma_{\mathcal{B}}$, it follows from \Cref{prop highest point of Lambda}(a)(b) that the disc corresponding to $\eta'$ contains some root $z \in \mathcal{B} \smallsetminus \{\infty\}$, and one can check that we must have $\eta' \neq \eta \vee \eta' \in [\eta, \eta_z]$ so that we have $(\eta, \eta') \subset (\eta, \eta_z)$.  It follows that we have $A \neq \varnothing$.  From similar reasoning, one sees that we have $A = \mathcal{B}$ if and only if $\eta$ lies in the interior of $\Upsilon$.

It follows from \Cref{prop relating nodes to Upsilon} that we have $z \in A$ if and only if the point $Q \in (\mathcal{C}_{\alpha, \beta})_s(k)$ lying over the point $P \in \SF{\mathcal{X}_{\alpha, \beta}}(k)$ to which $z$ reduces is a singular point with $p$ branches passing through it, \textit{i.e} $P$ is not a branch point.  By the Zariski-Nagata purity theorem, the normalization $\widetilde{(\mathcal{C}_{\alpha, \beta})_s}$ of $(\mathcal{C}_{\alpha, \beta})_s$ can only be branched over $\SF{\mathcal{X}_{\alpha, \beta}}$ above the points which are reductions of points in $\mathcal{B}$.  Therefore, we have $A = \mathcal{B}$ if and only if the $p$-cyclic cover $\widetilde{(\mathcal{C}_{\alpha, \beta})_s} \to \SF{\mathcal{X}_{\alpha, \beta}}$ is not branched anywhere, which in turn is equivalent to saying that $\widetilde{(\mathcal{C}_{\alpha, \beta})_s}$ (and equivalently, $(\mathcal{C}_{\alpha, \beta})_s$) has $p$ components which are each a copy of $\proj_k^1$.  This proves the second statement of the corollary.

To get the first statement, we apply the criterion given in \Cref{prop part of mini} as follows.  For each component $V$ of the special fiber $(\mathcal{C}_{\alpha, \beta})_s$, with $w(V)$ defined as in \S\ref{sec preliminaries geometric components}, we will show that we have $w(V) \geq 2$, which by \Cref{prop part of mini} guarantees that we have $\mathcal{X}_{\alpha, \beta} \leq \Xmini$.  Suppose that the fiber $(\mathcal{C}_{\alpha, \beta})_s$ has only a single component $V$.  Then as $A \neq \varnothing$, it has a singular point $Q$ with $p$ branches passing through it, which implies $w(V) \geq p \geq 2$.  Now suppose instead that the fiber $(\mathcal{C}_{\alpha, \beta})_s$ has $p$ components, and let $V \cong \proj_k^1$ be one of them.  As argued above, there is a point $z \in \mathcal{B} \smallsetminus \{\infty\}$ such that we have $\eta > \eta_z$.  Let $P_0$ (resp. $P_\infty$) be the $k$-point of $\SF{\mathcal{X}_{\alpha, \beta}}$ to which $z$ (resp. $\infty$) reduces, and note that we have $P_0 \neq P_\infty$.  As $P_0, P_\infty$ are reductions of branch points of the morphism $C \to X$, they are themselves branch points of the morphism $(\mathcal{C}_{\alpha, \beta})_s \to \SF{\mathcal{X}_{\alpha, \beta}}$ of special fibers.  Then the points $Q_0, Q_\infty \in V(k)$ which respectively lie over them intersect with the other components of $(\mathcal{C}_{\alpha, \beta})_s$ and so are singular points of this special fiber, again implying that $w(V) \geq 2$.
\end{proof}

The following lemma is more or less a converse to \Cref{cor relating nodes to Upsilon}.

\begin{lemma} \label{lemma relating nodes to Upsilon converse}

Let $\eta = \eta_{\alpha, v(\beta)} \in \Hyp$, and suppose that we have $\mathcal{X}_{\alpha, \beta} \leq \Xmini$ and that there is a singular $k$-point of $(\mathcal{C}_{\alpha, \beta})_s$ with $p$ branches passing through it.  Then we have $\eta \in \Upsilon_{K'}$.

\end{lemma}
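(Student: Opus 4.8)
The aim is to verify the two conditions that define membership in $\Upsilon_{K'}$: that $\mathcal{C}_{\alpha,\beta}$ is defined over $K'$, and that $\eta \in \Upsilon = \mathfrak{t}_f^{-1}(\ptfrac) \cap \Sigma_{\mathcal{B}}^0$. The first is immediate from the hypothesis $\mathcal{X}_{\alpha,\beta} \le \Xmini$: since $\Xmini$ is defined over $\mathcal{O}_{K'}$ so is its contraction $\mathcal{X}_{\alpha,\beta}$, and the dominant morphism $\Xmini \to \mathcal{X}_{\alpha,\beta}$ lifts, as recalled in \S\ref{sec preliminaries geometric Galois}, to a dominant morphism $\Cmini \to \mathcal{C}_{\alpha,\beta}$ between relative normalizations; thus $\mathcal{C}_{\alpha,\beta} \le \Cmini$ is a model of $C$ over $\mathcal{O}_{K'}$. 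So everything reduces to proving $\eta \in \Upsilon$, and for this the hypothesis $\mathcal{X}_{\alpha,\beta}\le\Xmini$ will not be needed.

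First I would show $\mathfrak{t}_f(\eta) = \ptfrac$. In the tame case $\ptfrac = 0$ and $\mathfrak{t}_f \equiv 0$, so there is nothing to prove. In the wild case, suppose $\mathfrak{t}_f(\eta) < \ptfrac$; then the parameter $t$ of a good (total) decomposition at $\eta$ satisfies $t < \ptfrac$, so by \Cref{prop normalization}(c) the coordinate $x_{\alpha,\beta}$ makes $(\mathcal{C}_{\alpha,\beta})_s$ a purely inseparable degree-$p$ cover of $\proj^1_k$. Such a cover is a universal homeomorphism, so if $(\mathcal{C}_{\alpha,\beta})_s$ had a point with $p$ branches, each branch would carry an unramified map to $\proj^1_k$ (their local degrees are positive integers summing to at most $p$), exhibiting the function field of $(\mathcal{C}_{\alpha,\beta})_s$ as an étale — hence separable — extension of $k(x)$; this contradicts pure inseparability. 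So no $p$-branch point exists, contrary to hypothesis, and therefore $\mathfrak{t}_f(\eta) = \ptfrac$. In particular $t \ge \ptfrac$, so $(\mathcal{C}_{\alpha,\beta})_s \to \proj^1_k$ is separable by \Cref{prop normalization}(c), and as the branch divisor of $\mathcal{C}_{\alpha,\beta} \to \mathcal{X}_{\alpha,\beta}$ is then horizontal, this cover of special fibers is finite étale away from the reductions of the branch points — in particular $(\mathcal{C}_{\alpha,\beta})_s$ is smooth there.

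Next I would show $\eta = \eta_{D_{\alpha,v(\beta)}} \in \Sigma_{\mathcal{B}}$. The $p$-branch singular point $Q$ lies over the reduction of some branch point $z$, by the smoothness just noted. If $\eta \notin \Sigma_{\mathcal{B}}$ then $D_{\alpha,v(\beta)}$ contains no root of $f$, so the only reduction of a branch point is $x_{\alpha,\beta} = \infty$; arguing as in the previous paragraph, the $p$ branches at $Q$ (which is over $\infty$) are all unramified and $Q$ is the unique point over $\infty$, so the finite separable cover $(\mathcal{C}_{\alpha,\beta})_s \to \proj^1_k$ is étale over all of $\proj^1_k$. But $\proj^1_k$ carries no nontrivial $\zz/p$-torsor, so this cover is split, $(\mathcal{C}_{\alpha,\beta})_s$ is a disjoint union of smooth lines, and $Q$ cannot be singular — a contradiction. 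Hence $\eta \in \Sigma_{\mathcal{B}}$.

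Finally, the heart of the matter is to deduce $\eta \in \Sigma_{\mathcal{B}}^0$ by reversing the chain of equivalences in the proof of \Cref{prop relating nodes to Upsilon}. Write $\mathfrak{s} = D_{\alpha,v(\beta)} \cap \mathcal{B}$ and $\mathfrak{s} = \mathfrak{c}_1 \sqcup \dots \sqcup \mathfrak{c}_s$ as in \Cref{prop removing a point from convex hull}, and let $z$ be a branch point over whose reduction there is a $p$-branch singular point. If $z \notin D_{\alpha,v(\beta)}$ this point lies over $x_{\alpha,\beta} = \infty$, and if $z \in D_{\alpha,v(\beta)}$ we recenter so that $z$ reduces to $0$; in either case \Cref{prop normalization properties} converts ``$p$ branches'' into a divisibility-by-$p$ statement about the degree (tame) or, in the wild case, about the degrees and orders of $\bar{\rho}_0$ and $\bar{q}_0$, which by \Cref{lemma degrees of terms appearing} — and in the wild case by \Cref{lemma mathfrakv}, identifying those degrees/orders with one-sided derivatives of $\vfun_\rho$ and $\vfun_q$ at $\eta$ and using $\vfun_f = p\,\vfun_q$ near $\eta$ (valid since $\mathfrak{t}_f > 0$ there) — becomes $p \mid \#\underline{\mathfrak{s}}$ when $z \notin D_{\alpha,v(\beta)}$ and $p \mid \#\underline{\mathfrak{c}_i}$ for the child cluster $\mathfrak{c}_i \ni z$ of $\mathfrak{s}$ when $z \in D_{\alpha,v(\beta)}$. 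By the characterization of $\Sigma_{\mathcal{B}}^0$ in \Cref{prop boundary points} (applicable now that $\eta \in \Sigma_{\mathcal{B}}$), this places $\eta$ in $\Sigma_{\mathcal{B}}^0$, and the proof is complete. The main obstacle I anticipate is precisely this last step: carrying out the wild-case bookkeeping — matching the normalized reductions $\bar{\rho}_0, \bar{q}_0$ to the behaviour of $\mathfrak{t}_f$ near $\eta$ and thence to the parities of the cluster cardinalities — with the same care as in the proof of \Cref{prop relating nodes to Upsilon}.
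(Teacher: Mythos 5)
The part of your argument where you deduce $\eta \in \Sigma_{\mathcal{B}}$ has a genuine gap. From ``the $p$ branches at $Q$ are all unramified and $Q$ is the unique point over $\infty$'' you infer that $\SF{\mathcal{C}_{\alpha,\beta}} \to \proj^1_k$ is étale over all of $\proj^1_k$, hence trivial, hence $\SF{\mathcal{C}_{\alpha,\beta}}$ is a disjoint union of smooth lines and $Q$ cannot be singular. This conflates $\SF{\mathcal{C}_{\alpha,\beta}}$ with its normalization: if $Q$ is singular then the map $\SF{\mathcal{C}_{\alpha,\beta}} \to \proj^1_k$ is not étale at $Q$, and what simple connectedness of $\proj^1_k$ gives you is only that the \emph{normalization} $\widetilde{\SF{\mathcal{C}_{\alpha,\beta}}}$ splits into $p$ lines --- which is perfectly compatible with $\SF{\mathcal{C}_{\alpha,\beta}}$ being those $p$ lines glued together at the single point $Q$ (an ordinary $p$-fold singularity with $p$ unramified branches), exactly the configuration you set out to exclude. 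You announce that the hypothesis $\mathcal{X}_{\alpha,\beta} \leq \Xmini$ will not be needed for this step, but this is precisely where the paper uses it, via \Cref{cor crushed implies branch point}: if all of $\mathcal{B}$ reduced to a single point of $\SF{\mathcal{X}_{\alpha,\beta}}$, then every node of $\SF{\Xmini}$ on the strict transform of $\SF{\mathcal{X}_{\alpha,\beta}}$ would be a branch point of $\Cmini \to \Xmini$ and hence have a singleton preimage, which is incompatible with the $p$ nodes of $\SF{\Cmini}$ arising from the $p$ branches at $Q$. (If one really wants to avoid the hypothesis, the glued-lines scenario is ruled out for a concrete computational reason you do not supply: when $D_{\alpha,v(\beta)}$ contains no root of $f$, \Cref{lemma degrees of terms appearing} shows the normalized reduction of $f_{\alpha,\beta}$ is a nonzero constant; since you have already forced $\mathfrak{t}_f(\eta) > 0$, the normalized reductions of $q_{\alpha,\beta}^p$ and $f_{\alpha,\beta}$ agree up to a scalar, so $\bar{q}_0^p$ and hence $\bar{q}_0$ are nonzero constants, whence $\deg(\bar{\rho}_0) < p\deg(\bar{q}_0) = 0$ is impossible and \Cref{prop normalization properties}(c)(i) forces $Q$ to have a single branch.)

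For the last step ($\eta \in \Sigma_{\mathcal{B}}^0$) in the wild case, the paper takes a much shorter route than the ``chain-reversal'' you propose, and you should take it as well: having established $\mathfrak{t}_f(\eta) = \ptfrac > 0$, and using that $\mathfrak{t}_f$ vanishes identically on $\Sigma_{\mathcal{B}}^*$ by \Cref{thm t_f formula one component}(a) combined with \Cref{prop mathfrakt minimum}, one concludes at once that $\eta \notin \Sigma_{\mathcal{B}}^*$, hence $\eta \in \Sigma_{\mathcal{B}} \smallsetminus \Sigma_{\mathcal{B}}^* \subseteq \Sigma_{\mathcal{B}}^0$; no bookkeeping on $\bar{\rho}_0$ and $\bar{q}_0$ is needed.
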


\begin{proof}
Any model $\leq \Xmini$ is defined over $K'$, so by definition of $\Upsilon_{K'}$, it suffices to show that $\eta \in \Upsilon$.

The hypotheses of this lemma imply, by \Cref{cor crushed implies branch point}, that it cannot be the case that all points in $\mathcal{B}$ reduce to a single point in the special fiber $\SF{\mathcal{X}_{\alpha, \beta}}$; therefore, there are at least $2$ distinct $k$-points of $\SF{\mathcal{X}_{\alpha, \beta}}$ to which the points in disjoint subsets of $\mathcal{B}$ reduce.  Letting $P_\infty$ be the point to which $\infty \in \mathcal{B}$ reduces, there is another point $P_0$ to which a point $z \in \mathcal{B} \smallsetminus \{\infty\}$ reduces.  Clearly the point $P_\infty$ is given by $x_{\alpha, \beta} = \infty$; the points $x = w \in \mathcal{B}$ which reduce to this are those that satisfy $v(w - \alpha) < v(\beta)$.  Therefore, we must have $v(z - \alpha) \geq v(\beta)$, or equivalently, $z \in D_{\alpha, v(\beta)}$.  Then it follows from \Cref{prop highest point of Lambda}(b) that we have $\eta \in \Sigma_{\mathcal{B}}$.  We may now assume (without affecting the hypothesis on $\SF{\mathcal{C}_{\alpha, \beta}}$) that $\alpha \in D_{\alpha, v(\beta)} \cap \mathcal{B}$.

\textbf{Tame case:} Suppose that $k$ has characteristic different from $p$.  Let $\bar{\rho}_0$ be as in the discussion preceding \Cref{prop normalization}, so that it is a normalized reduction of $\rho$.  Write $\mathfrak{s} = D_{\alpha, v(\beta)} \cap \mathcal{B}$.  Our hypothesis on $\eta$ implies, thanks to \Cref{prop normalization properties}(b), that we have $p \mid \deg(\bar{\rho}_0)$ or $p \mid \ord(\bar{\rho}_0)$.  In the former case, \Cref{lemma degrees of terms appearing} implies that we have $p \mid \#\underline{\mathfrak{s}}$, and in the latter case, \Cref{lemma degrees of terms appearing} implies that we have $p \mid \#\underline{\mathfrak{c}}$, where $\mathfrak{c} \ni \alpha$ is a cluster whose parent coincides with $\mathfrak{s}$.  Now by \Cref{prop boundary points}, we have $\eta \in \Sigma_{\mathcal{B}}^0 = \Upsilon$.

\textbf{Wild case:} Suppose now that $k$ has characteristic $p$.  As the function $\mathfrak{t}_f$ is identically $0$ on $\Sigma_{\mathcal{B}}^*$ by \Cref{thm t_f formula one component}(a) combined with \Cref{prop mathfrakt minimum}, we have $\eta \in \Sigma_{\mathcal{B}} \smallsetminus \Sigma_{\mathcal{B}}^* \subsetneq \Sigma_{\mathcal{B}}^0$.  Now suppose that we have $\mathfrak{t}_f(\eta) < \pfrac$.  Then \Cref{prop normalization}(c) combined with \Cref{prop equations for normalization} implies that the special fiber $(\mathcal{C}_{\alpha, \beta})_s$ of the normalization of $\mathcal{X}_{\alpha, \beta}$ in $K(C)$ is an inseparable cover of $\SF{\mathcal{X}_{\alpha, \beta}} \cong \proj_k^1$.  Therefore, it is itself isomorphic to $\proj_k^1$ and thus cannot have a singular point, which contradicts our hypothesis.  It follows that we have $\mathfrak{t}_f(\eta) = \pfrac$, which is to say that $\eta \in \Upsilon$.
\end{proof}

\begin{prop} \label{prop non-maximal boundary points}

The toric rank $t(\SF{\Cmini})$ of (the special fiber of) the semistable model $\Cmini$ is equal to $p - 1$ times the number of non-maximal boundary points of connected components $U$ of $\Upsilon$ (with $U$ considered as a subspace of $\Sigma_{\mathcal{B}}$).

\end{prop}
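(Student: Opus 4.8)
The plan is to deduce the formula from \Cref{prop toric rank formula}, which gives $t(\SF{\Cmini}) = (p-1)(N_0 - N_1)$, where $N_0$ counts the nodes of $\SF{\Xmini}$ which are not branch points of $\Cmini \to \Xmini$ and $N_1$ counts the components of $\SF{\Xmini}$ whose inverse image in $\SF{\Cmini}$ is disconnected; it therefore suffices to show that $N_0 - N_1$ equals the number of non-maximal boundary points of the connected components of $\Upsilon$. Both the toric rank (by \Cref{rmk abelian and toric}(a)) and the set $\Upsilon$ are unchanged upon replacing $K'$ by a finite extension, so I am free to enlarge $K'$ repeatedly. By \Cref{prop models of projective line}, $\Xmini = \mathcal{X}_{\mathfrak{D}}$ for a finite set of discs $\mathfrak{D}$, and the dual tree of $\SF{\Xmini}$ is realized as a finite subtree $S \subset \Hyp$ with vertex set $\{\eta_D : D \in \mathfrak{D}\}$ whose open edges are the open paths between $\Xmini$-adjacent vertices; by \Cref{prop models of projective line special fiber} these open edges are exactly the nodes of $\SF{\Xmini}$. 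Write $\Upsilon^{\circ}$ for the topological interior of $\Upsilon$ inside $\Sigma_{\mathcal{B}}$ and $\partial\Upsilon = \Upsilon \smallsetminus \Upsilon^{\circ}$, a finite set of rational points (the breakpoints where $\mathfrak{t}_f$ drops below $\ptfrac$ together with the boundary points of $\Sigma_{\mathcal{B}}^0$ as in \Cref{prop boundary points}); after enlarging $K'$ we may assume all of $\partial\Upsilon$ and all vertices of $\Sigma_{\mathcal{B}}$ lying in $\Upsilon$ are defined over $K'$, and we may ignore the finitely many one-point components of $\Upsilon$, which contribute $0$ to both sides.

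The first step is to establish that $\Upsilon \subseteq S$, with every point of $\partial\Upsilon$ and every vertex of $\Sigma_{\mathcal{B}}$ lying in $\Upsilon$ being a vertex of $S$. Since $\Upsilon$ is a finite union of straight closed segments of $\Sigma_{\mathcal{B}}$, it is enough to note that any $\eta = \eta_{D_{\alpha, v(\beta)}}$ which lies in the relative interior of such a segment and has $v(\beta) \in v(K'^{\times})$ is a non-isolated point of $\Upsilon_{K'}$, so \Cref{cor relating nodes to Upsilon} gives $\mathcal{X}_{\alpha, \beta} \leq \Xmini$, i.e.\ $\eta$ is a vertex of $S$; consecutive such vertices are joined by a full edge of $S$, so the whole segment (and, by the same argument, its endpoints and the points $\eta_{\mathfrak{s}}$) lies in $S$. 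A consequence is that each open edge $\mathring e$ of $S$ either has its whole open path contained in $\Upsilon^{\circ}$ or is disjoint from $\Upsilon^{\circ}$ --- otherwise $\mathring e$ would contain a point of $\partial\Upsilon$ or a vertex of $\Sigma_{\mathcal{B}}$, both of which are now vertices of $S$.

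Next I will identify $N_0$ and $N_1$ with data of $S$ relative to $\Upsilon^{\circ}$. A vertex $\eta_D$ of $S$ has $p$ components of $\SF{\Cmini}$ lying above it exactly when $\eta_D \in \Upsilon^{\circ}$ --- this is the combination of \Cref{cor relating nodes to Upsilon} and \Cref{lemma relating nodes to Upsilon converse} --- so $N_1$ is the number of vertices of $S$ in $\Upsilon^{\circ}$. An open edge $\mathring e$ of $S$ is a non-branch node exactly when $\mathring e \subseteq \Upsilon^{\circ}$: after enlarging $K'$ so that a generic interior point $\eta_{D_3}$ of $\mathring e$ (away from the finitely many hang-off points of branch points and from $\Sigma_{\mathcal{B}}$-vertices) becomes a vertex of $\SF{\Xmini}$ --- possible when $\mathring e \subseteq \Upsilon^{\circ}$, by \Cref{cor relating nodes to Upsilon} --- the node subdivides into a chain, and one checks, using \Cref{cor relating nodes to Upsilon} (so that every component of the chain lies below $p$ copies of $\proj_k^1$, hence the cover is \'etale there) together with the Zariski--Nagata purity theorem, that no branch point reduces into the chain; conversely, when $\mathring e$ is disjoint from $\Upsilon^{\circ}$, a generic interior point $\eta_{D_3}$ has $\mathfrak{t}_f(\eta_{D_3}) < \ptfrac$ or $\eta_{D_3} \in \Sigma_{\mathcal{B}}^*$ (using \Cref{thm t_f formula one component}(a) and \Cref{prop mathfrakt minimum} for the latter), so by \Cref{prop normalization}(c) the special fibre over $\eta_{D_3}$ is irreducible, forcing a branch point to reduce into the corresponding chain and the node to be a branch point. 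Thus $N_0$ is the number of open edges of $S$ contained in $\Upsilon^{\circ}$.

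Finally I conclude with an elementary count. Since $\Upsilon \subseteq S$, the combinatorial graph with vertex set $\mathcal{V} := \{\eta_D : D \in \mathfrak{D}\} \cap \Upsilon$ and edges the open edges of $S$ contained in $\Upsilon$ is a forest with the same connected components as $\Upsilon$, so its edge count is $|\mathcal{V}| - \#\pi_0(\Upsilon)$. By the first step, $\mathcal{V} \smallsetminus \Upsilon^{\circ} = \partial\Upsilon$, and every edge of this forest lies in $\Upsilon^{\circ}$; combining this with the previous paragraph gives $N_0 = |\mathcal{V}| - \#\pi_0(\Upsilon)$ and $N_1 = |\mathcal{V}| - |\partial\Upsilon|$, whence $N_0 - N_1 = |\partial\Upsilon| - \#\pi_0(\Upsilon)$. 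Finally, since $\mathfrak{t}_f$ is continuous, is bounded by $\ptfrac$, and vanishes near $\eta_\infty$ (so that $\Upsilon$, whose components are connected subspaces of $\Sigma_{\mathcal{B}}$, does not contain a neighbourhood of $\eta_\infty$), each component $U$ of $\Upsilon$ has a unique maximal point, which belongs to $\partial\Upsilon$; hence $|\partial\Upsilon| - \#\pi_0(\Upsilon)$ is precisely the number of non-maximal boundary points of the components of $\Upsilon$, and the proposition follows. The main obstacle is the bookkeeping of the first two steps --- in particular, the edge criterion of the third paragraph, which requires carefully tracking how the branch locus and the function $\mathfrak{t}_f$ behave under finite base change, and confirming that after a suitable extension the skeleton $S$ genuinely contains $\Upsilon$ with all the relevant special points as vertices.
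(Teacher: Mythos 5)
Your overall strategy --- invoking \Cref{prop toric rank formula} and reducing to the identity $N_0 - N_1 = |\partial\Upsilon| - \#\pi_0(\Upsilon)$ --- coincides with the paper's, and your final count (each component of $\Upsilon$ has a unique maximal point, which lies on the boundary) is correct.  The divergence is in how you pin down $N_0$ and $N_1$: the paper builds a forest directly on the vertex set $\Upsilon_{K'}$ and deduces the edge/node correspondence from \Cref{prop relating nodes to Upsilon}, which decides for each $\eta \in \Upsilon_{K'}$ and each direction from $\eta$ whether the corresponding point of $\SF{\mathcal{C}_{\alpha,\beta}}$ has $p$ branches, whereas you work with the full skeleton $S$ of $\SF{\Xmini}$ and attempt to decide the status of each open edge by subdivision and an \'{e}tale-versus-inseparable dichotomy.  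This is where a genuine gap appears.

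In your converse step (an open edge $\mathring e \subset S$ disjoint from $\Upsilon^{\circ}$ yields a branch node), you cite \Cref{prop normalization}(c) to conclude that the special fiber over a generic interior point $\eta_{D_3}$ of $\mathring e$ is irreducible.  But that proposition only establishes reducedness and (in)separability of the cover, not irreducibility.  In the wild case the inseparability conclusion does force $\SF{\mathcal{C}_{D_3}} \cong \proj_k^1$, so the step is recoverable; but in the tame case $\ptfrac = 0$, the function $\mathfrak{t}_f$ is identically zero, the cover is \emph{always} separable, and your fallback alternative ($\eta_{D_3} \in \Sigma_{\mathcal{B}}^*$) extracts nothing from \Cref{prop normalization}(c).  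What is actually needed in that case is an argument that the degree-$p$ cover of $\SF{\mathcal{X}_{D_3}}$ is ramified somewhere --- for instance, $p \nmid \#\underline{\mathfrak{s}}$ at a generic point of $\Sigma_{\mathcal{B}}^* \smallsetminus \Sigma_{\mathcal{B}}^0$ combined with \Cref{lemma degrees of terms appearing} and \Cref{prop normalization properties} --- so that it cannot consist of $p$ disjoint copies of $\proj_k^1$.  The cleanest repair is to replace your ad hoc edge criterion with \Cref{prop relating nodes to Upsilon}, the paper's central tool here, which handles both residue characteristics uniformly.  Your forward step (enlarging $K'$ and applying Zariski--Nagata along the subdivided chain) is not wrong, but it is more delicate than you acknowledge: one must check that the enlarged $K''$ still satisfies the conclusion of \Cref{prop persistent nodes}, and that a branch point of $\mathcal{B}$ reduces into the subdividing chain exactly when the original node was a branch point; these are again handled in one stroke by \Cref{prop relating nodes to Upsilon} together with the graph on $\Upsilon_{K'}$.
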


\begin{proof}
After possibly replacing $K'$ by a finite extension, we assume each non-isolated boundary point of $\Upsilon$ lies in $\Upsilon_{K'}$.  Now by \Cref{cor relating nodes to Upsilon} and \Cref{lemma relating nodes to Upsilon converse} combined, the points $\eta_{D_{\alpha, v(\beta)}} \in \Upsilon_{K'}$ are precisely the points $\eta_{D_{\alpha, v(\beta)}} \in \Hyp$ with $\mathcal{X}_{\alpha, \beta} \leq \Xmini$ and such that the special fiber $(\mathcal{C}_{\alpha, \beta})_s$ has a singular point with $p$ branches passing through it.  The latter condition is equivalent to saying that the strict transform of $\SF{\mathcal{C}_{\alpha, \beta}}$ in $\Cmini$ has $p$ points lying over a node in (the strict transform of $\SF{\mathcal{X}_{\alpha, \beta}}$ in) $\SF{\Xmini}$ (see the top of \S\ref{sec normalizations}).

Let us now put a graph structure on the discrete set $\Upsilon_{K'}$ by viewing its points as vertices and by defining an edge between a pair $\eta, \eta'$ of vertices to be a (necessarily unique) path $[\eta, \eta'] \subset \Upsilon$ with no other vertex of $\Upsilon_{K'}$ lying in its interior.  With this structure, each component of the graph $\Upsilon_{K'}$ is clearly a tree, and the leaves (\textit{i.e.} points with exactly $1$ edge) of $\Upsilon_{K'}$ are the boundary points of the subspace $\Upsilon \subset \Sigma_{\mathcal{B}}$.  It now follows from applying \Cref{prop models of projective line special fiber} that the nodes of $\SF{\Xmini}$ which are not branch points of $\Cmini \to \Xmini$ (or equivalently, whose inverse images in $\SF{\Cmini}$ consist of $p$ points) correspond to the edges of the graph $\Upsilon_{K'}$.  Therefore, in the notation of \Cref{prop toric rank formula}, the integer $N_0$ equals the number of edges of $\Upsilon_{K'}$.  Since $\Upsilon_{K'}$ is a disjoint union of trees, this equals the number of its vertices minus the number of its connected components.

Meanwhile, \Cref{cor relating nodes to Upsilon} implies that the number $N_1$ (in the notation of \Cref{prop toric rank formula}) of components of $\SF{\Xmini}$ whose inverse images in $\SF{\Cmini}$ consist of multiple components equals the number of non-leaf vertices of $\Upsilon_{K'}$.  The quantity $N_0 - N_1$ therefore equals the number of boundary points of $\Upsilon$ minus the number of connected components of $\Upsilon$.  Each connected component $U$ of $\Upsilon$ has a unique maximal point $\xi$ which is clearly a boundary point (as we have $\xi' \subset (\xi, \eta_\infty) \subset \Sigma_{\mathcal{B}}$ and $(\xi, \xi') \cap U = \varnothing$ for some $\xi' > \xi$ close enough to $\xi$); therefore, we get that $N_0 - N_1$ equals the number of non-maximal boundary points of connected components of $\Upsilon$.  By \Cref{prop toric rank formula}, the desired formula follows.
\end{proof}

\begin{cor} \label{cor bound on toric rank}

If the connected components of the space $\Sigma_{\mathcal{B}}^*$ are written as $\Sigma_{\mathfrak{r}_0}, \dots, \Sigma_{\mathfrak{r}_h}$ as in \Cref{prop connected components Sigma}, we have the inequality $t(\SF{\Cmini}) \leq (p - 1)h$, and equality holds if the residue characteristic of $K$ is not $p$.

\end{cor}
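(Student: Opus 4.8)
The plan is to read off $t(\SF{\Cmini})$ from \Cref{prop non-maximal boundary points}, so that everything reduces to counting the non-maximal boundary points of the connected components of $\Upsilon = \mathfrak{t}_f^{-1}(\pfrac) \cap \Sigma_{\mathcal{B}}^0$ (each considered as a subspace of $\Sigma_{\mathcal{B}}$). First I would settle the case where the residue characteristic is not $p$: there $\pfrac = 0$, so $\mathfrak{t}_f \equiv 0$ and $\Upsilon = \Sigma_{\mathcal{B}}^0$, and I claim the non-maximal boundary points of the components of $\Sigma_{\mathcal{B}}^0$ are exactly $\eta_{\mathfrak{s}_1}, \dots, \eta_{\mathfrak{s}_h}$. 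If $\eta = \eta_D$ is a boundary point of its component $U$ that is not maximal in $U$, then by \Cref{prop boundary points}(b) it cannot satisfy $(\eta, \eta_z) \cap U = \varnothing$ for any $z \in \mathcal{B} \smallsetminus D$, so it falls under \Cref{prop boundary points}(d) and hence equals $\eta_{\mathfrak{s}_i}$ for some $i \in \{1, \dots, h\}$; conversely each $\eta_{\mathfrak{s}_i}$ satisfies the hypotheses of \Cref{prop boundary points}(d), so it is a boundary point, and by the ``if and only if'' of \Cref{prop boundary points}(b) together with $p \mid \#\underline{\mathfrak{s}}_i$ (\Cref{prop connected components Sigma}) it is not maximal in its component. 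Since the $\mathfrak{s}_i$ are pairwise distinct and $\eta$ determines $\mathfrak{s} = D \cap \mathcal{B}$, there are precisely $h$ such points, and \Cref{prop non-maximal boundary points} gives $t(\SF{\Cmini}) = (p - 1)h$.

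For the inequality in general I would exhibit an injection from the non-maximal boundary points of the components of $\Upsilon$ into $\{1, \dots, h\}$. Using \Cref{prop mathfrakt minimum} applied to $\mathcal{B} \smallsetminus \{\infty\} = \mathfrak{r}_0' \sqcup \mathfrak{r}_1 \sqcup \dots \sqcup \mathfrak{r}_h$, one has $\mathfrak{t}_f = \min_{0 \le j \le h} \mathfrak{t}_{\mathfrak{r}_j'}$, and since each $\mathfrak{t}_{\mathfrak{r}_j'} \le \pfrac$, a point of $\Sigma_{\mathcal{B}}^0$ lies in $\Upsilon$ iff $\mathfrak{t}_{\mathfrak{r}_j'} = \pfrac$ for every $j$. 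Let $\eta$ be a non-maximal boundary point of a component $U'$ of $\Upsilon$. Since $\Upsilon$ is closed, $U'$ is closed in $\Sigma_{\mathcal{B}}$, so a neighborhood of $\eta$ meets no other component of $\Upsilon$; and since $\eta$ is non-maximal, an initial segment of $[\eta, \eta_\infty)$ lies in $U'$. Hence the direction in which $\Sigma_{\mathcal{B}}$ escapes $U'$ at $\eta$ points towards some $\eta_z$ with $\eta > \eta_z$, and just below $\eta$ towards $\eta_z$ one is outside $\Upsilon$ — that is, outside $\Sigma_{\mathcal{B}}^0$ (Case A), or inside $\Sigma_{\mathcal{B}}^0$ with $\mathfrak{t}_{\mathfrak{r}_j'}$ having dropped below $\pfrac$ for some $j$ (Case B). In Case A, $\eta$ is a non-maximal boundary point of a component of $\Sigma_{\mathcal{B}}^0$, hence $\eta = \eta_{\mathfrak{s}_i}$ by the previous paragraph; but then $\mathfrak{t}_f(\eta) \le \mathfrak{t}_{\mathfrak{r}_i'}(\eta_{\mathfrak{s}_i}) = 0$ by \Cref{thm t_f formula one component}(a), contradicting $\eta \in \Upsilon$ when $\pfrac > 0$. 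So in the wild setting Case A never occurs, and every such $\eta$ is a Case-B point.

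In Case B, I would use the monotonicity built into \Cref{thm t_f formula one component}(a)--(c) — $\mathfrak{t}_{\mathfrak{r}_j'}$ vanishes exactly on $\Sigma_{\mathfrak{r}_j}$ and, along any ray emanating from $\Sigma_{\mathfrak{r}_j}$ (every point of $\Hyp$ lying on such a ray by \Cref{lemma all of hyp covered}), increases with decreasing integer slopes until it reaches the cap $\pfrac$ — to pin down $\eta$. The fact that $\mathfrak{t}_{\mathfrak{r}_j'}$ increases as one passes from a point just below $\eta$ (towards $\eta_z$, where it is $< \pfrac$) up to $\eta$ (where it equals $\pfrac$) forces $\Sigma_{\mathfrak{r}_j}$ to be reached from $\eta$ by descending, which by the tree structure of $\Sigma_{\mathcal{B}}$ and \Cref{prop highest point of Lambda}(a) forces $\eta > \eta_{\mathfrak{s}_j}$ and identifies $\eta$ as the unique point of the (single) upward ray from $\eta_{\mathfrak{s}_j}$ at which $\mathfrak{t}_{\mathfrak{r}_j'}$ first attains $\pfrac$; in particular $j \ne 0$, since $\Sigma_{\mathfrak{r}_0}$ already contains $\eta_\infty$, so $\mathfrak{t}_{\mathfrak{r}_0'}$ is $0$ near $\eta_\infty$ and no such point exists. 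Sending $\eta$ to $j$ is then injective, since $\eta$ is recovered from $j$ as that first-crossing point; this yields at most $h$ non-maximal boundary points in the wild case, hence $t(\SF{\Cmini}) \le (p - 1)h$, and together with the tame-case equality the corollary follows.

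The main obstacle I anticipate is making the Case-B step fully rigorous, in particular excluding the possibility that $\eta$ and $\eta_{\mathfrak{s}_j}$ are incomparable in $\Sigma_{\mathcal{B}}$: there the path from $\eta$ to $\Sigma_{\mathfrak{r}_j}$ must climb to the join $\eta \vee \eta_{\mathfrak{s}_j}$ before descending, so $\eta$ lies on a ray of the type described in \Cref{thm t_f formula one component}(c) (with $D_{\alpha, v(\beta)} \cap \mathfrak{r}_j = \varnothing$), along which $\mathfrak{t}_{\mathfrak{r}_j'}$ is non-decreasing as one descends towards $\eta$; this contradicts $\mathfrak{t}_{\mathfrak{r}_j'}(\eta) = \pfrac$ together with $\mathfrak{t}_{\mathfrak{r}_j'}$ being strictly smaller just below $\eta$. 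Dispatching this case, and the parallel sub-case $\eta < \eta_{\mathfrak{s}_j}$, carefully from the slope statements of \Cref{thm t_f formula one component}(b)(c) and the ``only one direction upwards'' property of $\Berk$, is the bulk of the remaining work; once these are in hand, the identification of $\eta$ with the first-crossing point on the upward ray from $\eta_{\mathfrak{s}_j}$, and hence the injectivity, are immediate.
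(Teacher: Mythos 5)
Your proposal is correct and follows essentially the same approach as the paper: both reduce via \Cref{prop non-maximal boundary points} to counting non-maximal boundary points of $\Upsilon$ and then construct an injection (a bijection when the residue characteristic differs from $p$) into $\{1, \dots, h\}$ by following the downward direction at a boundary point along which $\mathfrak{t}_f$ drops below $\ptfrac$, identifying it via \Cref{thm t_f formula one component} with a component $\Sigma_{\mathfrak{r}_i}$ ($i \neq 0$) and recovering the boundary point as the first $\ptfrac$-crossing on the upward ray from $\eta_{\mathfrak{s}_i}$. Your Case A/B split and the incomparability discussion are a more explicit rendering of what the paper handles in one step (using that $\mathfrak{t}_f$ vanishes on $\Sigma_{\mathcal{B}}^*$, so exiting $\Sigma_{\mathcal{B}}^0$ also forces $\mathfrak{t}_f < \ptfrac$), but the decomposition of the problem and the key lemmas invoked coincide.
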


\begin{proof}
We prove this by furnishing a one-to-one function $\Phi$, which is also onto in the tame case, assigning to each boundary point of $\Upsilon$ a connected component of $\Sigma_{\mathcal{B}}^*$ which does not contain $\eta_\infty$, the existence of which implies that the number of non-maximal boundary points of $\Upsilon$ is $\leq h$ with equality in the tame case; \Cref{prop non-maximal boundary points} then implies the assertion of the corollary.

\textbf{Tame case:} Suppose that the residue characteristic of $K$ is different from $p$.  Then the non-maximal boundary points of $\Upsilon = \Sigma_{\mathcal{B}}^0$ are precisely the maximal points of the connected components of $\Sigma_{\mathcal{B}}^*$ not containing $\eta_\infty$ by \Cref{prop boundary points}(a)(c)(d), which defines the desired bijection $\Phi$.

\textbf{Wild case:} Suppose now that the residue characteristic of $K$ is $p$.  Let $\eta \in \Upsilon$ be a non-maximal boundary point.  Then there is some open path $(\eta, \eta') \subset \Sigma_{\mathcal{B}} \smallsetminus \Upsilon$ with $\eta > \eta'$.  If $\eta'$ is chosen close enough to $\eta$, then by definition of $\Upsilon$, the function $\mathfrak{t}_f$ takes values $< \pfrac$ on $(\eta, \eta')$ and it then follows from \Cref{prop mathfrakt minimum} and the continuity of the functions $\mathfrak{t}_{\mathfrak{r}_i}$ (given by \Cref{cor t_f is continuous}) that we have $\mathfrak{t}_f = \mathfrak{t}_{\mathfrak{r}_i}$ for some $i \in \{0, \dots, h\}$ and that this function is monotonically decreasing on $(\eta, \eta')$.  We define our desired function $\Phi$ by letting $\Phi(\eta)$ be the connected component $\mathfrak{t}_{\mathfrak{r}_i}$.

As the value of the continuous function $\mathfrak{t}_{\mathfrak{r}_i}$ is $\pfrac$ at the input $\eta$ and decreases on $(\eta, \eta')$, it follows from \Cref{thm t_f formula one component}(a)(b)(c) that we have $i \neq 0$ (\textit{i.e.} $\infty \notin \Sigma_{\mathfrak{r}_i}$) and that $\mathfrak{t}_{\mathfrak{r}_i}$ decreases from $\pfrac$ to $0$ on the ``downward" path $[\eta, \xi]$ where $\xi$ is the maximal point of $\Sigma_{\mathfrak{r}_i}$.  As we have $\mathfrak{t}_f \leq \mathfrak{t}_{\mathfrak{r}_i}$ on $\Hyp$ by \Cref{prop mathfrakt minimum}, the function $\mathfrak{t}_f$ exhibits the same behavior over $[\eta, \xi]$.  From this, the one-to-one property of $\Phi$ is clear.  Indeed, if there are non-maximal boundary points $\eta, \eta' \in \Upsilon$ with $\Phi(\eta) = \Phi(\eta')$, then we have $\eta, \eta' > \xi$ and that $\mathfrak{t}_f$ begins at $\pfrac$ and (strictly) decreases to $0$ on both paths $[\eta, \xi]$ and $[\eta', \xi]$.  This implies $\eta' = \eta$ and completes the proof.
\end{proof}

\begin{rmk} \label{rmk dokchitsers toric rank}

In the case that $p = 2$ and the residue characteristic of $K$ is not $2$, \Cref{cor bound on toric rank}, combined with \Cref{prop cardinality-p clusters}, recovers the result of Dokchitser-Dokchitser-Maistret-Morgan (given as \cite[Theorem 1.8(7)]{dokchitser2022arithmetic}) which says in this case that the potential toric rank equals the number of even-cardinality clusters which are not themselves a disjoint union of $\geq 2$ even-cardinality clusters.

\end{rmk}

\begin{lemma} \label{lemma properties (i) and (ii) of main theorem}

Properties (i) and (ii) of \Cref{thm main} are equivalent to the property of being clustered in $\pfrac$-separated pairs, which in the tame case (where $\pfrac = 0$) is the same as being clustered in pairs.

\end{lemma}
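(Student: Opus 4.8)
The plan is to prove the biconditional in two reductions. First I would dispense with Property (i): it holds precisely when there is a partition $\underline{\mathcal{B}} = \underline{\mathfrak{r}}_0 \sqcup \dots \sqcup \underline{\mathfrak{r}}_h$ into sub-multisets of cardinality $p$ whose underlying sets have cardinality $2$, with $\infty \in \mathfrak{r}_0$. Indeed, writing $\mathfrak{r}_0 = \{z_0, \infty\}$ and $\mathfrak{r}_i = \{z_i, w_i\}$ for $i \geq 1$ and setting $m_i$ to be the multiplicity of $z_i$ as a root of $f$, the partition conditions translate --- using that the multiplicity of $\infty$ in $\underline{\mathcal{B}}$ equals $p\lceil \deg f / p \rceil - \deg f$, which works out to $p - m_0$ once $f$ has the stated shape --- into exactly the requirement that $f$ can be rewritten as in (\ref{eq superelliptic degenerate model}) with each $1 \leq m_i \leq p-1$, and conversely such a rewriting visibly produces the partition. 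Moreover, under such a partition the path $\Lambda_i$ of Property (ii) joining $\eta_{z_i}$ and $\eta_{w_i}$ coincides with the single open path $\Sigma_{\mathfrak{r}_i}$, so Property (ii) is literally the $\pfrac$-separation condition $B(\Lambda_i, \ptfrac) \cap B(\Lambda_j, \ptfrac) = \varnothing$ of \Cref{dfn clustered in pairs berk}, and the requirement that each $\Lambda_i$ be a single open path is automatic from $\#\mathfrak{r}_i = 2$.

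The crux is then to show that, for a partition of this shape, pairwise disjointness of $\Sigma_{\mathfrak{r}_0}, \dots, \Sigma_{\mathfrak{r}_h}$ is equivalent to the remaining hypotheses of \Cref{dfn clustered in pairs clusters}, namely that the clusters $\mathfrak{s}_i$ (the smallest cluster containing $\mathfrak{r}_i$, for $1 \leq i \leq h$) are pairwise distinct and each have multiset cardinality divisible by $p$. One direction is already contained in the proof of \Cref{prop clustered in pairs}: when $\underline{\mathcal{B}}$ is clustered in pairs the spaces $\Sigma_{\mathfrak{r}_i}$ are the connected components of $\Sigma_{\mathcal{B}}^*$, hence pairwise disjoint. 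For the converse, distinctness of the $\mathfrak{s}_i$ is immediate, since if $\mathfrak{s}_i = \mathfrak{s}_j$ with $i \neq j$ in $\{1, \dots, h\}$ then by \Cref{prop highest point of Lambda}(a) the point $\eta_{\mathfrak{s}_i} = \eta_{\mathfrak{s}_j}$ is the maximal point of both $\Sigma_{\mathfrak{r}_i}$ and $\Sigma_{\mathfrak{r}_j}$, contradicting disjointness.

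For the divisibility statement, I would fix $i \in \{1, \dots, h\}$, put $\mathfrak{s} := \mathfrak{s}_i$, and show that no $\mathfrak{r}_j$ straddles $\mathfrak{s}$, i.e. meets both $\mathfrak{s}$ and its complement in $\mathcal{B}$. If $z \in \mathfrak{r}_j \cap \mathfrak{s}$ and $w \in \mathfrak{r}_j \smallsetminus \mathfrak{s}$, then $\eta_z < \eta_{\mathfrak{s}}$ while $\eta_w \not< \eta_{\mathfrak{s}}$; since $\eta_{\mathfrak{s}}$ and the apex $\eta_z \vee \eta_w$ both exceed $\eta_z$ they are comparable, and both possibilities yield a contradiction: if $\eta_{\mathfrak{s}} \leq \eta_z \vee \eta_w$ then $\eta_{\mathfrak{s}}$ lies on the path $(\eta_z, \eta_w) = \Sigma_{\mathfrak{r}_j}$, clashing with $\eta_{\mathfrak{s}} \in \Sigma_{\mathfrak{r}_i}$, whereas if $\eta_{\mathfrak{s}} > \eta_z \vee \eta_w \geq \eta_w$ then $w \in \mathfrak{s}$. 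Hence every $\mathfrak{r}_j$ is contained in $\mathfrak{s}$ or disjoint from it, so $\underline{\mathfrak{s}} = \bigsqcup_{\mathfrak{r}_j \subseteq \mathfrak{s}} \underline{\mathfrak{r}}_j$ as multisets and therefore $\#\underline{\mathfrak{s}} = p \cdot \#\{\, j : \mathfrak{r}_j \subseteq \mathfrak{s} \,\}$, which is divisible by $p$. This is the step I expect to be the main obstacle: it is the one place where the disjointness of the arcs $\Sigma_{\mathfrak{r}_i}$ must genuinely be exploited to control cluster cardinalities.

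Assembling the pieces: if Properties (i) and (ii) hold, then (i) furnishes the partition and (ii) (since $\Lambda_i \subseteq B(\Lambda_i, \ptfrac)$) forces the $\Sigma_{\mathfrak{r}_i}$ to be pairwise disjoint, so $\underline{\mathcal{B}}$ is clustered in pairs; \Cref{prop clustered in pairs} and \Cref{prop connected components Sigma} then identify the $\Lambda_i$ with the connected components of $\Sigma_{\mathcal{B}}^*$, and (ii) is precisely their $\pfrac$-separation, so $\underline{\mathcal{B}}$ is clustered in $\pfrac$-separated pairs. Conversely, if $\underline{\mathcal{B}}$ is clustered in $\pfrac$-separated pairs, \Cref{prop connected components Sigma} and \Cref{prop clustered in pairs} supply a partition with $\#\mathfrak{r}_i = 2$ and $\infty \in \mathfrak{r}_0$, hence a labeling realizing (i), and (ii) is just the $\pfrac$-separation of the components $\Lambda_i = \Sigma_{\mathfrak{r}_i}$. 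Finally, in the tame case $\pfrac = 0$, so $B(\Lambda_i, 0) = \Lambda_i$ (each $\Sigma_{\mathfrak{r}_i}$ being closed in $\Hyp$), and being clustered in $0$-separated pairs adds to ``clustered in pairs'' only pairwise disjointness of the components of $\Sigma_{\mathcal{B}}^*$, which always holds; so the two notions coincide.
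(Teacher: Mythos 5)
Your proof is correct, and it follows the same overall route as the paper: translate Property (i) into the existence of a cardinality-$p$/cardinality-$2$ partition via the multiplicity of $\infty$, observe that under such a partition Property (ii) is verbatim the $\pfrac$-separation condition of Definition~\ref{dfn clustered in pairs berk}, and then connect back to Definition~\ref{dfn clustered in pairs clusters} via Proposition~\ref{prop clustered in pairs}. The place where you add genuine value is the ``no straddling'' argument showing that pairwise disjointness of the $\Sigma_{\mathfrak{r}_i}$ forces $p \mid \#\underline{\mathfrak{s}}_i$; the paper compresses this to the single sentence ``As the cardinalities $\dots$ are minimal $\dots$ the partition must be the one guaranteed by Proposition~\ref{prop connected components Sigma},'' which is correct but unjustified at the level of detail you provide. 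Your comparability argument with $\eta_z \vee \eta_w$ is sound and does exactly the work needed.

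One small wrinkle you glide past: when $\mathfrak{r}_j = \mathfrak{r}_0 = \{z_0, \infty\}$, your candidate straddler has $w = \infty$, and $\eta_z \vee \eta_\infty$ is not literally defined (Definition~\ref{dfn berk} only forms $\vee$ from discs and singletons in $\cc_K$). The case is still harmless — if $z_0 \in \mathfrak{s}$ then $\eta_{\mathfrak{s}} > \eta_{z_0}$ gives $\eta_{\mathfrak{s}} \in (\eta_{z_0}, \eta_\infty) = \Sigma_{\mathfrak{r}_0}$ directly, contradicting disjointness — but it is worth stating separately rather than folding into the $\vee$-comparability dichotomy. With that footnote added, the argument is airtight and, if anything, is a cleaner version of what the paper intends.
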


\begin{proof}
Writing $\mathfrak{r}_0 = \{z_0, w_0 := \infty\}, \mathfrak{r}_1 = \{z_1, w_1\}, \dots, \mathfrak{r}_h = \{z_h, w_h\}$, letting $m_i$ be the multiplicity of the root $z_i$ for $0 \leq i \leq h$, and noting that each $\Sigma_{\mathfrak{r}_i}$ is simply the path $[\eta_{z_i}, \eta_{w_i}]$, we see that being clustered in pairs implies property (i) of \Cref{thm main} and also implies mutual disjointness of the subspaces $\Lambda_i = \Sigma_{\mathfrak{r}_i}$; the $\pfrac$-separatedness property then implies property (ii).

Conversely, property (i) furnishes the partition $\underline{\mathcal{B}} = \underline{\mathfrak{r}}_0 \sqcup \dots \sqcup \underline{\mathfrak{r}}_h$, with each multiset $\underline{\mathfrak{r}}_i$ consisting of the element $z_i$ appearing with multiplicity $m_i$ and the element $w_i$ appearing with multiplicity $p - m_i$.  We thus have $\#\underline{\mathfrak{r}}_i = p$ for $0 \leq i \leq h$.  Moreover, property (ii) implies in particular that the subspaces $\Lambda_i = \Sigma_{\mathfrak{r}_i} \subset \Sigma_{\mathcal{B}}^*$ are mutually disjoint.  As the cardinalities of the multisets $\#\underline{\mathfrak{r}}_i$ are minimal with respect to the condition of being divisible by $p$ as in \Cref{prop connected components Sigma}, the partition $\underline{\mathcal{B}} = \underline{\mathfrak{r}}_0 \sqcup \dots \sqcup \underline{\mathfrak{r}}_h$ must be the one guaranteed by that proposition.  Now property (ii) directly implies that $\underline{\mathcal{B}}$ is clustered in $\pfrac$-separated pairs.
\end{proof}

\begin{proof}[Proof (of \Cref{thm main})]
Recall that the genus of $C$ is equal to $\frac{1}{2}(p - 1)(\#\mathcal{B} - 2)$.  Then if $C$ is potentially degenerate, we have (as in \Cref{rmk alternate definition of split degenerate}) the equation 
\begin{equation} \label{eq toric rank equals genus}
t(\SF{\Cmini}) = \tfrac{1}{2}(p - 1)(\#\mathcal{B} - 2) = (p - 1)(\#\mathcal{B}/2 - 1).
\end{equation}
Meanwhile, letting $\mathcal{B} = \mathfrak{r}_0 \sqcup \dots \sqcup \mathfrak{r}_h$ be the partition given by \Cref{prop connected components Sigma}, as we have $p \mid \#\underline{\mathfrak{r}}_i$ for $0 \leq i \leq h$ but each element of $\mathcal{B}$ appears with multiplicity $\leq p - 1$, it must be the case that $\#\mathfrak{r}_i \geq 2$ for $0 \leq i \leq h$.  Therefore, we have $h + 1 \leq \#\mathcal{B}/2$, with equality if and only if each set $\mathfrak{r}_i$ has cardinality $2$.  Combining this with (\ref{eq toric rank equals genus}) and with \Cref{cor bound on toric rank}, we get the inequalities 
\begin{equation} \label{eq squeezed between inequalities}
(p - 1)(\#\mathcal{B}/2 - 1) = t(\SF{\Cmini}) \leq (p - 1)h \leq (p - 1)(\#\mathcal{B}/2 - 1).
\end{equation}
The above inequalities are forced to be equalities, implying that we have $h + 1 = \#\mathcal{B}/2$, which means that we have $\#\mathfrak{r}_0 = \dots = \#\mathfrak{r}_i = 2$.  Now as multiplicities of elements in $\mathcal{B}$ are $\leq p - 1$, this forces $\#\underline{\mathfrak{r}}_0 = \dots = \#\underline{\mathfrak{r}}_h = p$.  Thus, we have $h + 1 = \#\underline{\mathcal{B}} / p$, which according to \Cref{dfn clustered in pairs berk}, means that $\underline{\mathcal{B}}$ is clustered in $p$-multisets and in fact clustered in pairs as each $\mathfrak{r}_i$ has cardinality $2$ as a set.

We have thus shown that being potentially degenerate implies the clustered in pairs property; we now assume for the moment that $K$ has residue characteristic not $p$ and show under this condition that the converse holds.  Meanwhile, \Cref{cor bound on toric rank} tells us that we have $t(\SF{\Cmini}) = (p - 1)h$.  The hypothesis that $\underline{\mathcal{B}}$ is clustered in pairs implies that $h + 1 = \#\mathcal{B}/2$.  Putting this together, we get 
\begin{equation}
t(\SF{\Cmini}) = (p - 1)h = (p - 1)(\#\mathcal{B}/2 - 1).
\end{equation}
As the expression on the right is the formula for the genus, the curve $C$ is potentially degenerate.  This completes the proof of \Cref{thm main} in the tame case.

Now suppose that we are in the wild case, \textit{i.e.} that the residue characteristic of $K$ is $p$.  Assume that $\underline{\mathcal{B}}$ is clustered in pairs (which we have shown to be implied by potential degneracy); this gives us $h + 1 = \#\mathcal{B}/2$.  For $1 \leq i \leq h$, let $\xi_i$ be the maximal point of the connected component $\Sigma_{\mathfrak{r}_i} \subset \Sigma_{\mathcal{B}}^*$ (see \Cref{prop highest point of Lambda}(a)), and let $\hat{\xi}_i$ be the (unique) point $> \xi_i$ with $\delta(\hat{\xi}_i, \xi_i) = \pfrac$; note that we have $\hat{\xi}_i \in (\xi_i, \eta_\infty) \subset \Sigma_{\mathcal{B}}$ and in fact $\hat{\xi}_i \in B(\Sigma_{\mathfrak{r}_i}, \pfrac)$.  Moreover, for each $i$, the point $\hat{\xi}_i$ is the (unique) maximal point of $B(\Sigma_{\mathfrak{r}_i}, \pfrac)$: indeed, for any point $\eta \in B(\Sigma_{\mathfrak{r}_i}, \pfrac)$, we have $\eta \vee \xi_i \in [\hat{\xi}_i, \xi_i]$ (we have $\eta \vee \xi_i  \geq \xi_i$ and $\eta \vee \xi_i \in [\eta, \xi_i]$, a path in the connected space $B(\Sigma_{\mathfrak{r}_i}, \pfrac)$).

Now from \Cref{cor bound on toric rank}, we get 
\begin{equation}
t(\SF{\Cmini}) \leq (p - 1)h = (p - 1)(\#\mathcal{B}/2 - 1).
\end{equation}
As the expression on the right is the formula for the genus, to complete the proof of \Cref{thm main} in the wild case, our task is to show that we have $t(\SF{\Cmini}) = (p - 1)h$ if and only if we have the $\pfrac$-separateness property (\textit{i.e.} $B(\Sigma_{\mathfrak{r}_i}, \pfrac) \cap B(\Sigma_{\mathfrak{r}_j}, \pfrac) = \varnothing$ for $i \neq j$).  As in the proof of \Cref{cor bound on toric rank}, the equality $t(\SF{\Cmini}) = (p - 1)h$ is equivalent to the one-to-one function $\Phi$ also being onto, where $\Phi$ takes a non-maximal boundary point $\eta \in \Upsilon$ to a component $\Sigma_{\mathfrak{r}_i} \not\ni \eta_\infty$ such that we have $\eta > \xi_i$ and that $\mathfrak{t}_f$ takes values beginning at $\pfrac$ and decreasing to $0$ on $[\eta, \xi_i]$.

As $\underline{\mathcal{B}}$ is clustered in pairs, \Cref{cor t_f formula} gives us an explicit formula for $\mathfrak{t}_f$ on $\Hyp$ which implies, in particular, that along segments of paths on which $\mathfrak{t}_f$ takes values $< \pfrac$, its slope is $\pm 1$.  One deduces from this that, in order for $\Phi$ to take a non-maximal boundary point $\eta \in \Upsilon$ to a component $\Sigma_{\mathfrak{r}_i}$ for some $i \in \{1, \dots, h\}$, we must have $\eta = \hat{\xi}_i$, so that the function $\mathfrak{t}_f$ on $[\eta = \hat{\xi}_i, \xi_i]$ is equal to $\mathfrak{t}_{\mathfrak{r}_i}$ and linear with slope $-1$ (going from $\mathfrak{t}_f(\hat{\xi}_i) = \mathfrak{t}_{\mathfrak{r}_i}(\hat{\xi}_i) = \pfrac$ to $\mathfrak{t}_f(\xi_i) = \mathfrak{t}_{\mathfrak{r}_i}(\xi_i) = 0$).  Therefore, the function $\Phi$ is onto if and only if the points $\hat{\xi}_1, \dots, \hat{\xi}_h \in \Sigma_{\mathcal{B}}$ are all distinct non-maximal boundary points of $\Upsilon$.

Suppose that $\underline{\mathcal{B}}$ is clustered in $\pfrac$-separated pairs, so that we have $B(\Sigma_{\mathfrak{r}_i}, \pfrac) \cap B(\Sigma_{\mathfrak{r}_j}, \pfrac) = \varnothing$ for indices $i \neq j$.  Fix an index $i \in \{1, \dots, h\}$.  Since we clearly have $[\hat{\xi}_i, \xi_i] \subset B(\Sigma_{\mathfrak{r}_i}, \pfrac)$, we have $[\hat{\xi}_i, \xi_i] \cap B(\Sigma_{\mathfrak{r}_j}, \pfrac) = \varnothing$ for each $j \neq i$.  \Cref{thm t_f formula one component}(d) implies that $\mathfrak{t}_{\mathfrak{r}_j}$ is identically $\pfrac$ away from $B(\Sigma_{\mathfrak{r}_j}, \pfrac)$, and so by \Cref{cor t_f formula}, on $[\hat{\xi}_i, \xi_i]$ we have $\mathfrak{t}_f = \mathfrak{t}_{\mathfrak{r}_i}$, and thus this function takes the value $\pfrac$ on $\hat{\xi}_i$ and decreases along $[\hat{\xi}_i, \xi_i]$.  Therefore, the point $\hat{\xi}_i$ is a non-maximal boundary point of $\Upsilon$, and since $i$ was chosen arbitrarily, the function $\Phi$ is onto.

Now suppose that $\underline{\mathcal{B}}$ is not clustered in $\pfrac$-separated pairs, so that for some indices $i \neq j$, there exists a point $\eta \in B(\Sigma_{\mathfrak{r}_i}, \pfrac) \cap B(\Sigma_{\mathfrak{r}_j}, \pfrac)$.  Assume for the moment that we have $i, j \neq 0$.  As $\hat{\xi}_i$ and $\hat{\xi}_j$ are the respective maximal points of $B(\Sigma_{\mathfrak{r}_i}, \pfrac)$ and $B(\Sigma_{\mathfrak{r}_j}, \pfrac)$, we have $\hat{\xi}_i, \hat{\xi}_j \geq \eta$, from which it follows that $\hat{\xi}_i \in [\hat{\xi}_j, \eta]$ or $\hat{\xi}_j \in [\hat{\xi}_i, \eta]$.  Without loss of generality, we assume the latter.  If $\hat{\xi}_j = \hat{\xi}_i$, then not all of the points $\hat{\xi}_1, \dots, \hat{\xi}_h$ are distinct and $\Phi$ is thus not onto.  If instead we have $\hat{\xi}_j \in (\hat{\xi}_i, \eta]$, then, as $\eta$ lies in a path from (but not including) an endpoint of distance $\pfrac$ from $\Sigma_{\mathfrak{r}_i}$ to an endpoint in $\Sigma_{\mathfrak{r}_i}$, using \Cref{cor t_f formula} we get 
\begin{equation}
\mathfrak{t}_f(\hat{\xi}_j) \leq \mathfrak{t}_{\mathfrak{r}_i'}(\hat{\xi}_j) = \delta(\hat{\xi}_j, \Sigma_{\mathfrak{r}_i}) < \pfrac,
\end{equation}
 implying that $\hat{\xi}_j \notin \Upsilon$ and that $\Phi$ is not onto.

Finally, in the case that $i = 0$, we still have $\hat{\xi}_j \geq \eta$ and thus $\hat{\xi}_j \in [\eta, \eta_\infty]$.  As $[\eta, \eta_\infty]$ is a path from a point in $B(\Sigma_{\mathfrak{r}_0}, \pfrac)$ to a point in $\Sigma_{\mathfrak{r}_0}$, with the help of \Cref{cor t_f formula} we get 
\begin{equation}
\mathfrak{t}_f(\eta') \leq \mathfrak{t}_{\mathfrak{r}_0'}(\eta') = \delta(\eta', \Sigma_{\mathfrak{r}_0}) < \pfrac.
\end{equation}
 for any $\eta' \in (\eta, \eta_\infty)$.  We thus have $(\hat{\xi}_j, \eta_\infty) \cap \Upsilon = \varnothing$, so that the point $\hat{\xi}_j$, if in $\Upsilon$ at all, cannot be a maximal boundary point of $\Upsilon$.  In either situation, we have shown that $\Phi$ is not onto.
\end{proof}

\begin{rmk} \label{rmk no non-isolated boundary points}

If $C$ is potentially degenerate, then there are no isolated (boundary) points of $\Upsilon \subset \Sigma_{\mathcal{B}}$.  In the tame case, this follows from the fact that $\Upsilon = \Sigma_{\mathcal{B}}^0$ is the complement of a subspace of $\Sigma_{\mathcal{B}}$ whose components are closed.  In the wild case, note that the definition of the function $\Phi$ in the proof of \Cref{cor bound on toric rank}, as well as the argument that it is one-to-one, applies not only to the set of non-maximal boundary points of $\Upsilon$ (the domain given for $\Phi$) but to the set of points $\eta = \eta_D \in U \subset \Upsilon$ (where $U$ is a connected component) with $(\eta, \eta_z) \cap U = \varnothing$ for some $z \in \mathcal{B} \smallsetminus D$.  Then one sees from the proof of \Cref{thm main} that, in the potentially degenerate case, where the map $\Phi$ is shown to be a bijection, these two domains on which the one-to-one function $\Phi$ may be defined must coincide; this implies in particular the non-existence of isolated points of $\Upsilon$.

\end{rmk}

\section{Building split degenerate models} \label{sec building models}

In this section we show how to actually construct the minimal regular model $\Cmini$ of a potentially degenerate $p$-cyclic cover $C \to X := \proj_K^1$ over an extension as given in \Cref{prop persistent nodes} (or more directly, how to construct its quotient $\Xmini$ by the $p$-cyclic Galois group), and we specify what this field extension should be, in \Cref{thm construction of mini} below; this is the goal of \S\ref{sec building models result}.  We then apply this (in \S\ref{sec building models low genus}) to classify, in the cases of genus $1$ and $2$, the possible structures of the \emph{stable model} of $C$.

In the tame setting, without the potential degeneracy hypothesis, variations of the construction presented in this section have appeared as results in the literature: when $p = 2$ and $K$ has residue characteristic $\neq 2$, a regular semistable model of $C$ is explicitly constructed in \cite[\S4,5]{dokchitser2022arithmetic}, and the stable model of $C$ as a \emph{marked} curve is explicitly constructed by Gehrunger and Pink in \cite{gehrunger2021reduction}; meanwhile, for general $p$ under the assumption that $K$ has residue characteristic $\neq p$ (or any exponent $n \geq 2$ in the standard superelliptic equation not divisible by the residue characteristic), the stable model of the marked curve $C$ is constructed in \cite[\S4]{bouw2017computing}.  Where the wild setting is concerned, strategies for the construction of certain semistable models of $C$ have been developed and explicilty implemented for genus $2$ by the author in collaboration with Leonardo Fiore in \cite{fiore2023clusters} and independently by Gehrunger and Pink (treating $C$ as a marked curve) in \cite{gehrunger2024reduction}.

It is interesting to note that in the wild setting, the construction of a semistable model of a $p$-cyclic cover $C \to \proj_K^1$ and the structure of its special fiber is generally not determined by its cluster data (as it is in the tame setting), but when the curve is potentially degenerate, the results of this section show that it is determined by its cluster data even in the wild case.

\subsection{Explicit construction of the minimal regular model} \label{sec building models result}

Throughout this subsection, we recall the spaces $\Upsilon \subseteq \Sigma_{\mathcal{B}}^0 \subset \Sigma_{\mathcal{B}}$ and $\Sigma_{\mathcal{B}}^* \subset \Sigma_{\mathcal{B}}$ given in Definitions \ref{dfn Sigma} and \ref{dfn Sigma^*} and defined at the top of \S\ref{sec proof of main}; we also assume that our ground field $K$ contains a primitive $p$th root of unity $\zeta_p$ and that we have $\mathcal{B} \smallsetminus \{\infty\} \subset K$, noting that this implies that the depth and relative depth of each cluster lie in $v(K^\times)$.  These hypotheses in particular allow us to claim the following.

\begin{lemma} \label{lemma boundary point of Upsilon is a K-point}

Assume that $C$ is potentially degenerate.  Let $\eta_{D_{\alpha, b}}$ be a boundary point of the subspace $\Upsilon \subset \Hyp$.  Then we have $b \in v(K^\times)$.

\end{lemma}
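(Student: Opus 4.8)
The plan is to characterize the boundary points of $\Upsilon$ explicitly and then read off the depth from that characterization. First I would invoke \Cref{rmk no non-isolated boundary points}, which (under the potential degeneracy hypothesis) tells us that there are no isolated points of $\Upsilon$; hence every boundary point $\eta = \eta_{D_{\alpha,b}}$ is a non-isolated boundary point, and in the wild case it is one of the non-maximal boundary points $\hat{\xi}_i$ or the maximal point $\xi$ of some connected component $U$ of $\Upsilon$. In the tame case, $\Upsilon = \Sigma_{\mathcal{B}}^0$, and by \Cref{prop boundary points} together with \Cref{prop non-maximal boundary points}'s analysis, every boundary point of $\Sigma_{\mathcal{B}}^0$ is either the maximal point of a connected component of $\Sigma_{\mathcal{B}}^*$ (which by \Cref{prop highest point of Lambda}(a) is $\eta_{\mathfrak{s}_i}$ for some cluster $\mathfrak{s}_i$) or the point $\eta_{\mathfrak{c}}$ for some cluster $\mathfrak{c}$ with $p \nmid \#\underline{\mathfrak{c}}$ whose parent $\mathfrak{s}$ has $p \mid \#\underline{\mathfrak{s}}$. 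In all of these cases the point is $\eta_{\mathfrak{s}}$ for some cluster $\mathfrak{s} \subset \mathcal{B}$, so $b = d(\mathfrak{s})$, and by our standing hypothesis that $\mathcal{B}\smallsetminus\{\infty\} \subset K$ we have $d(\mathfrak{s}) \in v(K^\times)$, as desired.

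The only remaining case to handle is that of a non-maximal boundary point in the wild setting which is not of the form $\eta_{\mathfrak{s}}$. From the proof of \Cref{thm main}, when $C$ is potentially degenerate the multiset $\underline{\mathcal{B}}$ is clustered in $\ptfrac$-separated pairs, and the one-to-one correspondence $\Phi$ is onto: every non-maximal boundary point of $\Upsilon$ is exactly one of the points $\hat{\xi}_i$ for $i \in \{1, \dots, h\}$, where $\xi_i = \eta_{\mathfrak{s}_i}$ is the maximal point of $\Sigma_{\mathfrak{r}_i}$ and $\hat{\xi}_i$ is the unique point above $\xi_i$ with $\delta(\hat{\xi}_i, \xi_i) = \ptfrac$. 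So the task reduces to showing $\hat{\xi}_i$ corresponds to a disc of depth in $v(K^\times)$. Since $\xi_i = \eta_{\mathfrak{s}_i}$ has depth $d(\mathfrak{s}_i) \in v(K^\times)$ (again using $\mathcal{B}\smallsetminus\{\infty\}\subset K$), and since $\hat{\xi}_i$ lies on the path $(\xi_i, \eta_\infty)$ at distance exactly $\ptfrac$ above $\xi_i$, the depth of the disc corresponding to $\hat{\xi}_i$ is $d(\mathfrak{s}_i) - \ptfrac$. I would then observe that $\ptfrac = \frac{p\,v(p)}{p-1}$; since $K$ contains a primitive $p$th root of unity $\zeta_p$ and has residue characteristic $p$, the ramification of $K/\qq_p$ (or of the relevant local field) forces $\frac{v(p)}{p-1} \in v(K^\times)$ — indeed $v(\zeta_p - 1) = \frac{v(p)}{p-1}$, so $\frac{pv(p)}{p-1} = p \cdot v(\zeta_p-1) \in v(K^\times)$. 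Hence $d(\mathfrak{s}_i) - \ptfrac \in v(K^\times)$, completing this case.

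The main obstacle I anticipate is being careful about the case analysis: making sure that \emph{every} boundary point of $\Upsilon$ (in both the tame and wild cases, and including maximal points of components versus non-maximal ones) really is accounted for by one of the explicit descriptions above, rather than some stray point. The cleanest route is to cite \Cref{prop boundary points} for the tame/general structure of $\Sigma_{\mathcal{B}}^0$-boundary points and the proof of \Cref{thm main} (via $\Phi$ being a bijection, which uses potential degeneracy crucially) for the wild non-maximal boundary points, and to note that maximal points of connected components of $\Upsilon$ are always of the form $\eta_{\mathfrak{s}}$ for a cluster $\mathfrak{s}$ — for a maximal point $\eta = \eta_D$ of a component $U$, \Cref{prop boundary points}(b) gives $p \nmid \#\underline{\mathfrak{s}}$ where $\mathfrak{s} = D\cap\mathcal{B}$, but then since $\eta\in\Upsilon\subseteq\Sigma_{\mathcal{B}}^0$ the first statement of \Cref{prop boundary points} forces $p \mid \#\underline{\mathfrak{c}}_i$ for some child $\mathfrak{c}_i$, whence $s \geq 2$ and $\eta = \eta_{\mathfrak{s}}$ by \Cref{prop removing a point from convex hull}; wait — more directly, the relevant point is that in the wild case the maximal point of a component of $\Upsilon$ is $\hat{\xi}_i$ shifted... actually here it is simplest to argue: for \emph{any} boundary point $\eta = \eta_{D_{\alpha,b}}$, either $\eta = \eta_{\mathfrak{s}}$ for a cluster (so $b = d(\mathfrak{s}) \in v(K^\times)$) or $\eta = \hat{\xi}_i$ for some $i$ (so $b = d(\mathfrak{s}_i) - \ptfrac \in v(K^\times)$), and both possibilities land in $v(K^\times)$. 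So once the classification is pinned down the arithmetic is routine, and the genuine content is the classification, which is entirely supplied by the cited results.
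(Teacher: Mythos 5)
Your proposal takes a genuinely different route from the paper's, and it has a real gap in the wild case.

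The paper's proof is unified: it invokes \Cref{cor t_f formula} (which applies once we know $\underline{\mathcal{B}}$ is clustered in pairs, guaranteed by potential degeneracy via \Cref{thm main}) to show that any boundary point $\eta$ of $\Upsilon$ lies at distance exactly $\ptfrac$ from the unique nearest point $\xi \in \Sigma_{\mathcal{B}}^*$, and then argues -- by examining the position of $\xi$ within its connected component of $\Sigma_{\mathcal{B}}^*$ -- that $\xi$ must be a vertex of $\Sigma_{\mathcal{B}}$, hence $\xi = \eta_{\mathfrak{s}}$ for a cluster $\mathfrak{s}$ and $b = d(\mathfrak{s}) \pm \ptfrac \in v(K^\times)$. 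Your proposal instead tries to classify the boundary points of $\Upsilon$ directly, splitting into tame/wild and maximal/non-maximal cases and invoking the bijection $\Phi$ from the proof of \Cref{thm main}.

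The gap is in your final assertion that every boundary point of $\Upsilon$ is either a vertex $\eta_{\mathfrak{s}}$ or one of the points $\hat{\xi}_i$. This is not true, and you yourself seem to sense the problem (``wait --- more directly, the relevant point is...''). The bijection $\Phi$ only accounts for the \emph{non-maximal} boundary points; the maximal boundary point of a connected component of $\Upsilon$ is a separate matter. Your argument for these via \Cref{prop boundary points} does not apply cleanly in the wild case because $\Upsilon = \mathfrak{t}_f^{-1}(\ptfrac) \cap \Sigma_{\mathcal{B}}^0$ is a \emph{proper} subset of $\Sigma_{\mathcal{B}}^0$, so the maximal point of a component of $\Upsilon$ need not be a boundary point of $\Sigma_{\mathcal{B}}^0$ at all: moving upward from it may simply make $\mathfrak{t}_f$ drop below $\ptfrac$ while remaining inside $\Sigma_{\mathcal{B}}^0$. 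Such a point $\eta$ lies at distance exactly $\ptfrac$ \emph{below} (or to the side of) a component of $\Sigma_{\mathcal{B}}^*$, and is in general neither a vertex $\eta_{\mathfrak{s}}$ nor of the form $\hat{\xi}_i$. For a concrete example with $p=2$, take $\mathcal{B} = \{z_0, z_1, w_1, \infty\}$ with $v(z_0 - z_1) = v(z_0 - w_1) = 0$ and $v(z_1 - w_1) = 5v(2)$: then $\Upsilon$ is the segment on $(\eta_{\mathcal{B}\smallsetminus\{\infty\}}, \eta_{\{z_1,w_1\}})$ between depths $2v(2)$ and $3v(2)$, and the maximal boundary point sits at depth $2v(2)$, which is neither a cluster depth nor $\hat{\xi}_1$ (the latter being at depth $3v(2)$). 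The correct unifying observation, which the paper's proof makes, is that the nearest point $\xi$ of $\Sigma_{\mathcal{B}}^*$ to \emph{any} boundary point is a vertex of $\Sigma_{\mathcal{B}}$ (whether $\eta$ sits above $\xi$, below $\xi$, or off to the side), and that suffices. You would need to supply that argument to close the gap.
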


\begin{proof}
By \Cref{thm main} (together with \Cref{lemma properties (i) and (ii) of main theorem}), the multiset $\underline{\mathcal{B}}$ is clustered in pairs.  It then follows from this and from \Cref{cor t_f formula} that the boundary point $\eta := \eta_{D_{\alpha, b}} \in \Upsilon$ lies at a distance of exactly $\pfrac$ from the unique nearest point in $\Sigma_{\mathcal{B}}^*$ to it, which we denote by $\xi$, and that we have $(\eta, \xi) \cap \Upsilon = \varnothing$.  If we have $\eta \vee \xi > \xi$, then as this implies $\eta \vee \xi \in (\eta, \xi)$, we get that $\xi$ is the maximal point of its connected component of $\Sigma_{\mathcal{B}}^*$ and is thus a vertex of $\Sigma_{\mathcal{B}}$ by \Cref{prop connected components Sigma} combined with \Cref{prop highest point of Lambda}.  If instead we have $\xi > \eta$, then as $\xi$ lies in the interior of some path contained in $\Sigma_{\mathcal{B}}^*$ (namely a connected component, which is itself an open path) and as the path $[\eta, \xi]$ intersects $\Sigma_{\mathcal{B}}^*$ only at the point $\xi$, this point is again a vertex of $\Sigma_{\mathcal{B}}$.

By \Cref{prop removing a point from convex hull}, any vertex is of the form $\eta_{\mathfrak{s}} = \eta_{D_{\alpha, d(\mathfrak{s})}}$ for some cluster $\mathfrak{s} \ni \alpha$.  Since we have $\mathfrak{s} \subset K$, we have $d(\mathfrak{s}) \in v(K^\times)$.  Now as $\delta(\eta, \xi) = \pfrac \in v(K^\times)$ (because $v(1 - \zeta_p) = \frac{v(p)}{p}$), it follows from the definition of the distance function that we have $b \in v(K^\times)$.
\end{proof}

We recall that in \S\ref{sec normalizations p-cyclic covers}, given the smooth model $\mathcal{X}_D$ of $X$ corresponding to any disc $D \subset \bar{K}$, we produced explicit equations for its normalization $\mathcal{C}_D$ in the function field of $C$.  When $\alpha \in \bar{K}$ and $\beta \in \bar{K}^\times$ are chosen such that $D = D_{\alpha, v(\beta)}$, let us write $\mathcal{X}_{\alpha, \beta}, \mathcal{C}_{\alpha, \beta}$ for the respective models $\mathcal{X}_D, \mathcal{C}_D$.  The following lemma describes a field extension of $K$ over which the model $\mathcal{C}_{\alpha, \beta}$ is defined.

\begin{lemma} \label{lemma field of definition}

Assume that $C$ is potentially degenerate.  Choose $\alpha \in \mathcal{B} \smallsetminus \{\infty\}$ and $\beta \in \bar{K}^\times$ such that we have $\eta := \eta_{D_{\alpha, v(\beta)}} \in \Sigma_{\mathcal{B}}$.  Let $\mathfrak{s} = D_{\alpha, v(\beta)} \cap \mathcal{B}$ be the cluster associated to $\eta$ as in \Cref{prop removing a point from convex hull}.  Let $K'_\alpha / K$ be the extension of $K$ obtained by
\begin{itemize}
\item adjoining an element of $\bar{K}$ with valuation equal to $\frac{1}{p} \delta(\mathfrak{s})$ for each cluster $\mathfrak{c} \supseteq D_{\alpha, v(\beta)} \cap \mathcal{B}$ with $p \nmid \#\underline{\mathfrak{c}}$; and, 
\item if the residue characteristic of $K$ is $p$, adjoining $p$th roots of the elements 
\begin{equation} \label{eq elements to adjoin pth roots of}
(\alpha - z_0)^{m_0}, (\alpha - z_1)^{m_1} (\alpha - w_1)^{p-m_1}, \dots, (\alpha - z_h)^{m_h} (\alpha - w_h)^{p-m_h},
\end{equation}
where the roots $z_0, z_1, w_1, \dots, z_h, w_h$ and the integers $m_0, \dots, m_h$ are as in (\ref{eq superelliptic degenerate model}).
\end{itemize}

The model $\mathcal{C}_{\alpha, \beta}$ is defined over $K'_\alpha(\beta)$ if and only if we have 
\begin{enumerate}[(i)]
\item $\eta \in \Upsilon$, or 
\item $\eta \in \Sigma_{\mathcal{B}}^0 \smallsetminus \Upsilon$ and $\frac{1}{p} \delta(\eta, \Sigma_{\mathcal{B}}^*) \in v(K'_\alpha(\beta)^\times)$, or 
\item $\eta \in \Sigma_{\mathcal{B}}^* \smallsetminus \Sigma_{\mathcal{B}}^0$ and $\frac{1}{p} \delta(\eta, \eta_{\mathfrak{s}}) \in v(K'_\alpha(\beta)^\times)$, 
\end{enumerate}
and otherwise is defined over a degree-$p$ extension of $K'_\alpha(\beta)$.

\end{lemma}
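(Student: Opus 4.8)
\emph{Overall strategy.} I would argue directly from the explicit presentation of $\mathcal{C}_{\alpha,\beta}$ in \S\ref{sec normalizations p-cyclic covers}: this model is glued from the affine charts $W,W^\vee$ cut out by the equations in (\ref{eq non-reduced normalization}) and (\ref{eq rear chart}). Reading those equations off, $\mathcal{C}_{\alpha,\beta}$ descends to a subextension $L/K$ of $K'$ as soon as $L$ contains a scalar of valuation $v(\beta)$ (which may then replace $\beta$), a good part-$p$th-power decomposition $f_{\alpha,\beta}=q_{\alpha,\beta}^p+\rho_{\alpha,\beta}$ over $L$, and a $p$th root of some $\gamma\in L^\times$ with $v(\gamma)=\mathfrak{t}_f(\eta)+\vfun_f(\eta)$ (here I use that $v(\gamma)=\min\{t,\ptfrac\}+v(f_{\alpha,\beta})$ in the construction, that $\min\{t,\ptfrac\}=\mathfrak{t}_f(\eta)$ by \Cref{dfn t_f}, that $v(f_{\alpha,\beta})=\vfun_f(\eta)$ by \Cref{prop t_f well defined}(a), and the hypothesis $\zeta_p\in K$). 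Conversely, since $\mathcal{C}_{\alpha,\beta}$ is the normalization of $\mathcal{X}_{\alpha,\beta}$ in $K'(C)$ by \Cref{prop equations for normalization} and its reduced special fiber is governed by \Cref{prop normalization}, any field of definition must carry the coordinate of $\mathcal{X}_{\alpha,\beta}$, support a good decomposition, and contain $\gamma^{1/p}$; the only element that might fail to already live in $K'_\alpha(\beta)$ is $\gamma^{1/p}$, whose adjunction is of degree $\leq p$, producing the ``degree-$p$ extension'' clause. So the lemma reduces to two assertions: (A) $f_{\alpha,\beta}$ admits a good decomposition over $K'_\alpha(\beta)$, and (B) $K'_\alpha(\beta)$ contains a suitable $\gamma^{1/p}$ precisely when one of (i)--(iii) holds.

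\emph{Step (A).} By \Cref{thm main} (with \Cref{lemma properties (i) and (ii) of main theorem}), $\underline{\mathcal{B}}$ is clustered in pairs, so that $f=(x-z_0)^{m_0}\prod_{i=1}^h(x-z_i)^{m_i}(x-w_i)^{p-m_i}$ with all $z_i,w_i\in K$. By \Cref{cor mathfrakt minimum} it suffices to decompose each factor's $(\alpha,\beta)$-translate well over $K'_\alpha(\beta)$, and the proofs of \Cref{thm t_f formula one component} and \Cref{lemma t_f formula one component} (together with its $i=0$ analogue) exhibit such decompositions, each with $q$ a constant or a short polynomial: in the tame case $q=0$ works for every factor (so $K$ suffices); in the wild case the only non-$K$ ingredient, besides $\beta$ and elements of $K$, is a constant $p$th root of $(\alpha-z_i)^{m_i}(\alpha-w_i)^{p-m_i}$ for those $i\neq i_0$ with $D_{\alpha,v(\beta)}\cap\mathfrak{r}_i=\varnothing$, and these are exactly the $p$th roots adjoined by the second bullet defining $K'_\alpha$ (the $i=i_0$ factor and the factors whose pair lies in $D_{\alpha,v(\beta)}$ being decomposable already over $K(\beta)$). \Cref{cor mathfrakt minimum} then assembles a good decomposition of $f_{\alpha,\beta}$ over $K'_\alpha(\beta)$.

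\emph{Step (B), the crux.} By Gauss's lemma $\vfun_f(\eta)=\sum_{z\in\underline{\mathcal{B}}\smallsetminus\{\infty\}}\min\{v(\beta),v(z-\alpha)\}$, which I would group by the pairs $\mathfrak{r}_i$: a pair contained in $D:=D_{\alpha,v(\beta)}$ contributes $p\,v(\beta)$; a pair disjoint from $D$ contributes $v\big((\alpha-z_i)^{m_i}(\alpha-w_i)^{p-m_i}\big)$; and the pair containing $\alpha$ contributes $p\,v(\beta)$ when $D$ separates it and $m_{i_0}v(\beta)+(p-m_{i_0})d(\mathfrak{s}_{i_0})$ otherwise. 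Dividing by $p$ and combining with $\mathfrak{t}_f(\eta)$ via \Cref{cor t_f formula} and \Cref{thm t_f formula one component}(d) (noting $\tfrac1p\ptfrac=v(1-\zeta_p)\in v(K^\times)$), I expect $\tfrac1p v(\gamma)$ to equal an integer combination of $v(\beta)$, of the valuations $v\big((\alpha-z_i)^{m_i}(\alpha-w_i)^{p-m_i}\big)\in v(K^\times)$ (in the wild case, of their adjoined $p$th roots), and of $\tfrac1p\delta(\mathfrak{c})$ for clusters $\mathfrak{c}\supseteq\mathfrak{s}$ with $p\nmid\#\underline{\mathfrak{c}}$ — which, by \Cref{cor cardinality-p clusters}, are precisely the clusters arising as ``break discs'' between $\alpha$ and points outside $D$, and whose scalars' $p$th roots form the first bullet defining $K'_\alpha$ — plus a single remainder term. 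Matching the vanishing of that remainder against the characterization of $\Sigma_{\mathcal{B}}^*$, $\Sigma_{\mathcal{B}}^0$, $\Upsilon$ and their boundary points in \Cref{prop boundary points} (using \Cref{lemma boundary point of Upsilon is a K-point} to pin down $v(\beta)$ where needed), the remainder should be $0$ exactly when $\eta\in\Upsilon$ (case (i)), equal $\tfrac1p\delta(\eta,\Sigma_{\mathcal{B}}^*)$ when $\eta\in\Sigma_{\mathcal{B}}^0\smallsetminus\Upsilon$ (case (ii)), and equal $\tfrac1p\delta(\eta,\eta_{\mathfrak{s}})$ when $\eta\in\Sigma_{\mathcal{B}}^*\smallsetminus\Sigma_{\mathcal{B}}^0$ (case (iii)) — the trichotomy in the statement. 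If none of (i)--(iii) holds, $\tfrac1p v(\gamma)\notin v(K'_\alpha(\beta)^\times)$ and one more $p$th root (degree $p$) is needed; in each of (i)--(iii) one can moreover pick $\gamma$ to be an explicit monomial in the already-adjoined generators, so that $\gamma^{1/p}\in K'_\alpha(\beta)$ genuinely, not merely at the level of value groups.

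\emph{Main obstacle.} The hard part is the bookkeeping in Step (B): tracking which relative depths $\delta(\mathfrak{c})$ survive in $\tfrac1p\vfun_f(\eta)$ once the $p\mid\#\underline{\mathfrak{c}}$ contributions reorganize (via \Cref{cor cardinality-p clusters} and the clustered-in-pairs structure), and identifying the single leftover term with $\delta(\eta,\Sigma_{\mathcal{B}}^*)$ or $\delta(\eta,\eta_{\mathfrak{s}})$ across the three regions. A secondary, wild-only subtlety is that a valuation lying in the value group need not be $p$-divisible inside the field; this is circumvented by the freedom in choosing $\gamma$ as a monomial in the adjoined generators.
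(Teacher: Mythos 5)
Your overall strategy — show that (A) the good decomposition of $f_{\alpha,\beta}$ lives over $K'_\alpha(\beta)$ and (B) $\gamma^{1/p}$ lies in $K'_\alpha(\beta)$ precisely under (i)--(iii), with the final $p$th-root adjunction being the only remaining obstruction — is exactly the paper's plan, and your Step (A) matches the paper's argument (product decomposition via \Cref{cor mathfrakt minimum}, with $q_i = x_{\alpha,1} + ((\alpha-z_i)^{m_i}(\alpha-w_i)^{p-m_i})^{1/p}$ using the adjoined $p$th roots; the paper doesn't bother with your refinement of discarding the $p$th roots for pairs meeting $D_{\alpha,v(\beta)}$, but nothing is wrong there). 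You also correctly flag the value-group-vs-actual-$p$th-power subtlety and the monomial trick that resolves it.

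However, Step (B) is where the actual content of the lemma lies, and there you have a genuine gap: you say ``I expect $\tfrac1p v(\gamma)$ to equal an integer combination of \dots plus a single remainder term'' and ``the remainder should be $0$ exactly when \dots'' — this is a conjecture, not a proof. The bookkeeping you defer is precisely the work the paper does. Concretely, the paper starts from
\begin{equation*}
v(f_{\alpha,\beta}) = \#\underline{\mathfrak{s}}\, v(\beta) + \sum_{z\in\underline{\mathcal{B}}\smallsetminus\underline{\mathfrak{s}}} v(z-\alpha),
\end{equation*}
recognizes each $v(z-\alpha)$ as the depth of a cluster containing $\mathfrak{s}$, and then (in case $p\mid\#\underline{\mathfrak{s}}$) reorganizes the sum into $p$ times depths of $p$-divisible clusters plus relative depths of non-$p$-divisible clusters, the latter being handled by the first bullet of $K'_\alpha$; in case $\eta=\eta_{\mathfrak{s}}$ with $p\mid\#\underline{\mathfrak{c}}$ for a child $\mathfrak{c}\ni\alpha$, it re-centers the sum on $\mathfrak{c}$ and repeats. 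For case (iii), rather than direct bookkeeping, the paper makes a clean reduction: it expresses
\begin{equation*}
v(f_{\alpha,\beta}) = v(f_{\alpha,\beta'}) - \#\underline{\mathfrak{s}}\,\delta(\eta,\eta_{\mathfrak{s}})
\end{equation*}
(where $\eta_{D_{\alpha,v(\beta')}}=\eta_{\mathfrak{s}}$), observes that $\eta_{\mathfrak{s}}\in\Sigma_{\mathcal{B}}^0$ so the earlier case applies to $\beta'$, and then uses $p\nmid\#\underline{\mathfrak{s}}$ to convert the residual condition exactly into $\tfrac1p\delta(\eta,\eta_{\mathfrak{s}})\in v(K'_\alpha(\beta)^\times)$. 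Your pair-by-pair Gauss-lemma grouping is a legitimate alternative starting point, but it is not obviously cleaner, and you would still need to carry out the reorganization-into-$p$-divisible-pieces argument and the reduction of (iii) to the vertex $\eta_{\mathfrak{s}}$; as written, those two pieces are missing.
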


\begin{rmk} \label{rmk distance well defined}

Property (ii) in the lemma above can only occur in the wild case, and we note that the distance $\delta(\eta, \Sigma_{\mathcal{B}}^*)$ is well defined even though the subspace $\Sigma_{\mathcal{B}}^* \subset \Sigma_{\mathcal{B}}$ is not connected: \Cref{thm main} (with \Cref{lemma properties (i) and (ii) of main theorem}) says that $\underline{\mathcal{B}}$ is clustered in pairs, and then \Cref{cor t_f formula} implies that when $\mathfrak{t}_f(\eta) < \pfrac$, there is a unique component of $\Sigma_{\mathcal{B}}^*$ of minimal distance from $\eta$ (that distance equaling $\mathfrak{t}_f(\eta)$).

\end{rmk}

\begin{proof}[Proof (of \Cref{lemma field of definition})]
Recall the construction of $\mathcal{C}_{\alpha, \beta}$ given in \S\ref{sec normalizations p-cyclic covers}, which involves translating $y$ by the polynomial $q_{\alpha, \beta}(x_{\alpha, \beta})$ and scaling it by an element $\gamma^{1/p}$ whose $p$th power $\gamma$ satisfies $v(\gamma) = \mathfrak{t}_f(\eta) + v(f_{\alpha, \beta})$.  We first set out to show that, if chosen a certain way, the coefficients of the polynomial $q_{\alpha, \beta}$ (and thus also of $\rho_{\alpha, \beta}$) lie in $K'_\alpha(\beta)$.

By \Cref{thm main}, the partition $\mathcal{B} = \mathfrak{r}_0 \sqcup \dots \sqcup \mathfrak{r}_h$ given by \Cref{prop connected components Sigma} is given by $\mathfrak{r}_0 = \{z_0, w_0 := \infty\}$ and $\mathfrak{r}_i = \{z_i, w_i\}$, with $z_i$ (resp. $w_i$) having multiplicity $m_i$ (resp. $p - m_i$).  Write $f_0(x_{\alpha, 1}) = (x_{\alpha, 1} - (z_0 - \alpha))^{m_0}$ and $f_i(x_{\alpha, 1}) = (x_{\alpha, 1} - (z_i - \alpha))^{m_i} (x_{\alpha, 1} - (w_i - \alpha))^{p-m_i}$ for $1 \leq i \leq h$.  Let $q_0 = (z^{m_0})^{1/p}$; for $1 \leq i \leq h$, let $q_i(x_{\alpha, 1}) = x_{\alpha, 1} + ((\alpha - z_i)^{m_i} (\alpha - w_i)^{p-m_i})^{1/p}$; and for $0 \leq i \leq h$, let $\rho_i = f_i - q_i^p$ (where $(\cdot)^{1/p}$ indicates having chosen a $p$th root in $\bar{K}$ of $\cdot$).  It is clear that the decomposition $f_i = q_i^p + \rho_i$ is total for $0 \leq i \leq h$ and thus is good by \Cref{cor total is good}.  Then by \Cref{cor mathfrakt minimum}, the polynomials $q_{\alpha, 1} := \prod_{i = 0}^h q_i$ and $\rho_{\alpha, 1} := f_{\alpha, 1} - q_{\alpha, 1}^p$ form a good decomposition of $f_{\alpha, 1}$, and so we may take $q_{\alpha, \beta}(x_{\alpha, \beta}) = q_{\alpha, 1}(\beta x_{\alpha, \beta})$ and $\rho_{\alpha, \beta}(x_{\alpha, \beta}) = \rho_{\alpha, 1}(\beta x_{\alpha, \beta})$ for the polynomials used in the construction of $\mathcal{C}_{\alpha, \beta}$.  Thus, the coefficients of $q_{\alpha, \beta}$ lie in the field generated over $K$ by $\beta$ as well as the $p$th roots of the elements in (\ref{eq elements to adjoin pth roots of}), which is contained in $K'_\alpha(\beta)$.  In the case that the residue characteristic of $K$ is not $p$, on the other hand, we may take $q_{\alpha, \beta} = 0$ and still get that the coefficients of $q_{\alpha, \beta}$ and $\rho_{\alpha, \beta}$ lie in $K'_\alpha(\beta)$.

Now one can always choose $\gamma$ to be an element of $K'_\alpha(\beta)$, because what we have shown implies that the polynomials $f_{\alpha, \beta}, q_{\alpha, \beta}, \rho_{\alpha, \beta}$ all have coefficients in $K'_\alpha(\beta)$ and $\pfrac = v((1 - \zeta_p)^p) \in v(K^\times)$, and so we get $v(f_{\alpha, \beta}), \mathfrak{t}_f(\eta) = \min\{v(\rho_{\alpha, \beta}) - v(f_{\alpha, \beta}), \pfrac\} \in v(K'_\alpha(\beta)^\times)$.  As $\mathcal{C}_{\alpha, \beta}$ is defined over $K'_\alpha(\beta, \gamma^{1/p})$, it is defined over a degree-$p$ extension of $K'_\alpha(\beta)$, and it remains to show that under certain hypotheses this coincides with $K'_\alpha(\beta)$ (\textit{i.e.} we may choose $\gamma \in K'_\alpha(\beta)$ such that its $p$th roots also lie in $K'_\alpha(\beta)$, which is equivalent to saying that $\frac{1}{p}(\mathfrak{t}_f(\eta) + v(f_{\alpha, \beta})) \in K'_\alpha(\beta)$).

We use the multiplicativity of the Gauss valuation and the formula in (\ref{eq scaled and translated polynomial}) to get 
\begin{equation} \label{eq v(f_alpha,beta) formula}
v(f_{\alpha, \beta}) = \#\underline{\mathfrak{s}} v(\beta) + \sum_{z \in \underline{\mathcal{B}} \smallsetminus \underline{\mathfrak{s}}} v(z - \alpha),
\end{equation}
where the sum is taken with multiplicity, and where $\mathfrak{s} = D_{\alpha, v(\beta)} \cap \mathcal{B}$.  It is clear that the sum on the right-hand side of (\ref{eq v(f_alpha,beta) formula}) consists of terms which are each the depth of a cluster containing $\mathfrak{s}$.

Suppose that $\eta$ lies in the subspace $\Sigma_{\mathcal{B}}^0 \subset \Sigma_{\mathcal{B}}$ defined in \S\ref{sec preliminaries partitions}.  Then by \Cref{prop boundary points}, we have $p \mid \#\underline{\mathfrak{s}}$ or else we have $\eta = \eta_{\mathfrak{s}}$ and $p \mid \#\underline{\mathfrak{c}}$ for some cluster $\mathfrak{c}$ whose parent $\mathfrak{c}'$ is $\mathfrak{s}$.  In the former case, the first term on the right-hand side of (\ref{eq v(f_alpha,beta) formula}) is divisible by $p$ in $v(K'_\alpha(\beta)^\times)$, while the number of terms in the sum on the right-hand side of (\ref{eq v(f_alpha,beta) formula}) is divisible by $p$, and it is easy to see that this sum is equal to the sum of $p$ times the depths of some clusters containing $\mathfrak{s}$ of cardinality divisible by $p$ (as a multiset) and some relative depths of other clusters containing $\mathfrak{s}$ of cardinality not divisible by $p$ (as a multiset); as such relative depths are divisible by $p$ in $v(K'_\alpha(\beta)^\times)$ by hypothesis, we have $\frac{1}{p} v(f_{\alpha, \beta}) \in v(K'_\alpha(\beta)^\times)$ in this case.  In the latter case, we may assume that $\alpha \in \mathfrak{c} \subsetneq \mathfrak{s} = \mathfrak{c}'$, as the choice of $\alpha \in D_{\alpha, v(\beta)}$ does not affect $f_{\alpha, \beta}$ (by \Cref{prop t_f well defined}(a)), and on noting that we have $v(z - \alpha) = v(\beta)$ for all $z \in \mathfrak{s} \smallsetminus \mathfrak{c} = \mathfrak{c}' \smallsetminus \mathfrak{c}$, we may rewrite (\ref{eq v(f_alpha,beta) formula}) as 
\begin{equation}
v(f_{\alpha, \beta}) = \#\underline{\mathfrak{c}} v(\beta) + \sum_{z \in \underline{\mathcal{B}} \smallsetminus \underline{\mathfrak{c}}} v(z - \alpha);
\end{equation}
as $p \mid \#\underline{\mathfrak{c}}$, the same argument gives us $\frac{1}{p} v(f_{\alpha, \beta}) \in v(K'_\alpha(\beta)^\times)$.

Now if $\eta \in \Upsilon \subseteq \Sigma_{\mathcal{B}}^0$, then certainly we have $\frac{1}{p} \mathfrak{t}_f(\eta) = \frac{v(p)}{p-1} = v(1 - \zeta_p) \in v(K^\times) \leq v(K'_\alpha(\beta)^\times)$; therefore, one can find a choice of $\gamma^{1/p}$ as an element of $K'_\alpha(\beta)^\times$ with the appropriate valuation.  If instead we have $\eta \in \Sigma_{\mathcal{B}}^0 \smallsetminus \Upsilon$, then, as in \Cref{rmk distance well defined}, we have $\delta(\eta, \Sigma_{\mathcal{B}}^*) = \mathfrak{t}_f(\eta)$, and so again one can find a choice of $\gamma^{1/p} \in K'_\alpha(\beta)$ with the appropriate valuation just as long as we have $\frac{1}{p} \delta(\eta, \Sigma_{\mathcal{B}}^*) \in v(K'_\alpha(\beta)^\times)$.  Thus, the claim is proved under the assumption of propert (i) or (ii).

Finally, suppose that we have $\eta \in \Sigma_{\mathcal{B}}^* \smallsetminus \Sigma_{\mathcal{B}}^0$, so that $\mathfrak{t}_f(\eta) = 0$ by \Cref{cor t_f formula}.  Now \Cref{prop boundary points} implies that we have $p \nmid \#\underline{\mathfrak{s}}$.  We need to show that $\frac{1}{p}(\mathfrak{t}_f(\eta) + v(f_{\alpha, \beta})) = \frac{1}{p} v(f_{\alpha, \beta}) \in v(K'_\alpha(\beta)^\times)$ as long as $\frac{1}{p} \delta(\eta, \eta_{\mathfrak{s}}) \in v(K'_\alpha(\beta)^\times)$.  Let $\beta' \in K^\times$ satisfy $D_{\alpha, v(\beta')} = D_{\mathfrak{s}}$, or equivalently, $v(\beta') = d(\mathfrak{s}) \in v(K^\times)$.  It follows from \Cref{lemma t_f difference formula} and \Cref{lemma mathfrakv} that we have 
\begin{equation} \label{eq v(f_alpha,beta')}
v(f_{\alpha, \beta}) = v(f_{\alpha, \beta'}) - \#\underline{\mathfrak{s}} (v(\beta') - v(\beta)) = v(f_{\alpha, \beta'}) - \#\underline{\mathfrak{s}} \delta(\eta, \eta_{\mathfrak{s}}).
\end{equation}
The point $\eta_{\mathfrak{s}}$ is a vertex of $\Sigma_{\mathcal{B}}$ by \Cref{prop removing a point from convex hull}, which means that $\geq 3$ paths having $\eta_{\mathfrak{s}}$ as one endpoint whose pairwise intersections are $\{\eta_{\mathfrak{s}}\}$ are contained in $\Sigma_{\mathcal{B}}$.  At most $2$ of these paths can have non-singleton intersection with $\Sigma_{\mathcal{B}}^*$, as from \Cref{dfn clustered in pairs berk} we see that the connected component of $\Sigma_{\mathcal{B}}^*$ containing $\eta_{\mathfrak{s}}$ is itself a single path (passing through $\eta_{\mathfrak{s}}$).  It follows that we have $\eta_{\mathfrak{s}} \in \Sigma_{\mathcal{B}}^0$.  Since $\eta_{\mathfrak{s}}$ also lies in $\Sigma_{\mathcal{B}}^*$ by definition, we have $\frac{1}{p} \delta(\eta_{\mathfrak{s}}, \Sigma_{\mathcal{B}}^*) = 0 \in v(K'_\alpha(\beta)^\times)$, so property (i) or (ii) holds for $\eta_{D_{\alpha, v(\beta')}} = \eta_{\mathfrak{s}}$, and we have shown above that this implies $\frac{1}{p} v(f_{\alpha, \beta'}) \in v(K'_\alpha(\beta)^\times)$.  Then from (\ref{eq v(f_alpha,beta')}), we have $\frac{1}{p} v(f_{\alpha, \beta}) \in v(K'_\alpha(\beta)^\times)$ if and only if $\frac{1}{p} \#\underline{\mathfrak{s}} \delta(\eta, \eta_{\mathfrak{s}}) \in v(K'_\alpha(\beta)^\times)$.  As $p \nmid \#\underline{\mathfrak{s}}$ and $\delta(\eta, \eta_{\mathfrak{s}}) = v(\beta') - v(\beta) \in v(K_\alpha(\beta)^\times)$, this proves the claim under property (iii).
\end{proof}

\begin{thm} \label{thm construction of mini}

Assume that $C$ is potentially degenerate.  Let $\mathfrak{s}_1, \dots, \mathfrak{s}_h$ be the clusters given by \Cref{prop connected components Sigma}, and for $1 \leq j \leq h$, choose a representative $\alpha_j \in \mathfrak{s}_j$.  Let $K' / K$ be the extension of $K$ obtained by 
\begin{itemize}
\item adjoining an element of $\bar{K}$ with valuation equal to $\frac{1}{p} \delta(\mathfrak{s})$ for each cluster $\mathfrak{s}$ with $p \nmid \#\underline{\mathfrak{s}}$ or $\delta(\mathfrak{s}) < \pfrac$ (which requires adjoining at most one element, which will have degree $p$ over $K$, as we have $\delta(\mathfrak{s}) \in K^\times$ for all clusters $\mathfrak{s}$); and, 
\item if the residue characteristic of $K$ is $p$, adjoining $p$th roots of the elements in 
\begin{equation}
\{(\alpha_j - z_0)^{m_0}\}_{1 \leq j \leq h} \cup \{(\alpha_j - z_i)^{m_i} (\alpha_j - w_i)^{p-m_i}\}_{1 \leq i, j \leq h},
\end{equation}
where the roots $z_0, z_1, w_1, \dots, z_h, w_h$ and the integers $m_0, \dots, m_h$ are as in (\ref{eq superelliptic degenerate model}).
\end{itemize}

The minimal regular model $\Cmini$ of $C$ satisfying the properties given by \Cref{prop persistent nodes} is defined over $K'$ (in particular, the curve $C$ attains semistable reduction over $K'$), and it can be constructed as follows.  Let $\Xmini$ be the quotient of $\Cmini$ by the Galois group, and let $\Dmini$ be the collection of discs such that we have $\Xmini = \mathcal{X}_{\Dmini}$ as in \Cref{prop models of projective line}.  Then $\Dmini$ consists of the discs $D_{\alpha, b}$ which satisfy the following properties.
\begin{enumerate}[(i)]
\item The point $\eta_{D_{\alpha, b}}$ lies in the convex hull $\hat{\Upsilon} \subset \Hyp$ of the space $\Upsilon$.
\item We have $b \in v((K')^\times)$ if $\eta_{D_{\alpha, b}} \in \Upsilon$.
\item We have $\frac{1}{p} \delta(\eta_{D_{\alpha, b}}, \Sigma_{\mathcal{B}}^*) \in v((K')^\times)$ if $\eta_{D_{\alpha, b}} \in \Sigma_{\mathcal{B}}^0 \smallsetminus \Upsilon$.
\item We have $\frac{1}{p} \delta(\eta_{D_{\alpha, b}}, \eta_{\mathfrak{s}}) \in v((K')^\times)$ if $\eta_{D_{\alpha, b}} \in \Sigma_{\mathcal{B}}^* \smallsetminus \Sigma_{\mathcal{B}}^0$, where $\mathfrak{s} = D_{\alpha, b} \cap \mathcal{B}$.
\end{enumerate}

\end{thm}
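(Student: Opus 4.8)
The plan is to first use \Cref{prop models of projective line} to reduce the problem to identifying the collection $\Dmini$, i.e.\ the set of discs $D$ for which the smooth model $\mathcal{X}_D$ is dominated by $\Xmini$. By the lifting correspondence of \S\ref{sec preliminaries geometric Galois}, $\mathcal{X}_D \le \Xmini$ if and only if $\mathcal{C}_D \le \Cmini$, and by \Cref{prop part of mini} (applicable since $C/K'$ has positive genus and semistable reduction) the latter holds precisely when every component $V$ of $(\mathcal{C}_D)_s$ satisfies $m(V) = 1$ --- automatic by \Cref{prop equations for normalization} --- together with $a(V) \ge 1$ or $w(V) \ge 2$; moreover, since split degeneracy forces every component of $\SF{\Cmini}$ to have geometric genus $0$, it follows from \Cref{lemma inv m a w} that in fact $D \in \Dmini$ if and only if every component $V$ of $(\mathcal{C}_D)_s$ has $a(V) = 0$ and $w(V) \ge 2$. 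For the field-of-definition claims I would invoke \Cref{lemma field of definition} and \Cref{lemma boundary point of Upsilon is a K-point}: one checks that the field $K'$ built in the theorem contains each of the fields $K'_\alpha(\beta)$ appearing in \Cref{lemma field of definition} (using that $\mathcal{C}_D$ depends only on the disc $D$, so one is free to choose the center $\alpha$ among the fixed representatives $\alpha_1, \dots, \alpha_h$, or a finite root when $D$ contains one), so that conditions (ii)--(iv) are exactly the conditions under which $\mathcal{C}_D$ is defined over $K'$.

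Next I would prove the inclusion ``(i)--(iv) $\Rightarrow D \in \Dmini$''. If $\eta_{D_{\alpha,b}} \in \Upsilon$ with $b \in v((K')^\times)$, then after base change $\eta_{D_{\alpha,b}} \in \Upsilon_{K'}$, and since $\Upsilon$ has no isolated points in the potentially degenerate case by \Cref{rmk no non-isolated boundary points}, \Cref{cor relating nodes to Upsilon} gives $\mathcal{X}_D \le \Xmini$ directly. If $\eta_D \in \hat{\Upsilon} \smallsetminus \Upsilon$, then $\eta_D$ lies on one of the paths joining two connected components of $\Upsilon$ (or joining a component of $\Upsilon$ to a vertex of $\Sigma_{\mathcal{B}}$); by \Cref{cor t_f formula} one has $\mathfrak{t}_f(\eta_D) < \ptfrac$ in the wild case (and $\mathfrak{t}_f \equiv 0$ in the tame case), so by \Cref{prop normalization}(c) the special fiber $(\mathcal{C}_D)_s$ is an irreducible, geometrically rational curve, and using \Cref{lemma degrees of terms appearing} together with \Cref{prop normalization properties} one checks that its only singularities are the two unibranch points lying over the reductions of the two branch points that straddle $\eta_D$ along the skeleton, so $a(V) = 0$ and $w(V) = 2$. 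In either case \Cref{prop part of mini} gives $\mathcal{C}_D \le \Cmini$.

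For the reverse inclusion ``$D \in \Dmini \Rightarrow$ (i)--(iv)'' I would argue as follows. If the relevant rationality condition (ii)--(iv) fails, then $\mathcal{C}_D$ is only defined over a degree-$p$ extension of $K'$ by \Cref{lemma field of definition}, so it cannot be dominated by the $K'$-model $\Cmini$; and if $\eta_D \notin \Sigma_{\mathcal{B}}$, i.e.\ $D$ contains no branch point, then the argument of \Cref{prop nodes do not happen far away from branching} produces a component of $\SF{\Cmini}$ with $w = 1$, contradicting \Cref{prop part of mini}. It remains to rule out the case $\eta_D \in \Sigma_{\mathcal{B}} \smallsetminus \hat{\Upsilon}$. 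Here I would combine the local analysis of \Cref{prop normalization properties} with the exact toric-rank identity $t(\SF{\Cmini}) = (p-1)h$ established in the proof of \Cref{thm main}: by \Cref{prop toric rank formula} this forces $N_0 - N_1 = h$, and by \Cref{cor relating nodes to Upsilon} the components of $\SF{\Xmini}$ whose preimage is disconnected are exactly those corresponding to interior points of $\Upsilon$; a tree-combinatorial argument on the dual graph of $\SF{\Xmini}$ (using \Cref{cor crushed implies branch point} to locate the ramified components) then shows that any component with $\eta_D \in \Sigma_{\mathcal{B}} \smallsetminus \hat{\Upsilon}$ would be an extra ``dangling'' component forcing, via \Cref{prop part of mini}, a component with $w < 2$ --- a contradiction. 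Finally, once $\Dmini$ is identified, the assertion that $\Xmini = \mathcal{X}_{\Dmini}$ and that $\Cmini$ is its relative normalization follows from \Cref{prop models of projective line} and the discussion at the top of \S\ref{sec normalizations}.

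I expect the last step --- showing that $\Xmini$ has no components with $\eta_D$ outside $\hat{\Upsilon}$ --- to be the main obstacle, since it requires simultaneously controlling the local structure of the relative normalizations $\mathcal{C}_D$ near the reductions of the branch points and exploiting the sharp toric-rank count; everything else is bookkeeping built on the results of \S\ref{sec proof of main} and \S\ref{sec normalizations}.
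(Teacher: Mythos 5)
Your overall strategy — characterize $\Dmini$ by applying \Cref{prop part of mini} to each relative normalization $\mathcal{C}_D$ — is legitimate in principle, and it correctly identifies the two directions that need proving.  But there are genuine gaps, and the paper's proof is organized quite differently, precisely to sidestep them.

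First, the paper proves the theorem in two stages.  It first establishes a lemma (the role of your claimed characterization) over a \emph{larger} extension $K''/K'$ where $C$ is already known to have semistable reduction with the node-mapping property of \Cref{prop persistent nodes}, and only then shows that $\mathcal{C}_{\hat{\Upsilon}_{K'}}$ is already semistable over $K'$ with no $(-1)$-lines, by observing that over $K''$ the model $\mathcal{C}_{\hat{\Upsilon}_{K'}} \otimes \mathcal{O}_{K''}$ is obtained from the minimal regular model by contracting components meeting the rest at exactly $2$ nodes, and descending via \cite[Lemma 10.3.30(a)]{liu2002algebraic}.  You implicitly assume from the outset that $\Cmini$ exists over $K'$ with the node-mapping property, which is precisely one of the assertions to be proved.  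One cannot invoke \Cref{prop part of mini} over $K'$ until semistable reduction over $K'$ has been established.

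Second, your handling of the two directions has concrete gaps.  For the forward direction, for $\eta_D \in \hat{\Upsilon} \smallsetminus \Upsilon$ you assert that the only singularities of $(\mathcal{C}_D)_s$ are the two unibranch points straddling $\eta_D$, so $w(V) = 2$.  But in the wild inseparable case the special fiber is cut out by $y^p = \bar{\rho}_0(x)$, which is singular wherever $\bar{\rho}_0'(x) = 0$ — and nothing in \Cref{lemma degrees of terms appearing} or \Cref{prop normalization properties} controls the full singular locus away from $x_{\alpha,\beta} \in \{0, \infty\}$.  The paper avoids computing $w(V)$ of an arbitrary $\mathcal{C}_D$ directly; instead it argues that in the model $\Cmini$, the component lying over $V$ meets the rest at exactly the two nodes over $P_1, P_2$.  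For the converse direction, ruling out $\eta_D \in \Sigma_{\mathcal{B}} \smallsetminus \hat{\Upsilon}$ is where you admit uncertainty, and indeed your ``tree-combinatorial argument'' is not an argument.  The paper does this by a local analysis of \emph{boundary} points of $\hatHmini$: if $\eta_D$ is a boundary point of $\hatHmini$ corresponding to a component $V$ of $\SF{\Xmini}$ meeting the rest at a unique node $P$, then counting branches of the inverse image of $P$ and applying \Cref{prop part of mini} and \Cref{cor inverse images are smooth} forces that $P$ not be a branch point, whence via \Cref{cor relating nodes to Upsilon} and \Cref{lemma relating nodes to Upsilon converse} the point $\eta_D$ is a boundary point of $\Upsilon$.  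Since $\hatHmini$ is connected this gives $\hatHmini = \hat{\Upsilon}$.  The paper then uses a node-thickness argument (a gap of length $> p v(\pi)$ would create a node of thickness $\geq 2$, contradicting regularity) to rule out skipped intermediate points; you implicitly delegate this to your converse-direction field-of-definition step, which works, but only after the $K''$-stage has been set up.

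So: the criterion you invoke from \Cref{prop part of mini} is the right criterion, and the field-of-definition bookkeeping via \Cref{lemma field of definition} is right; but the argument as written leaves the existence of $\Cmini$ over $K'$ unaddressed, leaves $w(V) = 2$ unjustified in the wild inseparable case, and substitutes a hope for the crucial step of ruling out components outside $\hat{\Upsilon}$.  You would need to import the paper's boundary-point characterization (or something equivalent) to close that last gap.
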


\begin{rmk} \label{rmk every pth vertex}

Let us call a point $\eta = \eta_{D_{\alpha, b}} \in \Sigma_{\mathcal{B}}$ a \emph{$(K')$-point} if we have $b \in v((K')^\times)$.  Properties (i)--(iv) may be understood, in plainer and less rigorous language, as saying that the collection of points $\eta \in \Hyp$ whose corresponding discs are members of the collection $\Dmini$ consists of every $(K')$-point of $\Upsilon$ along with ``every $p$th $(K')$-point" of $\hat{\Upsilon} \smallsetminus \Upsilon$ (in the sense of every $p$th $(K')$-point along a given path contained in $\hat{\Upsilon} \smallsetminus \Upsilon$).  The construction of $K'$ (and in particular, the $\frac{1}{p} \delta(\mathfrak{s})$ assumption) ensures that the lengths of maximal paths in $\hat{\Upsilon} \smallsetminus \Upsilon$ are divisible by $p$ in $v((K')^\times)$.

\end{rmk}

The proof of \Cref{thm construction of mini} essentially consists of two parts: first we prove the statement when $K'$ is replaced by some extension $K'' / K'$ over which $C$ attains good reduction and its minimal regular model has the property given by \Cref{prop persistent nodes}, which we take care of through the following lemma, and then we prove that the same assertions are true for $K'$ itself.

\begin{lemma} \label{lemma construction of mini for K''}

With the definitions and set-up of \Cref{thm construction of mini}, let $K'' / K'$ be a finite extension.  Define $\hat{\Upsilon}_{K''}$ be the set of points of $\Hyp$ satisfying properties (i)--(iv) as given in the statement of the theorem but with $K'$ replaced by $K''$.  Let $\mathcal{X}_{\hat{\Upsilon}_{K''}}$ denote the model of $X := \proj_K^1$ coming from the set of discs corresponding to the points in $\hat{\Upsilon}_{K''}$ as in \Cref{prop models of projective line}, and let $\mathcal{C}_{\hat{\Upsilon}_{K''}}$ be the normalization of this model in $K''(C)$.

Suppose that $C$ attains semistable reduction over $K''$ and moreover that, letting $\Cmini \to \Xmini$ denote the minimal regular model of $C$ over $K''$ as a $p$-cyclic cover of the quotient model of $X$, each node of $\SF{\Cmini}$ maps to a node of $\SF{\Xmini}$.  Then we have $\Xmini = \mathcal{X}_{\hat{\Upsilon}_{K''}}$.

\end{lemma}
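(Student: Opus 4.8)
The plan is to prove \Cref{lemma construction of mini for K''} by identifying the two collections of discs $\mathfrak{D}$ (with $\Xmini=\mathcal{X}_{\mathfrak{D}}$) and $\mathfrak{D}'$ (with $\mathcal{X}_{\hat{\Upsilon}_{K''}}=\mathcal{X}_{\mathfrak{D}'}$); by the bijection of \Cref{prop models of projective line} it suffices to show $\mathfrak{D}'\subseteq\mathfrak{D}$ and $\mathfrak{D}\subseteq\mathfrak{D}'$, which amount respectively to $\mathcal{X}_{\hat{\Upsilon}_{K''}}\le\Xmini$ and to the statement that every component-disc of $\SF{\Xmini}$ satisfies conditions (i)--(iv) over $K''$. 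A preliminary step checks that $\mathcal{X}_{\hat{\Upsilon}_{K''}}$ is genuinely a model of $X$ over $\mathcal{O}_{K''}$ with reduced special fiber: since $\hat{\Upsilon}\subseteq\Sigma_{\mathcal{B}}$, each $\eta_{D_{\alpha,b}}\in\hat{\Upsilon}_{K''}$ lies over a disc that may be centered at a branch point $\alpha$, and unwinding (ii)--(iv) together with \Cref{lemma boundary point of Upsilon is a K-point} and the fact that all cluster depths lie in $v(K^\times)$ shows such a disc has $K''$-rational center and radius; one then forms $\mathcal{X}_{\hat{\Upsilon}_{K''}}$ as in \S\ref{sec normalizations models of P^1} and takes $\mathcal{C}_{\hat{\Upsilon}_{K''}}$ to be its relative normalization in $K''(C)$, whose special fiber is reduced by \Cref{prop equations for normalization}.

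For $\mathcal{X}_{\hat{\Upsilon}_{K''}}\le\Xmini$ I would show $\mathcal{C}_{\hat{\Upsilon}_{K''}}\le\Cmini$ via the criterion of \Cref{prop part of mini} (applicable since $C/K''$ has positive genus and semistable reduction): every component $V$ of $\SF{\mathcal{C}_{\hat{\Upsilon}_{K''}}}$ has $m(V)=1$ (reducedness) and $a(V)\ge 1$ or $w(V)\ge 2$. Because $a$ is unchanged and $w$ only grows under domination (\Cref{lemma inv m a w}), it is enough to treat each $\mathcal{C}_D$ with $\eta_D\in\hat{\Upsilon}_{K''}$ separately. If $\eta_D\in\Upsilon$, then $\eta_D$ is not isolated in $\Upsilon$ (\Cref{rmk no non-isolated boundary points}) and \Cref{cor relating nodes to Upsilon} gives $\mathcal{X}_D\le\Xmini$ directly. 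If $\eta_D\in\hat{\Upsilon}\smallsetminus\Upsilon$, then $\eta_D$ lies in the interior of a path of the finite tree $\hat{\Upsilon}$ joining two components of $\Upsilon$, so it is not a leaf of $\hat{\Upsilon}$; here $\SF{\mathcal{C}_D}$ is either a single line (by \Cref{prop normalization}(c) in the wild inseparable range, and in the tame range when the relevant normalized reduction has a single root) or has positive geometric genus (so $a(V)\ge 1$), and in the former case one uses that the skeleton of $\hat{\Upsilon}$ is $K''$-rational to conclude that $\mathcal{X}_D$ meets at least two other components of $\SF{\mathcal{X}_{\hat{\Upsilon}_{K''}}}$, forcing $w(V)\ge 2$.

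For $\mathfrak{D}\subseteq\mathfrak{D}'$ I would take a component-disc $D$ of $\SF{\Xmini}$ and show $\eta_D\in\hat{\Upsilon}_{K''}$. Its center and radius are $K''$-rational since $\mathcal{X}_D\le\Xmini$, and $\mathcal{C}_D$, being the relative normalization of $\mathcal{X}_D$ in $K''(C)$, is dominated by the semistable model $\Cmini$; via \Cref{lemma field of definition} this forces the $p$-divisibility conditions (iii)--(iv) on the relevant distances, since otherwise a degree-$p$ extension would be needed to reach a reduced, semistable special fiber. For the location claim $\eta_D\in\hat{\Upsilon}$ I would split on whether $\SF{\mathcal{C}_D}$ has a singular point with $p$ branches: if so, \Cref{lemma relating nodes to Upsilon converse} gives $\eta_D\in\Upsilon$; if not, then by \Cref{prop part of mini} the line $\SF{\mathcal{C}_D}$ has $w\ge 2$, so $\mathcal{X}_D$ meets at least two other components of $\SF{\Xmini}$, and following these directions through the tree of components of $\Xmini$ — using \Cref{prop nodes do not happen far away from branching} and \Cref{cor crushed implies branch point} to see one cannot run out of branching — one reaches on each side a component whose normalization does carry a $p$-branch singularity, i.e. a point of $\Upsilon$; hence $\eta_D$ lies on a path between two points of $\Upsilon$, so $\eta_D\in\hat{\Upsilon}$. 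The remaining bookkeeping — that within $\hat{\Upsilon}\smallsetminus\Upsilon$ exactly the points obeying the $p$-divisibility conditions occur as components of $\Xmini$ (the ``every $p$th $K''$-point'' picture of \Cref{rmk every pth vertex}) — would be read off from node-thickness computations in $\SF{\Xmini}$ via \Cref{prop models of projective line special fiber} and the change of variable $y=\gamma^{1/p}y_{\alpha,\beta}+q_{\alpha,\beta}$, using that $\Cmini$ is regular; the identity $t(\SF{\Cmini})=(p-1)h$ obtained in the proof of \Cref{thm main} serves as a consistency check that no component is missed.

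I expect the main obstacle to be precisely the analysis on $\hat{\Upsilon}\smallsetminus\Upsilon$ (and its sublocus $\Sigma_{\mathcal{B}}^*\smallsetminus\Sigma_{\mathcal{B}}^0$), where \Cref{cor relating nodes to Upsilon} no longer applies: one must directly control how the relative normalizations of a chain of smooth models of $\proj^1_K$ glue together, in particular the factor of $p$ governing node thicknesses in the wildly inseparable range, which is exactly what dictates the disc spacing in $\Dmini$ and the $\tfrac{1}{p}\delta(\mathfrak{s})$-type generators of $K'$. Establishing that this gluing yields a regular semistable model with no $(-1)$-lines over exactly the locus described by (i)--(iv), neither double-counting nor omitting components, is the technical crux of the argument.
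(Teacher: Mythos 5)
Your high-level plan (two inclusions, one via \Cref{prop part of mini}, one by case analysis on whether $\SF{\mathcal{C}_D}$ has a $p$-branch singularity) is recognizable in the paper's actual proof, which organizes the argument around the set $\Hmini \subset \Hyp$ of disc-points of $\Xmini$, shows $\hatHmini = \hat\Upsilon$ by matching boundary points, and then compares $\Hmini$ with $\hat\Upsilon_{K''}$.  The key lemmas and corollaries you invoke (\Cref{prop part of mini}, \Cref{cor relating nodes to Upsilon}, \Cref{lemma relating nodes to Upsilon converse}, \Cref{lemma field of definition}, \Cref{prop models of projective line special fiber}) are exactly the ones the paper uses.

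The genuine gap is the one you flag yourself but do not close: the ``every $p$th $K''$-point'' statement on $\hat\Upsilon \smallsetminus \Upsilon$.  You describe it as bookkeeping to be ``read off from node-thickness computations'' with the identity $t(\SF{\Cmini}) = (p-1)h$ as a consistency check, but that identity only constrains the total count and cannot by itself certify that each individual missing point would be detected.  The paper's actual argument here is a concrete contradiction: if some $\eta \in \hat\Upsilon_{K''} \smallsetminus \Upsilon_{K''}$ failed to be in $\Hmini$, then the nearest $\Hmini$-points on either side of $\eta$ along the ambient path are each $\geq p\,v(\pi)$ away (because points of $\hat\Upsilon_{K''}$ obeying condition (iii) along that path are spaced by $p\,v(\pi)$), so by \Cref{prop models of projective line special fiber} the corresponding node of $\SF{\Xmini}$ has thickness $> 2p$; by \cite[Proposition 10.3.48(c)]{liu2002algebraic} a node of $\SF{\Cmini}$ above it then has thickness $\geq 2$, contradicting regularity of $\Cmini$.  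Nothing in your proposal does this work.

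Two smaller issues.  First, your reduction ``it is enough to treat each $\mathcal{C}_D$ separately'' rests on the claim that ``$w$ only grows under domination,'' citing \Cref{lemma inv m a w}; that lemma asserts \emph{equality} of $w$ under domination by the minimal desingularization, not monotonicity in general, so the reduction needs a separate justification (or should be replaced, as in the paper, by directly checking the conditions of \Cref{prop part of mini} on the strict transforms in $\Cmini$).  Second, the case $a(V) \geq 1$ for $\eta_D \in \hat\Upsilon \smallsetminus \Upsilon$ cannot actually occur when $C$ is potentially degenerate: if it did, \Cref{lemma inv m a w} would propagate $a \geq 1$ to $\SF{\Cmini}$, contradicting split degeneracy.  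In fact, in the clustered-in-pairs setting, for $\eta_D$ on a single open path $\Lambda_j$ one can check from \Cref{prop boundary points} that any vertex of $\Sigma_{\mathcal{B}}$ on $\Lambda_j$ lies in $\Sigma_{\mathcal{B}}^0$, so $\Sigma_{\mathcal{B}}^* \smallsetminus \Sigma_{\mathcal{B}}^0$ consists only of non-vertices and $\SF{\mathcal{C}_D}$ is always a line; you can simplify your dichotomy accordingly rather than carrying a dead branch.
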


\begin{proof}
Let us denote by $\Hmini \subset \Hyp$ the collection of points whose discs are members of the collection which determines the model $\Xmini$ of the projective line as in \Cref{prop models of projective line}.

Let $\eta_{D_{\alpha, b}}$ be a boundary point of $\Upsilon$, which by \Cref{rmk no non-isolated boundary points} is not isolated.  We have $b \in v(K^\times) \leq v((K'')^\times)$ by \Cref{lemma boundary point of Upsilon is a K-point}.  Then one checks using \Cref{lemma field of definition} that the corresponding model $\mathcal{C}_{\alpha, \beta}$ (for some $\beta \in K^\times$ with $v(\beta) = b$) is defined over $K'$ (and thus certainly over $K''$), and so we have $\eta_{D_{\alpha, b}} \in \Hmini$ thanks to \Cref{cor relating nodes to Upsilon}.

Let $\hatHmini \subset \Hyp$ denote the convex hull of $\Hmini$, and let $\eta_{D_{\alpha, b}}$ be a boundary point of $\hatHmini$.  Then the component $V$ of $\SF{\Xmini}$ corresponding to $D_{\alpha, b}$ (as in \S\ref{sec normalizations models of P^1}) meets the other components of $\SF{\Xmini}$ at a unique node $P$.  The inverse image of $P$ in $\SF{\Cmini}$ consists of nodes by \Cref{prop persistent nodes}; if it is a unique node $Q$, then it follows from \Cref{cor inverse images are smooth} that the inverse image of $V$ in $\SF{\Cmini}$ must consist of a single component $W$ meeting the rest of $\SF{\Cmini}$ only at $Q$; with the notation of \S\ref{sec preliminaries geometric components}, we have $w(W) = 1$.  As the abelian rank of any component of the special fiber of a (split) degenerate model has genus $0$, this contradicts \Cref{prop part of mini}.  Therefore, there are $p$ nodes $Q_1, \dots, Q_p$ of $\SF{\Cmini}$ lying over $P$.  If there are $p$ components of $\SF{\Cmini}$ $W_1, \dots, W_p$ lying over $V$, then each $W_i$ intersects the rest of $\SF{\Cmini}$ at one of the nodes $P_j$ and we have $w(W_1) = \dots = w(W_p) = 1$, again contradicting \Cref{prop part of mini}.  Therefore, the inverse image of $V$ in $\SF{\Cmini}$ consists of a single component $W$ intersecting the rest of $\SF{\Cmini}$ at the nodes $Q_1, \dots, Q_p$; this is the strict transform of $\SF{\mathcal{C}_{\alpha, \beta}}$ (for some $\beta \in (K')^\times$ satisfying $v(\beta) = b$), and as $\SF{\mathcal{C}_{\alpha, \beta}}$ is obtained from $W$ by contracting the other components of $\SF{\Cmini})$, the point in $\SF{\mathcal{C}_{\alpha, \beta}}$ lying over $P$ has $p$ branches passing through it.  Then \Cref{cor relating nodes to Upsilon} and \Cref{lemma relating nodes to Upsilon converse} tell us that $\eta_{D_{\alpha, b}}$ must be a boundary point of $\Upsilon$.  Since all boundary points of $\Upsilon$ lie in $\Hmini$, we have $\hatHmini = \hat{\Upsilon}$.

Noting that we have $\hat{\Upsilon} \subset \Sigma_{\mathcal{B}}$ (as $\Sigma_{\mathcal{B}}$ is connected), it is straightforward to check using \Cref{lemma field of definition} that a point $\eta_{D_{\alpha, v(\beta)}} \in \hat{\Upsilon}$ lies in $\hat{\Upsilon}_{K''}$ if and only if the corresponding model $\mathcal{C}_{\alpha, b}$ is defined over $K''$.  It is therefore clear that we have $\Hmini \subseteq \hat{\Upsilon}_{K''}$; we need to show the reverse inclusion.  We have already seen that the boundary points of $\Upsilon$ lie in $\Hmini$.  Now any interior point of $\Upsilon$ lying in $\Upsilon_{K''}$ also lies in $\Hmini$ by \Cref{cor relating nodes to Upsilon}.  It therefore remains to show that we have $\hat{\Upsilon}_{K''} \smallsetminus \Upsilon_{K''} \subset \Hmini$.

Suppose that there is a point $\eta \in \hat{\Upsilon}_{K''} \smallsetminus \Upsilon_{K''}$ which does not lie in $\Hmini$.  There then exist boundary points $\xi', \xi''$ of $\Upsilon$ satisfying $\eta \in (\xi', \xi'')$ and $(\xi', \xi'') \cap \Upsilon = \varnothing$.  As $\xi', \xi''$ are boundary points, they lie in $\Hmini$.  There must be a maximal sub-path $[\eta', \eta''] \subset [\xi', \xi'']$ containing the ``missing" point $\eta$ in its interior and such that $(\eta', \eta'') \cap \Hmini = \varnothing$.  Property (iii) implies the points in the path $[\xi', \xi'']$ belonging to $\Hmini$ lie at intervals of length $p v(\pi)$ (see \Cref{rmk every pth vertex}), where $\pi$ is a uniformizer of $K''$; we therefore have $\delta(\eta', \eta), \delta(\eta'', \eta) \geq p v(\pi)$ and so $\delta(\eta', \eta'') \geq 2p v(\pi)$.

Now by \Cref{prop models of projective line special fiber}, the (strict transforms of the) special fibers of the smooth models $\mathcal{X}_D$ and $\mathcal{X}_{D'}$ meet in $\SF{\mathcal{X}_{\mathfrak{D}}}$ at a node $P$ of thickness $> 2p$.  Let $Q \in \SF{\Cmini}(k)$ be a point lying above $P$, which is necessarily a node (by \Cref{prop persistent nodes}) and denote its thickness by $\theta$.  Then it follows from \cite[Proposition 10.3.48(c)]{liu2002algebraic} that the node $P$ has thickness $\leq p \theta$ (in fact, one can show that there is only one node $Q$ lying above $P$ and from this that $P$ has thickness exactly $p \theta$).  Therefore, the node $Q$ has thickness $\geq 2$, which contradicts the fact that each nodes of the special fiber of a regular model has thickness $1$.  This completes the proof.
\end{proof}

\begin{proof}[Proof (of \Cref{thm construction of mini})]
We adopt the notation of \Cref{lemma construction of mini for K''} and proceed to show that $\mathcal{C}_{\hat{\Upsilon}_{K'}}$ is a semistable model of $C$ defined over $K'$ with no (-1)-lines (and thus the minimal regular model of $C$ over $K'$) and such that nodes of $\SF{\mathcal{C}_{\hat{\Upsilon}_{K'}}}$ map to nodes of $\SF{\mathcal{X}_{\hat{\Upsilon}_{K'}}}$ (the property given by \Cref{prop persistent nodes}).  Given this claim, applying \Cref{lemma construction of mini for K''} gives us the statement of the theorem.

Let $K'' / K'$ be a finite extension over which $C$ achieves semistable reduction and satisfies the property given by \Cref{prop persistent nodes}.  \Cref{lemma construction of mini for K''} tells us that the minimal regular model of $C$ over $K''$ is given by $\mathcal{C}_{\hat{\Upsilon}_{K''}}$.  The model $\mathcal{C}_{\hat{\Upsilon}_{K'}} \otimes \Spec(\mathcal{O}_{K''})$ of $C$ over $K''$ is then obtained from this minimal regular model by contracting all components of the special fiber lying over the components of $\SF{\mathcal{X}_{\hat{\Upsilon}_{K''}}}$ corresponding to the points in $\hat{\Upsilon}_{K''} \smallsetminus \hat{\Upsilon}_{K'}$.

No point of $\hat{\Upsilon}_{K''} \smallsetminus \hat{\Upsilon}_{K'}$ is a vertex of $\Sigma_{\mathcal{B}}$, because by \Cref{prop removing a point from convex hull}, a vertex is of the form $\eta_{\mathfrak{s}} = \eta_{\alpha, d(\mathfrak{s})}$ for some cluster $\mathfrak{s} \ni \alpha$; we have $d(\mathfrak{s}) \in v(K^\times)$ so that $\eta_{\mathfrak{s}} \in \hat{\Upsilon}_{K'}$.  Moreover, by \Cref{lemma boundary point of Upsilon is a K-point}, the boundary points of $\Upsilon$ lie in $\hat{\Upsilon}_{K'}$.  It follows from these observations and from the fact that $\mathcal{X}_{\hat{\Upsilon}_{K'}}$ itself is semistable (as a quotient of a semistable model is semistable by \cite[Proposition 10.3.48]{liu2002algebraic}) that the points of $\hat{\Upsilon}_{K''} \smallsetminus \hat{\Upsilon}_{K'}$ are each adjacent to exactly $2$ points in $\hat{\Upsilon}_{K''}$ (where \emph{adjacent} points have a path between them which does not pass through any other point of $\hat{\Upsilon}_{K''}$).  This is equivalent to saying that in the special fiber $\SF{\mathcal{X}_{\hat{\Upsilon}_{K''}}}$, the component corresponding to a point $\eta = \eta_{D_{\alpha, v(\beta)}} \in \hat{\Upsilon}_{K''} \smallsetminus \hat{\Upsilon}_{K'}$ -- that is, the strict transform $V$ of $\SF{\mathcal{X}_{\alpha, \beta}}$ -- meets the rest of the special fiber at exactly $2$ nodes $P_1, P_2$.

If $\eta$ is an interior point of $\Upsilon$, then \Cref{cor relating nodes to Upsilon} implies that the inverse image of $V$ in $\SF{\mathcal{C}_{\hat{\Upsilon}_{K''}}}$ (which is isomorphic to the normalization of $\SF{\mathcal{C}_{\alpha, \beta}}$) has $p$ components, each of which is a copy of $\proj_k^1$ meeting the rest of the special fiber only at a node lying over $P_1$ and a node lying over $P_2$.  Otherwise, we must have $\eta \notin \Upsilon$, and then \Cref{cor relating nodes to Upsilon} implies that the special fiber $\SF{\mathcal{C}_{\alpha, \beta}}$ has $1$ component; the inverse image of $V$ is the strict transform of $\SF{\mathcal{C}_{\alpha, \beta}}$, isomorphic to its normalization, which then must have a single component $W$.  If $W$ had $p$ nodes lying over $P_1$ at which it met another component of $\SF{\mathcal{C}_{\hat{\Upsilon}_{K''}}}$, on contracting the other components to get $\SF{\mathcal{C}_{\alpha, \beta}}$, these $p$ nodes would collapse to a singular point with $p$ branches passing through it, which \Cref{lemma relating nodes to Upsilon converse} implies is impossible.  Therefore, $W$ has a unique node $Q_1$ lying over $P_1$ and similarly has a unique node $Q_2$ lying over $P_2$; these are its points of intersection with the rest of $\SF{\mathcal{C}_{\hat{\Upsilon}_{K''}}}$.

Thus, in either case, each component $W$ of $\SF{\mathcal{C}_{\hat{\Upsilon}_{K''}}}$ corresponding to $\eta \in \hat{\Upsilon}_{K''} \smallsetminus \hat{\Upsilon}_{K'}$ that we are contracting in the process of forming $\mathcal{C}_{\hat{\Upsilon}_{K'}} \otimes \Spec(\mathcal{O}_{K''})$ is a line meeting the other components at exactly $2$ nodes, lying over a line $V$ in $\SF{\mathcal{X}_{\hat{\Upsilon}_{K''}}}$ meeting the other components at exactly $2$ nodes.  It is well known that contracting such a line $W$ does not affect semistability: it only creates a node in place of $W$ which in our situation lies over the node obtained by contracting $V$, and so it does not create a new node which lies over a smooth point, nor does it create a (-1)-line.

It follows that $\mathcal{C}_{\hat{\Upsilon}_{K'}} \otimes \Spec(\mathcal{O}_{K''})$ is a semistable model with no $(-1)$-lines and whose nodes lie over nodes in $\mathcal{X}_{\hat{\Upsilon}_{K'}} \otimes \Spec(\mathcal{O}_{K''})$.  By \cite[Lemma 10.3.30(a)]{liu2002algebraic}, the model $\mathcal{C}_{\hat{\Upsilon}_{K'}}$ is hence semistable, and it is clear that it again has no (-1)-lines and that its nodes lie over nodes in $\mathcal{X}_{\hat{\Upsilon}_{K'}}$.  We have thus proved the desired statement.
\end{proof}

\subsection{Construction of stable models and classification for low genus} \label{sec building models low genus}

\Cref{thm construction of mini} really allows us to explicitly construct an appropriate finite extension $K' / K$ and the minimal regular models $\Cmini / \mathcal{O}_{K'}$ of $p$-cyclic covers $C / K$ of the projective line in the case that they are potentially degenerate: as we have seen in the proof of \Cref{cor t_f formula}, to come up with equations defining the components of $\SF{\Cmini}$ involves finding part-$p$th-power decompositions $f_{\alpha, \beta} = q_{\alpha, \beta}^p + \rho_{\alpha, \beta}$, where $q_{\alpha, \beta}$ can be taken to be the product of some polynomials of degree at most $1$ given by straightforward formulas, and there is no need to compute a total part-$p$th-power decomposition of $f_{\alpha, \beta}$.

In the case of genus $1$ (which only occurs for $p = 2$), this recovers the well known construction of minimal regular models of elliptic curves $C / K$ with potentially multiplicative reduction (see Case ($b_m$) in \cite[\S1.5]{bosch2012neron} for the tame case, for instance).  Let $K' / K$ be a finite extension over which $C$ attains split multiplicative reduction.  \Cref{thm main}, translated into the language of clusters by \Cref{rmk translating main thm}, implies that there must be a cardinality-$2$ cluster $\mathfrak{s}$ with $\delta(\mathfrak{s}) > 2\pfrac = 4v(2)$.  In fact, we have $\mathcal{B} \smallsetminus \{\infty\} \subset K'$ in this situation, so that such a curve can be defined over $K'$ using an equation in the so-called Legendre form $y^2 = x(x - 1)(x - \lambda) \in K'[x]$, where one may choose $\lambda \in K'$ such that $v(\lambda) > 0$ (so that $\mathfrak{s} = \{0, \lambda\}$; then the formula in terms of $\lambda$ for the $j$-invariant of this curve given in \cite[Proposition III.1.7]{silverman2009arithmetic} shows that the condition $\delta(\mathfrak{s}) > 4v(2)$ is equivalent to the $j$-invariant being non-integral and in fact that its valuation equals $8v(2) - 2v(\lambda)$.

Now one can easily compute (using \Cref{cor t_f formula} in the case of residue characteristic $2$) that $\Upsilon$ coincides with the single path $[\eta_{0, b}, \eta_{0, b'}]$, where $b = d(\mathfrak{s}) - 2v(2) = v(\lambda) - 2v(2)$ and $b' = d(\mathcal{B} \smallsetminus \{\infty\}) + 2v(2) = 2v(2)$.  Then \Cref{thm construction of mini} tells us that $\Xmini = \mathcal{X}_{\Dmini}$ is constructed using the collection $\Dmini$ consisting of the discs $D_{0, b}$ such that $b \in [v(\lambda) - 2v(2), 2v(2)] \cap v((K')^\times)$.  Letting $\pi$ be a uniformizer of $K'$, there are then exactly $(v(\lambda) - 4v(2)) / v(\pi) + 1$ components of $\SF{\Xmini}$ which form a single chain of lines.  One sees using \Cref{cor relating nodes to Upsilon} that the inverse image in $\SF{\Cmini}$ of each of the lines at the end of this chain has a single component isomorphic to $\proj_k^1$ (as each corresponds to a boundary point of $\Upsilon = \hat{\Upsilon}$), while the inverse image in $\SF{\Cmini}$ of each of the intermediate lines in the chain has $2$ components, each isomorphic to $\proj_k^1$.  It follows that the special fiber $\SF{\Cmini}$ consists of a chain of $2v(\lambda) - 8v(2)$ lines forming a loop.

When $C$ has genus $\geq 2$, one can construct not only the minimal regular model of $C$ over an appropriate extension $K' / K$ but also the \emph{stable model} of $C$ over $K'$, which has the advantage that its special fiber is simpler to describe apart from the fact that in general its nodes are of varying thickness.  Below we classify the possible configurations of the special fiber of the stable model of a $p$-cyclic cover $C$ of the projective line in the case that the genus of $C$ is $2$ (which can only occur when $p = 2$ or $p = 3$).

\begin{rmk} \label{rmk stable models}

Assume that $C$ has genus $\geq 2$ and that $C$ has (split) degenerate reduction over a finite extension $K' / K$.  The stable model of $C$ is the minimum (with respect to dominance) among all semistable models of $C$, and it can be characterized as the unique semistable model $\mathcal{C}$ of $C$ such that each component $V \cong \proj_k^1$ of $\SF{\mathcal{C}}$ satisfies $w(V) \geq 3$, where $w$ is defined as in \S\ref{sec preliminaries geometric components}.  Consequently, \Cref{prop part of mini} implies that the stable model of $C$ can be obtained from the minimal regular model $\mathcal{C}$ of $C$ by contracting each component $V$ of $(\mathcal{C})_s$ with $w(V) = 2$.

Assume further that the extension $K'$ is as in \Cref{thm construction of mini}, which guarantees the existence of and describes the minimal regular model $\Cmini$ of $C$ as a $p$-cyclic cover of its quotient $\Xmini$ of $X$.  Using the results of \S\ref{sec proof of main}, in particular \Cref{cor relating nodes to Upsilon}, along with the geometric arguments used to prove them, one can verify the following (where we use the notation of \Cref{lemma construction of mini for K''}):
\begin{enumerate}[(a)]
\item each point of $\hat{\Upsilon}_{K'}$ which is neither a boundary point of $\Upsilon$ nor a vertex of $\Sigma_{\mathcal{B}}$ corresponds to a component of $\SF{\Xmini}$ whose inverse image in $\SF{\Cmini}$ consists of components $V$ with $w(V) = 2$;
\item if $p \geq 3$, then the points specified by (a) account for all components $V$ of $\SF{\Cmini}$ with $w(V) = 2$; and 
\item if $p = 2$, the remaining components $V$ of $\SF{\Cmini}$ with $w(V) = 2$ are the inverse images of the components of $\SF{\Xmini}$ corresponding to those (boundary) points of $\hat{\Upsilon}$ which do not lie in the interior of any path contained in $\hat{\Upsilon}$.
\end{enumerate}

In the other direction, one may always recover the minimal regular model from the stable model by desingularizing it, which amounts to replacing each node of thickness $\theta \geq 2$ with a chain of $\theta - 1$ lines connected by nodes of thickness $1$ by \cite[Corollary 10.3.25]{liu2002algebraic}.

\end{rmk}

Using \Cref{rmk stable models} as a guide, one can compute the following classification for genus $2$.

There are three possible configurations of components and nodes of the special fiber of the stable model $\Cst$ of a curve $C$ of genus $2$, depicted and labeled (as Type A, B, or C) in \Cref{fig special fibers genus 2} below.

For a fixed $p \in \{2, 3\}$, the topological structure of the subspace $\Upsilon \subset \Hyp$ associated to $C$ entirely determines the structure of the special fiber $\SF{\Cst}$.  When $p = 2$, the corresponding topological structures of $\Upsilon \subseteq \hat{\Upsilon}$ are shown in \Cref{fig Upsilon genus 2} below, where dotted lines indicate $\hat{\Upsilon} \smallsetminus \Upsilon$.

\begin{figure}[h!]

\begin{subfigure}[b]{.3\textwidth}
\centering
\includegraphics[height=1.75cm]{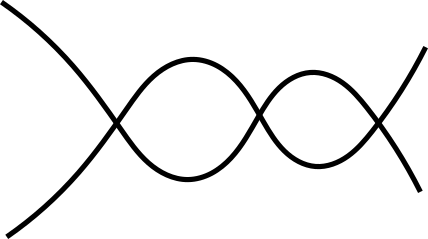}
\caption{Type A}
\end{subfigure}
~
\begin{subfigure}[b]{.3\textwidth}
\centering
\raisebox{.2cm}{\includegraphics[height=1cm]{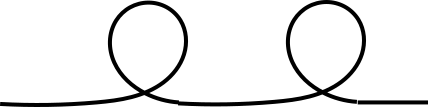}}
\caption{Type B}
\end{subfigure}
~
\begin{subfigure}[b]{.3\textwidth}
\centering
\includegraphics[height=1.5cm]{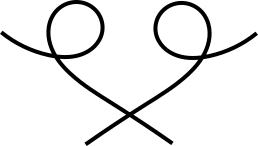}
\caption{Type C}
\end{subfigure}

\caption{The possible configurations of the special fiber of a curve of genus $2$}

\label{fig special fibers genus 2}
\end{figure}

\begin{figure}[h!]

\begin{subfigure}[b]{.3\textwidth}
\centering
\resizebox{!}{1.8cm}{
\begin{tikzpicture}

\draw[very thick] (0,0) -- (1.7,1);
\draw[very thick] (3.4,0) -- (1.7,1);
\draw[very thick] (1.7,2.7) -- (1.7,1);
    
\end{tikzpicture}
}
\caption{Case A}
\end{subfigure}
~
\begin{subfigure}[b]{.3\textwidth}
\centering
\resizebox{!}{4cm/3}{
\begin{tikzpicture}

\draw[very thick] (0,0) -- (2,2);
\draw[very thick] (4,0) -- (2,2);
    
\end{tikzpicture}
}
\caption{Case B}
\end{subfigure}
~
\begin{subfigure}[b]{.3\textwidth}
\centering
\resizebox{!}{4cm/3}{
\begin{tikzpicture}

\draw[very thick] (0,0) -- (1.5,2);
\draw[very thick, dashed] (1.5,2) -- (3,2);
\draw[very thick] (4.5,0) -- (3,2);
    
\end{tikzpicture}
}
\caption{Case C}
\end{subfigure}

\caption{The possible topological structures of $\Upsilon$ in the case of genus $2$}

\label{fig Upsilon genus 2}
\end{figure}
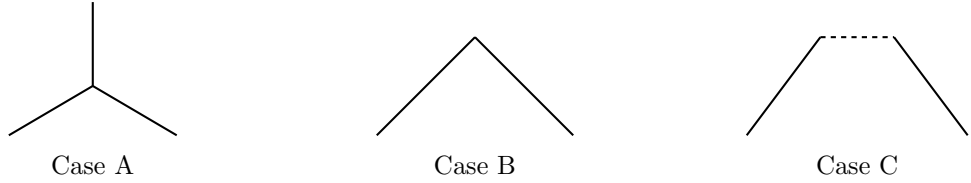

For $p = 2$, where we have $\#\mathcal{B} = 6$, let us break down the conditions on the cluster data which imply each of the above topological structures of $\Upsilon$.  In the diagrams of the subspaces $\Upsilon \subseteq \Sigma_{\mathcal{B}}^0 \subsetneq \Sigma_{\mathcal{B}}$ appearing in Figures \ref{fig spaces case A}, \ref{fig spaces case B}, and \ref{fig spaces case C} below, black lines indicate $\Upsilon$, gray lines indicate $\Sigma_{\mathcal{B}}^0 \smallsetminus \Upsilon$ (which is empty in the tame setting), and dashes indicate $\Sigma_{\mathcal{B}} \smallsetminus \Sigma_{\mathcal{B}}^0$.  Circles indicate the points of $\hat{\Upsilon}$ which are either vertices or boundary points of $\Upsilon$; those that are filled in correspond to components $V$ of $\SF{\Cmini}$ which satisfy $w(V) = 2$ and therefore are contracted to get the stable model.

\textbf{Case A:} The non-maximal clusters of $\mathcal{B}$ consist of $\mathfrak{c}_i = \{z_i, w_i\}$ for $i = 1, 2$ and of $\mathfrak{s} := \mathfrak{c}_1 \sqcup \mathfrak{c}_2$, with $\delta(\mathfrak{c}_1), \delta(\mathfrak{c}_2), \delta(\mathfrak{s}) > 2v(2)$.  In the wild setting, this corresponds to Gehrunger and Pink's Case D4 in \cite{gehrunger2024reduction}; this case is also treated in \cite{dokchitser2023note}: see Remark 1.6 of that paper.

\vspace{-1em}

\begin{figure}[h!]

\begin{subfigure}[b]{.4\textwidth}
\centering
\begin{tikzpicture}

    \coordinate (A) at (.5,1);
    \coordinate (B) at (2.25,1.125);
    \coordinate (C) at (1.5,3);
    \coordinate (D) at (1.5,2);
    \draw[very thick, dashed] (0,0) -- (A);
    \draw[very thick, dashed] (1,0) -- (A);
    \draw[very thick, dashed] (2,0) -- (B);
    \draw[very thick, dashed] (2.5,0) -- (B);
    \draw[very thick, dashed] (4,0) -- (C);
    \draw[very thick, dashed] (1.5,4) -- (C);

    \draw[very thick] (A) -- (D);
    \draw[very thick] (B) -- (D);
    \draw[very thick] (C) -- (D);

    \draw[black, fill=black] (A) circle(.5mm);
    \draw[black, fill=black] (B) circle(.5mm);
    \draw[black, fill=black] (C) circle(.5mm);
    \draw[black, fill=white] (D) circle(.5mm);

    \node[left=1pt] at (A) {$\eta_{\mathfrak{c}_1}$};
    \node[right=1pt] at (B) {$\eta_{\mathfrak{c}_2}$};
    \node[right=1pt] at (C) {$\eta_{\mathcal{B} \smallsetminus \{\infty\}}$};
    \node[left=1pt] at (D) {$\eta_{\mathfrak{s}}$};
    
\end{tikzpicture}
\caption{Case A -- tame}
\end{subfigure}
~
\begin{subfigure}[b]{.4\textwidth}
\centering
\begin{tikzpicture}

    \coordinate (A) at (.5,1);
    \coordinate (B) at (2.25,1.125);
    \coordinate (C) at (1.5,3);
    \coordinate (D) at (1.5,2);
    \coordinate (A1) at (.75,1.25);
    \coordinate (B1) at (2.02,1.393);
    \coordinate (C1) at (1.5,2.646);
    \draw[very thick, dashed] (0,0) -- (A);
    \draw[very thick, dashed] (1,0) -- (A);
    \draw[very thick, dashed] (2,0) -- (B);
    \draw[very thick, dashed] (2.5,0) -- (B);
    \draw[very thick, dashed] (4,0) -- (C);
    \draw[very thick, dashed] (1.5,4) -- (C);

    \draw[very thick, gray] (A) -- (A1);
    \draw[very thick, gray] (B) -- (B1);
    \draw[very thick, gray] (C) -- (C1);

    \draw[very thick] (A1) -- (D);
    \draw[very thick] (B1) -- (D);
    \draw[very thick] (C1) -- (D);

    \draw[black, fill=black] (A1) circle(.5mm);
    \draw[black, fill=black] (B1) circle(.5mm);
    \draw[black, fill=black] (C1) circle(.5mm);
    \draw[black, fill=white] (D) circle(.5mm);

    \node[left=1pt] at (A) {$\eta_{\mathfrak{c}_1}$};
    \node[right=1pt] at (B) {$\eta_{\mathfrak{c}_2}$};
    \node[right=1pt] at (C) {$\eta_{\mathcal{B} \smallsetminus \{\infty\}}$};
    \node[right=1pt] at (D) {$\eta_{\mathfrak{s}}$};
    
\end{tikzpicture}
\caption{Case A -- wild}
\end{subfigure}

\caption{The spaces $\Upsilon \subseteq \Sigma_{\mathcal{B}}^0 \subsetneq \Sigma_{\mathcal{B}}$ in Case A}

\label{fig spaces case A}
\end{figure}

The thicknesses of the three nodes in the configuration of Type A are given by twice the lengths of the paths from the boundary points of $\Upsilon$ (which in the tame case are $\eta_{\mathfrak{c}_1}, \eta_{\mathfrak{c}_2}, \eta_{\mathcal{B} \smallsetminus \{\infty\}} \in \Upsilon$) to the vertex $\eta_{\mathfrak{s}}$ and thus equal $2\delta(\mathfrak{c}_1) - 4v(2)$, $2\delta(\mathfrak{c}_2) - 4v(2)$, and $2\delta(\mathfrak{s}) - 4v(2)$.

\textbf{Case B:} The non-maximal clusters of $\mathcal{B}$ consist of $\mathfrak{c}_i = \{z_i, w_i\}$ for $i = 1, 2$ and, in the case that $K$ has residue characteristic $2$, of the cluster $\mathfrak{s} := \mathfrak{c}_1 \sqcup \mathfrak{c}_2$, with $\delta(\mathfrak{c}_1), \delta(\mathfrak{c}_2) > 2v(2)$ and (in the wild case) $\delta(\mathfrak{s}) = 2v(2)$.  In the wild setting, this corresponds to Gehrunger and Pink's Case D3 in \cite{gehrunger2024reduction}.  The result in this case also recovers a special case of \cite[Proposition 1.5]{dokchitser2023note}.

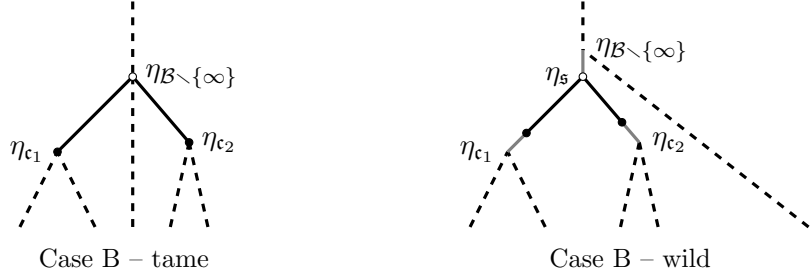
\begin{figure}[h!]

\begin{subfigure}[b]{.4\textwidth}
\centering
\begin{tikzpicture}

    \coordinate (A) at (.5,1);
    \coordinate (B) at (2.25,1.125);
    \coordinate (D) at (1.5,2);
    \draw[very thick, dashed] (0,0) -- (A);
    \draw[very thick, dashed] (1,0) -- (A);
    \draw[very thick, dashed] (2,0) -- (B);
    \draw[very thick, dashed] (2.5,0) -- (B);
    \draw[very thick, dashed] (1.5,0) -- (D);
    \draw[very thick, dashed] (1.5,3) -- (D);

    \draw[very thick] (A) -- (D);
    \draw[very thick] (B) -- (D);

    \draw[black, fill=black] (A) circle(.5mm);
    \draw[black, fill=black] (B) circle(.5mm);
    \draw[black, fill=white] (D) circle(.5mm);

    \node[left=1pt] at (A) {$\eta_{\mathfrak{c}_1}$};
    \node[right=1pt] at (B) {$\eta_{\mathfrak{c}_2}$};
    \node[right=1pt] at (D) {$\eta_{\mathcal{B} \smallsetminus \{\infty\}}$};
    
\end{tikzpicture}
\caption{Case B -- tame}
\end{subfigure}
~
\begin{subfigure}[b]{.4\textwidth}
\centering
\begin{tikzpicture}

    \coordinate (A) at (.5,1);
    \coordinate (B) at (2.25,1.125);
    \coordinate (C) at (1.5,2.354);
    \coordinate (A1) at (.75,1.25);
    \coordinate (B1) at (2.02,1.393);
    \coordinate (C1) at (1.5,2);
    \draw[very thick, dashed] (0,0) -- (A);
    \draw[very thick, dashed] (1,0) -- (A);
    \draw[very thick, dashed] (2,0) -- (B);
    \draw[very thick, dashed] (2.5,0) -- (B);
    \draw[very thick, dashed] (4.5,0) -- (C);
    \draw[very thick, dashed] (1.5,3) -- (C);

    \draw[very thick, gray] (A) -- (A1);
    \draw[very thick, gray] (B) -- (B1);
    \draw[very thick, gray] (C) -- (C1);

    \draw[very thick] (A1) -- (C1);
    \draw[very thick] (B1) -- (C1);

    \draw[black, fill=black] (A1) circle(.5mm);
    \draw[black, fill=black] (B1) circle(.5mm);
    \draw[black, fill=white] (C1) circle(.5mm);

    \node[left=1pt] at (A) {$\eta_{\mathfrak{c}_1}$};
    \node[right=1pt] at (B) {$\eta_{\mathfrak{c}_2}$};
    \node[left=1pt] at (C1) {$\eta_{\mathfrak{s}}$};
    \node[right=1pt] at (C) {$\eta_{\mathcal{B} \smallsetminus \{\infty\}}$};
    
\end{tikzpicture}
\caption{Case B -- wild}
\end{subfigure}

\caption{The spaces $\Upsilon \subseteq \Sigma_{\mathcal{B}}^0 \subsetneq \Sigma_{\mathcal{B}}$ in Case B}

\label{fig spaces case B}
\end{figure}

The thicknesses of the two nodes in the configuration of Type B are given by twice the lengths of the paths from the endpoints of the path $\Upsilon$ (which in the tame case are $\eta_{\mathfrak{s}_1}, \eta_{\mathfrak{s}_2} \in \Upsilon$) to the boundary point $\eta_{\mathcal{B} \smallsetminus \{\infty\}}$ (resp. $\eta_{\mathfrak{s}}$) in the tame (resp. wild) case and thus equal $2\delta(\mathfrak{c}_1) - 4v(2)$ and $2\delta(\mathfrak{c}_2) - 4v(2)$.

\textbf{Cases C1 and C2:} In these cases, which only occurs in the wild setting, the non-maximal clusters of $\mathcal{B}$ consist of $\mathfrak{c}_i = \{z_i, w_i\}$ for $i = 1, 2$ and of $\mathfrak{s} := \mathfrak{c}_1 \sqcup \mathfrak{c}_2$, with either
\begin{itemize}
\item  $0 \leq \delta(\mathfrak{s}) < 2v(2)$ and $\delta(\mathfrak{c}_1), \delta(\mathfrak{c}_2) > 4v(2) - \delta(\mathfrak{s})$ (let us call this Case C1), or
\item  $0 \leq \delta(\mathfrak{c}_2) < 2v(2)$ and $\delta(\mathfrak{c}_1), \delta(\mathfrak{s}) > 4v(2) - \delta(\mathfrak{c}_2)$ (let us call this Case C2),
\end{itemize}
 where a subset of $\mathcal{B}$ having relative depth $0$ means that it is not a cluster.  Case C1 (resp. Case C2) corresponds to Gehrunger and Pink's Case C6 if $\delta(\mathfrak{s}) = 0$ (resp. $\delta(\mathfrak{c}_2) = 0$) and corresponds to Case D10 if $\delta(\mathfrak{s}) > 0$ (resp. if $\delta(\mathfrak{c}_2) > 0$) in \cite{gehrunger2024reduction}.

\textbf{Case C3:} The non-maximal clusters of $\mathcal{B}$ consist of $\mathfrak{c}_1 = \{z_1, w_1\} \subsetneq \mathfrak{d} = \{z_1, w_1, z_2\} \subsetneq \mathfrak{c}_2 = \{z_1, w_1, z_2, w_2\}$, with $\delta(\mathfrak{c}_1), \delta(\mathfrak{c}_2) > 4v(2)$.  In the wild setting, this corresponds to Gehrunger and Pink's Case G6 in \cite{gehrunger2024reduction}.

\begin{figure}[h!]

\begin{subfigure}[b]{.25\textwidth}
\centering
\begin{tikzpicture}

    \coordinate (A) at (.5,1);
    \coordinate (B) at (2.25,1.125);
    \coordinate (C) at (1.5,2.212);
    \coordinate (D) at (1.5,2);
    \coordinate (A1) at (.75,1.25);
    \coordinate (B1) at (2.02,1.393);
    \coordinate (C1a) at (1.4,1.9);
    \coordinate (C1b) at (1.6,1.9);
    \draw[very thick, dashed] (0,0) -- (A);
    \draw[very thick, dashed] (1,0) -- (A);
    \draw[very thick, dashed] (2,0) -- (B);
    \draw[very thick, dashed] (2.5,0) -- (B);
    \draw[very thick, dashed] (3.5,.735) -- (C);
    \draw[very thick, dashed] (1.5,4) -- (C);

    \draw[very thick, gray] (A) -- (A1);
    \draw[very thick, gray] (B) -- (B1);
    \draw[very thick, gray] (C1a) -- (D);
    \draw[very thick, gray] (C1b) -- (D);
    \draw[very thick, gray] (C) -- (D);

    \draw[very thick] (A1) -- (C1a);
    \draw[very thick] (B1) -- (C1b);

    \draw[black, fill=black] (A1) circle(.5mm);
    \draw[black, fill=black] (B1) circle(.5mm);
    \draw[black, fill=white] (C1a) circle(.5mm);
    \draw[black, fill=white] (C1b) circle(.5mm);

    \node[left=1pt] at (A) {$\eta_{\mathfrak{c}_1}$};
    \node[right=1pt] at (B) {$\eta_{\mathfrak{c}_2}$};
    \node[right=1pt] at (C) {$\eta_{\mathcal{B} \smallsetminus \{\infty\}}$};
    \node[left=1pt] at (D) {$\eta_{\mathfrak{s}}$};
    
\end{tikzpicture}
\caption{Case C1 (wild)}
\end{subfigure}
~
\begin{subfigure}[b]{.25\textwidth}
\centering
\begin{tikzpicture}

    \coordinate (A) at (.5,1);
    \coordinate (B) at (1.65,1.85);
    \coordinate (C) at (1.5,2.9);
    \coordinate (D) at (1.5,2);
    \coordinate (A1) at (.75,1.25);
    \coordinate (B1) at (1.5,2.141);
    \coordinate (C1) at (1.5,2.546);
    \coordinate (D1) at (1.4,1.9);

    \draw[very thick, dashed] (0,0) -- (A);
    \draw[very thick, dashed] (1,0) -- (A);
    \draw[very thick, dashed] (1.6,0) -- (B);
    \draw[very thick, dashed] (2.5,0) -- (B);
    \draw[very thick, dashed] (3.5,.735) -- (C);
    \draw[very thick, dashed] (1.5,4) -- (C);

    \draw[very thick, gray] (A) -- (A1);
    \draw[very thick, gray] (B) -- (D);
    \draw[very thick, gray] (D1) -- (D);
    \draw[very thick, gray] (B1) -- (D);
    \draw[very thick, gray] (C1) -- (C);

    \draw[very thick] (A1) -- (D1);
    \draw[very thick] (B1) -- (C1);

    \draw[black, fill=black] (A1) circle(.5mm);
    \draw[black, fill=white] (B1) circle(.5mm);
    \draw[black, fill=white] (D1) circle(.5mm);
    \draw[black, fill=black] (C1) circle(.5mm);

    \node[left=1pt] at (A) {$\eta_{\mathfrak{c}_1}$};
    \node[below right=1pt] at (B) {$\eta_{\mathfrak{c}_2}$};
    \node[right=1pt] at (C) {$\eta_{\mathcal{B} \smallsetminus \{\infty\}}$};
    \node[left=1pt] at (D) {$\eta_{\mathfrak{s}}$};
    
\end{tikzpicture}
\caption{Case C2 (wild)}
\end{subfigure}
~
\begin{subfigure}[b]{.25\textwidth}
\centering
\begin{tikzpicture}

    \coordinate (A) at (.5,1);
    \coordinate (B') at (1.5,2);
    \coordinate (B) at (1.85,2.35);
    \coordinate (C) at (2.85,3.35);
    \draw[very thick, dashed] (0,0) -- (A);
    \draw[very thick, dashed] (1,0) -- (A);
    \draw[very thick, dashed] (1.5,0) -- (B');
    \draw[very thick, dashed] (B') -- (B);
    \draw[very thick, dashed] (2.5,0) -- (B);
    \draw[very thick, dashed] (3.77,.67) -- (C);
    \draw[very thick, dashed] (2.85,4) -- (C);

    \draw[very thick] (A) -- (B');
    \draw[very thick] (B) -- (C);

    \draw[black, fill=black] (A) circle(.5mm);
    \draw[black, fill=white] (B') circle(.5mm);
    \draw[black, fill=white] (B) circle(.5mm);
    \draw[black, fill=black] (C) circle(.5mm);

    \node[left=1pt] at (A) {$\eta_{\mathfrak{c}_1}$};
    \node[above left=0pt] at (B') {$\eta_{\mathfrak{d}}$};
    \node[above left=0pt] at (B) {$\eta_{\mathfrak{c}_2}$};
    \node[right=1pt] at (C) {$\eta_{\mathcal{B} \smallsetminus \{\infty\}}$};
    
\end{tikzpicture}
\caption{Case C3 -- tame}
\end{subfigure}
~
\begin{subfigure}[b]{.25\textwidth}
\centering
\begin{tikzpicture}

    \coordinate (A) at (.5,1);
    \coordinate (B') at (1.5,2);
    \coordinate (B) at (1.85,2.35);
    \coordinate (C) at (2.85,3.35);
    \coordinate (A1) at (.75,1.25);
    \coordinate (B'1) at (1.25,1.75);
    \coordinate (B1) at (2.1,2.6);
    \coordinate (C1) at (2.6,3.1);
    \draw[very thick, dashed] (0,0) -- (A);
    \draw[very thick, dashed] (1,0) -- (A);
    \draw[very thick, dashed] (1.5,0) -- (B');
    \draw[very thick, dashed] (B') -- (B);
    \draw[very thick, dashed] (2.5,0) -- (B);
    \draw[very thick, dashed] (3.77,.67) -- (C);
    \draw[very thick, dashed] (2.85,4) -- (C);

    \draw[very thick, gray] (A) -- (A1);
    \draw[very thick, gray] (B'1) -- (B');
    \draw[very thick, gray] (B) -- (B1);
    \draw[very thick, gray] (C1) -- (C);

    \draw[very thick] (A1) -- (B'1);
    \draw[very thick] (B1) -- (C1);

    \draw[black, fill=black] (A1) circle(.5mm);
    \draw[black, fill=white] (B'1) circle(.5mm);
    \draw[black, fill=white] (B1) circle(.5mm);
    \draw[black, fill=black] (C1) circle(.5mm);

    \node[left=1pt] at (A) {$\eta_{\mathfrak{c}_1}$};
    \node[above left=0pt] at (B') {$\eta_{\mathfrak{d}}$};
    \node[above left=0pt] at (B) {$\eta_{\mathfrak{c}_2}$};
    \node[right=1pt] at (C) {$\eta_{\mathcal{B} \smallsetminus \{\infty\}}$};
    
\end{tikzpicture}
\caption{Case C3 -- wild}
\end{subfigure}

\caption{The spaces $\Upsilon \subseteq \Sigma_{\mathcal{B}}^0 \subsetneq \Sigma_{\mathcal{B}}$ in all sub-cases of Cases C}

\label{fig spaces case C}
\end{figure}
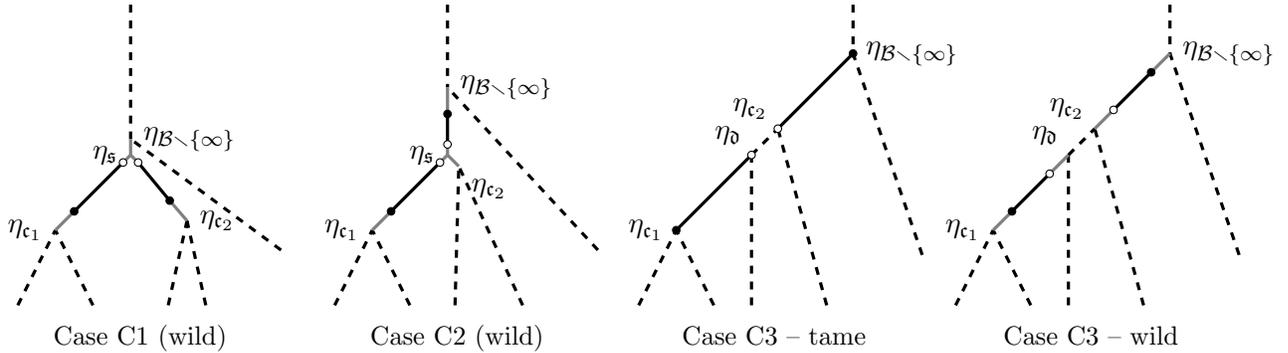

The thicknesses of the two nodes each lying in only one component in the configuration of Type C are given by twice the lengths of the segments which form each component of $\Upsilon$; these thicknesses again equal $2\delta(\mathfrak{c}_1) - 4v(2)$ and $2\delta(\mathfrak{c}_2) - 4v(2)$.  The thickness of the node connecting the two components is given by half of the length of the path connecting the two components of $\Upsilon$ and thus equals $\delta(\mathfrak{d})/2$ (resp. $2v(2) - \delta(\mathfrak{s})$; resp. $2v(2) - \delta(\mathfrak{c}_2)$) in Case C3 (resp. C1; resp. C2).

Finally, when $p = 3$, the possibilities for genus $2$ are much simpler.  We have $\#\mathcal{B} = 4$ and, as in the genus $1$ situation, there must be a cardinality-$2$ cluster $\mathfrak{s}$ with $\delta(\mathfrak{s}) > 2\pfrac = 3v(3)$, and the only possible topological structure for $\Upsilon$ is that of a single segment of length $\delta(\mathfrak{s}) - 2\pfrac = \delta(\mathfrak{s}) - 3v(3)$.  Each endpoint of $\Upsilon$ (and none of the points in the interior) corresponds to a component of $\SF{\Cst}$, and these two components of $\SF{\Cst}$ meet at $3$ nodes, each of which has thickness equal to $\delta(\mathfrak{s}) - 3v(3)$.  In particular, the configuration of the special fiber $\SF{\Cst}$ falls under Type A.

\begin{rmk} \label{rmk multiplicity of root doesn't matter}

As one sees in the particular case of $p = 3$ and genus $2$ above, the exponents $m_0, \dots, m_h$ (root multiplicities) appearing in (\ref{eq superelliptic degenerate model}) do not affect the spaces $\Upsilon \subseteq \Sigma_{\mathcal{B}}^0 \subsetneq \Sigma_{\mathcal{B}}$ or the structure of the special fiber of the stable (or minimal regular) model of $C$.

\end{rmk}

\appendix \section{Models for $p$-cyclic covers of the projective line} \label{appendix}

Given a Galois cover $C \to X := \proj_K^1$ whose Galois group is cyclic of order $p$ and whose set of branch points we denote by $\mathcal{B} \subset \bar{K} \cup \{\infty\}$, one obtains explicit equations defining $C$ as follows.  We denote the standard coordinate of $X$ by $x$.  As the function field $\bar{K}(C)$ is a $p$-cyclic Galois extension of $\bar{K}(X) \cong \bar{K}(x)$, it must be generated over $\bar{K}(x)$ by a $p$th root of a $p$th-power-free polynomial $f(x) \in \bar{K}(x)$ (\textit{i.e.} $f$ is not divisible by the $p$th power of any polynomial in $\bar{K}[x]$), and this polynomial must vanish precisely at the points of $\mathcal{B} \smallsetminus \{\infty\}$; in order for $K(C) / K(x)$ to be a $p$-cyclic Galois extension, we must also have $f(x) \in K[x]$ and $\zeta_p \in K$, where $\zeta_p$ denotes a primitive $p$th root of unity.  The set of roots of $f$ thus coincides with $\mathcal{B} \smallsetminus \{\infty\}$.  After possibly replacing $K$ by a degree-$p$ extension (obtained by adjoining $p$th roots of some element of $K$), we may assume that $f$ is monic, so that it can be written in the form 
\begin{equation} \label{eq polynomial for superelliptic}
f(x) = \prod_{i = 1}^d (x - z_i)^{m_i},
\end{equation}
where $z_1, \dots, z_d$ are the points of $\mathcal{B} \smallsetminus \{\infty\}$ and we have $1 \leq m_i \leq p - 1$ for $1 \leq i \leq d$.

\begin{prop} \label{prop superelliptic model}

With the above set-up, the curve $C$ is the normalization of the glued-together affine charts given respectively by the equations 
\begin{equation} \label{eq superelliptic model full}
y^p = f(x) \qquad \text{and} \qquad \check{y}^p = f^\vee(\check{x}),
\end{equation}
where $\check{x} = x^{-1}$, $\check{y} = x^{-\lceil \deg(f)/p \rceil} y$, and $f^\vee(\check{x}) = x^{-p \lceil \deg(f)/p \rceil} f(x)$.  Moreover, we have the following.

\begin{enumerate}[(a)]

\item The group of Galois automorphisms is generated by the automorphism $(x, y) \mapsto (x, \zeta_p y)$.

\item The cover $C \to X$ is branched over the point $\infty$ if and only if we have $p \nmid \deg(f)$.

\item Given $i \in \{1, \dots, d\}$ with $2 \leq m_i \leq p - 1$, the point $(x, y) = (z_i, 0)$ on the affine curve given by the first equation in (\ref{eq superelliptic model full}) is a unibranch singularity (and thus has $1$ point lying above it in the desingularization $C$).  Analogously, if we have $p \lceil \deg(f) / p \rceil - \deg(f) \geq 2$, the point $(\check{x}, \check{y}) = (0, 0)$ on the affine curve given by the second equation in (\ref{eq superelliptic model full}) is a unibranch singularity.

\end{enumerate}

\end{prop}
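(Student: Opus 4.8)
The plan is to deduce everything from a couple of elementary local computations, using that by construction $K(C) = K(x)[y]/(y^p - f(x))$ is a $p$-cyclic Kummer extension of $K(x) \cong K(X)$ (recall $\zeta_p \in K$), so that $C$ is the normalization of $X = \proj_K^1$ in $K(C)$. Since normalization is local on the base, I would cover $X$ by $\Spec K[x]$ and $\Spec K[\check x]$ with $\check x = x^{-1}$: over the first chart $K[x][y]/(y^p - f(x))$ is module-finite over $K[x]$ with total fraction ring $K(C)$, so its normalization is the part of $C$ over $\Spec K[x]$; over the second, set $\check y := x^{-\lceil \deg(f)/p\rceil}y$ and verify
\begin{equation*}
\check{y}^p = x^{-p\lceil \deg(f)/p\rceil} f(x) = \check{x}^{\,p\lceil \deg(f)/p\rceil - \deg(f)}\prod_{i=1}^d (1 - z_i\check{x})^{m_i} =: f^\vee(\check x),
\end{equation*}
which is a polynomial in $\check x$ since $p\lceil\deg(f)/p\rceil - \deg(f) \ge 0$; then $K[\check x][\check y]/(\check y^p - f^\vee(\check x))$ is module-finite over $K[\check x]$ with fraction ring $K(C)$ (as $\check y$ and $y$ differ by a unit of $K(x)$), so its normalization is the part of $C$ over $\Spec K[\check x]$. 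On the overlap $x \ne 0,\infty$ both coordinate rings equal $K[x,x^{-1}][y]$, so the two normalizations glue along the prescribed substitution and yield $C$.

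For part (a), the Kummer generator $\sigma$ of $\Gal(K(C)/K(x))$ with $\sigma(y) = \zeta_p y$ fixes $\mathcal O_X$ and preserves integrality, hence extends to an order-$p$ automorphism of $C$ over $X$ which is $(x,y)\mapsto(x,\zeta_p y)$ on the first chart; since $[K(C):K(x)] = p$ it generates the Galois group. For part (b), I would compute the ramification of $K(C)/K(x)$ over the place $\infty$ from the second chart, where $\check y^p = u\,\check x^{\,e}$ with $e := p\lceil\deg(f)/p\rceil - \deg(f) \in \{0,\dots,p-1\}$ and $u := \prod_i (1 - z_i\check x)^{m_i}$ a unit at $\check x = 0$; this forces the valuation at $\infty$ of $\check y$ to be $e/p$, so the ramification index over $\infty$ is $p/\gcd(p,e)$, which (as $p$ is prime) is $p$ when $e \ne 0$ and $1$ when $e = 0$. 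Since $e = 0 \iff p \mid \deg(f)$, this is exactly the claim; when $e = 0$ the fibre over $\infty$ splits into the $p$ distinct points $\check y = \zeta_p^j$, confirming the cover is unbranched there.

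Part (c) is where the real work lies. Fix $i$ with $2 \le m_i \le p - 1$ and put $x' = x - z_i$, so near $(z_i,0)$ we have $f(x) = (x')^{m_i} g(x')$ with $g \in K[[x']]$ and $g(0)\ne 0$. Because $p$ is invertible in $K$ (here is where $\mathrm{char}\,K \ne p$ is essential) and $g(0)$ has a $p$th root over $\bar K$, the unit $g$ has a $p$th root $h \in \bar K[[x']]$, obtained by solving $h^p = g$ coefficient by coefficient; substituting $w := y/h(x')$ identifies the completed local ring of the first affine chart at $(z_i,0)$, after the harmless base change to $\bar K$, with $\bar K[[x',w]]/(w^p - (x')^{m_i})$. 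Since $\gcd(m_i,p) = 1$, the polynomial $w^p - (x')^{m_i}$ is irreducible (single Newton-polygon slope of denominator $p$ in lowest terms), so this ring is a domain whose normalization is the power series ring $\bar K[[s]]$ via $x' = s^p$, $w = s^{m_i}$; thus there is a single branch and one point over $(z_i,0)$ in $C$, and the point is genuinely singular because both partial derivatives of $w^p - (x')^{m_i}$ vanish at the origin once $m_i \ge 2$. The assertion at $\infty$ is proved identically on the second chart with $f^\vee(\check x) = \check x^{\,e} u(\check x)$, $u$ a unit at $0$: when $e = p\lceil\deg(f)/p\rceil - \deg(f) \ge 2$ one has $1 \le e \le p-1$, hence $\gcd(e,p) = 1$, and the same computation applies with $(x',m_i)$ replaced by $(\check x, e)$.

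The main obstacle, then, is precisely the local analysis in part (c): justifying the extraction of the $p$th root of the unit $g$ — legitimate exactly because $\mathrm{char}\,K \ne p$ — and recording that $w^p - (x')^m$ with $\gcd(m,p) = 1$ defines a unibranch plane curve singularity normalized by the parametrization $s \mapsto (s^p, s^m)$, a standard fact about quasi-homogeneous singularities that I would either cite or verify directly from the parametrization. Everything else is bookkeeping with the two charts and the Kummer description of $K(C)/K(x)$.
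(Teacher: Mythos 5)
Your proof is correct, and parts (a) and (b) are essentially the same as the paper's (Kummer generator for the Galois action; explicit valuation computation at $\check x = 0$ in the second chart to read off the ramification index). Part (c), however, takes a genuinely different route. The paper resolves the singularity $(x,y)=(0,0)$ of $y^p = x^m h(x)$ by an explicit sequence of monomial substitutions driven by the Euclidean algorithm on $(p,m)$ (running the chain $p = q_1 m + r_1$, $m = q_2 r_1 + r_2$, \dots until $r_s = 1$), terminating in an equation whose Jacobian is manifestly nondegenerate at the origin; this directly exhibits a unique preimage without ever leaving the category of affine schemes. You instead pass to the completed local ring over $\bar K$, extract a $p$th root $h$ of the unit $g$ (legitimate because $\mathrm{char}\,K \neq p$, so the coefficients of $h$ can be solved for recursively), substitute $w = y/h$ to identify the analytic local model with $\bar K[[x',w]]/(w^p - (x')^{m_i})$, and then quote the classical fact that this quasi-homogeneous singularity is unibranch when $\gcd(m_i,p)=1$, via the parametrization $s \mapsto (s^p, s^{m_i})$ and the Newton-polygon irreducibility of $w^p - (x')^{m_i}$ over $\bar K((x'))$. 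Your approach is shorter and makes the role of $\gcd(m_i,p)=1$ and $\mathrm{char}\,K \neq p$ conceptually transparent, at the cost of invoking formal completion and the identification of branches with minimal primes of the completed local ring. The paper's approach is more elementary and constructive (it literally produces the chart of the resolution), which fits the explicit-model spirit of the rest of the appendix. Both are valid; just make sure, if you write this up, to state explicitly that counting branches is a geometric notion computed after base change to $\bar K$, which is why working in $\bar K[[x']]$ is harmless.

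One small remark worth flagging: the paper's proof of (c) writes ``$h(x) \in K[x]$ is a polynomial with no constant term,'' which must be a typo for ``with \emph{nonzero} constant term'' (since $z_i$ has exact multiplicity $m_i$ in $f$); the nonvanishing of $h(0)$ is in fact needed at the final step of the paper's desingularization to see that the partial with respect to $x_s$ does not vanish. Your write-up correctly uses $g(0) \neq 0$.
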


\begin{proof}
We note that $K(C) = K(X)(y) = K(x, y)$, where $y$ is a $p$th root of $f$.  The first statement is standard and can now be proved using a similar argument to the one used in the proof of \Cref{prop equations for normalization}, while part (a) is immediate from a description of the field automorphisms in $\Gal(K(x, y) / K(x))$.

Note that $f^\vee$, as a polynomial in the variable $\check{x}$, is exactly divisible by $\check{x}^{p \lceil \deg(f)/p \rceil - \deg(f)}$.  The point $\infty$ of $X = \proj_K^1$ is given by $\check{x} = 0$; now putting this into the second equation in (\ref{eq superelliptic model full}) yields the equation $\check{y}^p = 0$ (resp. an equation of the form $\check{y}^p = \text{[constant]}$) if $p \nmid \deg(f)$ (resp. $p \mid \deg(f)$).  Part (b) follows.

Suppose for some index $i$ that we have $m := m_i \geq 2$.  After possibly translating $x$, we may assume that we have $z_i = 0$.  It is easy to check that both partial derivatives vanish at the point $(x, y) = (0, 0)$, so this is a singular point.  Write $f(x) = x^m h(x)$, so that $h(x) \in K[x]$ is a polynomial with no constant term.

We now describe a process of desingularization based on the Euclidean algorithm.  Let $q_1, r_1 \geq 1$ be integers such that we have $p = q_1 m + r_1$ and $r_1 \leq p - 1$.  Making the substitutions $x = x_1 y^{q_1}$ and $y = y_1$, the first equation in (\ref{eq superelliptic model full}) simplifies to 
\begin{equation} \label{eq Euclidean}
y_1^{r_1} = x_1^m h(x_1 y^{q_1}).
\end{equation}
Now let $q_r, r_2 \geq 1$ be integers such that we have $m = q_2 r_1 + r_2$ and $r_2 \leq m - 1$.  Making the substitutions $x_1 = x_2$ and $y_1 = x_2^{q_2} y_2$, the equation in (\ref{eq Euclidean}) simplifies to 
\begin{equation}
y_2^{r_1} = x_2^{r_2} h(x_2^{q_1 q_2 + 1} y_2^{q_1}).
\end{equation}
Repeating this process, as $\gcd(p, m) = 1$, we eventually get $r_s = 1$ for some $s \geq 1$, and our equation simplifies to a polynomial which is either 
\begin{itemize}
\item of the form $y_s = x_s^{r_{s-1}} h(\psi(x_s, y_s))$, where $\psi(x_s, y_s)$ is a non-constant monomial, or 
\item of the form $y_s^{r_{s-1}} = x_s h(\psi(x_s, y_s))$, where $\psi(x_s, y_s)$ is a non-constant monomial.
\end{itemize}
The point $(x, y) = (0, 0)$ corresponds only to the point $(x_s, y_s) = (0, 0)$ and it is easy to see from the form of the equation in $x_s, y_s$ that the partial derivatives do not both vanish at this point, so it is non-singular.  It is clear now that the change of variables from $x, y$ to $x_s, y_s$ provides a desingularization of the affine curve given by the first equation in (\ref{eq superelliptic model full}) at $(x, y) = (0, 0)$.  The claim for the point $(\check{x}, \check{y}) = (0, 0)$ is shown analogously, and part (c) is thus proved.
\end{proof}

\begin{rmk} \label{rmk moving point to infty}

In the above situation, if $\infty \notin \mathcal{B}$, one can always pick some $z \in \mathcal{B}$ and apply a fractional linear transformation that moves $z$ to $\infty$ (\textit{e.g.} $x \mapsto (x - z)^{-1}$) to $X = \proj_K^1$ -- which one may view as choosing a different coordinate variable to be denoted $x$ -- so that the set of branch points of $C \to X$ does contain $\infty$.  This is what enables us to make the assumption that $\infty \in \mathcal{B}$ that was set at the start of this paper.

\end{rmk}

\begin{rmk} \label{rmk why we ignore this type of singularity}

Suppose, as in the proof of \Cref{prop superelliptic model}(c), that $(x, y) = (0, 0)$ is a singular point of the affine curve given by the first equation in (\ref{eq superelliptic model full}) (which is of the form $y^p = x^m h(x)$ with $2 \leq m \leq p - 1$ and $x \nmid h(x)$), and let $\tilde{Q}$ denote the point lying above it in the normalization $C$.  Let $\Cmini$ and $\Xmini$ respectively be the minimal regular model and its quotient by the Galois group as in \Cref{prop persistent nodes}, and let $K' / K$ be a finite extension such that they are defined over $K'$.  Letting $V$ be the component of $\SF{\Xmini}$ to which the point $x = 0$ reduces and letting $\mathcal{X} \leq \Xmini$ be the smooth model of $X$ corresponding to $V$, it follows from the smoothness of the strict transform in $\Cmini$ of the normalization $\mathcal{C}$ of $\mathcal{X}$ in $K(C)$ guaranteed by \Cref{cor inverse images are smooth} that the reduction $Q$ of the $\mathcal{O}_{K'}$-point of $\Cmini$ to which $\tilde{Q}$ extends is non-singular and the only point lying over the reduction of the point $x = 0$ in $\SF{\Xmini}$.

Therefore, desingularizing the singular points of the form $y^p = x^m h(x)$ with $2 \leq m \leq p - 1$ and $x \nmid h(x)$ does not create any new nodes or singular points in the special fiber $\SF{\Cmini}$, which is why we did not need to explicitly treat this issue in the main part of this paper.

\end{rmk}

\bibliographystyle{plain}
\bibliography{bibfile}

\end{document}